\DeclareSymbolFont{cyrletters}{OT2}{wncyr}{m}{n}
\DeclareMathSymbol{\berd}{\beta}{cyrletters}{"42}
 \newtheorem{thm}{Theorem}[section]
 \newtheorem{cor}[thm]{Corollary}
 \newtheorem{lem}[thm]{Lemma}
 \newtheorem{prop}[thm]{Proposition}
 \theoremstyle{definition}
 \newtheorem{defn}[thm]{Definition}
 \theoremstyle{remark}
 \newtheorem{rem}[thm]{Remark}
\numberwithin{equation}{section} \numberwithin{figure}{section}
\newcommand{\e}{\mathrm e}
\newcommand{\C}{{\mathbb C}}
\newcommand{\D}{{\mathbb D}}
\newcommand{\T}{{\mathbb T}}
\newcommand{\R}{{\mathbb R}}
\newcommand{\calA}{{\mathcal A}}
\newcommand{\dA}{{\diff A}}
\newcommand{\calP}{{\mathcal P}}
\newcommand{\setS}{{\mathcal S}}
\newcommand{\Ordo}{\mathrm O}
\newcommand{\ordo}{\mathrm o}
\newcommand\Qfun{V}
\newcommand\vol{\mathrm{vol}}
\newcommand\clos{\text{\rm clos}}
\newcommand\Phull{\operatorname{\text{\rm phull}}}
\newcommand\Pol{\mathrm{Pol}}
\newcommand{\probab}{\mathrm{prob}}
\newcommand\probc{{\probab}_c(\C)}
\newcommand{\re}{\operatorname{Re}}
\newcommand{\im}{\operatorname{Im}}
\newcommand{\diff}{{\mathrm d}}
\newcommand{\imag}{{\mathrm i}}
\newcommand{\expect}{\mathbb{E}}
\newcommand{\EE}{\mathcal{E}}
\newcommand{\dist}{\operatorname{dist\,}}
\newcommand{\supp}{\operatorname{supp}}
\newcommand{\trace}{\mathrm{tr}}
\newcommand{\Sub}{\mathrm{Sub}}
\newcommand{\Obs}{\mathrm{Obst}}
\newcommand{\Int}{\text{\rm int}}
\newcommand{\sm}{\setminus}
\newcommand{\ti}{\tilde}
\begin{document}
%
\title[Coulomb gas ensembles and Laplacian growth]
{Coulomb gas ensembles and Laplacian growth}

\author[Hedenmalm]
{H\aa{}kan Hedenmalm}

\address{H{\aa}kan Hedenmalm\\ Department of Mathematics\\
KTH Royal Institute of Technology\\
S -- 100 44 Stockholm\\
SWEDEN}

\email{haakanh@math.kth.se}

\thanks{The first author is supported by the G\"oran 
Gustafsson Foundation (KVA) and Vetenskapsr\aa{}det (VR). 
The second author is supported by NSF Grant No. 0201893.}

\author[Makarov]
{Nikolai Makarov}

\address{Nikolai Makarov\\ Department of Mathematics\\ California Institute of
Technology\\ Pasadena\\ CA 91125\\ USA}

\email{makarov@caltech.edu}



\begin{abstract} 
We consider weight functions $Q:\C\to \R$ that are locally in a suitable
Sobolev space, and impose a logarithmic growth condition from below. We
use $Q$ as a confining potential in the model of one-component plasma 
($2$-dimensional Coulomb gas), and study the configuration of the electron  
cloud as the number $n$ of electrons tends to infinity, while the confining 
potential is rescaled: we use $mQ$ in place of $Q$ and let $m$ tend to
infinity as well. We show that if $m,n$ tend to infinity in a proportional
fashion, with $n/m\to t$ , where $0<t<+\infty$ is fixed, then the electrons 
accumulate on a compact set $S_t$, which we call the {\em droplet}. The set
$S_t$ can be obtained as the coincidence set of an obstacle problem, if we
remove a small set (the shallow points). Moreover, on the droplet $S_t$, 
the density of electrons is asymptotically $\Delta Q$. The growth of the 
droplets $S_t$ as $t$ increases is known as Laplacian growth. It is 
well-known that Laplacian growth is unstable. To analyze this feature, we 
introduce the notion of a {\em local droplet}, which involves removing part 
of the obstacle away from the set $S_t$. The local droplets are no longer 
uniquely determined by the time parameter $t$, but at least they may be 
partially ordered. We show that the growth of the local droplets may be 
terminated in a maximal local droplet, or by the droplets' growing to 
infinity in some direction (``fingering'').
\end{abstract}

\maketitle

\addtolength{\textheight}{2.2cm}

\section{Overview} \label{intro}

\subsection{Outline of the paper}
In Sections \ref{sec-qcl} and \ref{sec-J}, we study the one-component plasma 
(Coulomb gas ensemble) in two dimensions, and find the quasi-classical limit 
as the number $n$ of electrons tends to infinity while the confining potential 
is rescaled: $mQ$ replaces $Q$, where $m$ tends to infinity, so that 
$n/m\to t$. 
This was obtained previously by Johansson \cite{J} in the one-dimensional 
context. It turns out that Johansson's proof carries through with only minor 
modifications also in the two-dimensional case, as was explained earlier in 
our arXiv preprint \cite{HM}. Here, we make an effort to obtain the 
result under minimal smoothness and growth assumptions on the potential $Q$. 

In Section \ref{sec-obst}, we connect the equilibrium measure with an obstacle
problem, and show how to apply the Kinderlehrer-Stampacchia-Caffarelli theory
to obtain a priori smoothness of the solutions to the obstacle problem. 
We also show that the density of the equilibrium measure is given by 
$\Delta Q$ on the droplet, which permits us to reduce the complexity of the 
equilibrium measure to the study of its support (the droplet). Here, $\Delta
:=\partial\bar\partial$ is a quarter of the usual Laplacian.
The droplet is shown to equal the coincidence set for
the associated obstacle problem, if we remove the so-called shallow points. 
For smooth strictly convex $Q$, the topology of the droplets is shown to be
simple.

In Section \ref{sec-ld}, we introduce the notion of local droplets, which are
obtained when we pass from the potential $Q$ to its localization $Q_\Sigma$
for subsets $\Sigma\subset\C$ (cf., e.g. \cite{EF}). The local droplets 
are partially ordered, and in Section \ref{sec-cld}, we study maximal 
domination chains of local droplets. The maximal domination chains either 
end in a maximal local droplet, or grow to infinity. The local droplets appear 
to be natural from the point of view of physics (see, e.g., \cite{W}). They 
are also natural from the mathematical point of view: the description of all 
possible local droplets is exactly the inverse problem of potential theory.

One purpose with the material on domination chains of droplets in Section 
\ref{sec-cld} is to provide a natural setting to analyze Laplacian growth 
(i.e., the Hele-Shaw equation), which is known to be unstable in the forward 
time direction. This is explained in Section \ref{sec-hse}. 
The domination chains of droplets are interesting in part because of 
their integrability nature, especially in the case of potentials $Q$ with 
$\Delta Q=\mathrm{constant}>0$ near the local droplet (such $Q$ will be called 
constant strength potentials). 
This will be the topic of a forthcoming paper, where we will discuss the 
algebraic-geometric nature of maximal local droplets for constant strength 
potentials.
\medskip

\subsection{Comments on the exposition}
While a few of the results covered in this paper are essentially understood, 
we believe the reader will appreciate a rather self-contained and easily
accessible exposition. As for the treatment of Johansson's
theorems in Section \ref{sec-J}, the extension to the two-dimensional setting
requires some care about details, and as far as we know, no general proof has 
been available so far beyond the arXiv preprint \cite{HM}, where an excessive 
regularity condition was made to simplify the presentation (here, we remove 
that condition by modifying the smoothing argument of Johansson's paper 
\cite{J}; see Subsection \ref{subsec-entr}). 

The connection between equilibrium measures and obstacle problems is known 
(see, e.g., \cite{ST}). However, it is perhaps less well known that the 
Kinderlehrer-Stampacchia-Caffarelli theory (see \cite{KS}; cf. also \cite{HS}, 
where the same technique was used) allows us to develop an understanding of 
the equilibrium measures in terms of their supports, the droplets. This 
contrasts with the one-dimensional theory, where a lot of the difficulty is 
to determine the density of the equilibrium measure. 
As for the treatment of the Hele-Shaw equation, our approach based on 
equilibrium measures and obstacle problems allows us to develop the theory 
with low regularity. The standard approach to Hele-Shaw flow theory is to use
(partial) balayage and variational inequalities, see, e.g., \cite{GB}. 
We prefer the obstacle problem approach because it is more intuitive and 
geometrically appealing.

\subsection{Acknowledgements}
We thank Kurt Johansson for helpful comments in connection with the previous
arXiv preprint \cite{HM}, and Serguei Shimorin for help with the proofreading. 






\section{Quasi-classical limit of Coulomb gas ensembles}
\label{sec-qcl}

\subsection{One-component plasma (OCP)} 
In the 2-dimensional Cou\-lomb gas model (or rather OCP, the 
one-component plasma model), we have  $n$ electrons located at points 
$\{z_j\}_{j=1}^{n}$ in the complex plane, influenced by an external 
field. The potential of interaction is
\[
\log\frac{1}{|z_j-z_k|^2},\qquad j\neq k,\,\,\,j,k\in\{1,\ldots,n\},
\]
while the external field potential is denoted by $\Qfun(z)$. The function
\[
\Qfun:\,\C\to\R\cup\{+\infty\}
\]
is lower semi-continuous and sufficiently large to keep the electrons at 
finite distances. We shall supply the precise condition shortly.
The combined potential energy resulting from particle interaction and
the external potential is the function 
$\EE_\Qfun:\C^n\to\R\cup\{+\infty\}$ given by
\[
\EE_\Qfun(z)=\frac12\sum_{j,k:j\ne k}\log\frac1{|z_j-z_k|^2}+
\sum_j \Qfun(z_j),\qquad z=(z_1,\ldots,z_n)\in\C^n,
\]
where the summation indices $j,k$ are assumed confined to the set 
$\{1,\ldots,n\}$. In any reasonable gas dynamics model, the low energy
states are supposed to be more likely than the high energy states. 
For a positive constant $\beta$, let $Z_n=Z_{n,\beta,\Qfun}$ denote the
constant
\[
Z_{n}=\int_{\C^n}~\e^{-\frac{\beta}{2}\,\EE_\Qfun}~\diff \vol_{2n},
\]
where $\vol_{2n}$ denotes the standard volume measure in $\C^n
\cong\R^{2n}$.
We suppose that $0<Z_n<+\infty$, which means that the potential $\Qfun$
imposes a weak localization restraint on the plasma cloud. The
corresponding Gibbs model then gives the joint density of states
\[
\frac{1}{Z_{n}}\,\e^{-\frac{\beta}{2}\,\EE_\Qfun(z)}.
\]
where $\beta$ has the interpretation as the inverse temperature.
In terms of the usual van der Monde expression
\[
\triangle(z)=\prod_{j,k:j<k}(z_k-z_j),
\]
we may write 
\[
Z_{n}=
\int_{\C^n}|\triangle(z)|^\beta \e^{-\frac{\beta}{2}\sum_j\Qfun(z_j)}~\diff
\vol_{2n}(z).
\]
We thus introduce a probability point process
\[
\Pi_n\equiv\Pi_{n,\beta,\Qfun}\in\probab(\C^n)
\]
($\probab(\C^n)$ is the convex set of all Borel probability measures on $\C^n$)
by setting
\[
\diff\Pi_n(z)=\frac{\e^{-\frac{\beta}{2}\,\EE_\Qfun(z)}}{Z_{n}}\,
\diff \vol_{2n}(z)=\frac{|\triangle(z)|^\beta}{Z_n} 
\e^{-\frac{\beta}{2}\sum_j\Qfun(z_j)}~\diff \vol_{2n}(z), \qquad z\in\C^n.
\]

\subsection{Marginal measures.}
For integers $k=1,\ldots,n$, we define the marginal probability measure
$\Pi_n^{(k)}\in\probab(\C^k)$ by setting
\[
\Pi_n^{(k)}(e)=\Pi_n(e\times\C^{n-k}),
\]
for Borel measurable subsets $e\subset\C^k$; in particular, $\Pi_n^{(n)}=
\Pi_n$.  
The associated measures
\[
\Gamma_n^{(k)}=\frac{n!}{(n-k)!}\,\Pi_n^{(k)}
\]
are known as {\em intensity (or correlation) measures}.
For $k=n$, we have $\Gamma_n^{(n)}=n!\,\Pi_n$, which is why we simplify the 
notation and write $\Gamma_n=\Gamma_n^{(n)}$. 
On the other hand, for $k=1$, we have ($\expect$ is the expectation operation)
\[
\Gamma_n^{(1)}(e)=\expect\big[\#\{j:z_j\in e\}\big],
\]
where it is tacitly assumed that $j$ is confined to the set $\{1,\ldots,n\}$,
and $\#$ denotes counting measure. 
In more explicit form, we have, for $n=2$ and $k=1$,
\begin{equation}
\diff\Gamma_2^{(1)}(\zeta)=
\frac{2\int_\C|\zeta-\xi|^\beta\diff\mu(\xi)}
{\int_{\C^2}|\xi-\eta|^\beta\diff\mu(\xi)\diff\mu(\eta)}
\diff\mu(\zeta),\qquad
\zeta\in\C,
\label{eq-G21}
\end{equation}
where $\diff\mu(\xi)=\e^{-\frac{\beta}{2}\Qfun(\xi)}\diff \vol_2(\xi)$.
More generally, for $1\le k\le n$ and a Borel subset $e\subset\C^k$, we have
\[
\Gamma^{(k)}_n(e)=\expect[\#\{(j_1,\ldots,j_k)\in\,\text{perm}(k,n):
\,\,(z_{j_1},\ldots,z_{j_k})\in e\}],
\]
where $\text{perm}(k,n)$ stands for the collection of all permutations of 
length $k$ of the set $\{1,\ldots,n\}$. 

\begin{rem} \rm
In the above definition of the probability measure $\Pi_n$, we realize that
\[
\e^{-\frac{\beta}{2}\sum_j\Qfun(z_j)}\diff \vol_{2n}(z)=\diff\mu(z_1)\cdots
\diff\mu(z_n),\qquad z=(z_1,\ldots, z_n),
\]
where
\[
\diff\mu(\xi)=\e^{-\frac{\beta}{2}\Qfun(\xi)}\diff \vol_2(\xi),
\qquad\xi\in\C.
\]
Most of the above discussion does not depend on this particular structure of 
the measure $\mu$, and we are free to consider more general 
measures.
For instance, this allows us to include the one-dimensional theory in the 
model.
\end{rem}

\subsection{The random normal matrix model} If $\beta=2$, then the
probability measure
\[
\diff \Pi_n(z)=\frac{|\triangle(z)|^2}{Z_n} e^{-\sum_j\Qfun(z_j)}
\diff \vol_{2n}(z)
\]
with normalization constant
\[
Z_n=\int_{\C^n}|\triangle(z)|^2 e^{-\sum_j\Qfun(z_j)}
\diff \vol_{2n}(z)
\]
describes the distribution of the eigenvalues of $n\times n$ 
Random Normal Matrices (RNM) with joint probability measure proportional to
\[
e^{-\trace\,\Qfun(M)}\diff M,
\]
where ``tr'' is the trace, and $\diff M$ stands for the natural ``Haar-type'' 
measure on the submanifold of all complex-valued $n\times n$ matrices $M$ 
with $M^*M=MM^*$ (the normal matrices). In this case the point process is 
determinantal:
\begin{equation}
\diff\Gamma_n^{(k)}(z)=
\det\left[ K_n(z_i,z_j)\right]_{i,j=1}^k e^{-\sum_j\Qfun(z_j)}\,
\diff\vol_{2k}(z),
\label{eq-detK}
\end{equation}
where $K_n$ is the reproducing kernel in the polynomial Bargmann-Fock space
\[
{\Pol}_n=
{\rm span}\{1,z,\dots, z^{n-1}\}\subset L^2(\C,\e^{-\Qfun}).
\]
We thus consider $\Pol_n$ as a finite-dimensional linear 
subspace of $L^2(\C, e^{-\Qfun})$ (linearity is always with respect to the 
field $\C$), and the Gram-Schmidt procedure supplies, for 
$j=0,\ldots, n-1$, polynomials $p_j$ of degree $j$ and norm $1$ such that 
$p_j\perp p_k$ for $j\neq k$.
In terms of these orthogonal polynomials, we have
\begin{equation}
K_n(z,w)=\sum_0^{n-1} p_j(z)\,\bar p_j(w).
\label{eq-K}
\end{equation}
The algebraic mechanism behind the formula for the correlation measure
$\Gamma^{(m)}_n$ is well understood. See, for instance, Mehta's book \cite{M}.

\subsection{Aggregation of quantum droplets} For reasons that will become
clearer later on, we shall regard the point process $\Gamma_n=n!\Pi_n$
(or, equivalently, $\Pi_n$) as a quantum droplet. We are interested in the 
transition $\Gamma_n\to\Gamma_{n+1}$, which corresponds to adding one more 
electron to the droplet. A direct comparison of the processes $\Gamma_n$ and 
$\Gamma_{n+1}$ is not possible, and we are led to consider marginal 
intensities. The following lemma for $\beta=2$ has the interpretation that 
if we add an electron, the expected number of $k$-tuples of electrons 
increases everywhere in $\C^k$.  

\begin{lem} If  $\beta=2$, then
\[\forall k, \qquad \Gamma_n^{(k)}\le
\Gamma_{n+1}^{(k)}.\]
\label{lem-2.2}
\end{lem}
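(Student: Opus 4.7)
The plan is to use the determinantal formula \eqref{eq-detK} together with the recursive structure \eqref{eq-K} of the reproducing kernel. Since both measures $\Gamma_n^{(k)}$ and $\Gamma_{n+1}^{(k)}$ are absolutely continuous with respect to $\vol_{2k}$, with densities
\[
\det[K_n(z_i,z_j)]_{i,j=1}^k\,\e^{-\sum_j \Qfun(z_j)}
\quad\text{and}\quad
\det[K_{n+1}(z_i,z_j)]_{i,j=1}^k\,\e^{-\sum_j \Qfun(z_j)},
\]
respectively, it suffices to establish the pointwise inequality
\[
\det[K_n(z_i,z_j)]_{i,j=1}^k\le \det[K_{n+1}(z_i,z_j)]_{i,j=1}^k
\qquad (z_1,\ldots,z_k)\in\C^k,
\]
since the exponential factor is positive and common to both sides.

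Next, from \eqref{eq-K} we read off the identity $K_{n+1}(z,w)=K_n(z,w)+p_n(z)\bar p_n(w)$, so that
\[
\bigl[K_{n+1}(z_i,z_j)\bigr]_{i,j=1}^k=\bigl[K_{n}(z_i,z_j)\bigr]_{i,j=1}^k+\bigl[p_n(z_i)\overline{p_n(z_j)}\bigr]_{i,j=1}^k.
\]
The two matrices on the right-hand side are both positive semi-definite: the first is a Gram-type matrix associated with the reproducing kernel of $\Pol_n$, while the second is a rank-one positive semi-definite matrix of the form $vv^*$ with $v=(p_n(z_1),\ldots,p_n(z_k))^{\mathrm t}$.

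The final step is the elementary fact that the determinant is monotone on the cone of positive semi-definite Hermitian matrices: if $A,B\ge 0$, then $\det(A+B)\ge\det(A)$. For $A$ invertible this is the matrix determinant lemma, $\det(A+vv^*)=\det(A)(1+v^*A^{-1}v)\ge\det(A)$, and the singular case follows by replacing $A$ with $A+\varepsilon I$ and letting $\varepsilon\downarrow 0$ (alternatively, one may invoke Minkowski's determinant inequality $\det(A+B)\ge\det(A)+\det(B)$). Applying this with $A=[K_n(z_i,z_j)]$ and $B=vv^*$ yields the pointwise inequality of densities, and integration over arbitrary Borel sets $e\subset\C^k$ gives $\Gamma_n^{(k)}(e)\le\Gamma_{n+1}^{(k)}(e)$, as required. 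There is no genuine obstacle: the whole argument is a direct consequence of the rank-one update formula for $K_{n+1}$ and monotonicity of the determinant on the PSD cone.
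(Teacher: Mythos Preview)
Your proof is correct and follows essentially the same route as the paper: both use the rank-one update $K_{n+1}=K_n+p_n\bar p_n$ and reduce to the monotonicity $\det(A)\le\det(A+vv^*)$ for positive semi-definite $A$. The only cosmetic difference is in the justification of this last step: the paper invokes the minimax principle for eigenvalues, while you use the matrix determinant lemma (with a continuity argument for singular $A$); both are standard and equally valid.
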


\begin{proof}
In view of \eqref{eq-K}, we have
\[
\left[ K_{n+1}(z_i,z_j)\right]_{i,j=1}^k=
\left[K_{n}(z_i,z_j)\right]_{i,j=1}^k+\left[p_{n}(z_i)\bar p_n(z_j)
\right]_{i,j=1}^k,
\]
where all matrices involved are positive (semi)definite (the rightmost 
matrix has rank 1). As we compare with \eqref{eq-detK}, we realize that
the desired assertion
\[
\det\left[ K_{n}(z_i,z_j)\right]_{i,j=1}^k\le
\det\left[K_{n+1}(z_i,z_j)\right]_{i,j=1}^k
\]
is an immediate consequence of the minimax principle (see, e. g., the books
of Dunford, Schwarz \cite{DS} and Gohberg, Krein \cite{GK}).
\end{proof}

\begin{rem}
This ``aggregation'' property might well be true for all 
$\beta\le 2$ but it certainly fails for $\beta>2$. We consider
the illuminating special case $\Gamma_1^{(1)}\le\Gamma_{2}^{(1)}$, which 
in the notation of \eqref{eq-G21} asserts that
\begin{equation}
\int_{\C^2}|\xi-\eta|^\beta\diff\mu(\xi)\diff\mu(\eta)\le
2 \mu(\C)\int_\C |\zeta-\xi|^\beta\diff\mu(\xi),\qquad \zeta\in\C.
\label{eq-test}
\end{equation}
The measure $\diff\mu(\xi)=\e^{-\frac{\beta}{2}\Qfun(\xi)}\diff\vol_2(\xi)$ 
can essentially be replaced by an fairly arbitrary positive Borel measure (with
finite moments). As we plug in the choice 
$\diff\mu=\diff\delta_0+\diff\delta_{1}$, we see that \eqref{eq-test} is 
equivalent to 
\[
|\zeta|^\beta+|\zeta-1|^\beta\ge\frac12,\qquad \zeta\in\C.
\]
With $\zeta=\frac12$ this gives $\beta\le 2$.
In fact, it is possible to show that the inequality 
$\Gamma_1^{(1)}\le\Gamma_{2}^{(1)}$ holds generally for $0<\beta\le2$. 
We outline the argument. It suffices to consider $z=0$ in \eqref{eq-test}, 
and to show that
\begin{equation}
\int_{\C^2}\big\{|\xi|^\beta+|\eta|^\beta-|\xi-\eta|^\beta\big\}
\diff\mu(\xi)\diff\mu(\eta)\ge0
\label{eq-test2}
\end{equation}
for all positive measures $\mu$ with finite moments. 
For $0<\beta\le1$ the $L^\beta$ triangle inequality shows that the integrand
on the left hand side is positive point-wise, and the assertion is immediate.
We turn to the remaining case $1<\beta<2$. One first establishes with the 
methods of Calculus that
\[
(1+t+2x)^{\beta/2}\le 1+t^{\beta/2}+\beta x,\qquad 
-\sqrt{t}\le x\le\sqrt{t}, \,\,0<t<+\infty,
\]
which in complex form becomes
\[
|1+\tau|^\beta\le 1+|\tau|^\beta+\beta\re \tau,\qquad \tau\in\D,
\]
where $\D$ denotes the open unit disk in $\C$. By homogenization, this 
inequality leads to
\[
|\xi-\eta|^\beta\le |\xi|^\beta+|\eta|^\beta-\beta\min\{|\xi|^{\beta-2},
|\eta|^{\beta-2}\}\,\re(\bar\eta\xi),\qquad \xi,\eta\in\C,
\]
so that
\[
|\xi|^\beta+|\eta|^\beta-|\xi-\eta|^\beta\ge
\beta\min\{|\xi|^{\beta-2},|\eta|^{\beta-2}\}\,\re(\bar\eta\xi),\qquad 
\xi,\eta\in\C.
\]
So, to get \eqref{eq-test2} it suffices to obtain
\[
\int_{\C^2}
\min\{|\xi|^{\beta-2},|\eta|^{\beta-2}\}\re(\bar\eta\xi)\,
\diff\mu(\xi)\diff\mu(\eta)\ge0.
\]
But this is an immediate consequence of Schur's product theorem for positive
definite matrices (in this case we have ``continuous'' matrices), as
both $\min\{|\xi|^{\beta-2},|\eta|^{\beta-2}\}$ and $\re(\bar\eta\xi)$ express
positive definite kernels.
\end{rem}

\subsection{Scaling and the class of weights}
 
If we keep the confining potential $\Qfun$ fixed, and
let $n$ (the number of electrons) grow, the process $\Pi_n$ will generically
grow beyond any confinement. For this reason, it is necessary to jack up the
confinement as $n$ grows. This is achieved by putting $\Qfun=mQ$, where $m$
is a scaling parameter and 
\[
Q:\,\C\to\R\cup\{+\infty\}
\] 
is a fixed potential, assumed to be lower semi-continuous. To avoid degeneracy,
we must suppose that $Q<+\infty$ at least on a set of positive area. 
From well-known physical considerations, it is natural 
to let $m$ be essentially proportional to $n$. As we are free to pick $Q$ 
as we like, we may assume that the proportionality
constant is $1$, that is, that $m=n+\ordo(n)$ as $n\to+\infty$.
The growth requirement on $Q$ which conforms with this normalization is
\begin{equation}
Q(z)-\log|z|^2\to+\infty\quad\text{as}\,\,\,|z|\to+\infty.
\label{eq-2.6}
\end{equation}
\medskip

\subsection{The equilibrium measure} 
\label{sub-equil}
We consider the limit of the point 
processes 
\[
\Pi_{mQ,n}\qquad \text{as}\quad n\to+\infty\quad\text{while}\,\,\,
\frac{n}{m}\to 1,
\]
while assuming that $Q$ grows in accordance with \eqref{eq-2.6}.
In this case we have convergence of the saddle point configurations. More 
precisely, the probability measures
\begin{equation}
\sigma_n=\frac1n\sum_j\delta_{z_j},
\label{eq-sigma}
\end{equation}
which minimize the functionals (we write $z=(z_1,\ldots,z_n)$) 
\[
I^\#_{mQ,n}[\sigma_n]:=\frac2{n(n-1)}\EE_{mQ}(z)
=\frac1{n(n-1)}\sum_{j,k:j\ne k}\log\frac1{|z_j-z_k|^2}+\frac{2m}{n(n-1)}
\sum_j Q(z_j),
\]
converge as $n\to+\infty$ while $m=n+\ordo(n)$ in the weak-star sense of 
measures to the unique probability measure $\sigma=\hat\sigma_Q$ which 
minimizes the weighted logarithmic energy
\begin{equation}
\label{eq-intQ}
I_Q[\sigma]:=\int_{\C^2}\log\frac1{|\xi-\eta|^2}
\diff\sigma(\xi)\diff\sigma(\eta)+2\int_\C Q\diff\sigma.
\end{equation}
This comes as no big surprise given the striking similarity of the expressions 
$I^\#_{mQ,n}[\sigma_n]$ and $I_Q[\sigma]$. 
The configuration of points corresponding to a minimizer $\sigma_n$ is known 
as a collection of {\em weighted Fekete points}, and the measure 
$\hat\sigma_Q$ is called the {\em equilibrium measure}. The existence and 
uniqueness of the minimizing measure $\hat\sigma_Q$ is due to Frostman.
Let $\probc$ denote the convex body of all compactly supported Borel 
probability measures on $\C$. 

\begin{thm} [Frostman] There exists a unique equilibrium measure $\hat\sigma
=\hat\sigma_Q$ such that
\[
I_Q[\hat\sigma]=\inf_\sigma I_Q[\sigma],
\]
the infimum being taken over all compactly supported probability measures 
$\sigma$. 
\label{thm-frost1}
\end{thm}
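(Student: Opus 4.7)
The plan is to follow the direct method of the calculus of variations, exploiting the growth condition \eqref{eq-2.6} to overcome the non-compactness of $\probc$. First I would rewrite the energy via the chordal inequality $|\xi-\eta|^{2}\le (1+|\xi|^{2})(1+|\eta|^{2})$, obtaining
\begin{equation*}
I_Q[\sigma]=\iint_{\C^{2}}\log\frac{(1+|\xi|^{2})(1+|\eta|^{2})}{|\xi-\eta|^{2}}\,\diff\sigma(\xi)\diff\sigma(\eta)+2\int_{\C}\wt Q\,\diff\sigma,
\end{equation*}
where $\wt Q(\xi):=Q(\xi)-\log(1+|\xi|^{2})$. Under \eqref{eq-2.6}, $\wt Q$ is lower semi-continuous on $\C$, bounded below, and tends to $+\infty$ as $|\xi|\to\infty$, while the kernel in the double integral is pointwise non-negative. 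This yields at once the lower bound $I_Q[\sigma]\ge 2\inf_{\C}\wt Q>-\infty$, and it shows that any minimizing sequence $\{\sigma_n\}\subset\probc$ is tight: otherwise mass escaping to $\infty$ would force $\int\wt Q\,\diff\sigma_n\to+\infty$, and hence $I_Q[\sigma_n]\to+\infty$.

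For existence, I would extract a weak-$\ast$ convergent subsequence $\sigma_n\to\sigma^{*}$ via Prokhorov's theorem and verify that $I_Q$ is weak-$\ast$ lower semi-continuous. The linear term $\sigma\mapsto\int\wt Q\,\diff\sigma$ is l.s.c., since $\wt Q$ is l.s.c.\ and bounded below (approximate from below by an increasing sequence of bounded continuous functions and invoke monotone convergence). The quadratic term has a non-negative l.s.c.\ kernel, so the same truncation-plus-monotone-convergence trick gives lower semi-continuity under weak-$\ast$ convergence of $\sigma\otimes\sigma$. Consequently
\begin{equation*}
I_Q[\sigma^{*}]\le\liminf_{n\to\infty}I_Q[\sigma_n]=\inf_{\sigma\in\probc}I_Q[\sigma],
\end{equation*}
so $\sigma^{*}$ is a minimizer. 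A short separate argument shows $\sigma^{*}\in\probc$: outside a sufficiently large disk, $\wt Q$ is so large that any mass placed there can be transported inward to strictly decrease $I_Q$.

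Uniqueness rests on the classical fact that the logarithmic kernel is strictly positive definite on compactly supported signed measures of zero total mass with finite energy: if $\nu\ne 0$ is such a measure, then $\iint\log(1/|\xi-\eta|^{2})\,\diff\nu(\xi)\diff\nu(\eta)>0$ (a standard result, proved via a Fourier-type representation; cf.\ Landkof or Saff--Totik). Given two minimizers $\sigma_1,\sigma_2\in\probc$, a bilinear expansion along the convex combination $\sigma_t:=(1-t)\sigma_1+t\sigma_2$ yields
\begin{equation*}
I_Q[\sigma_t]=(1-t)I_Q[\sigma_1]+tI_Q[\sigma_2]-t(1-t)\iint_{\C^{2}}\log\frac{1}{|\xi-\eta|^{2}}\,\diff(\sigma_1-\sigma_2)(\xi)\diff(\sigma_1-\sigma_2)(\eta),
\end{equation*}
and positive definiteness then forces $\sigma_1=\sigma_2$.

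The main technical obstacle is the lower semi-continuity of the double logarithmic integral against a sign-unbounded, singular kernel; the chordal reduction is the device that renders this routine, by recasting the entire energy as the sum of a non-negative l.s.c.\ double integral and a single integral against a bounded-below l.s.c.\ weight.
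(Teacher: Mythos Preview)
Your argument is correct and is essentially the classical direct-method proof one finds in Saff--Totik. The paper itself does not supply a proof of this theorem; it simply writes ``For the proof, we refer to \cite{ST}.'' So there is nothing to compare beyond noting that your sketch is a faithful outline of the standard argument in that reference (the chordal rewriting into a nonnegative kernel plus the coercive term $\wt Q$, tightness, weak-$\ast$ lower semi-continuity, and uniqueness via strict positive definiteness of the logarithmic energy on neutral signed measures).
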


For the proof, we refer to \cite{ST}. We will write
\begin{equation}
\gamma(Q):=I_Q[\hat\sigma_O],\quad \gamma^*(Q):=\gamma(Q)-\int_\C
Q\diff\hat\sigma_Q,
\label{eq-gQ}
\end{equation}
for the (modified) Robin constants involved. Let 
\begin{equation}
L_Q(\xi,\eta):=\log\frac1{|\xi-\eta|^2}+Q(\xi)+Q(\eta),
\label{eq-LQ}
\end{equation}
and observe that for probability measures $\sigma$, we have
\begin{equation}
\label{eq-intQ2}
I_Q[\sigma]=\int_{\C^2}L_Q(\xi,\eta)\,
\diff\sigma(\xi)\diff\sigma(\eta).
\end{equation}
Next, we introduce the weighted potential
\[
U_Q^\sigma(\xi)=\int_\C L_Q(\xi,\eta)\diff\sigma(\eta)
\]
and observe that since
\begin{equation}
\label{eq-intQ3}
I_Q[\sigma]=\int_{\C}U_Q^\sigma(\xi)\diff\sigma(\xi)
\end{equation}
we expect that the energy minimizer $\sigma=\hat\sigma_Q$ should have 
$U_Q^\sigma$ constant on the support 
\[
S=S_Q:=\supp\hat\sigma_Q,
\] 
and that constant should also equal the minimum value of $U_Q^\sigma$.
We will at times use the notation $\hat\sigma_Q=\hat\sigma[Q]$ and $S_Q=S[Q]$. 
We use q.e. as short-hand for {\em quasi-everywhere}.

\begin{thm} [Frostman]
The support $S_Q$ of the equilibrium measure $\hat\sigma_Q$ 
is compact. Moreover, if $\gamma(Q)$ is as in \eqref{eq-gQ}, then
$U^{\hat\sigma_Q}_Q\ge\gamma(Q)$ q.e. on $\C$, while 
$U^{\hat\sigma_Q}_Q\le\gamma(Q)$ at each point of $S_Q$.
The value $\gamma(Q)$ equals the minimal energy $I_Q[\hat\sigma_Q]$. 
\label{thm-frost2}
\end{thm}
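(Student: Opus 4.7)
\emph{Compactness and variational inequality.} Compactness of $S_Q$ is immediate from Theorem \ref{thm-frost1}: the minimizer $\hat\sigma_Q$ is taken from $\probc$ and hence has compact support by construction. For the pointwise statements, I would run the standard first-variation argument at the minimizer. Given any $\nu \in \probc$ with $I_Q[\nu] < \infty$, form $\sigma_t = (1-t)\hat\sigma_Q + t\nu$ for $t \in [0,1]$. By the symmetry of the kernel $L_Q$ together with the identities \eqref{eq-intQ2} and \eqref{eq-intQ3},
\[
I_Q[\sigma_t] = (1-t)^2 \gamma(Q) + 2t(1-t)\int_\C U_Q^{\hat\sigma_Q}\diff\nu + t^2 I_Q[\nu].
\]
Minimality of $\hat\sigma_Q$ forces $\frac{\diff}{\diff t} I_Q[\sigma_t]\big|_{t=0^+} \geq 0$, which reduces after cancellation to the test-measure bound
\[
\int_\C U_Q^{\hat\sigma_Q}\diff\nu \geq \gamma(Q)
\]
for every admissible $\nu$.

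\emph{From test measures to q.e.\ and to $S_Q$.} If the set $A = \{U_Q^{\hat\sigma_Q} < \gamma(Q)\}$ had positive outer logarithmic capacity, then, exhausting $A$ by the sublevel sets $\{Q \leq N\}$ (on which $Q$ is bounded, since $Q$ is lower semi-continuous and finite on $A$), one finds a compact $K \subset A$ of positive capacity with $Q|_K$ bounded. By classical Evans--Selberg theory, $K$ supports a probability measure $\nu$ of finite logarithmic energy; boundedness of $Q$ on $K$ gives $I_Q[\nu] < \infty$. Plugging this $\nu$ into the test-measure inequality produces $\int U_Q^{\hat\sigma_Q}\diff\nu < \gamma(Q)$, a contradiction, so $U_Q^{\hat\sigma_Q} \geq \gamma(Q)$ q.e. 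Because $\hat\sigma_Q$ has finite energy it does not charge polar sets, so the inequality holds $\hat\sigma_Q$-a.e. Combined with $\int U_Q^{\hat\sigma_Q}\diff\hat\sigma_Q = \gamma(Q)$ from \eqref{eq-intQ3} and \eqref{eq-gQ}, this forces $U_Q^{\hat\sigma_Q} = \gamma(Q)$ $\hat\sigma_Q$-a.e. The lower semi-continuity of $U_Q^{\hat\sigma_Q}$ (from the l.s.c.\ of $Q$ and of the logarithmic potential of $\hat\sigma_Q$) now says that $\{U_Q^{\hat\sigma_Q} > \gamma(Q)\}$ is open and of zero $\hat\sigma_Q$-measure, hence disjoint from $\supp \hat\sigma_Q = S_Q$; therefore $U_Q^{\hat\sigma_Q} \leq \gamma(Q)$ on $S_Q$. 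The identity $\gamma(Q) = I_Q[\hat\sigma_Q]$ is just the definition in \eqref{eq-gQ}.

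\emph{Main obstacle.} The technically delicate step is upgrading the integrated test-measure inequality to the pointwise q.e.\ bound. This rests on the classical potential-theoretic fact that every set of positive outer logarithmic capacity supports a probability measure of finite logarithmic energy. Since the external field $Q$ may blow up on portions of such a set, one must couple Evans--Selberg with a lower semi-continuity/exhaustion argument to produce a test measure supported where $Q$ is bounded, guaranteeing $I_Q[\nu] < \infty$. Once this is in place, the remaining passage to the bound on $S_Q$ is routine bookkeeping around the lower semi-continuity of $U_Q^{\hat\sigma_Q}$.
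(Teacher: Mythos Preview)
The paper does not supply its own proof of this theorem: immediately after the statement it writes ``For the proof, we refer to \cite{ST}.'' So there is no in-paper argument to compare against. Your proposal is precisely the classical first-variation argument that one finds in Saff--Totik, and it is correct in outline and in detail.

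One small terminological quibble: the fact you invoke---that a compact set of positive logarithmic capacity carries a probability measure of finite logarithmic energy---is not Evans--Selberg (which concerns the existence of a potential blowing up exactly on a prescribed polar $G_\delta$). It is rather the basic characterization of capacity: the equilibrium measure of $K$ itself does the job. This does not affect the validity of your argument, only the attribution. Also, it is worth remarking that $A=\{U_Q^{\hat\sigma_Q}<\gamma(Q)\}$ is Borel (indeed $F_\sigma$, since $U_Q^{\hat\sigma_Q}$ is lower semi-continuous), so Choquet capacitability and countable subadditivity of outer capacity legitimately yield the compact $K\subset A\cap\{Q\le N\}$ of positive capacity that you need.
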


For the proof, we refer to \cite{ST}. The number $\e^{-\gamma(Q)}$ is said to
be the weighted capacity.

In terms of the usual logarithmic potential
\[
U^\sigma(\xi)=\int_\C \log\frac{1}{|\xi-\eta|^2}\diff\sigma(\eta),
\]
we see that for a compactly supported probability measure $\sigma$,
\[
U^{\sigma}_{Q}(\xi)=\int_\C L_{Q}(\xi,\eta)\diff\sigma(\eta)
=U^{\sigma}(\xi)+Q(\xi)+\int_\C Q\diff\sigma,
\]
which allows us to write Frostman's Theorem \ref{thm-frost2} in the following
form.

\begin{thm} [Frostman]
The support $S_Q$ of the equilibrium measure $\hat\sigma_Q$ 
is compact. Moreover, if $\gamma^*(Q)$ is as in \eqref{eq-gQ}, then
$U^{\hat\sigma_Q}+Q\ge\gamma^*(Q)$ q.e. on $\C$, while 
$U^{\hat\sigma_Q}+Q\le\gamma^*(Q)$ at each point of $S_Q$.
\label{thm-frost3}
\end{thm}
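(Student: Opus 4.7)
The plan is to derive Theorem \ref{thm-frost3} as a direct consequence of Theorem \ref{thm-frost2}, using only the decomposition of the weighted potential that was already recorded in the excerpt. No new analytical input should be needed; the statement is literally a reformulation.

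First I would recall the identity, valid for any compactly supported probability measure $\sigma$,
\[
U_Q^{\sigma}(\xi)=U^{\sigma}(\xi)+Q(\xi)+\int_\C Q\,\diff\sigma,
\]
which was obtained just before the statement by splitting $L_Q(\xi,\eta)=\log|\xi-\eta|^{-2}+Q(\xi)+Q(\eta)$ and integrating against $\diff\sigma(\eta)$. Specializing to $\sigma=\hat\sigma_Q$, the additive constant is exactly $\int_\C Q\,\diff\hat\sigma_Q=\gamma(Q)-\gamma^*(Q)$ by the definition \eqref{eq-gQ} of the modified Robin constant.

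Next I would transport the two inequalities of Theorem \ref{thm-frost2} through this identity. The bound $U_Q^{\hat\sigma_Q}\ge \gamma(Q)$ quasi-everywhere on $\C$ becomes
\[
U^{\hat\sigma_Q}(\xi)+Q(\xi)\ge \gamma(Q)-\int_\C Q\,\diff\hat\sigma_Q=\gamma^*(Q),\qquad\text{q.e. on }\C,
\]
and similarly the opposite inequality on the support $S_Q$ follows by subtracting the same constant from both sides of $U_Q^{\hat\sigma_Q}\le\gamma(Q)$. Compactness of $S_Q$ is restated verbatim from Theorem \ref{thm-frost2}.

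There is essentially no obstacle: the only thing to verify is that the constant $\int_\C Q\,\diff\hat\sigma_Q$ is finite, which is a standard consequence of the growth condition \eqref{eq-2.6} together with the fact that $\hat\sigma_Q$ has compact support and finite energy. One might also add a sentence noting that the two forms of Frostman's theorem are equivalent, since the passage is reversible: knowing $\gamma^*(Q)$ together with $\int Q\,\diff\hat\sigma_Q$ recovers $\gamma(Q)$. Thus the proof reduces to a one-line substitution after invoking Theorem \ref{thm-frost2}.
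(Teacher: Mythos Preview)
Your proposal is correct and matches the paper's approach exactly: the paper does not even give a separate proof of Theorem~\ref{thm-frost3}, but simply records the identity $U_Q^\sigma=U^\sigma+Q+\int Q\,\diff\sigma$ and states that this ``allows us to write Frostman's Theorem~\ref{thm-frost2} in the following form.'' Your write-up spells out this one-line substitution, which is precisely what the paper intends.
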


Let $\hat\sigma_{mQ,n}$ denote the probability measure $\sigma_n$ given by
\eqref{eq-sigma} corresponding to a weighted Fekete point configuration (i.e.,
a minimizing configuration).
The convergence to the global energy minimizing measure is as follows.  

\begin{thm}[Fekete, Totik] We have the convergence
\[\hat\sigma_{mQ,n}\to\hat\sigma_Q\quad\text{as}\,\,\,n\to+\infty\,\,\,
\text{while}\,\,\,m=n+\ordo(n)
\]
in the weak-star sense of measures. Moreover, we have convergence in energy:
\[
I^\#_{mQ,n}[\hat\sigma_{mQ,n}]\to I_Q[\hat\sigma_Q]=\gamma(Q),
\quad\text{as}\,\,\,n\to+\infty\,\,\,\text{while}\,\,\,m=n+\ordo(n).
\]  
\label{thm-FT}
\end{thm}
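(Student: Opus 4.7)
The plan is to sandwich the discrete minimum $I^\#_{mQ,n}[\hat\sigma_{mQ,n}]$ between matching asymptotic upper and lower bounds, both tending to $\gamma(Q)=I_Q[\hat\sigma_Q]$, and then deduce weak-star convergence of the empirical measures by combining this energy convergence with the uniqueness half of Frostman's Theorem~\ref{thm-frost1}. This is the two-dimensional analogue of the standard Fekete--Totik scheme from \cite{ST}.

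For the upper bound I would test against $n$-tuples drawn i.i.d.\ from $\hat\sigma_Q$. Since $\hat\sigma_Q$ has finite logarithmic energy by Theorem~\ref{thm-frost2} it has no point masses, and Fubini gives
\[
\int_{\C^n}I^\#_{mQ,n}[\sigma_n]\,\diff\hat\sigma_Q^{\otimes n}(z)
=\int_{\C^2}\log\frac1{|\xi-\eta|^2}\diff\hat\sigma_Q(\xi)\diff\hat\sigma_Q(\eta)+\frac{2m}{n-1}\int_\C Q\diff\hat\sigma_Q,
\]
which tends to $\gamma(Q)$ under the hypothesis $m=n+\ordo(n)$. Since the discrete minimum is bounded above by any average, we obtain $I^\#_{mQ,n}[\hat\sigma_{mQ,n}]\le\gamma(Q)+\ordo(1)$.

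For the lower bound, I would first use the growth condition \eqref{eq-2.6} to confine every weighted Fekete configuration to a fixed compact set $K\subset\C$ for all sufficiently large $n$: an outlying point $z_j$ can always be translated inward to strictly decrease $\EE_{mQ}$, since $mQ(z_j)$ dominates $2\sum_{k\ne j}\log|z_j-z_k|=2(n-1)\log|z_j|+\Ordo(1)$ when $m=n+\ordo(n)$ and $|z_j|$ is large. Tightness of $\{\hat\sigma_{mQ,n}\}$ follows, so any weak-star subsequential limit $\sigma_*\in\probc$ is supported in $K$. I would then truncate the kernel by setting, for $M>0$,
\[
L_M(\xi,\eta):=\min\Big\{\log\frac1{|\xi-\eta|^2},\,M\Big\}+Q(\xi)+Q(\eta),
\]
which is lower semi-continuous and bounded below on $K\times K$. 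The diagonal contribution of the empirical product measure is at most $M/n=\ordo(1)$, so a short bookkeeping using $m=n+\ordo(n)$ yields
\[
\int_{\C^2}L_M\,\diff\hat\sigma_{mQ,n}\diff\hat\sigma_{mQ,n}\le I^\#_{mQ,n}[\hat\sigma_{mQ,n}]+\ordo(1).
\]
The Portmanteau inequality for lower semi-continuous integrands bounded below then gives $\int L_M\,\diff\sigma_*\diff\sigma_*\le\liminf_n I^\#_{mQ,n}[\hat\sigma_{mQ,n}]$, and letting $M\to+\infty$ by monotone convergence produces $I_Q[\sigma_*]\le\liminf_n I^\#_{mQ,n}[\hat\sigma_{mQ,n}]\le\gamma(Q)$. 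Since $\hat\sigma_Q$ is the unique minimizer of $I_Q$ on $\probc$, we conclude $\sigma_*=\hat\sigma_Q$, so the full sequence converges weak-star and the energy converges to $\gamma(Q)$.

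The main obstacle is the interplay of the two infinities in the functional: the diagonal singularity of $\log(1/|\xi-\eta|^2)$ and the possible $+\infty$ values of the lower semi-continuous weight $Q$. The diagonal singularity is tamed by the truncation $L_M$ combined with the $\ordo(1)$ diagonal bound $M/n$. The $+\infty$ values of $Q$ are harmless for the upper bound (we need only $\int Q\diff\hat\sigma_Q<+\infty$, which holds because $\gamma(Q)<+\infty$), and for the lower bound are absorbed by approximating $Q$ from below by a monotone sequence of bounded continuous functions on $K$ and invoking monotone convergence a second time. Establishing tightness from \eqref{eq-2.6} is the step that really uses the growth hypothesis; everything else is a routine weak-star semicontinuity argument combined with Theorems~\ref{thm-frost1} and \ref{thm-frost2}.
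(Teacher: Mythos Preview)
The paper does not supply its own proof of Theorem~\ref{thm-FT}; it simply refers the reader to \cite{ST}, p.~145. Your outline is precisely the standard Fekete--Totik scheme from \cite{ST}: an i.i.d.\ averaging upper bound, tightness from the growth condition, and a kernel-truncation lower bound feeding into the uniqueness of $\hat\sigma_Q$. The paper itself adopts exactly your truncation device when it later sketches the closely related Proposition~\ref{prop-limitmeas} (``by considering a cut-off of the logarithmic kernel''), so your approach and the paper's intended one coincide.

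One step to tighten: your tightness argument writes $2\sum_{k\ne j}\log|z_j-z_k|=2(n-1)\log|z_j|+\Ordo(1)$, which presupposes that the remaining $z_k$ already lie in a bounded set---the very thing you are proving. The clean fix is to argue about the point of largest modulus: if $|z_j|=\max_k|z_k|$ then $|z_j-z_k|\le 2|z_j|$ for every $k$, so the interaction gain from $z_j$ is at most $2(n-1)\log(2|z_j|)$, while $mQ(z_j)\ge m(\log|z_j|^2+A)$ for any prescribed $A$ once $|z_j|$ is large; since $m/(n-1)\to1$, moving $z_j$ to a fixed point $w_0$ with $Q(w_0)<+\infty$ strictly lowers $\EE_{mQ}$. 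Iterating on the new outermost point bounds all Fekete points uniformly in $n$. A second bookkeeping detail you pass over is that the diagonal contribution to $\iint L_M\,\diff\hat\sigma_{mQ,n}\diff\hat\sigma_{mQ,n}$ is $\frac{M}{n}+\frac{2}{n^2}\sum_jQ(z_j)$, not just $M/n$; the extra term is $\ordo(1)$ because the already-established upper bound $I^\#_{mQ,n}\le\gamma(Q)+\ordo(1)$, combined with the lower bound on the logarithmic part coming from confinement to $K$, forces $\frac{1}{n}\sum_jQ(z_j)$ to stay bounded. With these two points filled in, your argument is complete.
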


For the proof, we refer to \cite{ST}, p. 145.

\subsection{Johansson's marginal measure theorem for the plane}
For a probability measure $\sigma\in\probab(\C)$ and an integer 
$k=1,2,3,\ldots$, we denote by 
$\sigma^{\otimes k}\in\probab(\C^k)$ the product measure given by
\[
\diff\sigma^{\otimes k}(z_1,\ldots,z_k)=\diff\sigma(z_1)\cdots
\diff\sigma(z_k).
\]

\begin{defn} We say that $Q$ has {\em extra growth} provided that
\begin{equation}
Q(z)\ge(1+\delta_0)\log(1+|z|^2)-C_0,\qquad z\in\C,
\label{eq-Q1}
\end{equation}
holds for some small but positive value of $\delta_0$ and some (positive) 
real constant $C_0$. Moreover, we say that $Q$ is {\em regular} provided that 
it is bounded and continuous in an open neighborhood of 
$\setS_Q=\supp\hat\sigma_Q$.
\label{def-reg}
\end{defn}

\begin{thm} 
Suppose $Q$ is regular with extra growth.
Then, for every $k=1,2,3,\ldots$, we have the convergence
\[
\Pi^{(k)}_{mQ,n} \to \hat\sigma_Q^{\otimes k} \quad\text{as}\,\,\,
n\to+\infty\,\,\,\text{while}\,\,\,m=n+\ordo(n),
\]
in the weak-star sense of measures.
\label{thm-J}
\end{thm}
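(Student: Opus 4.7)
The plan is to establish the stronger assertion that, under the Gibbs measure $\Pi_{mQ,n}$, the random empirical measure
\[
\sigma_n:=\frac{1}{n}\sum_{j=1}^n\delta_{z_j}
\]
converges in $\Pi_{mQ,n}$-probability (in the weak-star sense on $\probc$) to the deterministic equilibrium measure $\hat\sigma_Q$, and then to deduce weak-star convergence of each $k$-marginal $\Pi^{(k)}_{mQ,n}$ to $\hat\sigma_Q^{\otimes k}$ from this concentration. The passage from empirical-measure concentration to marginals is classical: by the symmetry of $\Pi_{mQ,n}$ in its coordinates, for any bounded continuous symmetric $f:\C^k\to\R$,
\[
\int_{\C^k} f\,\diff\Pi^{(k)}_{mQ,n} = \expect\Big[\int_{\C^k} f\,\diff\sigma_n^{\otimes k}\Big] + \Ordo(n^{-1}\|f\|_\infty),
\]
because passing from ordered tuples of distinct indices to the product $\sigma_n^{\otimes k}$ over-counts only the diagonal, which contributes $\Ordo(n^{k-1})$ out of $n^k$ terms. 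Concentration of $\sigma_n$ and the continuous mapping theorem then make the right-hand side converge to $\int f\,\diff\hat\sigma_Q^{\otimes k}$; a non-symmetric $f$ is reduced to the symmetric case by symmetrization, since both $\Pi^{(k)}_{mQ,n}$ and $\hat\sigma_Q^{\otimes k}$ are symmetric.

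For the concentration of $\sigma_n$ I would use matching bounds on the partition function. The lower bound
\[
\frac{2}{\beta n(n-1)}\log Z_n \ge -\gamma(Q) + \ordo(1)
\]
follows from Theorem \ref{thm-FT}: restrict the integral defining $Z_n$ to a small product neighborhood of a Fekete configuration, on which the exponent is $-(\beta/2)n(n-1)\gamma(Q)+\ordo(n^2)$. The matching upper bound needs a comparison of the discrete Coulomb energy $\EE_{mQ}(z)$ with the continuous energy $I_Q$. Since $I_Q[\sigma_n]=+\infty$ for atomic $\sigma_n$, the approach is Johansson's smoothing: replace $\sigma_n$ by the measure $\tilde\sigma_n$ obtained by convolving each atom with the normalized area measure on $\D(0,r_n)$ for $r_n\downarrow 0$. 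The harmonicity of $\eta\mapsto \log|\xi-\eta|^{-2}$ away from the diagonal, combined with the regularity of $Q$ near $S_Q$ (Definition \ref{def-reg}), yields a bound of the form
\[
\frac{2}{n(n-1)}\EE_{mQ}(z) \ge I_Q[\tilde\sigma_n] - E(r_n,n),
\]
where $E(r_n,n)\to 0$ once $r_n$ is chosen to balance the log-diagonal contribution $\Ordo(\log(1/r_n)/n)$ against the modulus of continuity $\omega_Q(r_n)$ of $Q$ on a neighborhood of $S_Q$. The extra growth condition \eqref{eq-Q1} is invoked first to show, via a crude tail estimate on the Gibbs weight, that with $\Pi_{mQ,n}$-probability $1-e^{-cn^2}$ every $z_j$ lies in a fixed compact set $K$, reducing everything to uniform estimates in a neighborhood of $S_Q$. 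Combining with the lower bound on $Z_n$ gives the super-exponential deviation estimate
\[
\Pi_{mQ,n}\bigl\{z : I_Q[\tilde\sigma_n] > \gamma(Q)+\eps\bigr\} \le e^{-c\eps n^2 + \ordo(n^2)}.
\]
Frostman's Theorem \ref{thm-frost1}, together with strict convexity and lower semi-continuity of $I_Q$ on $\probc$, identifies $\hat\sigma_Q$ as the unique minimizer at level $\gamma(Q)$, so the sub-level sets $\{\sigma: I_Q[\sigma]\le\gamma(Q)+\eps\}$ form a weak-star neighborhood base at $\hat\sigma_Q$. Since the weak-star distance between $\sigma_n$ and $\tilde\sigma_n$ is $\le r_n\to 0$, this yields $\sigma_n\to\hat\sigma_Q$ in $\Pi_{mQ,n}$-probability, and the marginal convergence then follows from the opening paragraph.

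The main obstacle is the energy lower bound above, achieved under the \emph{minimal} regularity afforded by Definition \ref{def-reg} (just bounded and continuous near $S_Q$). The authors signal in Subsection \ref{subsec-entr} that the arXiv preprint \cite{HM} relied on stronger smoothness and that the improvement here is a more delicate choice of mollification scale $r_n$ that requires only a modulus of continuity for $Q$, rather than a Lipschitz or $C^1$ bound. The interplay between this mollification error and the a priori localization supplied by \eqref{eq-Q1} is the technically delicate step; once it is in place, the rest of the argument is a routine adaptation of Johansson's one-dimensional reasoning, with the only novelty being that the integration is against planar area measure rather than one-dimensional Lebesgue measure.
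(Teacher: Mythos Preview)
Your overall architecture --- concentration of the empirical measure under $\Pi_{mQ,n}$, then symmetry to pass to marginals --- matches the paper's. But you have swapped the roles of the two main tools relative to what the paper actually does, and one intermediate claim is stronger than the hypotheses support.

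For the lower bound on $\log Z_{m,n}$, the paper does \emph{not} integrate over a neighborhood of a Fekete configuration. Subsection~\ref{subsec-entr} instead applies Jensen's inequality against an absolutely continuous reference density $\phi$, and the mollification there is of the \emph{equilibrium measure} $\hat\sigma_Q$ (convolved with normalized area on $\D(0,r)$), not of the random empirical measure $\sigma_n$. The radius $r$ is sent to $0$ \emph{after} $n\to\infty$; there is no coupled scale $r_n$. The regularity of $Q$ near $S_Q$ is used only to obtain the energy convergence $I_Q[\sigma_r]\to\gamma(Q)$ in \eqref{eq-energyconv}, not to control a modulus of continuity along random configurations.

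For the concentration step, the paper avoids smoothing $\sigma_n$ altogether. The entropy lower bound \eqref{eq-estbelowstrong2} on $Z_{m,n}$, combined with the pointwise inequality \eqref{eq-LQest2} coming from extra growth, yields Proposition~\ref{prop-3.1} directly: the set $\C^n\setminus\calA(n,\epsilon+a)$ of configurations with discrete energy above $\gamma(Q)+\epsilon+a$ has $\Pi_{mQ,n}$-probability at most $e^{-\beta an(n-1)/8}$. On $\calA(n,\epsilon)$, Proposition~\ref{prop-limitmeas} (a cut-off argument on the logarithmic kernel) then converts small discrete energy into weak-star closeness of $\sigma_n$ to $\hat\sigma_Q$. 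No comparison of $\EE_{mQ}(z)$ with $I_Q[\tilde\sigma_n]$ is ever needed.

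Your plan has one genuine weak spot: the assertion that with probability $1-e^{-cn^2}$ \emph{every} $z_j$ lies in a fixed compact set. Under \eqref{eq-Q1} a single point at distance $R$ costs only $\Ordo(n\log R)$ in $\EE_{mQ}$, not $\Ordo(n^2)$, so the tail in a single coordinate decays like $e^{-cn}$, not $e^{-cn^2}$. Accordingly the paper proves only the weaker Proposition~\ref{prop-eps}: on $\calA(n,\epsilon)$, at most an $\epsilon$-fraction of the $z_j$ satisfy $|z_j|\ge R_0$. Your smoothing bound $\bigl|\int Q\,\diff\tilde\sigma_n-\tfrac1n\sum_jQ(z_j)\bigr|\le\omega_Q(r_n)$ therefore cannot rely on continuity of $Q$ at every $z_j$, and would need a separate treatment of the stray points. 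This is repairable, but it is a real gap in the argument as sketched; the paper's route sidesteps the issue because its concentration estimate never compares $\int Q\,\diff\tilde\sigma_n$ with $\tfrac1n\sum_jQ(z_j)$.
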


\begin{rem}
(i) Johansson \cite{J} proves his theorem in the degenerate real line case 
when $Q(\xi)=+\infty$ for $\xi\in\C\setminus\R$ (the Hermitian matrix case). 
This can be viewed as a limit case of our considerations. However, the
approach of Johansson's proof can be modified so as to include the complex 
plane case stated here. We indicate the necessary modifications in an appendix 
below.

\noindent(ii) An alternative formulation of Theorem \ref{thm-J} runs as 
follows.
As $n\to+\infty$ while $m=n+\ordo(n)$, the random variables $z_1,\dots, z_k$ 
on $(\C^n,\Pi_{mQ,n})$ are asymptotically i.i.d. with law $\hat\sigma_Q$.

\noindent(iii) We now find an application of Theorem \ref{thm-J} to linear
statistics.
Let $\mathrm{C}_b(\C^k)$ denote the Banach space of bounded
continuous functions in $\C^k$. Moreover, let the trace $\trace_n f$ of the 
function $f\in \mathrm{C}_b(\C)$ be given by
\[
\trace_n f=\sum_j f(z_j),
\]
where the sum as usual runs over $j=1,\ldots,n$ and $z_1,\ldots,z_n$ are 
random variables with joint probability $(\C^n,\Pi_{mQ,n})$. 
For each $j=1,\ldots,n$, we have, in view of Johansson's marginal measure 
theorem, for $f\in\mathrm{C}_b(\C)$,
\[
\expect[f(z_j)]=\int_\C f(\xi)\diff\Pi_{mQ,n}^{(1)}(\xi)~\to~
\int_\C f(\xi)\diff\hat\sigma_Q(\xi)=:\langle f,\hat\sigma_Q\rangle,
\]
as $n\to+\infty$ while $m=n+\ordo(n)$. By forming the average over $j$, we
get, for $f\in\mathrm{C}_b(\C)$, 
\[
\expect\left[\frac1n \trace_n f\right]
=\int_\C f(\xi)\diff\Pi_{mQ,n}^{(1)}(\xi)~\to~\langle f,\hat\sigma_Q\rangle,
\]
as $n\to+\infty$ while $m=n+\ordo(n)$.
There is an analogous statement which holds for functions 
$f\in \mathrm{C}_b(\C^k)$ and involves the measure $\hat\sigma^{\otimes k}_Q$ 
in place of $\hat\sigma_Q$. This more general statement allows us to obtain
that for $f\in\mathrm{C}_b(\C)$, ($k,k'$ are fixed integers $\ge0$)
\[
\expect
\bigg[\bigg(\frac1n \trace_n f\bigg)^k\bigg(\frac1n \trace_n \bar f\bigg)^{k'}
\bigg]~\to~
\left(\langle f,\hat\sigma_Q\rangle\right)^k
\left(\langle \bar f,\hat\sigma_Q\rangle\right)^{k'},
\]
as $n\to+\infty$ while $m=n+\ordo(n)$. Here, as usual, $\bar f$ is the
function whose values are complex conjugate to those of $f$. 
This expresses that $\frac1n\trace_n f$ tends to the constant value 
$\langle f,\hat\sigma_Q\rangle$ in all moments as $n\to+\infty$ 
while $m=n+\ordo(n)$, and hence in particular, we have convergence in 
distribution (as in the weak law of large numbers).

\noindent(iv) We remark that Theorem \ref{thm-J} holds independently of the 
value of the inverse temperature $\beta$. 
However, for $\beta=2$, much more precise statements have been obtained 
recently in \cite{AHM1}, \cite{AHM2}, \cite{AHM3}. 
The reason why this is possible is the determinantal property \eqref{eq-detK}. 
To give some hints about the results, we introduce the fluctuation
\[
\mathrm{fl}_nf=\trace_n f-n\langle f,\hat\sigma_Q\rangle.
\]
In view of (iv), we know that $\frac1n\mathrm{fl}_nf\to0$ in moments and hence
in distribution as $n\to+\infty$ while $m=n+\ordo(n)$. Next, suppose 
$n\to+\infty$ while $m=n+\ordo(1)$, which means that $m$ is kept much closer to
$n$ than before, and suppose also that the function $Q$ is real-analytically 
smooth with $\Delta Q>0$ in the interior of $S_Q$ (we recall that $S_Q$ is 
the support of the equilibrium measure $\hat\sigma_Q$).
In analogy with the CLT (central limit theorem),  it is shown in \cite{AHM1}, 
\cite{AHM2} that under some additional assumptions, the stochastic variable 
$\mathrm{fl}_nf$ converges in distribution to a real-valued Gaussian with 
expectation $e_f$ and variance $v_f$, 
\[
e_f=\frac{1}{2\pi}\int_{S_Q}f\,\Delta\log\Delta Q\,\diff\vol_2,\quad
v_f=\frac{1}{4\pi}\int_{S_Q}|\nabla f|^2\diff\vol_2,
\]
provided the function $f$ is real-valued, $C^\infty$-smooth, and is supported 
in the interior of $S_Q$. The extension to general test functions $f$ is 
obtained in \cite{AHM3}; the general formulae for $e_f,v_f$ include 
boundary effects. 
\end{rem}

\subsection{Johansson's free energy theorem for the plane}

We recall the expression for the normalization constant 
\[
Z_{m,n}=\int_{\C^n}|\triangle(z)|^\beta\e^{-\frac{\beta}{2} m\sum_j Q(z_j)}
\diff\vol_{2n}(z),
\]
which we write in the form
\begin{equation}
Z_{m,n}=\int_{\C^n}\exp\Bigg\{-\frac{\beta}{4}\sum_{j,k:j\ne k}
L_Q(z_j,z_k)+\frac{\beta}{2}(n-m-1)\sum_j Q(z_j)\Bigg\}
\diff\vol_{2n}(z),
\label{eq-Z}
\end{equation}
where $L_Q$ is as in \eqref{eq-LQ}. The quantity 
\[
\frac{1}{n(n-1)}\log Z_{m,n}
\]
has in the physics literature acquired the name {\em free energy} (frequently
$n^2$ is used in place of $n(n-1)$; asymptotically, there is no difference). 
See Definition \ref{def-reg} for the terms {\em regular} and 
{\em extra growth}.

\begin{thm} 
Suppose $Q$ is regular with extra growth. Then
\[
\frac{1}{n(n-1)}\log Z_{m,n}\to-\frac{\beta}{4}\gamma(Q)=
-\frac{\beta}{4}I_Q[\hat\sigma_Q]\quad\text{as}\,\,\,
n\to+\infty\,\,\,\text{while}\,\,\,m=n+\ordo(n).
\]
\label{thm-J-free}
\end{thm}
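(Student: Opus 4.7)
The plan is to prove two matching asymptotic estimates for $F_{m,n} := \frac{1}{n(n-1)}\log Z_{m,n}$: an upper bound $\limsup F_{m,n} \le -\frac{\beta}{4}\gamma(Q)$ and a lower bound $\liminf F_{m,n} \ge -\frac{\beta}{4}\gamma(Q)$. In both cases, the starting point is the representation \eqref{eq-Z}. Since $n-m-1=\ordo(n)$, the contribution of the term $\frac{\beta}{2}(n-m-1)\sum_j Q(z_j)$ to $\log Z_{m,n}$ will turn out to be $\ordo(n^2)$ (using that on the configurations dominating the integral, $\sum_j Q(z_j)$ is $\Ordo(n)$, thanks to the extra growth hypothesis \eqref{eq-Q1}, which confines the problem to a bounded region modulo exponentially small tails). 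The essential task is therefore to estimate the double sum $\sum_{j\ne k} L_Q(z_j,z_k)$.

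For the upper bound, the principal input is Frostman's inequality $I_Q[\sigma]\ge\gamma(Q)$, valid for every $\sigma\in\probc$ of finite weighted logarithmic energy (Theorem \ref{thm-frost1}). Given $z=(z_1,\ldots,z_n)\in\C^n$, form the empirical measure $\mu_z=\frac{1}{n}\sum_j\delta_{z_j}$ and regularize it by convolution with a radial mollifier $\phi_\delta$ at scale $\delta>0$, producing $\mu_z^\delta$ of finite weighted energy. Expanding $I_Q[\mu_z^\delta]\ge\gamma(Q)$ as a double sum in $j,k$ and separating the diagonal yields a pointwise lower bound
\[
\frac{1}{n(n-1)}\sum_{j\ne k} L_Q(z_j,z_k) \ge \gamma(Q) - \frac{C_1 \log(1/\delta)}{n} - C_2\delta - \text{(smoothing error)},
\]
where the smoothing error reflects the difference between convolving $\log\frac{1}{|\cdot|^2}$ against $\phi_\delta\ast\phi_\delta$ and the raw kernel. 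Choosing $\delta=\delta_n\to 0$ at a suitable rate (say $\delta_n=1/n$) makes the explicit corrections $\ordo(1)$; substituting into \eqref{eq-Z} and integrating against $\diff\vol_{2n}$ (whose tails are tamed by \eqref{eq-Q1}) then gives $\limsup F_{m,n}\le -\frac{\beta}{4}\gamma(Q)$.

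For the lower bound, we exhibit a set of configurations of non-trivial volume on which the energy is nearly minimal. Since $\hat\sigma_Q$ is compactly supported and, by Section \ref{sec-obst}, absolutely continuous with bounded density $\Delta Q$ on the droplet $\setS_Q$, we may partition (the bulk of) $\setS_Q$ into disjoint cells $C_1,\ldots,C_n$ of $\hat\sigma_Q$-mass approximately $1/n$ and Euclidean diameter $\ordo(1/\sqrt n)$, mutually separated by at least a fixed fraction of this diameter. Restricting the integral in \eqref{eq-Z} to $C_1\times\cdots\times C_n$ and invoking the $n!$ permutation factor, Riemann-sum estimates (clean for non-adjacent cells, costing only an $\Ordo(\log n)$ per adjacent pair for a total of $\Ordo(n\log n)=\ordo(n^2)$) give
\[
\sum_{j\ne k} L_Q(z_j,z_k) \le n(n-1)\gamma(Q) + \ordo(n^2)
\]
uniformly on the product set; combined with $\log\bigl(n!\prod_j\vol(C_j)\bigr)=\ordo(n^2)$ via Stirling, this yields $\liminf F_{m,n}\ge-\frac{\beta}{4}\gamma(Q)$.

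The hard part will be the upper bound: when two points $z_j,z_k$ are much closer than $\delta$, the convolved kernel strictly underestimates $\log|z_j-z_k|^{-2}$, and this near-diagonal defect must be controlled uniformly in $z\in\C^n$. Johansson's original argument \cite{J} handles this in the Hermitian (real-line) setting via a specific entropy/smoothing scheme; the earlier arXiv preprint \cite{HM} extended the proof to $\C$ only under a superfluous regularity hypothesis on $Q$. The modified smoothing step presented in Subsection \ref{subsec-entr} removes that hypothesis, requiring only the regularity in the sense of Definition \ref{def-reg}. Once that technical piece is in place, the matching upper and lower bounds combine to give the asserted convergence $F_{m,n}\to-\frac{\beta}{4}\gamma(Q)=-\frac{\beta}{4}I_Q[\hat\sigma_Q]$.
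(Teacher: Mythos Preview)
You have the two halves swapped relative to the paper, and this causes real gaps. In the paper, the smoothing of Subsection \ref{subsec-entr} is an \emph{entropy estimate giving the lower bound}: one mollifies the \emph{equilibrium} measure $\hat\sigma_Q$ (not the empirical measure) to a density $\phi_r$, applies Jensen's inequality with the product probability $\prod_j\phi_r(z_j)\,\diff\vol_{2n}$ to obtain $\liminf\frac{1}{n(n-1)}\log Z_{m,n}\ge-\frac\beta4 I_Q[\sigma_r]$, and then lets $r\to0$. This needs only that $\hat\sigma_Q$ have finite logarithmic energy and that $Q$ be continuous near $\setS_Q$. Your cell-partition lower bound instead invokes Section \ref{sec-obst} to get a bounded density $\Delta Q$ for $\hat\sigma_Q$, but Section \ref{sec-obst} requires $Q\in W^{2,p}$, which Theorem \ref{thm-J-free} does not assume: ``regular'' in Definition \ref{def-reg} means only bounded and continuous near $\setS_Q$.

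For the upper bound the paper does no mollification at all: it uses directly the pointwise Fekete inequality \eqref{eq-Fekete1}, namely $\frac{1}{n(n-1)}L_Q^{\langle\!\langle n\rangle\!\rangle}(z)\ge\gamma(Q)$ for every $z\in\C^n$, cited from \cite{ST}. (Your mollified-empirical-measure route could in principle reprove \eqref{eq-Fekete1}, and the ``near-diagonal defect'' you worry about actually goes the right way by superharmonicity; the genuine obstruction would be controlling $\int Q\,\diff\mu_z^\delta-\frac1n\sum_jQ(z_j)$ uniformly in $z\in\C^n$, which fails since $Q$ need not even be finite, let alone continuous, away from $\setS_Q$.) The substantive difficulty you skip is the \emph{integration step}: after inserting \eqref{eq-Fekete1} into \eqref{eq-Z'} one is left with $\int_{\C^n}\exp\bigl\{\frac\beta2(n-m-1)Q^{\langle n\rangle}\bigr\}\,\diff\vol_{2n}$, which diverges whenever $n-m-1\ge0$ (allowed by $m=n+\ordo(n)$). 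The paper's fix is the convex combination \eqref{eq-LQ1} of \eqref{eq-Fekete1} with the extra-growth estimate \eqref{eq-prop-2}: sacrificing a $\theta$-fraction of the energy bound buys a decay factor $\exp\{-c\theta(n-1)Q(z_j)\}$ per coordinate, which via \eqref{eq-exgrow} makes the integral converge explicitly (see \eqref{eq-LQ1}--\eqref{eq-LQ3}); then one lets $\theta\to0$. Your ``dominant configurations modulo exponentially small tails'' remark is a heuristic, not a substitute for this step.
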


\begin{rem}
As with Theorem \ref{thm-J}, Johansson \cite{J} proves his theorem in the 
degenerate real line case when $Q(\xi)=+\infty$ for $\xi\in\C\setminus\R$ 
(the Hermitian matrix case). This can be viewed as a limit case of our 
considerations. However, the approach of Johansson's proof can be modified 
so as to include the complex plane case stated here. We indicate the 
necessary modifications in an appendix below.
\end{rem}

\subsection{Aggregation of equilibrium measures} 
\label{sub2.9}
We now look at the quasi-classical limit of the evolution of quantum 
droplets (the addition of more electrons to the droplet). This will allow us
to understand how the quantum process is related to a growth process of 
Hele-Shaw type for compact sets in the plane. 

We restrict our attention to the potentials $Q$ that satisfy a scale invariant
version of the growth condition \eqref{eq-2.6}, namely
\[
Q(z)-A\log |z|\to+\infty,\quad\text{as}\,\,\, |z|\to+\infty,
\]
no matter how big the positive parameter $A$ gets.
We will be interested in the evolution of positive measures
\[
\hat\sigma_t\equiv \hat\sigma_t[Q]:=t\hat\sigma_{Q/t},
\]
where $t$ ranges over $0<t<+\infty$. We write $S_t=S_t[Q]$ for the support 
of the measure $\hat\sigma_t[Q]$ (i.e., $S_t=S_{Q/t}$). 
Note that $\hat\sigma_t[Q]$ has total mass $t$. 
The process of increasing the parameter $t$ has the following interpretation.
We consider the limit process of letting $n\to+\infty$ while $m=n/t+\ordo(n)$.
In other words, $m\to+\infty$ while $n=mt+\ordo(m)$. An increase of $t$ 
therefore has the interpretation of increasing the total number of electrons 
$n$ for fixed $m$. To rescale, we introduce $m'=mt$, so that the relationship
reads $n=m'+\ordo(m')$. Since $mQ=m'Q/t$, rescaling also means we must 
replace $Q$ by $Q/t$.
In view of Johansson's theorem, we find that
\[
\lim\,\expect~\frac{\#\{{\rm electrons~in~} e\}}m=
t\lim\, \expect~\frac{\#\{{\rm electrons~in~} e\}}{m'}
=t\hat\sigma_{Q/t}(e)=\hat\sigma_t[Q](e).
\]
In other words, the growth process $\hat\sigma_t[Q]$ is the quasi-classical
limit of the growth process of adding electrons to the quantum droplet. 
We shall see that if $Q$ is $C^2$-smooth, the measure $\hat\sigma_t[Q]$ is 
determined uniquely by its support $S_t[Q]$. We understand the set $S_t[Q]$ 
as a (classical) droplet; cf. Subsection \ref{subsec-5.1} for a precise 
definition. 

\begin{cor} 
The family of measures $\hat\sigma_t[Q]$ is monotonically increasing in $t$. 
\label{cor-2.12}
\end{cor}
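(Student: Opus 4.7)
The plan is to derive the monotonicity from the aggregation of marginal intensities established in Lemma~\ref{lem-2.2}. Although that lemma requires $\beta=2$, the measure $\hat\sigma_t[Q]$ is defined purely through energy minimization and carries no $\beta$-dependence, so we may freely work in the random normal matrix regime $\beta=2$.

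The first step is to realize $\hat\sigma_t[Q]$ as a weak-star limit of intensity measures rescaled by $1/m$ rather than the usual $1/n$. Applying Theorem~\ref{thm-J} with $k=1$ to the potential $Q/t$ with scaling parameter $m'=mt$, the hypothesis $n=m'+\ordo(m')$ becomes $n/m\to t$, while $m'(Q/t)=mQ$, so that
\[
\Pi_{mQ,n}^{(1)} \,\longrightarrow\, \hat\sigma_{Q/t}
\qquad \text{as } n,m\to+\infty \text{ with } n/m\to t,
\]
in the weak-star sense. Using the identity $\Gamma_{mQ,n}^{(1)}=n\,\Pi_{mQ,n}^{(1)}$ and multiplying by $n/m\to t$ upgrades this to
\[
\frac{1}{m}\,\Gamma_{mQ,n}^{(1)} \,\longrightarrow\, t\,\hat\sigma_{Q/t} \,=\, \hat\sigma_t[Q].
\]

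Next, fix $0<s<t$ and, for large $m$, set $n_s=\lfloor ms\rfloor$ and $n_t=\lfloor mt\rfloor$, so that $n_s\le n_t$, $n_s/m\to s$, and $n_t/m\to t$. Iterating Lemma~\ref{lem-2.2} with the fixed potential $mQ$, varying only the particle count, yields
\[
\Gamma_{mQ,n_s}^{(1)} \,\le\, \Gamma_{mQ,n_s+1}^{(1)} \,\le\, \cdots \,\le\, \Gamma_{mQ,n_t}^{(1)},
\]
and the inequality survives division by $m$. Testing against any nonnegative, compactly supported continuous function $f$ and letting $m\to+\infty$ along both sequences preserves the inequality and, by the previous step, gives $\langle f,\hat\sigma_s[Q]\rangle \le \langle f,\hat\sigma_t[Q]\rangle$, which characterizes $\hat\sigma_s[Q]\le\hat\sigma_t[Q]$ as positive Radon measures.

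The main technical point is to verify that Theorem~\ref{thm-J} genuinely applies to the rescaled potential $Q/t$ for every $t$ in the relevant range. The scale-invariant growth hypothesis imposed in Subsection~\ref{sub2.9} automatically implies the extra growth condition~\eqref{eq-Q1} for $Q/t$ for all $t>0$, and the required regularity of $Q/t$ in a neighborhood of $S_{Q/t}$ is inherited from the standing regularity assumption on $Q$. Beyond this verification the argument is purely a soft weak-star passage to the limit, and no substantive obstacle remains.
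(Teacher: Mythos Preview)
Your proposal is correct and follows precisely the same route as the paper: invoke the $\beta=2$ aggregation of Lemma~\ref{lem-2.2} for the quantum picture and pass to the quasi-classical limit, which is $\beta$-independent. The paper's own proof is a single sentence stating exactly this idea, so you have in effect written out the details that the paper leaves implicit; note also that the paper flags a purely potential-theoretic alternative (Proposition~\ref{prop-4.15}) in Remark~\ref{rem-2.13}, which avoids the regularity hypotheses you correctly identified as needed to invoke Theorem~\ref{thm-J}.
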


\begin{proof} This is true for  quantum droplets if $\beta=2$; the 
quasi-classical limit does not depend on $\beta$.
\end{proof}

\begin{rem}
\label{rem-2.13}
It is not hard to write down a potential theoretic proof of this fact; see
Proposition \ref{prop-4.15}.
\end{rem}

\section{Appendix: proof of Johansson's marginal measure and free energy
theorems for the plane}
\label{sec-J}

\subsection{Fekete configurations}
\label{subsec-3.1}
The approach to prove Johansson's theorem in this setting is to show that
point configurations $(z_1,\ldots,z_n)$ whose associated energy functional
\[
I^\sharp_{n,(n-1)Q}[\sigma_n]=\frac1{n(n-1)}\sum_{j,k:j\neq k}
L_Q(z_j,z_k)
\]
deviate substantially from the minimum are highly unlikely. We note that since 
$m=n+\ordo(n)$ is assumed, the choice to replace $m$ by $n-1$ in the energy
is reasonable. Let write 
\[
\hat\sigma_n:=\hat\sigma_{n,(n-1)Q}
\]
for the minimizing (Fekete) measure in the context of Theorem \ref{thm-FT},
and we also write 
\[
I^\sharp_{n}[\hat\sigma_n]:=I^\sharp_{n,(n-1)Q}[\hat\sigma_{n,(n-1)Q}]
\]
for the associated energy. By \cite{ST}, pp. 143--145, the sequence of energies
$I^\sharp_{n}[\hat\sigma_n]$ is decreasing in $n$, and converges to 
$I_Q[\hat\sigma_Q]=\gamma(Q)$ as $n\to+\infty$ (cf. Theorem \ref{thm-FT}). 

\subsection{An entropy estimate}\label{subsec-entr}
We introduce an auxiliary Borel measurable function 
$\phi:\C\to[0,+\infty)$ with
\begin{equation}
\int_\C\phi\diff\vol_2=1,\quad \int_\C(Q+|\log\phi|)\phi\diff\vol_2
<+\infty,
\label{eq-intphi}
\end{equation}
with the understanding that $\phi\log\phi=0$ at points where 
$\phi=0$. We sort of artificially smuggle it into the expression \eqref{eq-Z} 
for $Z_{m,n}$:
$$Z_{m,n}=\int_{\C^n}\exp\Bigg\{-\frac{\beta}{4}\sum_{j,k:j\ne k}
L_Q(z_j,z_k)+\frac{\beta}{2}(n-m-1)\sum_j Q(z_j)-\sum_j\log\phi(z_j)\Bigg\}
\prod_j\phi(z_j)\,\diff\vol_{2n}(z).$$
Now, by Jensen's inequality, we have, with 
$\diff \sigma_\phi=\phi\diff\vol_2$,
\begin{multline}
\log Z_{m,n}\\
\ge\log\int_{\C^n}\Bigg\{-\frac{\beta}{4}\sum_{j,k:j\ne k}
L_Q(z_j,z_k)+\frac{\beta}{2}(n-m-1)\sum_j Q(z_j)-\sum_j\log\phi(z_j)\Bigg\}
\prod_j\phi(z_j)\,\diff\vol_{2n}(z)
\\
=-\frac{\beta n(n-1)}{4}\int_{\C^2}L_Q(\xi,\eta)
\diff\sigma_\phi(\xi)\diff\sigma_\phi(\eta)
+\frac{\beta}{2} n(n-m-1)\int_\C Q\diff\sigma_\phi
-n\int_\C\log\phi\,\diff\sigma_\phi,
\label{eq-estZ1}
\end{multline}
where we used repeatedly that $\sigma_\phi$ is a probability measure. 
We rewrite this as
\begin{equation*}
\frac{1}{n(n-1)}\log Z_{m,n}\ge-\frac{\beta}{4}I_Q[\sigma_\phi]
+\beta\frac{n-m-1}{2(n-1)}\int_\C Q\diff\sigma_\phi
-\frac{1}{n-1}\int_\C\log\phi\diff\sigma_\phi,
\end{equation*}
This gives (as $n\to+\infty$ while $m=n+\ordo(n)$)
\begin{equation}
\liminf \frac{1}{n(n-1)}\log Z_{m,n}\ge -\frac{\beta}{4}\,I_Q[\sigma_\phi].
\label{eq-estbelow}
\end{equation}
The condition on $\phi$ that $\phi\log\phi\in L^1(\C)$ is of entropy type,
and this is the reason why we call \eqref{eq-estbelow} an {\em entropy 
estimate}. We would like to plug in the choice $\sigma_\phi=\hat\sigma_Q$ into 
the entropy estimate \eqref{eq-estbelow} to get an effective bound. At this
point, we do not know enough about $\hat\sigma_Q$ to be sure whether it is 
of the form $\sigma_\phi$ with $\phi$ meeting \eqref{eq-intphi}. To remedy 
this, we consider the function $\phi_r:\C\to[0,+\infty)$ given by ($0<r\le1$) 
$$\phi_r(\xi)=\frac{1}{\pi r^2}\int_{\D(\xi,r)}\diff\hat\sigma_Q(\eta)
=\frac{1}{\pi r^2}\int_{\tau\in\D(0,r)}\diff\hat\sigma_Q(\xi-\tau);$$
this amounts to convolution with the normalized characteristic function of
the disk $\D(0,r)$. The corresponding measure 
$$\diff\sigma_r:=\diff\sigma_{\phi_r}=\phi_r\diff\vol_2$$
is a compactly supported (Borel) probability measure, with density
$\phi_r\in L^\infty(\C)$, so that \eqref{eq-intphi} holds with
$\phi_r$ in place of $\phi$. By the standard properties of convolutions, 
$\sigma_r\to\hat\sigma_Q$ in the weak-star sense of measures as $r\to0$. 
We claim that we also have convergence in energy,
\begin{equation}
I_Q[\sigma_{r}]\to I_Q[\hat\sigma_Q]=\gamma(Q)\quad\text{as}\,\,\,r\to0.
\label{eq-energyconv}
\end{equation}
Suppose for the moment that we have obtained \eqref{eq-energyconv}.
Then we find from the approximation procedure that
\begin{equation}
\liminf \frac{1}{n^2}\log Z_{m,n}\ge -\frac{\beta}{4}\,I_Q[\hat\sigma_Q]
=-\frac{\beta}{4}\,\gamma(Q).
\label{eq-estbelowstrong}
\end{equation}
To obtain \eqref{eq-energyconv}, we note that interchanging the order of 
integration gives
$$I_Q[\sigma_{r}]-I_Q[\hat\sigma_Q]=2\int_\C Q(\diff\sigma_r-\diff\hat\sigma_Q)
+\int_{\C^2}\Lambda_r(\xi,\eta)
\diff\hat\sigma_Q(\xi)\diff\hat\sigma_Q(\eta),$$
where 
$$\Lambda_r(\xi,\eta)=\frac{2}{\pi^2r^4}\int_{(\tau,\tau')\in\D(0,r)^2}
\big[\log\big|(\xi+\tau)-(\eta+\tau')\big|-\log|\xi-\eta|\big]
\diff\vol_2(\tau)\diff\vol_2(\tau').$$
The support of $\sigma_r$ is at most within distance $r$ from the support
$\setS_Q$ of $\hat\sigma_Q$, so in view of the assumption that $Q$ be bounded 
and continuous in a fixed neighborhood of $\setS_Q$, we get 
\begin{equation}
\int_\C Q(\diff\sigma_r-\diff\hat\sigma_Q)= 
\int_\C Q\diff\sigma_r-\int_\C Q\diff\hat\sigma_Q\to 0\quad\text{as}\,\,\,
r\to0.
\label{eq-conv}
\end{equation}
Next, we rewrite the expression for $\Lambda_r$:
\begin{multline*}
\Lambda_r(\xi,\eta)=\frac{2}{\pi^2r^4}\int_{(\tau,\tau')\in\D(0,r)^2}
\log\bigg|1+\frac{\tau-\tau'}{\xi-\eta}\bigg|
\diff\vol_2(\tau)\diff\vol_2(\tau')\\
=\frac{2}{\pi^2r^4}\int_{\tau''\in\D(0,2r)}
\vol_2\big(\D(0,r)\cap\D(\tau'',r)\big)\,
\log\bigg|1+\frac{\tau''}{\xi-\eta}\bigg|\diff\vol_2(\tau'').
\end{multline*}
We use that the common area of the two intersecting circular disks is
$$\vol_2\big(\D(0,r)\cap\D(\tau'',r)\big)=
2r^2\arccos\frac{|\tau''|}{2r}
-r|\tau''|\sqrt{1-\frac{|\tau''|^2}{4r^2}}$$
to get
\begin{equation*}
\Lambda_r(\xi,\eta)=\frac{4}{\pi^2r^2}\int_{\tau''\in\D(0,2r)}
\Bigg\{\arccos\frac{|\tau''|}{2r}
-\frac{|\tau''|}{2r}\sqrt{1-\frac{|\tau''|^2}{4r^2}}\Bigg\}
\log\bigg|1+\frac{\tau''}{\xi-\eta}\bigg|\diff\vol_2(\tau'').
\end{equation*}
The identity
$$\int_{-\pi}^{\pi}\log\big|1+\lambda \e^{\imag \theta}\big|\,\diff\theta
=2\pi\log^+|\lambda|,\qquad\lambda\in\C,$$
where for real $x\ge0$, $\log^+x=\max\{0,\log x\}$, shows that
$$\Lambda_r(\xi,\eta)=0,\qquad \text{if}\,\,\,2r\le |\xi-\eta|,$$
while
$$\Lambda_r(\xi,\eta)=\frac{8}{\pi r^2}\int_{|\xi-\eta|}^{2r}
\Bigg\{\arccos\frac{s}{2r}
-\frac{s}{2r}\sqrt{1-\frac{s^2}{4r^2}}\Bigg\}
\log\frac{s}{|\xi-\eta|}\,s\diff s\quad\text{if}\,\,\,|\xi-\eta|<2r.$$
In the latter case, we may use that for $|\xi-\eta|<s<2r$, 
\[
0\le\arccos\frac{s}{2r}-\frac{s}{2r}\sqrt{1-\frac{s^2}{4r^2}}\le\frac{\pi}2,
\quad 0\le\log\frac{s}{|\xi-\eta|}\le \log\frac{2r}{|\xi-\eta|},
\]
to conclude that
\begin{equation*}
0\le\Lambda_r(\xi,\eta)\le
8\log\frac{2r}{|\xi-\eta|}\quad\text{if}\,\,\,
|\xi-\eta|<2r.
\end{equation*}
It follows that generally, we have
\begin{equation}
0\le\Lambda_r(\xi,\eta)\le8\log^+\frac{2r}{|\xi-\eta|}.
\label{eq-Lr}
\end{equation}
The measure $\hat\sigma_Q$ has compact support and finite logarithmic energy,
\[
\int_{\C^2}\log\frac1{|\xi-\eta|}\diff\hat\sigma_Q(\xi)
\diff\hat\sigma_Q(\eta)<+\infty,
\]
so that if we use \eqref{eq-Lr} and the Lebesgue's domintated convergence 
theorem, we see that
$$\int_{\C^2}\Lambda_r(\xi,\eta)\,\diff\hat\sigma_Q(\xi)
\diff\hat\sigma_Q(\eta)\to0\quad\text{as}\,\,\, r\to0.$$
As we combine this with \eqref{eq-conv}, the claimed energy convergence 
\eqref{eq-energyconv} is immediate, and hence \eqref{eq-estbelowstrong} 
follows.

\subsection{Low probability of high energy configurations}
In view of \eqref{eq-estbelowstrong}, we have 
\begin{equation}
\frac{1}{n(n-1)}\log Z_{m,n}\ge -\frac{\beta}{4}\,(\gamma(Q)+\varepsilon),
\label{eq-estbelowstrong2}
\end{equation}
for fixed positive $\varepsilon$ and large enough $n$. In this context, 
we think of $m=m_n$ as (fixed) sequence which depends on $n$, with 
$m=m_n=n+\ordo(n)$.

We put
\begin{equation}
\label{eq-G}
G(\xi,\eta):=
\log\frac{(1+|\xi|^2)(1+|\eta|^2)}{|\xi-\eta|^2}\ge0,\qquad \xi,\eta\in\C.
\end{equation}
In view of the assumed extra growth \eqref{eq-Q1}, we have
\begin{multline}
L_Q(\xi,\eta)=\log\frac{1}{|\xi-\eta|^2}+Q(\xi)+Q(\eta)
\\
\ge
\log\frac{1}{|\xi-\eta|^2}+\frac{\delta_0}{1+\delta_0}[Q(\xi)+Q(\eta)]+
\log[(1+|\xi|^2)(1+|\eta|^2)]-\frac{2C_0}{1+\delta_0}
\\
=G(\xi,\eta)+\frac{\delta_0}{1+\delta_0}
[Q(\xi)+Q(\eta)]-\frac{2C_0}{1+\delta_0}
\label{eq-LQest}
\end{multline}
where $\delta_0$ and $C_0$ are as in \eqref{eq-Q1}.
To simplify the notation, we write, with $z=(z_1,\ldots,z_n)$, 
\[
L_Q^{\langle\!\langle n\rangle\!\rangle}(z)=
\sum_{j,k:j\ne k}L_Q(z_j,z_k),\quad\text{and}\quad
G^{\langle\!\langle n\rangle\!\rangle}(z)=\sum_{j,k:j\ne k}G(z_j,z_k)\ge0,
\]
where it is assumed that $j$ and $k$ range over $\{1,\ldots,n\}$. These
expressions are of ``double trace type'' associated with the
functions $L_Q$ and $G$ (see \eqref{eq-LQ} and \eqref{eq-G}). 
We also have the   ``trace type'' expressions (with $z=(z_1,\ldots,z_n)$)
\[
Q^{\langle n\rangle}(z)=\sum_j Q(z_j)\quad\text{and}\quad
\Lambda^{\langle n\rangle}(z):=
\sum_j\log(1+|z_j|^2).
\]
It now follows from \eqref{eq-LQest} that
\begin{equation}
\label{eq-LQest2}
L_Q^{\langle\!\langle n\rangle\!\rangle}(z)\ge
G^{\langle\!\langle n\rangle\!\rangle}(z)+\frac{2\delta_0(n-1)}{1+\delta_0}
Q^{\langle n\rangle}(z)-\frac{2C_0}{1+\delta_0}n(n-1),
\end{equation}
while a direct application of the extra growth condition 
\eqref{eq-Q1} leads to
\begin{equation}
\label{eq-exgrow}
Q^{\langle n\rangle}(z)\ge(1+\delta_0)\Lambda^{\langle n\rangle}(z)-C_0n.
\end{equation}

The point with introducing this notation is that
\eqref{eq-Z} simplifies to
\begin{equation}
Z_{m,n}=\int_{\C^n}\exp\Bigg\{-\frac{\beta}{4}
L_Q^{\langle\!\langle n\rangle\!\rangle}(z)+\frac{\beta}{2}
(n-m-1)Q^{\langle n\rangle}(z)\Bigg\}\diff\vol_{2n}(z),
\label{eq-Z'}
\end{equation}
while the probability density becomes
\begin{equation}
\diff\Pi_{mQ,n}(z)=\frac{1}{Z_{m,n}}\exp\Bigg\{-\frac{\beta}{4}
L_Q^{\langle\!\langle n\rangle\!\rangle}(z)+\frac{\beta}{2}
(n-m-1)Q^{\langle n\rangle}(z)\Bigg\}\diff\vol_{2n}(z),
\label{eq-probdens}
\end{equation}
As mentioned in Subsection \ref{subsec-3.1}, we have the estimate
\begin{equation}
\frac1{n(n-1)}L_Q^{\langle\!\langle n\rangle\!\rangle}(z)\ge\gamma(Q),
\qquad z=(z_1,\ldots,z_n)\in\C^n.
\label{eq-Fekete1}
\end{equation}
We introduce the set
\begin{equation}
\calA(n,\epsilon)=\bigg\{z\in\C^n:\,\frac{1}{n(n-1)}
L_Q^{\langle\!\langle n\rangle\!\rangle}(z)
\le\gamma(Q)+\epsilon\bigg\},
\label{eq-setA}
\end{equation}
where $\epsilon$ is a positive real number.

\begin{prop}
There exists a positive integer $N_0$, which depends on $\epsilon>0$ 
but not on $a\ge0$, such that 
$$\Pi_{mQ,n}(\C^n\setminus\calA(n,\epsilon+a))\le\e^{-\beta an(n-1)/8},\qquad
n\ge N_0,$$
provided the sequence $m=m_n=n+\ordo(n)$ is kept fixed.
\label{prop-3.1}
\end{prop}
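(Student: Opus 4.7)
The plan is to apply the density formula \eqref{eq-probdens} together with the universal pointwise bound \eqref{eq-Fekete1} and the entropy-based lower bound \eqref{eq-estbelowstrong} on $Z_{m,n}$, while invoking the extra growth inequality \eqref{eq-LQest2} to control the integrability over $\C^n$. The key device will be to split
\begin{equation*}
-\frac{\beta}{4}L_Q^{\langle\!\langle n\rangle\!\rangle}(z)
= -\frac{\beta\lambda}{4}L_Q^{\langle\!\langle n\rangle\!\rangle}(z) - \frac{\beta(1-\lambda)}{4}L_Q^{\langle\!\langle n\rangle\!\rangle}(z),
\end{equation*}
taking $\lambda = 1 - \eta$ with a small parameter $\eta \in (0,1/2)$ to be chosen later. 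On $\C^n \setminus \calA(n,\epsilon+a)$, the first summand is bounded by $-\frac{\beta\lambda n(n-1)}{4}(\gamma(Q)+\epsilon+a)$; for the second, \eqref{eq-LQest2} together with $G^{\langle\!\langle n\rangle\!\rangle} \ge 0$ yields a contribution of the form $-\frac{\beta\eta\delta_0(n-1)}{2(1+\delta_0)}Q^{\langle n\rangle}(z) + \frac{\beta\eta C_0 n(n-1)}{2(1+\delta_0)}$.

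Combined with the $\frac{\beta(n-m-1)}{2}Q^{\langle n\rangle}(z)$ factor in \eqref{eq-probdens}, the net coefficient of $Q^{\langle n\rangle}(z)$ in the exponent becomes $-c_n$ with $c_n \sim \frac{\beta\eta\delta_0 n}{2(1+\delta_0)} > 0$ for $n$ large, since $n - m - 1 = \ordo(n)$. The integration over $\C^n$ then factors, and the extra growth \eqref{eq-Q1} gives
\begin{equation*}
\int_{\C^n} \e^{-c_n Q^{\langle n\rangle}(z)}\diff\vol_{2n}(z) \le \bigg(\frac{\pi \e^{c_n C_0}}{c_n(1+\delta_0)-1}\bigg)^n \le \e^{C_1 \eta n^2}
\end{equation*}
for $n$ sufficiently large, where $C_1$ is independent of $\eta$; the linearity in $\eta$ here is crucial and reflects $c_n \propto \eta$. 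Dividing by $Z_{m,n}$ and invoking $\log Z_{m,n} \ge -\frac{\beta n(n-1)}{4}(\gamma(Q)+\epsilon_0)$ from \eqref{eq-estbelowstrong} (valid for $n$ large once a small $\epsilon_0 > 0$ has been fixed), one arrives at
\begin{equation*}
\Pi_{mQ,n}(\C^n \setminus \calA(n,\epsilon+a)) \le \exp\bigg\{\frac{\beta n(n-1)}{4}\big[\eta\gamma(Q) + \epsilon_0 - (1-\eta)\epsilon - (1-\eta)a\big] + C_1 \eta n^2\bigg\}.
\end{equation*}

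It then remains to balance parameters. The coefficient of $a$ in the exponent is $-\frac{\beta(1-\eta)n(n-1)}{4}$, which is at most $-\frac{\beta n(n-1)}{8}$ exactly when $\eta \le 1/2$, matching the rate in the claim. For the $a$-independent residual to be nonpositive, one requires $(1-\eta)\epsilon \ge \eta\gamma(Q) + \epsilon_0 + \frac{4C_1 \eta n}{\beta(n-1)}$. Choosing $\eta$ small enough (depending on $\epsilon$, $\gamma(Q)$, $C_1$, $\beta$), then $\epsilon_0$ small, and finally $n \ge N_0(\epsilon)$ large enough makes this hold and yields the bound. The main obstacle is exactly this parameter balance: the entropy lower bound on $Z_{m,n}$ contributes $+\frac{\beta n(n-1)\gamma(Q)}{4}$ in the exponent, while the $\calA^c$-based estimate on $-\frac{\beta}{4}L_Q^{\langle\!\langle n\rangle\!\rangle}$ contributes only $-\frac{\beta\lambda n(n-1)\gamma(Q)}{4}$, forcing $\lambda$ close to $1$; yet $\lambda \to 1$ removes the $Q$-damping needed for the Fubini integral over $\C^n$ to converge, and the extra growth hypothesis \eqref{eq-Q1} is precisely what allows these competing constraints to be reconciled.
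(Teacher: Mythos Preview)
Your proposal is correct and follows essentially the same route as the paper's proof: both split off a small fraction (your $\eta$, the paper's $\theta$) of $-\frac{\beta}{4}L_Q^{\langle\!\langle n\rangle\!\rangle}$ to feed into the extra-growth bound \eqref{eq-LQest2}, use the remaining $(1-\eta)$-fraction together with the defining inequality of $\C^n\setminus\calA(n,\epsilon+a)$, integrate via the growth condition \eqref{eq-Q1}, and cancel the $\gamma(Q)$-contributions against the entropy lower bound \eqref{eq-estbelowstrong2} on $Z_{m,n}$. The only cosmetic difference is that the paper first passes from $Q^{\langle n\rangle}$ to $\Lambda^{\langle n\rangle}$ via \eqref{eq-exgrow} before integrating with \eqref{eq-polcoord}, whereas you integrate $\e^{-c_n Q^{\langle n\rangle}}$ directly; the resulting estimates are equivalent, and the parameter-balancing you describe (choose $\eta$ small depending on $\epsilon$, then $\epsilon_0$, then $N_0$) mirrors the paper's order of choices exactly.
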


\begin{proof}
By definition, we have
\begin{equation}
\frac{1}{n(n-1)}L_Q^{\langle\!\langle n\rangle\!\rangle}(z)
>\gamma(Q)+\epsilon+a,\qquad z\in\C^n\setminus\calA(n,\epsilon+a).
\label{eq-prop-1}
\end{equation}
We rewrite \eqref{eq-LQest2} as
\begin{equation}
\frac{1}{n(n-1)}\,L^{\langle\!\langle n\rangle\!\rangle}_Q(z)
\ge \frac{2\delta_0}{(1+\delta_0)n}Q^{\langle n\rangle}(z)-
\frac{2C_0}{1+\delta_0},\qquad z\in\C^n,
\label{eq-prop-2}
\end{equation}
and form a convex combination of \eqref{eq-prop-1} and \eqref{eq-prop-2} 
(we keep $\theta$ fixed with $0<\theta<1$)
\begin{equation}
\frac{1}{n(n-1)}\,L^{\langle\!\langle n\rangle\!\rangle}_Q(z)
\ge(1-\theta)(\gamma(Q)+\epsilon+a)
+\frac{\theta}{1+\delta_0}\bigg\{\frac{2\delta_0}{n}
Q^{\langle n\rangle}(z)-2C_0\bigg\},\quad z\in\C^n\setminus\calA(n,\epsilon+a).
\label{eq-prop-3}
\end{equation}
The exponent in the density defining $\Pi_{mQ,n}$ is (cf. \eqref{eq-Z'})
\[
-\frac{\beta}{4}\sum_{j,k:j\ne k}
L_Q(z_j,z_k)+\frac{\beta}{2}(n-m-1)\sum_j Q(z_j)
=-\frac{\beta}{4}L_Q^{\langle\!\langle n\rangle\!\rangle}(z)
+\frac{\beta}{2}(n-m-1)Q^{\langle n\rangle}(z),
\]
and in view of the estimate \eqref{eq-prop-3} we get
\begin{multline}
-\frac{\beta}{4}L_Q^{\langle\!\langle n\rangle\!\rangle}(z)
+\frac{\beta}{2}(n-m-1)Q^{\langle n\rangle}(z)
\le-\frac{\beta}{4} n(n-1)(1-\theta)(\gamma(Q)+\epsilon+a)
\\
-\frac{\beta}{2}\bigg\{\theta(n-1)\frac{\delta_0}{1+\delta_0}-(n-m-1)\bigg\}
Q^{\langle n\rangle}(z) +\frac{C_1\theta\beta}{4} n(n-1),
\qquad z\in\C^n\setminus\calA(n,\epsilon+a).
\label{eq-prop-4}
\end{multline}
If 
\begin{equation}
\frac{m}{n-1}>1-\frac{\theta\delta_0}{1+\delta_0},
\label{eq-prop-4.1}
\end{equation}
holds, which is bound to be the case for big enough $n$ (provided $\theta$
is kept away from $0$), since $m=n+\ordo(n)$,
the expression in front of $Q^{\langle n\rangle}(z)$ on the right hand side of
\eqref{eq-prop-4} is negative, and we may apply 
\eqref{eq-exgrow} to \eqref{eq-prop-4}, and arrive at
\begin{multline}
-\frac{\beta}{4}L_Q^{\langle\!\langle n\rangle\!\rangle}(z)
+\frac{\beta}{2}(n-m-1)Q^{\langle n\rangle}(z)
\\
\le-\frac{\beta}{4} n(n-1)(1-\theta)(\gamma(Q)+\epsilon+a)
-\frac{\beta}{2}\big\{\theta(n-1)\delta_0-(1+\delta_0)(n-m-1)\big\}
\Lambda^{\langle n\rangle}(z)
\\
+\frac{C_0\beta\theta}{2}n(n-1)-\frac{C_0\beta}{2} n(n-m-1),
\qquad z\in\C^n\setminus\calA(n,\epsilon+a).
\label{eq-prop-5}
\end{multline}
As a consequence, we find that
\begin{multline}
\Pi_{mQ,n}(\C^n\setminus\calA(n,\epsilon+a))
=\frac{1}{Z_{m,n}}\int_{\C^n\setminus\calA(n,\epsilon+a)}
\exp\Bigg\{-\frac{\beta}{4}L_Q^{\langle\!\langle n\rangle\!\rangle}(z)
+\frac{\beta}{2}(n-m-1)Q^{\langle n\rangle}(z)\Bigg\}\diff\vol_{2n}(z)
\\
\le\frac{1}{Z_{m,n}}\exp\Bigg\{
-\frac{\beta}{4} n(n-1)(1-\theta)(\gamma(Q)+\epsilon+a)
+\frac{C_0\beta\theta}{2}n(n-1)
-\frac{C_0\beta}{2} n(n-m-1)\Bigg\}
\\
\times\Bigg\{\int_\C(1+|\xi|^2)^{-\frac{\beta}{2}\{\theta(n-1)\delta_0
-(1+\delta_0)(n-m-1)\}}\diff\vol_2(\xi)\Bigg\}^n.
\label{eq-prop-6}
\end{multline}
An exercise involving polar coordinates convinces us that for $\alpha>1$, 
\begin{equation}
\int_\C(1+|\xi|^2)^{-\alpha}\diff\vol_2(\xi)=\frac{\pi}{\alpha-1},
\label{eq-polcoord}
\end{equation}
and we see that \eqref{eq-prop-6} entails that
\begin{multline}
\Pi_{mQ,n}(\C^n\setminus\calA(n,\epsilon+a))
\\
\le\frac{1}{Z_{m,n}}\exp\bigg\{
-\frac{\beta}{4} n(n-1)(1-\theta)(\gamma(Q)+\epsilon+a)
+\frac{C_0\beta\theta}{2}n(n-1)
-\frac{C_0\beta}{2} n(n-m-1)\bigg\}
\\
\times\bigg\{\frac{2\pi}{\beta\{\theta(n-1)\delta_0
-(1+\delta_0)(n-m-1)\}-2}\bigg\}^n,
\label{eq-prop-7}
\end{multline}
provided that 
\begin{equation*}
\frac{m}{n-1}>1-\frac{\theta\delta_0}{1+\delta_0}
+\frac{2}{\beta(1+\delta_0)(n-1)}.
\end{equation*}
Let us assume slightly more, namely that 
\begin{equation}
\frac{m}{n-1}>1-\frac{\theta\delta_0}{1+\delta_0}
+\frac{2(1+\pi)}{\beta(1+\delta_0)(n-1)},
\label{eq-prop-8}
\end{equation}
which is a little stronger than \eqref{eq-prop-4.1}, and holds for big enough 
$n$ (as long as $\theta$ is kept away from $0$), since $m=n+\ordo(n)$.
This allows us to get rid of the last factor in the right hand side of 
\eqref{eq-prop-7}:
\begin{multline}
\Pi_{mQ,n}(\C^n\setminus\calA(n,\epsilon+a))
\\
\le\frac{1}{Z_{m,n}}\exp\bigg\{
-\frac{\beta}{4}n(n-1)(1-\theta)(\gamma(Q)+\epsilon+a)
+\frac{C_0\beta\theta}{2}n(n-1)
-\frac{C_0\beta}{2} n(n-m-1)\bigg\}.
\label{eq-prop-9}
\end{multline}
We finally implement the estimate \eqref{eq-estbelowstrong2}, and get
\begin{multline}
\Pi_{mQ,n}(\C^n\setminus\calA(n,\epsilon+a))
\\
\le\exp\bigg\{\frac{\beta}{4}n(n-1)\bigg[\theta\gamma(Q)
-(1-\theta)(\epsilon+a)+\varepsilon
+2\theta C_0
-2C_0\bigg(1-\frac{m}{n-1}\bigg)\bigg]\bigg\}.
\label{eq-prop-10}
\end{multline}
The constant $C_0$ is assumed positive, and we may therefore 
pick a small $\theta$, $0<\theta<\frac12$, such that
\[
\theta[\gamma(Q)+2C_0]\le \frac{\epsilon}{2}.
\]
Since $m=n+\ordo(n)$, it follows from \eqref{eq-prop-10} that
\begin{equation}
\Pi_{mQ,n}(\C^n\setminus\calA(n,\epsilon+a))
\le\exp\bigg\{\frac{\beta}{4}n(n-1)\bigg[-(1-\theta)a
-\bigg(\frac12-\theta\bigg)\epsilon+\varepsilon
+\ordo(1)\bigg)\bigg]\bigg\}.
\label{eq-prop-11}
\end{equation}
Also, by choosing $\varepsilon$ sufficiently small, we can make sure that
\[
-\Big(\frac12-\theta\Big)\epsilon+\varepsilon
+\ordo(1)\le0
\]
for big $n$, so that \eqref{eq-prop-11} gives
\begin{equation}
\Pi_{mQ,n}(\C^n\setminus\calA(n,\epsilon+a))
\le\exp\bigg\{-\frac{\beta}{4}(1-\theta)a n(n-1)\bigg\}\le
\exp\bigg\{-\frac{\beta}{8}an(n-1)\bigg\},
\label{eq-prop-12}
\end{equation}
as claimed.
\end{proof}

\subsection{The proof of Johansson's free energy theorem}
The claim is that
\begin{equation}
\frac{1}{n(n-1)}\log Z_{m,n}\to -\frac{\beta}{4}\,\gamma(Q)\quad
\text{as}\,\,\,n\to+\infty\,\,\,\text{while}\,\,\,m=n+\ordo(n).
\label{eq-limit}
\end{equation}
Note that by \eqref{eq-estbelowstrong} we only need to show that
$\limsup$ converges to a number $\le-\beta\gamma(Q)/4$. To this end, 
we begin by establishing that for $0<\theta<1$, we have
\begin{multline}
-\frac{1}{2}L^{\langle\!\langle n\rangle\!\rangle}_Q(z)
+(n-m-1)Q^{\langle n\rangle}(z)
\\
\le -\frac{1-\theta}{2}n(n-1)\gamma(Q)-
\bigg[(n-1)\frac{\delta_0\theta}{1+\delta_0}-
(n-m-1)\bigg]Q^{\langle n\rangle}(z)+\frac{C_1\theta}{2}n(n-1),
\label{eq-LQ1}
\end{multline}
by forming a convex combination of \eqref{eq-Fekete1} and \eqref{eq-prop-2}.
By applying \eqref{eq-exgrow} to \eqref{eq-LQ1}, we get that 
(since the expression in front of $Q^{\langle n\rangle}(z)$ is negative for 
big $m,n$ with $m=n+\ordo(n)$) 
\begin{multline}
-\frac{1}{2}L^{\langle\!\langle n\rangle\!\rangle}_Q(z)
+(n-m-1)Q^{\langle n\rangle}(z)
\\
\le -\frac{1-\theta}{2}n(n-1)\gamma(Q)-\bigg[(n-1)\delta_0\theta-
(1+\delta_0)(n-m-1)\bigg]\bigg(\Lambda^{\langle n\rangle}(z)
-\frac{C_0n}{1+\delta_0}\bigg)
+\frac{C_1\theta}{2}n(n-1)
\\
=-\frac{1-\theta}{2}n(n-1)\gamma(Q)-\big[(n-1)\delta_0\theta-
(1+\delta_0)(n-m-1)\big]\Lambda^{\langle n\rangle}(z)
+C_0\theta n(n-1)-C_0 n(n-m-1).
\label{eq-LQ2}
\end{multline}
We multiply by $\beta/2$ on the left and right hand sides, to get
\begin{multline}
Z_{m,n}=\int_{\C^n}\exp\bigg\{
-\frac{\beta}{4}L^{\langle\!\langle n\rangle\!\rangle}_Q(z)
+\frac{\beta}{2}(n-m-1)Q^{\langle n\rangle}(z)\bigg\}\diff\vol_{2n}(z)
\\
\le\e^{-\frac{\beta}{4}(1-\theta)n(n-1)\gamma(Q)
+\frac{\beta}{2}C_0\theta n(n-1)-C_0 n(n-m-1)}\bigg\{
\int_\C(1+|\xi|^2)^{-\frac{\beta}{2}[\theta\delta_0(n-1)-(1+\delta_0)(n-m-1)]}
\diff\vol_{2}(\xi)\bigg\}^n,
\label{eq-LQ3}
\end{multline}
so that in view of \eqref{eq-polcoord}, we have
\begin{equation*}
Z_{m,n}
\le\exp\bigg\{-\frac{\beta}{4}(1-\theta)\gamma(Q)\,n(n-1)
+\frac{C_0\beta}{2}\theta n(n-1)-C_0 n(n-m-1)\bigg\},
\end{equation*}
provided \eqref{eq-prop-8} is assumed. Taking logarithms, we find that
\begin{equation*}
\frac{1}{n(n-1)}\log Z_{m,n}
\le-\frac{\beta}{4}(1-\theta)\gamma(Q)
+\frac{C_0\beta}{2}\theta-C_0\bigg(1-\frac{m}{n-1}\bigg),
\end{equation*}
for big enough $m,n$ with $m=n+\ordo(n)$, since \eqref{eq-prop-8} is
fulfilled then. As $\theta$, $0<\theta<1$, can be taken as close to $0$ 
as we like, it follows that 
\begin{equation*}
\limsup\frac{1}{n(n-1)}\log Z_{m,n}
\le-\frac{\beta}{4}\gamma(Q).
\end{equation*}
The claim is an immediate consequence.
\qed

\subsection{The proof of Johansson's marginal probability theorem}
For a positive real $R$ (a radius), we put
\[
n_R(z)=\sharp\big\{j\in\{1,\ldots,n\}:\,|z_j|\ge R\big\},
\]
where $\sharp$ counts the number of elements, and $z=(z_1,\ldots,z_n)$, as 
before. 
We let $R_0$ be a positive real with
\begin{equation}
\label{eq-R0}
\delta_0\log(1+R_0^2)\ge\gamma(Q)+2C_0+1.
\end{equation}

\begin{prop}
We have the estimate
$$\frac{n_{R_0}(z)}{n}\le\epsilon,\qquad z\in\calA(n,\epsilon).$$
\label{prop-eps}
\end{prop}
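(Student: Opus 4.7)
The plan is to extract from the defining inequality of $\calA(n,\epsilon)$ an a priori upper bound on the ``log-trace'' $\Lambda^{\langle n\rangle}(z)=\sum_j\log(1+|z_j|^2)$, and then to exploit that each index $j$ with $|z_j|\ge R_0$ contributes at least $\log(1+R_0^2)$ to that sum, so that $n_{R_0}(z)\log(1+R_0^2)\le\Lambda^{\langle n\rangle}(z)$.

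To realize this plan, I would start from the lower bound \eqref{eq-LQest2} and discard the positive term $G^{\langle\!\langle n\rangle\!\rangle}(z)\ge 0$ (the nonnegativity of $G$ was noted just after \eqref{eq-G}). This leaves
\[
L_Q^{\langle\!\langle n\rangle\!\rangle}(z)\ge\frac{2\delta_0(n-1)}{1+\delta_0}Q^{\langle n\rangle}(z)-\frac{2C_0}{1+\delta_0}n(n-1).
\]
For $z\in\calA(n,\epsilon)$, the left-hand side is at most $n(n-1)(\gamma(Q)+\epsilon)$ by \eqref{eq-setA}, and a rearrangement yields
\[
Q^{\langle n\rangle}(z)\le\frac{n}{2\delta_0}\Bigl[(1+\delta_0)(\gamma(Q)+\epsilon)+2C_0\Bigr].
\]
Inserting the extra growth inequality \eqref{eq-exgrow}, namely $(1+\delta_0)\Lambda^{\langle n\rangle}(z)-C_0 n\le Q^{\langle n\rangle}(z)$, and noting that the two $C_0$-contributions combine cleanly after dividing by $1+\delta_0$, I obtain
\[
\Lambda^{\langle n\rangle}(z)\le\frac{n}{2\delta_0}\bigl(\gamma(Q)+\epsilon+2C_0\bigr),\qquad z\in\calA(n,\epsilon).
\]

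Combining this with the trivial lower bound $\Lambda^{\langle n\rangle}(z)\ge n_{R_0}(z)\log(1+R_0^2)$, I arrive at
\[
\frac{n_{R_0}(z)}{n}\le\frac{\gamma(Q)+\epsilon+2C_0}{2\delta_0\log(1+R_0^2)},
\]
and the calibration \eqref{eq-R0} of $R_0$ is designed precisely so that the right-hand side is at most $\epsilon$. I do not foresee any substantive obstacle: the argument is essentially bookkeeping with inequalities already established, and the only minor finesse is to discard the positive but awkward-to-use $G^{\langle\!\langle n\rangle\!\rangle}$-term from \eqref{eq-LQest2} at the outset, rather than trying to retain it.
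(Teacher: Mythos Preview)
Your argument is correct up through the bound
\[
\frac{n_{R_0}(z)}{n}\le\frac{\gamma(Q)+\epsilon+2C_0}{2\delta_0\log(1+R_0^2)},
\]
but the final claim, that the calibration \eqref{eq-R0} makes the right-hand side at most $\epsilon$, is false. The condition \eqref{eq-R0} reads $\delta_0\log(1+R_0^2)\ge\gamma(Q)+2C_0+1$, with no dependence on $\epsilon$. Plugging this in gives at best
\[
\frac{n_{R_0}(z)}{n}\le\frac{\gamma(Q)+\epsilon+2C_0}{2(\gamma(Q)+2C_0+1)},
\]
which, as $\epsilon\to0$, tends to a fixed number of order $1$ (roughly $\tfrac12$ when $\gamma(Q)+2C_0$ is large), not to $0$. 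Your approach therefore yields only an $O(1)$ bound on $n_{R_0}(z)/n$, not the claimed $O(\epsilon)$ bound. The underlying reason is that after discarding $G^{\langle\!\langle n\rangle\!\rangle}$ and passing through $\Lambda^{\langle n\rangle}$, you have lost too much: the inequality $\Lambda^{\langle n\rangle}(z)\lesssim n$ is simply too weak to detect that only an $\epsilon$-fraction of points can be far away.

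The paper's proof uses a genuinely different mechanism. It splits the double sum $L_Q^{\langle\!\langle n\rangle\!\rangle}(z)$ according to whether the indices lie in the ``far'' set $\{j:|z_j|\ge R_0\}$ or not. The calibration \eqref{eq-R0} is used to show that $L_Q(\xi,\eta)\ge\gamma(Q)+1$ whenever at least one argument is far, while the Fekete lower bound \eqref{eq-Fekete1} handles the near--near block. Adding the pieces gives
\[
L_Q^{\langle\!\langle n\rangle\!\rangle}(z)\ge n(n-1)\gamma(Q)+(n-1)n_{R_0}(z),
\]
and comparing with the defining upper bound of $\calA(n,\epsilon)$ immediately yields $n_{R_0}(z)/n\le\epsilon$. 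The key point you are missing is that the ``$+1$'' in \eqref{eq-R0} produces an additive gain of exactly one unit per far pair, which accumulates to $(n-1)n_{R_0}(z)$; this is what converts the $\epsilon$-slack in $\calA(n,\epsilon)$ directly into the bound on $n_{R_0}(z)/n$.
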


\begin{proof}
We split the integer interval:
\[
\{1,\ldots,n\}={\mathfrak n}(z,R_0)\cup{\mathfrak m}(z,R_0),\quad
{\mathfrak n}(z,R_0)\cap{\mathfrak m}(z,R_0)=\emptyset,
\]
where 
\[
{\mathfrak n}(z,R_0)=\big\{j\in\{1,\ldots,n\}:\,|z_j|\ge R\big\},
\]
so that $n_{R_0}(z)=\sharp[{\mathfrak n}(z,R_0)]$. We split the sum defining
$L^{\langle\!\langle n\rangle\!\rangle}(z)$ accordingly (we use the symmetry
$L_Q(\xi,\eta)=L_Q(\eta,\xi)$):
\begin{multline*}
L^{\langle\!\langle n\rangle\!\rangle}(z)=
L^{\mathrm I}_Q(z)+2L^{\mathrm{II}}_Q(z)+L^{\mathrm{III}}_Q(z)
\\=
\sum_{j,k\in{\mathfrak m}(z,R_0):\,j\neq k}L_Q(z_j,z_k)
+2\sum_{j\in{\mathfrak m}(z,R_0),\,k\in {\mathfrak n}(z,R_0)}
L_Q(z_j,z_k)
+\sum_{j,k\in{\mathfrak n}(z,R_0):\,j\neq k}L_Q(z_j,z_k),
\end{multline*} 
with the obvious interpretation of $L^{\mathrm I}_Q(z)$, 
$L^{\mathrm{II}}_Q(z)$, and $L^{\mathrm{III}}_Q(z)$. From the extra growth 
condition \eqref{eq-Q1}, we see that
\begin{equation*}
L_Q(\xi,\eta)\ge G(\xi,\eta)+\delta_0\log[(1+|\xi|^2)(1+|\eta|^2)]-2C_0,
\end{equation*} 
so that by \eqref{eq-R0}, 
\begin{equation*}
L_Q(\xi,\eta)\ge\delta_0\log[(1+R_0^2)]-2C_0\ge\gamma(Q)+1,\quad\text{if}
\quad |\eta|\ge R_0.
\end{equation*} 
This allows us to conclude that
\[
2L^{\mathrm{II}}_Q(z)+L^{\mathrm{III}}_Q(z)\ge 2n_{R_0}(z)[n-n_{R_0}(z)]
(\gamma(Q)+1)+n_{R_0}(z)[n_{R_0}(z)-1](\gamma(Q)+1),
\]
As regards the term $L^{\mathrm I}_Q(z)$, we may apply \eqref{eq-Fekete1} 
to the
remaining $(n-n_{R_0}(z))$-tuple:
\[
L^{\mathrm I}_Q(z)\ge (n-n_{R_0}(z))(n-n_{R_0}(z)-1)\gamma(Q).
\]
By adding up the terms, we find that
\begin{equation*}
L^{\langle\!\langle n\rangle\!\rangle}_Q(z)=
L^{\mathrm I}_Q(z)+2L^{\mathrm{II}}_Q(z)+L^{\mathrm{III}}_Q(z)
\ge n(n-1)\gamma(Q)+(n-1)n_{R_0}(z).
\end{equation*} 
For $z\in\calA(n,\epsilon)$, we then get
\begin{equation*}
\gamma(Q)+\frac{n_{R_0}(z)}{n}\le
\frac{1}{n(n-1)}L^{\langle\!\langle n\rangle\!\rangle}(z)\le
\gamma(Q)+\epsilon,
\end{equation*} 
from which the assertion is immediate. 
\end{proof}

For a point $z\in\C^n$, we define the associated weighted
sum of point masses $\sigma_z\in\calP_c(\C)$ by the 
formula
\begin{equation}
\diff\sigma_z(\xi)=\frac1{n}\sum_{j=1}^n\diff\delta_{z_j}(\xi),
\qquad \xi\in\C,
\label{eq-sigmaN}
\end{equation}
where $\delta_w$ means the Dirac point mass at $w\in\C$. Also, let
$C_b(\C)=C(\C)\cap L^\infty(\C)$ denote the space of bounded complex-valued
continuous functions on $\C$. 

\begin{prop}
Suppose $\sigma_n=\sigma_z$ is as above, with $z=(z_1,\ldots,z_n)\in\C^n$.
Suppose, moreover, that 
\[
I^\sharp_Q[\sigma_n]=\frac{1}{n(n-1)}L^{\langle\!\langle n\rangle\!\rangle}
(z)\to\gamma(Q)
\]
as $n\to+\infty$. Then, as $n\to+\infty$, we have $\sigma_n\to\hat\sigma_Q$ 
weakly-star. In other words, for each $f\in C_b(\C)$, we have
$$\int_\C f\,\diff\sigma_n\to \int_\C f\,\diff\hat\sigma_Q\quad
\text{as}\quad n\to+\infty.$$
\label{prop-limitmeas}
\end{prop}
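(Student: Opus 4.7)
The plan is a standard weak-compactness argument: I will show every weak-star subsequential limit of $(\sigma_n)$ must coincide with the equilibrium measure $\hat\sigma_Q$, from which convergence of the full sequence follows by Prokhorov's theorem. The two ingredients are tightness of $(\sigma_n)$ and a lower semi-continuity estimate for $I_Q$; combined with the uniqueness part of Frostman's Theorem~\ref{thm-frost1} this forces $\sigma^*=\hat\sigma_Q$ for every accumulation point $\sigma^*$.

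For tightness I would invoke Proposition~\ref{prop-eps}: the hypothesis $I^\sharp_Q[\sigma_n]\to\gamma(Q)$ gives $I^\sharp_Q[\sigma_n]\le\gamma(Q)+\epsilon$ for all sufficiently large $n$ (depending on $\epsilon$), so eventually $z\in\calA(n,\epsilon)$, whence $n_{R_0}(z)/n\le\epsilon$. Since $\sigma_n(\C\setminus\overline{\D(0,R_0)})=n_{R_0}(z)/n$, this tail mass tends to $0$. Therefore $(\sigma_n)$ is tight, and any weak-star accumulation point $\sigma^*$ is supported in $\overline{\D(0,R_0)}$; in particular $\sigma^*\in\probc$.

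The crucial step is the estimate $I_Q[\sigma^*]\le\gamma(Q)$. The main obstacle is the logarithmic singularity of $L_Q$ on the diagonal of $\C^2$, which prevents a direct passage to the limit in the product integral. I would handle this by truncation: set $L^M_Q(\xi,\eta):=\min\{L_Q(\xi,\eta),M\}$, a bounded lower semi-continuous function on $\C^2$ (boundedness from below follows from the extra-growth estimate in \eqref{eq-LQest}). Since $\sigma_n\to\sigma^*$ weak-star implies $\sigma_n\otimes\sigma_n\to\sigma^*\otimes\sigma^*$ weak-star on $\C^2$, the Portmanteau theorem for bounded lsc functions gives
\[
\int_{\C^2}L^M_Q\diff(\sigma^*\otimes\sigma^*)\le\liminf_{n\to\infty}\int_{\C^2}L^M_Q\diff(\sigma_n\otimes\sigma_n).
\]
Separating the diagonal contribution yields
\[
\int_{\C^2}L^M_Q\diff(\sigma_n\otimes\sigma_n)=\frac{1}{n^2}\sum_{j\neq k}L^M_Q(z_j,z_k)+\frac{M}{n}\le\frac{n-1}{n}I^\sharp_Q[\sigma_n]+\frac{M}{n},
\]
where I used $L^M_Q\le L_Q$ off the diagonal. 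Taking $n\to+\infty$ gives $\int L^M_Q\diff(\sigma^*\otimes\sigma^*)\le\gamma(Q)$, and monotone convergence as $M\to+\infty$ then produces $I_Q[\sigma^*]\le\gamma(Q)$.

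The hard part is thus the truncation argument, though the sign of the approximation works out favorably: Portmanteau delivers a $\liminf$ inequality while $L^M_Q\le L_Q$ and the hypothesis on $I^\sharp_Q[\sigma_n]$ supply the matching upper estimate, and the diagonal contribution $M/n$ is of order $1/n$ and hence harmless. Once $I_Q[\sigma^*]\le\gamma(Q)=\inf I_Q$ is established, the uniqueness part of Frostman's theorem gives $\sigma^*=\hat\sigma_Q$; since every subsequence of $(\sigma_n)$ has a further subsequence converging to this same limit, the full sequence $(\sigma_n)$ converges weak-star to $\hat\sigma_Q$, as desired.
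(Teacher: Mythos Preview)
Your proof is correct and follows essentially the same approach as the paper's sketch: extract a weak-star convergent subsequence, use Proposition~\ref{prop-eps} for tightness, and apply a truncation of the kernel $L_Q$ to obtain $I_Q[\sigma^*]\le\gamma(Q)$, then conclude by Frostman uniqueness. You have supplied exactly the details the paper leaves to the reader, including the explicit handling of the diagonal contribution and the monotone-convergence passage $M\to+\infty$.
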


\begin{proof}
The proof is standard. We choose a weakly-star convergent subsequence, and
call the limit $\sigma^*$. From the assumptions on the probaility measure
$\sigma_n$, we find that almost all its mass is concentrated to a fixed compact
subset of $\C$ (cf. Proposition \ref{prop-limitmeas}), and that 
$I_Q[\sigma^*]\le I_Q[\hat\sigma_Q]=\gamma(Q)$, by considering a cut-off of the
logarithmic kernel. We leave the details to the interested reader.
\end{proof}

Let $\omega:\{1,\ldots,n\}\to\{1,\ldots,n\}$ be a permutation. For $z=(z_1,
\ldots,z_n)\in\C^n$ we let $z^\omega=(z_{\omega(1)},\ldots,z_{\omega(n)})\in
\C^n$ be point induced by the permutation. Suppose for the moment that
$f\in C_b(\C^n)$, and write $f^\omega(z)=f(z^\omega)$.
By symmetry, we then have
\[
\int_{\C^n}f\,\diff\Pi_{mQ,n}=\int_{\C^n}f^\omega\,\diff\Pi_{mQ,n},
\]
which gives
\begin{equation*}
\int_{\C^n}f\,\diff\Pi_{mQ,n}
=\frac1{n!}
\int_{\C^n}\sum_\omega f^\omega\diff\Pi_{mQ,n},
\end{equation*}
where the sum runs over all permutations $\omega$. 
We next split the integral:
\begin{equation}
\int_{\C^n}f\diff\Pi_{mQ,n}
=\frac{1}{n!}\int_{\calA(n,\epsilon)}\sum_\omega 
f^\omega\diff\Pi_{mQ,n}
+\frac{1}{n!}\int_{\C^n\sm\calA(n,\epsilon)}\sum_\varsigma
f^\omega\diff\Pi_{mQ,n}.
\label{eq-splitint}
\end{equation}
By Propositions \ref{prop-3.1} and \ref{prop-eps}, the last term is 
$\ordo(1)$ as $m,n\to+\infty$ while $m=n+\ordo(n)$.
In order to understand the remaining term, we should study
\begin{equation}
\frac{1}{n!}
\sum_\omega f^\omega \quad\text{on}\,\,\,\calA(n,\epsilon).
\label{eq-sum}
\end{equation}
We now focus on the $k=1$ case of Johansson's theorem, and restrict our 
attention to $f$ which only depend on the first coordinate, $f(z)=f(z_1)$ 
with some slight abuse of notation.
Then (\ref{eq-sum}) amounts to the linear statistic
\begin{equation}
\frac1{n}\sum_{j=1}^n f(z_{j}),\qquad(z_1,\ldots,z_n)\in\calA(n,\epsilon).
\label{eq-sum2}
\end{equation}
By Proposition \ref{prop-eps}, only an $\epsilon$ proportion of the points
$z_j$ may fall outside the disk $\D(0,R_0)$, and by Proposition 
\ref{prop-limitmeas}, the expression \eqref{eq-sum2} is close to 
(the constant!)
\[
\int_\C f\diff\hat\sigma_Q
\]
for small $\epsilon$ and large $n$. The weak-star convergence 
$\Pi^{(1)}_{mQ,n}\to\hat\sigma_Q$ follows, if we let $\epsilon$ approach $0$ 
slowly as $n\to+\infty$.
The remaining case $k>1$ is analogous. 
\qed

\section{An obstacle problem. Smooth potentials}
\label{sec-obst}

\subsection{Equilibrium measure in terms of an obstacle problem} 
\label{subsec-4.1}
We consider the cone $\Sub(\C)$ of all subharmonic functions in the
plane $\C$, and its convex subset ($0<t<+\infty$ is assumed fixed)
\[
\Sub_t(\C):=\bigg\{v\in\Sub(\C):
~\limsup_{|z|\to+\infty}~[v(z)-t\log|z|^2]<+\infty  \bigg\}.
\]
Given $Q:\C\to\R\cup\{+\infty\}$, the obstacle problem is to find
\begin{equation}
\Obs_t[Q](z):=\sup\big\{v(z):\,\,v\in\Sub_t(\C)\,\text{and}
~v\le Q\,\,\text{on}\,\,\C\big\}.
\label{eq-4.1}
\end{equation}
Here, we assume of $Q$ -- as before -- that it is lower semi-continuous,
bounded on a set of positive area, and 
that
\begin{equation}
\lim_{|z|\to+\infty}~[Q(z)-t\log|z|^2]=+\infty.
\label{eq-4.2}
\end{equation}
We think of both $Q$ and $t$ as fixed; we observe, however, that if 
\eqref{eq-4.2} is fulfilled for one value of $t$, then any smaller positive 
value works as well.
It is easy to check that the supremum in \eqref{eq-4.1} is taken over a 
non-empty collection of functions $v$ (e.g., a large negative constant
will satisfy the requirements). See e.g. Doob \cite{Doob} for the potential
theory pertaining to obstacle problems of this type. For instance, after 
possibly redefining the function $\Obs_t[Q]$ on a negligible set (here, this
is a set of logarithmic capacity $0$), we get a subharmonic function.  
We need to connect the obstacle problem \eqref{eq-4.1} with the equilibrium
measure theory of Subsections \ref{sub-equil} and \ref{sub2.9}. 
To this end, let 
\[
\hat\sigma_t=\hat\sigma_t[Q]:=t\hat\sigma_{Q/t},\qquad S_t=S_t[Q]:=S_{Q/t}=
\supp\hat\sigma_t,
\]
 be the scaled equilibrium measure of Subsection \ref{sub2.9} and its 
associated support set. We write
\[
\gamma_t(Q)=t\gamma(Q/t)\quad\text{and}\quad
\gamma^*_t(Q)=\gamma_t(Q)-\frac1t\int_\C Q\diff\hat\sigma_t.
\]
For a compactly supported finite positive Borel measure $\sigma$, let 
$U^{\sigma}$ denote the logarithmic potential
\[
U^{\sigma}(\xi)=
\int_\C\log\frac{1}{|\xi-\eta|^2}\diff\sigma(\eta),
\]
and put
\begin{equation}
\label{eq-Qt}
{\widehat Q}_t(\xi)=\gamma^*_t(Q)-U^{\hat\sigma_t}(\xi).
\end{equation}
The function ${\widehat Q}_t$ is then subharmonic in $\C$, and harmonic in 
$\C\setminus S_t$, where $S_t=\supp\hat\sigma_t$. 
Moreover, as it is the total mass of the measure which 
determines the decay of the logarithmic potential at infinity, we have
\begin{equation}
{\widehat Q}_t(z)=t\log|z|^2+\Ordo(1)\quad\text{as}\,\,\,|z|\to+\infty.
\label{eq-4.3}
\end{equation}

The following lemma supplies a criterion which allows us to solve the 
obstacle problem. We recall that the logarithmic energy $I_0[\sigma]$ is given 
by \eqref{eq-intQ} with $Q$ replaced by $0$.

\begin{lem} Let $\sigma$ be a compactly supported finite positive Borel 
measure in $\C$ of finite logarithmic energy $I_0[\sigma]<+\infty$ with 
total mass $\|\sigma\|=t$. Suppose $W=c-U^\sigma$, 
where $c\in\R$ is a constant. If $W$ has both $W\le Q$ q.e. on $\C$ and 
$W=Q$ q.e. on $\supp\sigma$, then $W=\Obs_t[Q]$ q.e.
\label{lm-4.1}
\end{lem}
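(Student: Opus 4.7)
My plan is to identify $\sigma$ with the equilibrium measure $\hat\sigma_t[Q]$ and $c$ with the modified Robin constant $\gamma_t^*(Q)$ via Frostman's uniqueness, and then to deduce $W = \widehat Q_t = \Obs_t[Q]$ q.e.\ by a maximum-principle comparison.

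First, $W = c - U^\sigma$ is subharmonic on $\C$ (as $U^\sigma$ is superharmonic), harmonic on $\C\setminus\supp\sigma$, and lies in $\Sub_t$ by the asymptotic $U^\sigma(z) = -t\log|z|^2 + \Ordo(|z|^{-1})$ at infinity (which uses $\|\sigma\|=t$). Setting $\tilde\sigma := \sigma/t$, the hypotheses rewrite as Frostman's variational inequalities for $Q/t$: $U^{\tilde\sigma} + Q/t \geq c/t$ q.e.\ on $\C$, with equality q.e.\ on $\supp\tilde\sigma$. Combined with the standard identity
\[
I_{Q/t}[\mu] - I_{Q/t}[\tilde\sigma] = I_0[\mu - \tilde\sigma] + 2\int_\C\bigl(U^{\tilde\sigma} + Q/t - c/t\bigr)\,d(\mu - \tilde\sigma),
\]
valid for any probability measure $\mu$ of finite logarithmic energy, this exhibits the right-hand side as non-negative: $I_0$ is positive definite on signed measures of zero total mass, while the integral term is non-negative because the Frostman inequality holds q.e.\ and finite-energy probability measures do not charge polar sets. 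Hence $\tilde\sigma$ uniquely minimizes $I_{Q/t}$, so by Theorem \ref{thm-frost1}, $\tilde\sigma = \hat\sigma_{Q/t}$ and $c/t = \gamma^*(Q/t)$. This gives $\sigma = \hat\sigma_t[Q]$, $c = \gamma_t^*(Q)$, and $W = \widehat Q_t$ as in \eqref{eq-Qt}.

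It remains to prove $\widehat Q_t = \Obs_t[Q]$ q.e. One direction: $\widehat Q_t$ is a subharmonic element of $\Sub_t$ with $\widehat Q_t \le Q$ q.e., and since $\widehat Q_t - Q$ is upper semicontinuous the set $\{\widehat Q_t > Q\}$ is open, hence empty (any non-empty open set in $\C$ has positive logarithmic capacity). So $\widehat Q_t$ is an actual competitor in the supremum defining $\Obs_t[Q]$, yielding $\widehat Q_t \le \Obs_t[Q]$ pointwise. For the reverse, fix any competitor $v \in \Sub_t$ with $v \le Q$ on $\C$. By the Riesz decomposition for subharmonic functions in $\Sub_t$, we may write $v = \gamma_v - U^{\mu_v}$ with $\mu_v \ge 0$ of mass $\le t$ and $\gamma_v := \limsup_{|z|\to\infty}[v(z) - t\log|z|^2]$ (taking $\gamma_v = -\infty$ if $\|\mu_v\| < t$). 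The constraint $v \le Q$ becomes $U^{\mu_v} + Q \ge \gamma_v$ on $\C$; the dual characterization of the weighted Robin constant, which says $\gamma^*(Q/t)$ is the supremum of $F$ for which some probability measure $\nu$ satisfies $U^\nu + Q/t \ge F$ q.e., then forces $\gamma_v \le \gamma_t^*(Q)$. Now $h := v - \widehat Q_t$ is subharmonic on $\C \setminus S_t$ (where $\widehat Q_t$ is harmonic), with $h \le 0$ q.e.\ on $\partial S_t$ (since $v \le Q = \widehat Q_t$ q.e.\ on $S_t$) and $\limsup_{|z|\to\infty} h \le \gamma_v - \gamma_t^*(Q) \le 0$. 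The extended maximum principle (with polar boundary exceptions allowed) gives $h \le 0$ on $\C \setminus S_t$, so $v \le \widehat Q_t$ q.e.\ on $\C$. Taking the supremum over $v$ yields $\Obs_t[Q] \le \widehat Q_t = W$ q.e.

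The principal obstacle is controlling the asymptotic constant $\gamma_v$ of an arbitrary competitor so that the max-principle argument closes properly at infinity; this rests on the dual characterization of $\gamma_t^*(Q)$ as a supremum over mass-$t$ potentials, a standard but delicate tool from weighted potential theory.
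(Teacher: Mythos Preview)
Your route differs from the paper's. You first identify $\sigma$ with the equilibrium measure $\hat\sigma_t$ via the converse to Frostman's conditions, and then prove $\widehat Q_t = \Obs_t[Q]$ from scratch; the paper never identifies $\sigma$ with anything and proves $W = \Obs_t[Q]$ directly for the given $\sigma$ via the Principle of Domination. Your identification step is correct and is a pleasant structural observation (though unnecessary, since the lemma is only ever applied with $\sigma = \hat\sigma_t$ as \emph{input} in Proposition~\ref{prop-4.2}). Your argument that the polar set $\{\widehat Q_t > Q\}$ is open and hence empty is also cleaner than the paper's approximation $W' = (1-\epsilon)W - \epsilon'U^\rho$ for that direction.

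The genuine gap is in the reverse inequality. You write an arbitrary competitor $v \in \Sub_t(\C)$ as $v = \gamma_v - U^{\mu_v}$, but this global Riesz representation is only available when $\mu_v$ has compact support (equivalently, when $v$ is harmonic in a punctured neighborhood of infinity); for a general $v \in \Sub_t(\C)$ the Riesz measure need not be compactly supported, and then $U^{\mu_v}$ may be ill-defined or lack the needed asymptotics, so that the formula $v = \gamma_v - U^{\mu_v}$ is meaningless. Your ``dual characterization'' $\gamma^*(Q/t) = \sup\{F : \exists\, \nu\ \text{probability},\ U^\nu + Q/t \ge F\ \text{q.e.}\}$ is correct for compactly supported $\nu$ --- and its proof is nothing but the Principle of Domination applied to $\hat\sigma_{Q/t}$ --- but you apply it to $\mu_v/t$ without securing compact support. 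The paper meets exactly this difficulty: it first assumes $v$ harmonic near infinity (so $v = b - U^\nu$ with $\nu$ compactly supported of mass $\le t$), applies the Principle of Domination directly to the given $\sigma$ to get $U^\sigma \le U^\nu + c - b$ everywhere and hence $v \le W$, and only then removes the restriction via the approximation $\tilde v(z) = \max\{v(z)/(1+\varepsilon),\, t\log|z|^2 - C\}$. Your argument needs an analogous reduction to close; once that is supplied, the detour through the equilibrium measure becomes superfluous, since the Domination argument already works for $\sigma$ itself.
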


\begin{proof} 
Without loss of generality, we may assume that $Q\ge1$ on $\C$.
The function $W$ is in $\Sub_t(\C)$ while $W\le Q$ q.e. on $\C$. So $W\le Q$
on $\C\setminus E$, where $E\subset \C$, is polar (i.e., has logarithmic 
capacity $0$). Let $\rho$ be a compactly supported Borel measure on $\C$
such that the corresponding potential has $U^\rho=+\infty$ on $E$ (see, e.g.
\cite{Doob}). 
Put $W':=(1-\epsilon)W-\epsilon'U^\varrho$ where $\epsilon,\epsilon'$ are 
two small positive numbers. If $\epsilon'$ is very small (also relative to 
$\epsilon$), we can make sure that $W'\le Q$ on $\C\setminus E$, by using
that $W\le Q$ on $\C\setminus E$, the standard properties of potentials,
and the given properties of $Q$. 
Then $W'\le Q$ throughout $\C$ automatically 
as $W'=-\infty$ on $E$. 
We conclude that $W'\le\Obs_t[Q]$ on $\C$. By letting $\epsilon'\to0$ first 
and then $\epsilon\to0$ second, we get that $W\le\Obs_t[Q]$ on 
$\C\setminus E$, and consequently, $W\le\Obs_t[Q]$ q.e. 
on $\C$. As a side remark, we observe that $W$ is locally of Sobolev class
$W^{1,2}$, which means that in the sense of distributions, its gradient is 
locally in $L^2$ with respect to area measure.
It remains to show the reverse inequality $\Obs_t[Q]\le W$ q.e. 
on $\C$. To this end, we pick a function $v\in\Sub_t(\C)$ with $v\le Q$ 
q.e. on $\C$. It will suffice to show that $v\le W$ on $\C$.
The potential $U^\sigma$ is harmonic in $\C\setminus S$, where 
$S:=\supp\sigma$, and therefore $W$ is harmonic there as well. The assumption 
on the total mass of $\sigma$ gives that
\begin{equation}
W(z)=t\log|z|^2+\Ordo(1)\quad\text{as}\,\,\,|z|\to+\infty.
\label{eq-4.4}
\end{equation}
Next, we consider the difference $u=v-W$, which is subharmonic in 
$\C\setminus S$ and has $u\le0$ q.e. on $S$. Moreover, the assumption that 
$v\in\Sub_t(\C)$ together with \eqref{eq-4.4} shows that $u$ is bounded from
above near infinity. We should like to apply the maximum principle 
in the open set $\C\setminus S$ and obtain that $u\le0$ on $\C\setminus S$
since $u\le0$ q.e. on the boundary. However, this is a little delicate as
the functions are not necessarily continuous up to the boundary. 
The so-called Principle of Domination \cite{ST}, p. 104,
is a good substitute. To apply it, we need to make the technical assumption 
that $v$ is harmonic in a punctured neighborhood of infinity, because it 
allows us to represent $v$ q.e. in the form $b-U^\nu$, where $b$ is a constant 
and $\nu$ is finite compactly supported positive Borel measure. The assumption
$v\in\Sub_t(\C)$ then gives that $\nu$ has total mass $\|\nu\|\le t$, so
that $\|\nu\|\le\|\sigma\|$. From the
assumptions we read off that $U^\sigma\le U^\nu+c-b$ holds  q.e. on 
$S=\supp\sigma$, and hence $\sigma$-a.e. (because $\sigma$ has finite 
logarithmic energy), so by the  Principle of Domination (which again uses 
that $\sigma$ has finite logarithmic energy, and that 
$\|\nu\|\le\|\sigma\|$), we find that 
$U^\sigma\le U^\nu+c-b$ holds throughout 
$\C$. The desired conclusion that $v\le W$ follows. Next, to justify the
conclusion that $v\le W$ on $\C$ holds when we only assume that  
$v\in\Sub_t(\C)$ with $v\le Q$ q.e. on $\C$, we proceed as follows.
If we let $\varepsilon$ be a small positive real number, and put
\[
\tilde v(z):=\max\bigg\{\frac{v(z)}{1+\varepsilon},t\log|z|^2-C\bigg\},
\]
then $\tilde v\in\Sub_t(\C)$ is harmonic in a punctured neighborhood of
the point at infinity, and $\tilde v\le Q$ q.e. on $\C$ holds if the
constant $C$ is big enough positive. So we have the conclusion $\tilde v\le W$
on $\C$ from the previous argument. Finally, we first let $C\to+\infty$
and afterwards let $\varepsilon\to0$, and obtain $v\le W$, as claimed.
\end{proof}

We have the following characterization.

\begin{prop}
We have
\[
\Obs_t[Q](z)={\widehat Q}_t(z),\qquad \text{q.e.--}\,\,z\in\C.
\]
\label{prop-4.2}
\end{prop}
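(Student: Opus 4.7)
The plan is to verify the hypotheses of Lemma \ref{lm-4.1} with the choices $\sigma = \hat\sigma_t$, $c = \gamma_t^*(Q)$, and $W = \widehat Q_t = \gamma_t^*(Q) - U^{\hat\sigma_t}$, and then simply quote the lemma.

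First I would check the three quantitative conditions on $\hat\sigma_t$. Compactness of $\supp\hat\sigma_t = S_t = S_{Q/t}$ is immediate from Frostman's Theorem \ref{thm-frost2} applied to the potential $Q/t$ (which itself satisfies the growth condition \eqref{eq-2.6} because $Q$ grows faster than any logarithm, by the standing assumption of Subsection \ref{sub2.9}). The total mass is $\|\hat\sigma_t\| = t$ since $\hat\sigma_{Q/t}$ is a probability measure. Finite logarithmic energy $I_0[\hat\sigma_t] < +\infty$ follows because $\hat\sigma_{Q/t}$ is the minimizer of $I_{Q/t}$, the minimum value $\gamma(Q/t)$ is finite (this is Frostman), and since $Q$ is bounded below on the compact set $S_{Q/t}$ the term $2\int (Q/t)\,\diff\hat\sigma_{Q/t}$ is finite, forcing $I_0[\hat\sigma_{Q/t}] < +\infty$, and then $I_0[\hat\sigma_t] = t^2 I_0[\hat\sigma_{Q/t}] < +\infty$ by bilinearity of the logarithmic energy.

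Next I would verify the two pointwise inequalities $W \le Q$ q.e. on $\C$ and $W = Q$ q.e. on $S_t$. This is the heart of the argument, but it is essentially a rescaling of Frostman's Theorem \ref{thm-frost3}. Applying that theorem to the potential $Q/t$ gives
\[
U^{\hat\sigma_{Q/t}} + Q/t \ge \gamma^*(Q/t) \text{ q.e. on } \C,\qquad U^{\hat\sigma_{Q/t}} + Q/t \le \gamma^*(Q/t) \text{ on } S_t.
\]
Multiplying through by $t$ and using $U^{\hat\sigma_t} = t\, U^{\hat\sigma_{Q/t}}$ and $\gamma_t^*(Q) = t\gamma^*(Q/t)$, we obtain $\widehat Q_t = \gamma_t^*(Q) - U^{\hat\sigma_t} \le Q$ q.e.\ on $\C$ and $\widehat Q_t \ge Q$ on $S_t$. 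The two inequalities on $S_t$ combine to $\widehat Q_t = Q$ q.e.\ on $S_t$ (modifying on polar sets as needed).

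With all hypotheses in hand, Lemma \ref{lm-4.1} yields $\widehat Q_t = \Obs_t[Q]$ q.e.\ on $\C$, which is the claim. The main potential obstacle is purely bookkeeping: making sure the scaling by $t$ between the $Q$-normalized Frostman theorem and the $Q/t$-normalized objects used in Subsection \ref{sub2.9} is done consistently, and checking that the obstacle class $\Sub_t(\C)$ in \eqref{eq-4.1} is exactly the class in which $\widehat Q_t$ lives, which is clear from \eqref{eq-4.3}. No new analytic input beyond Frostman and Lemma \ref{lm-4.1} is required.
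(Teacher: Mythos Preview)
Your proposal is correct and follows essentially the same route as the paper's own proof: reduce to Lemma~\ref{lm-4.1} and verify its hypotheses by rescaling Frostman's Theorem~\ref{thm-frost3} from $Q/t$ to $Q$. The only minor remark is that the relevant growth hypothesis here is \eqref{eq-4.2} for the fixed $t$ (the setup of Subsection~\ref{subsec-4.1}), rather than the stronger ``all $t$'' assumption of Subsection~\ref{sub2.9}; this does not affect the argument.
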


\begin{proof}
In view of Lemma \ref{lm-4.1}, we just need to check that ${\widehat Q}_t=Q$
q.e. on $S_t$ while ${\widehat Q}_t\le Q$ q.e. on $\C$. By Frostman's Theorem
\ref{thm-frost3}, we have $U^{\hat\sigma_{Q/t}}+Q/t\ge\gamma^*(Q/t)$
q.e. on $\C$, while $U^{\hat\sigma_{Q/t}}+Q/t=\gamma^*(Q/t)$ q.e. on $S_{Q/t}$.
Since $\hat\sigma_t=t\hat\sigma_{Q/t}$, $S_t=S_{Q/t}$, and $\gamma_t^*(Q)=
t\gamma^*(Q/t)$, this means that $U^{\hat\sigma_{t}}+Q\ge\gamma_t^*(Q)$
q.e. on $\C$, while $U^{\hat\sigma_{t}}+Q=\gamma_t^*(Q)$ q.e. on $S_t$.
With $\widehat Q_t=\gamma_t^*(Q)-U^{\hat\sigma_{t}}$, this is the same as 
having ${\widehat Q}_t\le Q$  q.e. on $\C$, while ${\widehat Q}_t=Q$ q.e. on 
$S_t$, as needed.
\end{proof}

\begin{rem}
The assertion of Proposition \ref{prop-4.2} is essentially equivalent to that 
of Theorem I.4.1 \cite{ST}.
\end{rem}

We easily recover the density from the potential; we write $\diff A:=\pi^{-1}
\diff\vol_2$ for normalized area measure. 

\begin{cor} We have, in the sense of distribution theory, 
$\diff\hat\sigma_t=\Delta{\widehat Q}_t\diff A$.
\label{cor-4.4}
\end{cor}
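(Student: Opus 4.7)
The plan is to unwind the definition of $\widehat{Q}_t$ and invoke the distributional identity for the Laplacian of the logarithmic kernel. By the definition in \eqref{eq-Qt}, we have
\[
\widehat{Q}_t = \gamma_t^*(Q) - U^{\hat\sigma_t},
\]
so the additive constant drops out after applying $\Delta$, and we are reduced to showing that $\Delta U^{\hat\sigma_t} = -\pi\hat\sigma_t$ in the sense of distributions (recalling that $\Delta = \partial\bar\partial = \tfrac14\Delta_{\mathrm{usual}}$ and $\diff A = \pi^{-1}\diff\vol_2$).

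First I would recall the pointwise (distributional) identity $\Delta_\xi \log\tfrac{1}{|\xi-\eta|^2} = -\pi\delta_\eta$, which follows from $\bar\partial(1/z) = \pi\delta_0$. Pairing against a test function $\varphi \in C_c^\infty(\C)$ and invoking Fubini (which is justified because $\hat\sigma_t$ has compact support and finite logarithmic energy, and $\Delta\varphi$ is bounded with compact support), we get
\[
\langle \Delta U^{\hat\sigma_t},\varphi\rangle
= \langle U^{\hat\sigma_t},\Delta\varphi\rangle
= \int_\C\!\!\int_\C \log\tfrac{1}{|\xi-\eta|^2}\,\Delta\varphi(\xi)\,\diff\vol_2(\xi)\,\diff\hat\sigma_t(\eta)
= -\pi\int_\C\varphi\,\diff\hat\sigma_t.
\]
Hence $\Delta U^{\hat\sigma_t} = -\pi\hat\sigma_t$ as distributions, and therefore $\Delta\widehat{Q}_t = \pi\hat\sigma_t$.

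Multiplying through by the normalization $\diff A = \pi^{-1}\diff\vol_2$ gives the desired identity $\diff\hat\sigma_t = \Delta\widehat{Q}_t\,\diff A$. Strictly speaking, since Proposition \ref{prop-4.2} only gives $\widehat{Q}_t = \Obs_t[Q]$ quasi-everywhere, one should note that this equality is enough: both sides agree outside a polar (hence Lebesgue-null) set, so they represent the same distribution when paired against $\diff\vol_2$, and the distributional Laplacian is unaffected.

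No step here is really an obstacle, since all the heavy lifting was done in Proposition \ref{prop-4.2}; the only point requiring mild care is the justification of Fubini in the distributional pairing, which follows from the compact support of $\hat\sigma_t$ (Frostman's theorem) together with the local integrability of $\log|\xi-\eta|$.
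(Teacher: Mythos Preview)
Your argument is correct and is exactly what the paper has in mind: the corollary is stated without proof because it follows immediately from the definition \eqref{eq-Qt} of $\widehat Q_t$ together with the fundamental solution identity $\Delta_\xi\log\frac{1}{|\xi-\eta|^2}=-\pi\delta_\eta$ (with the paper's normalizations $\Delta=\partial\bar\partial$ and $\diff A=\pi^{-1}\diff\vol_2$). One small remark: your closing comment about Proposition~\ref{prop-4.2} is unnecessary here, since the corollary is a statement about $\widehat Q_t$ as defined by \eqref{eq-Qt}, not about $\Obs_t[Q]$; the q.e.\ identification plays no role in this computation.
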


\subsection{The super-coincidence and coincidence sets}
We keep the setting of the previous subsection, and assume 
$Q:\C\to\R\cup\{+\infty\}$ is lower semi-continuous, and bounded on a set
of positive area, subject to the growth condition \eqref{eq-4.2}.
The potential $U^\sigma$ is superharmonic for a given finite positive
compactly supported measure $\sigma$, and therefore the function 
$\widehat Q_t$ defined by 
\eqref{eq-4.3} is automatically subharmonic. In particular, $\widehat Q_t$
is upper semi-continuous, and we find that the difference $Q-\widehat Q_t$
is lower semi-continuous. 
It follows that the {\em super-coincidence set}
\begin{equation}
\label{eq-St}
S^*_t=S_t^*[Q]=\big\{z\in\C:\,\widehat Q_t(z)\ge Q(z)\big\}
\end{equation}
is compact: it is closed by semi-continuity, while 
\eqref{eq-4.3} and \eqref{eq-4.4} show that it is bounded. We note that 
by Proposition \ref{prop-4.2}, $\widehat Q_t\le Q$ quasi-everywhere, so that
$S^*_t$ equals, up to a set of logarithmic capacity zero, the {\em coincidence 
set}
\[
\big\{z\in\C:\,\widehat Q_t(z)= Q(z)\big\}.
\]
In all cases when we have a little regularity, $\widehat Q_t$ is continuous, 
and then the super-coincidence set $S_t^*$ is the same as the coincidence set.
We therefore refrain from introducing separate notation for the coincidence
set.

\begin{prop} 
The function $\widehat Q_t$ is harmonic in $\C\sm S^*_t$, and as a consequence,
$S_t\subset S_t^*$. In particular, $S_t^*$ is non-empty.
\label{prop-4.5}
\end{prop}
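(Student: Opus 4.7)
The plan is to establish the inclusion $S_t\subset S^*_t$ first, via a density argument combining Frostman's theorem with semi-continuity, and then to deduce harmonicity as an immediate consequence of the explicit formula $\widehat Q_t=\gamma^*_t(Q)-U^{\hat\sigma_t}$ from \eqref{eq-Qt}. Non-emptiness of $S^*_t$ is then automatic: $\hat\sigma_t$ has positive total mass $t$, so $S_t$ is non-empty, and $S^*_t\supset S_t$.

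For the inclusion, I recall from the proof of Proposition~\ref{prop-4.2} that $\widehat Q_t=Q$ q.e.\ on $S_t$, and let $E\subset S_t$ denote the polar exceptional set. Because $\hat\sigma_t$ is a Frostman minimizer its logarithmic energy is finite (one has $I_0[\hat\sigma_t]=I_Q[\hat\sigma_t]-2\int Q\,\diff\hat\sigma_t<+\infty$), so it assigns zero mass to polar sets; in particular $\hat\sigma_t(E)=0$. Given $z\in S_t$, every neighborhood of $z$ carries positive $\hat\sigma_t$-mass, so we may extract a sequence $z_n\to z$ with $z_n\in S_t\setminus E$, at which $\widehat Q_t(z_n)=Q(z_n)$ exactly. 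Using upper semi-continuity of $\widehat Q_t$ (subharmonic) together with lower semi-continuity of $Q$,
\[
\widehat Q_t(z)\ \ge\ \limsup_{n\to\infty}\widehat Q_t(z_n)\ =\ \limsup_{n\to\infty}Q(z_n)\ \ge\ \liminf_{n\to\infty}Q(z_n)\ \ge\ Q(z),
\]
so $z\in S^*_t$, proving $S_t\subset S^*_t$.

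Harmonicity then follows at once: the logarithmic potential $U^{\hat\sigma_t}$ of the compactly supported measure $\hat\sigma_t$ is harmonic on $\C\setminus S_t$, hence so is $\widehat Q_t=\gamma^*_t(Q)-U^{\hat\sigma_t}$; since $\C\setminus S^*_t\subset\C\setminus S_t$, harmonicity transfers to the smaller set. (Alternatively, this can be read off Corollary~\ref{cor-4.4}, which identifies $\diff\hat\sigma_t$ with $\Delta\widehat Q_t\,\diff A$, so that the harmonic region is precisely where $\hat\sigma_t$ is absent.)

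The main obstacle is the upgrade from the qualitative ``q.e.\ coincidence on $S_t$'' to the pointwise statement ``every $z\in S_t$ lies in $S^*_t$''. This step requires combining two structural features of $\hat\sigma_t$ — its topological support property (every neighborhood of a support point has positive mass) and its absolute continuity with respect to logarithmic capacity (no mass on polar sets) — to produce approximating sequences on which the Frostman coincidence holds exactly. Once those sequences are available, the two-sided semi-continuity step is routine.
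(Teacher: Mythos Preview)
Your proof is correct but proceeds in the reverse order from the paper. The paper first establishes harmonicity of $\widehat Q_t$ on $\C\setminus S^*_t$ directly via a Perron-type argument: at a point $z_0$ with $\widehat Q_t(z_0)<Q(z_0)$, strict inequality persists on a small disk by semi-continuity, and if $\widehat Q_t$ were not harmonic there one could Poisson-modify $\Obs_t[Q]$ on that disk to produce a competitor in $\Sub_t(\C)$ still below $Q$ but strictly larger, contradicting maximality. The inclusion $S_t\subset S^*_t$ then follows from Corollary~\ref{cor-4.4}, since $\Delta\widehat Q_t=0$ on $\C\setminus S^*_t$ forces $\hat\sigma_t$ to vanish there. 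You instead prove the inclusion first, upgrading the q.e.\ coincidence on $S_t$ to a pointwise inequality $\widehat Q_t\ge Q$ via a density argument (using that $\hat\sigma_t$ has finite logarithmic energy and hence charges no polar sets, so every support point is approximated by points where coincidence holds exactly), and then read off harmonicity on $\C\setminus S^*_t$ from the already-noted harmonicity on the larger set $\C\setminus S_t$. Your route avoids the Perron construction and relies only on standard potential-theoretic facts about equilibrium measures; the paper's route stays within the obstacle-problem framework and does not need to invoke the finite-energy property.
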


\begin{proof}
We pick a point $z_0\in\C\setminus S_t^*$, so that $\widehat Q_t(z_0)<Q(z_0)$. 
By semi-continuity, we get that $\widehat Q_t<Q$ in a neighborhood of $z_0$.
We claim that $\widehat Q_t$ is harmonic near $z_0$. If not, we could use
Perron's lemma and replace $\Obs_t[Q]$ (which equals $\widehat Q_t$ q.e., by
Proposition \ref{prop-4.2}) on a small disk around $z_0$ by  
the harmonic function which has the same boundary values, and get a function
which is in $\Sub_t(\C)$, and bigger than $\Obs_t[Q]$ while being $\le Q$.
This violates the extremality of $\Obs_t[Q]$, and the claim follows.
Next, by Corollary \ref{cor-4.4}, we see that $z_0\in\C\setminus S_t$. Since 
$z_0$ was an arbitrary point in $\C\setminus S_t^*$, the proof is complete.
\end{proof}

\subsection{A priori smoothness for the obstacle problem for smooth 
potentials}
\label{subsec-smooth}
As before, $Q:\C\to\R\cup\{+\infty\}$ is lower semi-continuous with 
\eqref{eq-4.2} where $t$ is a (fixed) positive real.
If $Q$ has some degree of smoothness, say, e.g., $Q:\C\to\R$ is $C^2$-smooth, 
it is natural to wonder to what extent that carries over to $\Obs_t[Q]=
{\widehat Q}_t$. Since we sometimes need to work with slightly less smooth
weights, the (local) Sobolev classes $W^{2,p}$ are sometimes more appropriate. 
By Sobolev imbedding, functions in $W^{2,p}$ are continuous provided 
$1<p<+\infty$.
The case $p=+\infty$ gives the space $W^{2,\infty}=C^{1,1}$ of functions
whose first order partial derivatives are locally Lipschitz continuous. 
We use the notation $W^{2,p}$ and $C^{1,1}$ for {\em local} classes unless
otherwise stated.  

The following a priori smoothness result is standard in connection with 
the constrained obstacle problem discussed below \cite{Fried}, and associated
with the names such as Lewy, Stampacchia, Brezis, Lions, Kinderlehrer, and
Caffarelli.
We present the elementary approach recently found by Berman \cite{Ber}, which
gives the $C^{1,1}$-smoothness part. Berman's approach also applies in the 
several complex variables context.

\begin{prop}
If $Q\in C^2$, then $\Obs_t[Q]\in C^{1,1}$. More generally, if 
$Q\in C^{1,1}$, we still have $\Obs_t[Q]\in C^{1,1}$. Finally, if 
$Q\in W^{2,p}$ for some $p$, $1<p<+\infty$, then $\Obs_t[Q]\in W^{2,p}$.
\label{prop-4.6}
\end{prop}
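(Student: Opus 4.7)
The plan is to handle the two regularity regimes separately: the $C^{1,1}$ case (which subsumes the $C^2$ case) by an elementary semi-concavity argument in the spirit of Berman \cite{Ber}, and the $W^{2,p}$ case by a Calder\'on--Zygmund reduction once we know that $\Delta\Obs_t[Q]$ can be pointwise controlled by $\Delta Q$.

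For the $C^{1,1}$ case, set $u:=\Obs_t[Q]$ and let $M$ be a local Lipschitz constant for $\nabla Q$. I aim to establish the distributional semi-concavity estimate
\[
\tfrac{1}{2}\bigl[u(z+h)+u(z-h)\bigr]-u(z)\le\tfrac{M}{2}|h|^2,
\]
which, combined with the subharmonicity $\Delta u\ge 0$, forces every pure second-order directional derivative of $u$ to lie in $[-M,M]$, whence $D^2u\in L^\infty_{\mathrm{loc}}$ and $u\in C^{1,1}$. To prove the semi-concavity estimate I consider, for fixed $h\in\C$, the trial function $w(z):=\tfrac{1}{2}[u(z+h)+u(z-h)]-\tfrac{M}{2}|h|^2$. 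Its subharmonicity is immediate as an average of subharmonic translates, and the logarithmic far-field \eqref{eq-4.3} of $u$ transfers verbatim to $w$, so $w\in\Sub_t(\C)$. Next, the $C^{1,1}$ hypothesis on $Q$ yields the pointwise semi-concavity $\tfrac{1}{2}[Q(z+h)+Q(z-h)]\le Q(z)+\tfrac{M}{2}|h|^2$; combining with $u\le Q$ gives $w\le Q$ throughout $\C$. Hence $w$ is admissible in the variational problem \eqref{eq-4.1}, and the maximality of $u=\Obs_t[Q]$ delivers $w\le u$, which is precisely the semi-concavity estimate above.

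For the $W^{2,p}$ case, the plan is to establish the distributional identity $\Delta u=\chi_{S_t^*}\,\Delta Q$ and then invoke standard $L^p$ Calder\'on--Zygmund theory for the Newton potential. Proposition \ref{prop-4.5} already supplies $\Delta u=0$ off the super-coincidence set $S_t^*$. On $S_t^*$ the function $u$ coincides with $Q$ outside a polar (hence area-null) set, so a Lebesgue differentiation argument shows that the weak Laplacian of $u$ agrees almost everywhere on $S_t^*$ with $\Delta Q$. This yields $|\Delta u|\le|\Delta Q|$ a.e., hence $\Delta u\in L^p$, and representing $u$ as a Newton potential of its Laplacian plus a far-field harmonic correction (made legitimate by \eqref{eq-4.3}) upgrades the $L^p$ bound on $\Delta u$ to the desired $W^{2,p}$ bound on $u$.

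The principal obstacle is justifying $\Delta u=\chi_{S_t^*}\,\Delta Q$ when $Q$ is only of class $W^{2,p}$ with $1<p<+\infty$, since the pointwise semi-concavity step is then unavailable. My plan is to regularize: set $Q_\varepsilon:=Q*\varphi_\varepsilon$ by a standard mollification, apply the $C^{1,1}$ step to $u_\varepsilon:=\Obs_t[Q_\varepsilon]$ to derive the uniform Calder\'on--Zygmund bound $\|\Delta u_\varepsilon\|_{L^p}\le\|\Delta Q_\varepsilon\|_{L^p}\le\|\Delta Q\|_{L^p}$, and then extract a weak $W^{2,p}_{\mathrm{loc}}$ subsequential limit. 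A monotone-convergence argument for the obstacle problem (using that $Q_\varepsilon\to Q$ uniformly on compacta, together with the stability of $\Obs_t$ under such convergence of obstacles) identifies the weak limit with $u=\Obs_t[Q]$, transferring the $W^{2,p}$ bound to $u$.
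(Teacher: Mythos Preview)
Your $C^{1,1}$ argument follows the same Berman route as the paper, but there is a gap: you take $M$ to be a single Lipschitz constant for $\nabla Q$ and then assert the global semi-concavity $\tfrac{1}{2}[Q(z+h)+Q(z-h)]\le Q(z)+\tfrac{M}{2}|h|^2$, whereas the hypothesis $Q\in C^{1,1}$ is \emph{local}, so no global $M$ need exist. The paper closes this gap by splitting: for $|z|\le r_0$ one uses Taylor's formula with the local constant $M_{r_0}$, while for $|z|\ge r_0$ the growth condition \eqref{eq-4.2} gives $\Obs_t[Q](z\pm w)\le Q(z)$ directly (see \eqref{eq-4.5}), bypassing semi-concavity of $Q$ at infinity altogether. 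With this patch your argument coincides with the paper's.

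For the $W^{2,p}$ case your approach differs substantially from the paper's. The paper localizes to a smooth bounded Jordan domain $\Omega$ via Lemma \ref{lem-4.6}, reducing to a constrained obstacle problem $\Obs_{\Omega,\varrho}[Q]$, and then invokes the classical regularity theory for variational inequalities (Theorem \ref{thm-4.8}, drawn from \cite{Fried}). Your first plan---establishing $\Delta u=\chi_{S_t^*}\Delta Q$ directly via a Lebesgue differentiation argument on the coincidence set---is circular: the result from \cite{KS}, p.~53, that second derivatives of two coinciding functions agree a.e.\ on the coincidence set presupposes that \emph{both} functions already lie in $W^{2,p}$, which is precisely what you are trying to conclude for $u$. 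Your fallback regularization plan is sound in outline (once $Q_\varepsilon$ is smooth, the $C^{1,1}$ step plus $\Delta u_\varepsilon=\chi_{S^*_{t,\varepsilon}}\Delta Q_\varepsilon$ does yield the uniform $L^p$ bound on $\Delta u_\varepsilon$), but two genuine pieces are missing. First, you must verify that the growth condition \eqref{eq-4.2} survives mollification uniformly in $\varepsilon$, so that the coincidence sets $S^*_{t,\varepsilon}$ are trapped in a fixed compact. Second, the stability assertion ``$\Obs_t[Q_\varepsilon]\to\Obs_t[Q]$ under locally uniform convergence of obstacles'' requires proof; a sandwich argument of the form $\Obs_t[Q]-\delta\le\Obs_t[Q_\varepsilon]\le\Obs_t[Q]+\delta$ needs the competitors to be controlled outside the fixed compact as well, which loops back to the first point. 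These gaps are fillable, but the paper's reduction to the literature is considerably shorter.
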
 

\begin{proof}
We first show how $Q\in C^{1,1}$ implies that $\Obs_t[Q]\in C^{1,1}$.
We begin by noting that by \eqref{eq-4.3} and Proposition \ref{prop-4.2},
\[
\Obs_t[Q](z)={\widehat Q}_t(z)\le t\log(1+|z|^2)+C,\qquad z\in\C,
\]
for a suitable real constant $C$. 
By \eqref{eq-4.2}, the growth of $Q(z)$ is faster than that of $\Obs_t[Q]$,
which we can use to show that for some possibly big value of the radius $r_0$,
\begin{equation}
\Obs_t[Q](z+w)\le Q(z),
\qquad |z|\ge r_0,\,|w|\le1.
\label{eq-4.5}
\end{equation}
Let 
\[
M_r:=\sup\big\{|\partial_t^2\{ Q(z+t\zeta)\}|:\,|z|\le r,\,\,|\zeta|=1,\,
0\le t\le1\big\},
\]
which is finite for each radius $r$ due to the assumption that $Q\in C^{1,1}$, 
and note that by Taylor's formula,
\begin{equation}
\Obs_t[Q](z+w)\le Q(z+w)\le Q(z)+2\re[w\partial Q(z)]+
\tfrac12 M_{r}|w|^2,\qquad |z|\le r,\,\,|w|\le1.
\label{eq-4.6}
\end{equation}
We fix $w\in\C$ with $|w|\le1$, and put 
\[
\tilde Q_w(z):=\tfrac12\Obs_t[Q](z+w)+\tfrac12\Obs_t[Q](z-w)-
\tfrac12 M_{r_0}|w|^2.
\]
By a combination of \eqref{eq-4.5} and \eqref{eq-4.6}, $\tilde Q_w\le Q$ on
$\C$, while it is obvious that $\tilde Q_w\in\Sub_t(\C)$. So, from the
definition of the obstacle problem, we see that
$\tilde Q_w\le\Obs_t[Q]$ on $\C$. In other words,
\begin{equation}
\Obs_t[Q](z+w)+\Obs_t[Q](z-w)-2\Obs_t[Q](z)\le M_{r_0}|w|^2,\qquad z\in\C,\,\,
|w|\le1.
\label{eq-4.7}
\end{equation}
Next, if we divide both sides of \eqref{eq-4.7} by $|w|^2$ and then let 
$w\to0$, we get
\begin{equation*}
\partial_t^2\Obs_t[Q](z+t\zeta)\Big|_{t=0}\le M_{r_0},\qquad z\in\C,\,\,
|\zeta|=1.
\end{equation*}
In particular, if $z=x+\imag y$, we have
\[
\partial_x^2\Obs_t[Q](z)\le M_{r_0},\quad
\partial_y^2\Obs_t[Q](z)\le M_{r_0}.
\]
Since $\Obs_t[Q]$ is subharmonic, that is,
\[
\partial_x^2\Obs_t[Q](z)+\partial_y^2\Obs_t[Q](z)\ge0.
\]
holds in the sense of distribution theory, we must then also have
\[
-M_{r_0}\le\partial_x^2\Obs_t[Q](z)\le M_{r_0},\quad
-M_{r_0}\le\partial_y^2\Obs_t[Q](z)\le M_{r_0}.
\]
In particular, then, $\Obs_t[Q]\in C^{1,1}$. 

As for the remaining case when we have less smoothness, that is, 
when $Q\in W^{2,p}$, the assertion follows from the smoothness theory of
constrained obstacle problems (see Lemma \ref{lem-4.6} and Theorem 
\ref{thm-4.8} below). 
\end{proof}

\subsection{A constrained obstacle problem}
\label{subsec-4.2.2}
Let $Q:\C\to\R\cup\{+\infty\}$ be lower semi-continuous with \eqref{eq-4.2}
where $t$ is a (fixed) positive real, as before. 
Let $\Omega$ be a (bounded) Jordan domain, and $\varrho:\partial\Omega\to\R$  
a continuous function with $\varrho\le Q|_{\partial\Omega}$. 
Consider the constrained obstacle problem
\[
\Obs_{\Omega,\varrho}[Q](z)
:=\sup\big\{v(z):~ v\in\Sub(\Omega), ~ v\le Q\,\,\text{on}\,\,\Omega, 
~v=\varrho \;{\rm on}\; \partial\Omega\big\},\qquad z\in\bar\Omega.
\]
We would like to model the obstacle problem associated with $\Obs_t[Q]$ 
in the form of such a constrained obstacle problem.
The natural way to do this is to put $\varrho:=\Obs_t[Q]
\big|_{\partial\Omega}$. 

\begin{lem} If $\Omega$ is a $C^\infty$-smooth bounded Jordan domain and 
$\varrho=\Obs_t[Q]\big|_{\partial\Omega}$, then 
\[
\Obs_{\Omega,\varrho}[Q]=\Obs_t[Q] \quad\text{on}\,\,\,\Omega.
\]
\label{lem-4.6}
\end{lem}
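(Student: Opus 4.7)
The plan is to prove the identity by double inclusion, exploiting the definitions of the two obstacle problems. The easier direction is to show $\Obs_t[Q] \le \Obs_{\Omega,\varrho}[Q]$ on $\Omega$: I would simply observe that (after redefinition on a polar set, using Proposition \ref{prop-4.2}) the restriction of $\Obs_t[Q]$ to $\Omega$ is subharmonic in $\Omega$, bounded above by $Q$ quasi-everywhere in $\Omega$, and by construction coincides with $\varrho=\Obs_t[Q]|_{\partial\Omega}$ on $\partial\Omega$. Thus it is admissible in the constrained problem and bounded by the supremum $\Obs_{\Omega,\varrho}[Q]$.

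For the reverse inequality, the idea is to take an arbitrary competitor $v$ for $\Obs_{\Omega,\varrho}[Q]$ and extend it to a global competitor for $\Obs_t[Q]$. The naive extension by $\Obs_t[Q]$ outside $\Omega$ may fail to be subharmonic across $\partial\Omega$, because $v$ might exceed $\Obs_t[Q]$ near the boundary. I would repair this by setting
\[
\tilde v(z):=\begin{cases}\max\{v(z),\Obs_t[Q](z)\},&z\in\Omega,\\ \Obs_t[Q](z),&z\in\C\setminus\Omega,\end{cases}
\]
which is unambiguous on $\partial\Omega$ because $v=\varrho=\Obs_t[Q]$ there. On $\Omega$ and on $\C\setminus\overline\Omega$ the function $\tilde v$ is manifestly subharmonic. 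At a boundary point $z_0\in\partial\Omega$, upper semi-continuity follows from the Perron-type boundary condition $\limsup_{z\to z_0,\,z\in\Omega}v(z)\le\varrho(z_0)=\Obs_t[Q](z_0)$ combined with upper semi-continuity of $\Obs_t[Q]$, while the sub-mean-value property at $z_0$ follows from $\tilde v\ge\Obs_t[Q]$ pointwise on $\C$: for any small disk $D(z_0,r)$,
\[
\frac{1}{\pi r^2}\int_{D(z_0,r)}\tilde v\,\dA\ge\frac{1}{\pi r^2}\int_{D(z_0,r)}\Obs_t[Q]\,\dA\ge\Obs_t[Q](z_0)=\tilde v(z_0).
\]

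Since $\tilde v$ agrees with $\Obs_t[Q]\in\Sub_t(\C)$ outside the compact set $\overline\Omega$, it inherits the growth bound, so $\tilde v\in\Sub_t(\C)$. Moreover $\tilde v\le Q$ q.e.\ on $\C$: on $\Omega$ both $v$ and $\Obs_t[Q]$ lie below $Q$ q.e., while on $\C\setminus\Omega$ this is Proposition \ref{prop-4.2}. The defining property of $\Obs_t[Q]$ as the supremum over such functions then yields $\tilde v\le\Obs_t[Q]$ throughout $\C$. Restricting to $\Omega$, $v\le\tilde v\le\Obs_t[Q]$, and taking the supremum over admissible $v$ completes the proof.

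The main technical obstacle I foresee is the verification that $\tilde v$ is actually subharmonic across $\partial\Omega$; this is where the gluing could fail, and is also the reason for choosing the $\max$-type construction rather than a naive extension. The $C^\infty$-smoothness of $\partial\Omega$ does not seem crucial here but ensures boundary regularity (so that $\Obs_t[Q]$ is continuous in a neighborhood of $\partial\Omega$ as far as the Dirichlet problem on $\Omega$ is concerned) and legitimizes interpreting the boundary data $\varrho$ pointwise.
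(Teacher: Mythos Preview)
Your proposal is correct and follows essentially the same approach as the paper's proof: both directions are argued identically, and the key step---extending a competitor $v$ for the constrained problem by forming $\max\{v,\Obs_t[Q]\}$ on $\Omega$, gluing with $\Obs_t[Q]$ outside, and verifying the sub-mean-value inequality on $\partial\Omega$ via the pointwise inequality $\tilde v\ge\Obs_t[Q]$---matches the paper's argument exactly. Your explicit treatment of upper semi-continuity at boundary points is a detail the paper leaves implicit, but otherwise the proofs coincide.
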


\begin{proof} 
We put $R_0=\Obs_t[Q]$, $R_1=\Obs_t[Q]\big|_{\bar\Omega}$, and 
$R_2=\Obs_{\Omega,\varrho}[Q]$.
The function $R_1$ is subharmonic with $R_1\le Q$
in $\Omega$,  and has boundary values $R_1\big|_{\partial\Omega}=\varrho$. 
It is now immediate that $R_1\le R_2$. We proceed to show that $R_2\le R_1$.
To this end, we let $v\in\Sub(\Omega)$ have $v\le Q$ on $\Omega$ and 
boundary data $v=\varrho$ on $\partial\Omega$; we are to check that 
$v\le R_1$. Next, we put 
$\tilde v=\max\{v,R_1\}$; the function $\tilde v$ is in $\Sub(\Omega)$, has
$R_1\le \tilde v\le Q$ on $\Omega$, and boundary data 
$\tilde v|_{\partial\Omega}=\varrho$. 
We consider its extension
\[
V=\begin{cases}\tilde v\quad{\rm in}\;\Omega,\\  R_0\quad{\rm in}\;\C\setminus
\Omega.
\end{cases}
\]
The way things are set up, $R_0\le V\le Q$ in $\C$, with $V=R_0$ on 
$\partial\Omega$. We claim that $V\in\Sub(\C)$. It is enough to check the
mean value inequality along $\partial\Omega$. 
For points $a\in\partial\Omega$, we have ($\epsilon>0$ is a small real 
parameter)
\[
V(a)=R_0(a)\le \frac{1}{2\pi}
\int_{-\pi}^{\pi}R_0(a+\epsilon\e^{\imag\theta})\diff\theta
\le
\int_{-\pi}^{\pi}V(a+\epsilon\e^{\imag\theta})\diff\theta.
\]
It follows that $V\in\Sub(\C)$ and {\em a fortiori} $V\in\Sub_t(\C)$ (because
of the growth at infinity). We conclude that $V$ is a function which we
may plug into the optimization problem defining $R_0=\Obs_t[Q]$, and so
$V\le R_0$ on $\C$. In fact, due to the reverse inequality, we must have
$V=R_0$. In particular, $\tilde v=V|_{\bar\Omega}=R_1$, and so $v\le R_1$.  
\end{proof}

\subsection{Kinderlehrer-Stampacchia-Caffarelli theory}
 For our purposes it would be enough to consider the case of $C^2$ or even 
$C^\infty$ potentials $Q$ but since we sometimes have to modify them 
(see e.g. \cite{AHM2}),  the Sobolev classes $W^{2, p}$ seem to be more 
appropriate.  
We generally assume that $Q:\C\to\R$ is continuous subject to the growth
condition \eqref{eq-4.2} for some (fixed) positive real $t$.

We start with a simple observation.

\begin{lem} 
\label{lem-4.8}
Let $S_t=S_t[Q]$ and  suppose $Q\in W^{2,1}(\Int\,S_t)$. Then 
$\hat\sigma_t$ is absolutely continuous in $\Int\,S_t$ and in fact
\[
\diff\hat\sigma_t=\Delta Q\,\diff A\quad\text{on}\,\,\,\,\Int\,S_t.
\]
\label{lem-4.7}
\end{lem}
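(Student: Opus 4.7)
The plan is to combine Corollary \ref{cor-4.4} with Frostman's coincidence identity on $S_t$, and then to upgrade the quasi-everywhere equality of $\widehat Q_t$ and $Q$ to a distributional identity on $\Int\,S_t$.

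First, by Corollary \ref{cor-4.4}, we have $\diff\hat\sigma_t = \Delta \widehat Q_t \diff A$ in the sense of distributions on all of $\C$. So it suffices to show that, as distributions on $\Int\,S_t$, we have $\Delta \widehat Q_t = \Delta Q$.

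Second, by Proposition \ref{prop-4.2} together with Frostman's Theorem \ref{thm-frost3} (applied after the rescaling $Q\mapsto Q/t$, $\hat\sigma_{Q/t}\mapsto \hat\sigma_t/t$), we have $\widehat Q_t = Q$ quasi-everywhere on $S_t$, hence in particular quasi-everywhere on $\Int\,S_t$. A set of logarithmic capacity zero in the plane has zero area, so $\widehat Q_t = Q$ holds Lebesgue-almost everywhere on $\Int\,S_t$. The function $\widehat Q_t$ is subharmonic, hence in $L^1_{\loc}(\C)$, while $Q\in W^{2,1}(\Int\,S_t)\subset L^1_{\loc}(\Int\,S_t)$; therefore $\widehat Q_t$ and $Q$ coincide as $L^1_{\loc}$-functions on $\Int\,S_t$, and a fortiori as distributions there. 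Applying $\Delta$ (in the distributional sense) gives $\Delta \widehat Q_t = \Delta Q$ as distributions on $\Int\,S_t$, and since $Q\in W^{2,1}(\Int\,S_t)$, the right-hand side is an element of $L^1_{\loc}(\Int\,S_t)$.

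Combining the two steps yields $\diff\hat\sigma_t = \Delta Q\diff A$ on $\Int\,S_t$ as distributions; since $\Delta Q\in L^1_{\loc}(\Int\,S_t)$, this exhibits the restriction of $\hat\sigma_t$ to $\Int\,S_t$ as the absolutely continuous measure with density $\Delta Q$ with respect to $\diff A$, as required.

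The argument is essentially one of bookkeeping; the only point where one must exercise care is the passage from q.e. equality to distributional equality, which relies on the fact that polar subsets of the plane are Lebesgue-negligible and on both $\widehat Q_t$ and $Q$ being in $L^1_{\loc}(\Int\,S_t)$. No substantial obstacle arises.
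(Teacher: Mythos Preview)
Your proof is correct and follows essentially the same route as the paper's: use Corollary~\ref{cor-4.4} to write $\diff\hat\sigma_t=\Delta\widehat Q_t\,\diff A$, then identify $\widehat Q_t$ with $Q$ on $\Int\,S_t$ and pass to Laplacians. The paper's argument is a one-line version that simply asserts $\widehat Q_t=Q$ on $\Int\,S_t$ (implicitly relying on $S_t\subset S_t^*$ from Proposition~\ref{prop-4.5} and the q.e.\ inequality $\widehat Q_t\le Q$); you have spelled out the q.e.\ $\Rightarrow$ a.e.\ $\Rightarrow$ distributional step more carefully, which is a welcome clarification but not a different idea.
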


\begin{proof} 
As we know that $\diff\sigma_t=\Delta\widehat Q_t\diff A$ in the sense of 
distributions, and so the same is true if we restrict the distributions to 
the open set $\Int\, S_t$, where  $\widehat Q_t=Q$, and therefore 
$\Delta\widehat Q_t=\Delta Q$ as distributions.
\end{proof}

 The following two theorems are adapted from the theory of constrained obstacle
problems (variational inequalities); this theory is, as mentioned previously, 
associated with the names of Lewy, Stampacchia, Brezis, Lions, Kinderlehrer, 
and Caffarelli, et al. A standard references is \cite{Fried}, Chapter 1 
(see also \cite{CK}). 

\begin{thm} 
Fix $p$, $1<p<+\infty$, and let $\Omega$ be a $C^\infty$-smooth 
bounded Jordan domain. We suppose $Q$ is $W^{2,p}$-smooth in $\C$, and 
that $\varrho:\partial\Omega\to\R$ is a function which is the restriction 
to $\partial\Omega$ of a function in $W^{2,p}(\C)$, with $\varrho\le Q$ on
$\partial\Omega$. Then $\Obs_{\Omega,\varrho}[Q]\in W^{2,p}(\Omega)$. 
\label{thm-4.8}
\end{thm}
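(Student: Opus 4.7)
The plan is to recognize $w := -\Obs_{\Omega,\varrho}[Q]$ as the solution of a standard one-sided variational inequality with obstacle $-Q$ and boundary data $-\varrho$, derive the classical Lewy--Stampacchia bound on $\Delta w$, and then upgrade this $L^p$ control on the Laplacian to full $W^{2,p}(\Omega)$-regularity by up-to-boundary Calder\'on--Zygmund estimates on the $C^\infty$-smooth domain $\Omega$.

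First I would identify $w$ as the unique minimizer of the Dirichlet energy $J(v) := \tfrac12\int_\Omega|\nabla v|^2\,\diff A$ over the closed convex set
\[
\K := \{v \in W^{1,2}(\Omega):\, v \ge -Q \text{ a.e. in } \Omega,\; v + \varrho \in W_0^{1,2}(\Omega)\}.
\]
Existence and uniqueness come from strict convexity and weak lower semicontinuity of $J$ (with $\K$ nonempty since $Q\ge\varrho$ on $\partial\Omega$ is assumed); the resulting Euler--Lagrange inequality $\int_\Omega \nabla w\cdot\nabla(v - w)\,\diff A \ge 0$ for $v \in \K$ forces $-\Delta w \ge 0$ as a distribution, together with the complementarity relation $(-\Delta w)(w+Q) = 0$. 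A Perron-type comparison argument mirroring Lemma \ref{lem-4.6} then matches $-w$ with the upper envelope defining $\Obs_{\Omega,\varrho}[Q]$.

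The core of the proof is the Lewy--Stampacchia two-sided bound
\[
0 \le -\Delta w \le (\Delta Q)^+ \quad \text{in the sense of distributions on } \Omega.
\]
I would obtain this via the penalized Dirichlet problems
\[
-\Delta w_\epsilon = \tfrac{1}{\epsilon}(-Q - w_\epsilon)^+ \text{ in } \Omega, \qquad w_\epsilon|_{\partial\Omega} = -\varrho \qquad (\epsilon > 0),
\]
by comparing $w_\epsilon$ with the solution of the linear Poisson problem $-\Delta v = (\Delta Q)^+$ with the same boundary data, which forces the $\epsilon$-uniform pointwise bound $\epsilon^{-1}(-Q - w_\epsilon)^+ \le (\Delta Q)^+$. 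Weak compactness in $W^{2,p}(\Omega)$ combined with $w_\epsilon \to w$ transfers the estimate to $w$.

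Setting $u := \Obs_{\Omega,\varrho}[Q] = -w$, the previous bound gives $\Delta u \in L^p(\Omega)$ since $(\Delta Q)^+ \in L^p(\Omega)$ (as $Q \in W^{2,p}(\C)$). The $C^\infty$-smoothness of $\partial\Omega$, together with $\varrho$ being the trace of a $W^{2,p}(\C)$-function, makes the up-to-boundary Calder\'on--Zygmund $L^p$-estimate for the Dirichlet problem of the Laplacian directly applicable, yielding $u \in W^{2,p}(\Omega)$. The main obstacle I anticipate is the rigorous verification of the Lewy--Stampacchia upper bound and the passage to the limit in the penalized problems; the Calder\'on--Zygmund step at the end is a black-box application of elliptic regularity.
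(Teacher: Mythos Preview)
Your proposal is correct and is, in fact, far more detailed than what the paper offers: the paper's entire proof of Theorem~\ref{thm-4.8} is a one-line citation to Friedman's book \cite{Fried} (Theorem~1.3.2 and Problem~1 on p.~29). What you have outlined---recasting $-\Obs_{\Omega,\varrho}[Q]$ as the variational solution of the one-sided obstacle problem with obstacle $-Q$, deriving the Lewy--Stampacchia bound $0\le-\Delta w\le(\Delta Q)^+$ via penalization, and invoking up-to-the-boundary Calder\'on--Zygmund estimates on the smooth domain $\Omega$---is precisely the standard argument that the cited reference encapsulates, so there is no genuine methodological difference to compare. The one place to be a little careful is the penalization comparison you sketch (the inequality $\epsilon^{-1}(-Q-w_\epsilon)^+\le(\Delta Q)^+$ does not follow from a single maximum-principle comparison with the Poisson problem for $(\Delta Q)^+$; the usual route goes through a truncated penalty or a direct weak-formulation argument as in \cite{Fried} or \cite{KS}), but this is a well-documented technicality and does not affect the soundness of your plan.
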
 

\begin{proof}
This is explained in Chapter 1 of Friedman's book \cite{Fried}, see Theorem 
1.3.2 and Problem 1 on p. 29. 
\end{proof}

Together with Lemma \ref{lem-4.6}, this justifies the $W^{2,p}$ part of the 
assertion of Proposition \ref{prop-4.6}.
Now, in view of Proposition \ref{prop-4.6}, if we suppose $Q\in W^{2,p}$
for some $1<p<+\infty$, the function $\widehat Q_t=\Obs_t[Q]$ is in $W^{2,p}$,
and by Corollary \ref{cor-4.4}, the measure $\hat\sigma_t$ is absolutely
continuous (with respect to area), and the density is locally in $L^p$.
By Lemma \ref{lem-4.7}, we get
\[
\diff\hat\sigma_t=\Delta Q\diff A\quad\text{on}\,\,\,\Int\, S_t.
\]
If the boundary $\partial S_t$ has zero area, we can conclude that 
$\diff\hat\sigma_t=1_{S_t}\Delta Q\diff A$. It is remarkable 
that this conclusion holds even when $\partial S_t$ has positive area.

\begin{thm} 
If, for some $1<p<+\infty$, we have $Q\in W^{2,p}$, then $\widehat Q_t\in 
W^{2,p}$ and 
\[
\diff\hat\sigma_t=1_{S_t}\Delta Q\diff A.
\]
\label{thm-4.9}
\end{thm}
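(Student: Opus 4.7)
\medskip

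\noindent\textbf{Proof plan.} The first half of the assertion, $\widehat Q_t\in W^{2,p}$, is already given by Proposition \ref{prop-4.6} (applied to $\widehat Q_t=\Obs_t[Q]$). So the plan is focused on identifying the $L^p$ density $\Delta\widehat Q_t$ pointwise a.e. By Corollary \ref{cor-4.4} we have $\diff\hat\sigma_t=\Delta\widehat Q_t\,\diff A$ in the sense of distributions; since $\widehat Q_t\in W^{2,p}$, the distribution $\Delta\widehat Q_t$ is actually an $L^p_{\mathrm{loc}}$-function, so $\hat\sigma_t$ is absolutely continuous with respect to area and the task reduces to computing this density.

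The main idea is to use a dichotomy driven by the obstacle problem. On $\C\setminus S_t^*$, Proposition \ref{prop-4.5} gives harmonicity of $\widehat Q_t$, hence $\Delta\widehat Q_t=0$ a.e. there. On $S_t^*$, the inequality $\widehat Q_t\le Q$ from Proposition \ref{prop-4.2} upgrades to the equality $\widehat Q_t=Q$ (up to a polar, hence area-null, exceptional set, using that both $\widehat Q_t$ and $Q$ admit continuous representatives by Sobolev embedding for $p>1$). I would then invoke the standard Sobolev lemma: if $u,v\in W^{2,p}$ and $u=v$ on a measurable set $E$, then all second weak partials of $u-v$ vanish almost everywhere on $E$. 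Applied to $u=\widehat Q_t$, $v=Q$ and $E=S_t^*$, this yields $\Delta\widehat Q_t=\Delta Q$ a.e.\ on $S_t^*$. Putting the two pieces together,
\[
\Delta\widehat Q_t=1_{S_t^*}\Delta Q\quad\text{a.e. in }\C.
\]

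The final step is to pass from $S_t^*$ to $S_t$. Since $S_t=\supp\hat\sigma_t$, its complement carries no mass; as $\hat\sigma_t$ has the $L^p$ density $\Delta\widehat Q_t$, this forces $\Delta\widehat Q_t=0$ a.e.\ on $\C\setminus S_t$. Restricting the identity above to the subset $S_t^*\setminus S_t$ of $\C\setminus S_t$ then gives $\Delta Q=0$ a.e.\ there. Consequently $1_{S_t^*}\Delta Q=1_{S_t}\Delta Q$ a.e., and the desired formula $\diff\hat\sigma_t=1_{S_t}\Delta Q\,\diff A$ drops out.

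The step I expect to carry the weight of the argument is the Sobolev coincidence lemma, $\Delta(u-v)=0$ a.e.\ on $\{u=v\}$ for $u,v\in W^{2,p}$; this is the crucial input from the Kinderlehrer--Stampacchia--Caffarelli circle of ideas, and it is precisely what lets the conclusion bypass any assumption on the regularity or area of $\partial S_t$. Everything else in the argument is bookkeeping based on results already established in the paper.
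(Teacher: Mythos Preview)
Your proposal is correct and follows essentially the same approach as the paper: both hinge on the Sobolev coincidence lemma (Kinderlehrer--Stampacchia, p.~53) that two $W^{2,p}$ functions have equal second partials a.e.\ on their coincidence set. The only organizational difference is that the paper applies this lemma directly on $S_t$ (noting that $S_t\subset S_t^*$ already gives $\widehat Q_t=Q$ on $S_t$, and that $\hat\sigma_t$ vanishes off its support $S_t$), whereas you route through $S_t^*$ first and then observe $\Delta Q=0$ a.e.\ on $S_t^*\setminus S_t$; your extra step is harmless and in fact anticipates the paper's later Proposition~\ref{prop-4.14} on shallow points.
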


\begin{proof}
As $S_t$ is the support of $\hat\sigma_t$, the measure $\hat\sigma_t$ vanishes
off $S_t$. On $S_t$, however, the two $W^{2,p}$-smooth functions $Q$ and 
$\widehat Q_t$ coincide, and by \cite{KS}, p. 53, this entails that 
their partial derivatives of order $\le2$ coincide almost everywhere on $S_t$.
In particular, $\Delta\widehat Q_t=\Delta Q$ on $S_t$ as $L^p$ functions.
In view of Corollary \ref{cor-4.4}, the assertion is immediate.
\end{proof}

\begin{rem}
\label{rem-4.10}
\noindent(a) In the context of Proposition \ref{prop-4.6}, it does not help to
add more smoothness to $Q$. E.g., if $Q\in C^\infty$ is assumed, we still
cannot do better than $\Obs_t[Q]\in C^{1,1}$, at least near $\partial S_t$. 

\noindent(b) The smoothness assumptions of this subsection are excessive, 
in the sense
that it suffices to have the required smoothness of $Q$ in a neighborhood of
the droplet $S_t$. All the statements are valid under this weaker assumption.

\noindent(c) By the properties of the 2D Hilbert transform, the assertion that 
$\widehat Q_t\in W^{2,p}$ is equivalent to the property that the density
of the absolutely continuous measure $\diff\hat\sigma_t$ is in $L^p$ (locally).
\end{rem}



\subsection{The coincidence set and shallow points}
\label{subsec-shallow}
As in the previous subsection, $Q:\C\to\R$ is assumed to be of (local) 
Sobolev class $W^{2,p}$, with $1<p<+\infty$.
We assume that $Q$ meets the growth assumption \eqref{eq-4.2} for all
$t$ with $0<t<T$, where $T=T(Q)$ has $0<T\le+\infty$. 
    
We recall that we introduced the parameter $t$ to consider the evolution of 
the renormalized equilibrium measures $\hat\sigma_t=t\hat\sigma_{Q/t}$ as $t$ 
moves. The conclusion of Theorem \ref{thm-4.9} allows us to reduce the 
complexity and just study the evolution of the droplets $S_t=S_t[Q]=S_{Q/t}$.
The super-coincidence set $S_t^*=S^*_t[Q]$ defined by \eqref{eq-St} will be
referred to as the coincidence set, because the smoothness of $Q$ makes 
$\widehat Q_t$ continuous.

We should explain the relationship between the sets $S_t$ and $S_t^*$ (we 
already know that $S_t\subset S_t^*$). 
To this end, we say that a point $z_0\in S_t^*$ is $Q$-{\em shallow} 
(with respect to $S_t^*$) if there exists an open disk $D$ centered at $z_0$ 
such that 
\[
\int_{S_t^*\cap D}|\Delta Q|\,\diff A=0.
\] 
The $Q$-shallow point in $S_t^*$ form a relatively open subset.
We mention in passing that it follows from Theorem \ref{thm-4.9} that 
$\Delta Q\ge0$ a.e. on $S_t$.

\begin{prop}
The set $S_t$ is obtained from $S_t^*$ by removal of all the $Q$-shallow 
points.
\label{prop-4.14}
\end{prop}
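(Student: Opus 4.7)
The plan is to establish both set-theoretic inclusions using Theorem \ref{thm-4.9}, which gives $\diff\hat\sigma_t = 1_{S_t}\Delta Q\diff A$, together with the Kinderlehrer--Stampacchia fact (already invoked in the proof of Theorem \ref{thm-4.9}) that two $W^{2,p}$ functions have matching partial derivatives of order $\le 2$ almost everywhere on the set where they coincide.

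First, I would check that no $Q$-shallow point of $S_t^*$ lies in $S_t$. Let $z_0\in S_t^*$ be $Q$-shallow, and pick an open disk $D$ centered at $z_0$ with $\int_{S_t^*\cap D}|\Delta Q|\diff A=0$. Since $S_t\subset S_t^*$ (Proposition \ref{prop-4.5}) and $\diff\hat\sigma_t=1_{S_t}\Delta Q\diff A$ is a positive measure, we get
\[
\hat\sigma_t(D)=\int_{S_t\cap D}\Delta Q\diff A
\le\int_{S_t^*\cap D}|\Delta Q|\diff A=0,
\]
so $\hat\sigma_t$ vanishes on the neighborhood $D$ of $z_0$, which rules out $z_0\in\supp\hat\sigma_t=S_t$.

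For the reverse inclusion, I would argue by contrapositive: suppose $z_0\in S_t^*\setminus S_t$ and show it must be $Q$-shallow. Since $S_t$ is closed there exists an open disk $D$ centered at $z_0$ with $D\cap S_t=\emptyset$. By Theorem \ref{thm-4.9}, $\Delta\widehat Q_t=1_{S_t}\Delta Q$ vanishes identically on $D$, so $\widehat Q_t$ is harmonic on $D$ and in particular $\Delta\widehat Q_t=0$ pointwise a.e.\ on $D$. On the other hand, by definition of $S_t^*$ together with Proposition \ref{prop-4.2}, we have $\widehat Q_t=Q$ on $S_t^*$. Applying the coincidence-of-derivatives lemma from \cite{KS} (cf.\ the proof of Theorem \ref{thm-4.9}) to the two $W^{2,p}$ functions $Q$ and $\widehat Q_t$ on $S_t^*$, their Laplacians agree a.e.\ on $S_t^*$; intersecting with $D$ yields $\Delta Q=\Delta\widehat Q_t=0$ a.e.\ on $S_t^*\cap D$, whence
\[
\int_{S_t^*\cap D}|\Delta Q|\diff A=0,
\]
i.e., $z_0$ is $Q$-shallow.

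Combining the two steps gives the desired identity. The one point to handle with some care is the application of the Kinderlehrer--Stampacchia coincidence lemma: it requires $Q,\widehat Q_t\in W^{2,p}$, which is guaranteed by the assumption $Q\in W^{2,p}$ together with Proposition \ref{prop-4.6}, so this passes through without extra work. Beyond that, the argument is essentially a direct bookkeeping of what the distributional identity $\diff\hat\sigma_t=1_{S_t}\Delta Q\diff A$ says locally, and I do not expect any genuine obstacle.
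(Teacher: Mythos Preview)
Your proof is correct and follows essentially the same approach as the paper: both arguments rest on Theorem \ref{thm-4.9} together with the Kinderlehrer--Stampacchia coincidence-of-derivatives lemma applied to $Q$ and $\widehat Q_t$ on $S_t^*$. The paper packages the argument slightly differently---it first derives the identity $\diff\hat\sigma_t=1_{S_t^*}\Delta Q\,\diff A$ and then reads off both inclusions from the comparison with $\diff\hat\sigma_t=1_{S_t}\Delta Q\,\diff A$---whereas you establish the two inclusions separately; but the content is the same.
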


\begin{proof}
Since $\widehat Q_t$ and $Q$ are both in $C^{1,1}$ and coincide on $S_t^*$,
we get from \cite{KS}, p. 53, that $\Delta\widehat Q_t=\Delta Q$ holds a.e. 
on $S_t^*$, so that (in the same way as Lemma \ref{lem-4.7} was obtained)
\[\diff\hat\sigma_t=1_{S_t^*}\Delta Q\,\diff A.\]
By comparing with Lemma \ref{lem-4.7}, we see that $\Delta Q=0$ a.e. on 
$S_t^*\setminus S_t$. To calculate the support of $\hat\sigma_t$, we must 
remove all the points of $S_t^*$ where there is no $|\Delta Q|\diff A$-mass 
nearby, that is, the $Q$-shallow points.
\end{proof}

\subsection{Coincidence sets and the dynamics of droplets}
\label{subsec-coinc}
As in the previous subsection, $Q:\C\to\R$ is assumed to be of class 
$W^{2,p}$, with $1<p<+\infty$.
We assume that $Q$ meets the growth assumption \eqref{eq-4.2} for all
$t$ with $0<t<T$, where $T=T(Q)$ has $0<T\le+\infty$. 

The coincidence set $S_t^*=S_t[Q]$ defined by \eqref{eq-St} is just a little
bigger than $S_t$ (we remove the $Q$-shallow points), but it contains 
essential information which helps us understand the evolution of $S_t$ as $t$ 
grows.  

We begin with some elementary properties.

\begin{lem} 
If $0<t_1\le t_2<T$, then $\widehat Q_{t_1}\le\widehat Q_{t_2}$ 
and, in particular, 
$S^*_{t_1}\subset S^*_{t_2}$.
\label{lem-4.12}
\end{lem}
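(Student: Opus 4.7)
The plan is to exploit the monotonicity of the class $\Sub_t(\C)$ in the parameter $t$, from which the ordering of the obstacle solutions follows immediately, and then to read off the set inclusion from the definition of $S_t^*$.

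First I would observe that if $0<t_1\le t_2$, then $\Sub_{t_1}(\C)\subset \Sub_{t_2}(\C)$. Indeed, for $v\in\Sub_{t_1}(\C)$ one has
\[
\limsup_{|z|\to+\infty}[v(z)-t_2\log|z|^2]\le\limsup_{|z|\to+\infty}[v(z)-t_1\log|z|^2]<+\infty,
\]
since $(t_2-t_1)\log|z|^2\ge 0$ for $|z|\ge 1$. Moreover the side condition $v\le Q$ on $\C$ does not depend on $t$. Consequently the supremum defining $\Obs_{t_2}[Q]$ is taken over a larger family than the one defining $\Obs_{t_1}[Q]$, giving $\Obs_{t_1}[Q]\le\Obs_{t_2}[Q]$ pointwise on $\C$.

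By Proposition \ref{prop-4.2}, $\widehat Q_{t_i}=\Obs_{t_i}[Q]$ q.e. for $i=1,2$. Under the standing smoothness assumption $Q\in W^{2,p}$, Proposition \ref{prop-4.6} gives $\widehat Q_{t_i}\in W^{2,p}\subset C$ (via Sobolev embedding), so both functions are continuous; hence the q.e.\ identification is in fact pointwise, and we conclude
\[
\widehat Q_{t_1}(z)\le\widehat Q_{t_2}(z),\qquad z\in\C.
\]

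Finally, for any $z\in S^*_{t_1}$, the definition \eqref{eq-St} gives $\widehat Q_{t_1}(z)\ge Q(z)$, and combined with the inequality just established, $\widehat Q_{t_2}(z)\ge\widehat Q_{t_1}(z)\ge Q(z)$, so $z\in S^*_{t_2}$. This yields $S^*_{t_1}\subset S^*_{t_2}$. I do not anticipate any real obstacle here; the only mild point is the transition from the q.e.\ equality in Proposition \ref{prop-4.2} to the pointwise inequality, which is handled by the continuity afforded by Proposition \ref{prop-4.6}.
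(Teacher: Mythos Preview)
Your proof is correct and follows essentially the same approach as the paper: both exploit the inclusion $\Sub_{t_1}(\C)\subset\Sub_{t_2}(\C)$ to get $\Obs_{t_1}[Q]\le\Obs_{t_2}[Q]$, then invoke Proposition \ref{prop-4.2} for the identification with $\widehat Q_{t_i}$, and finally read off the set inclusion from the definition of $S_t^*$. Your extra remark on passing from q.e.\ to pointwise via the continuity from Proposition \ref{prop-4.6} is a welcome clarification that the paper leaves implicit.
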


\begin{proof} Since $\Sub_{t_1}(\C)\subset\Sub_{t_2}(\C)$, we clearly have
$\Obs_{t_1}[Q]\le\Obs_{t_2}[Q]$, from the definition of the obstacle problem.
The first assertion, $\widehat Q_{t_1}\le\widehat Q_{t_2}$, now follows from 
Proposition \ref{prop-4.2}. The second assertion, the inclusion 
$S^*_{t_1}\subset S^*_{t_2}$, is an easy consequence of the first assertion.
\end{proof}

Let $S_0^*$ denote the (nonempty compact) set where the global minimum of $Q$ 
is attained. 

\begin{prop} We have $S_0^*\subset S_t^*$ for all $0<t<T$.
\end{prop}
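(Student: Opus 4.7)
The plan is to exhibit a trivial admissible competitor in the obstacle problem \eqref{eq-4.1} and use it to produce a pointwise lower bound on $\widehat Q_t$ that is attained on $S_0^*$. Set $c:=\min_\C Q$; by lower semi-continuity of $Q$ together with the growth hypothesis \eqref{eq-4.2}, this minimum is finite and attained, and the set $S_0^*$ is precisely the level set $\{Q=c\}$.

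First I would verify that the constant function $v\equiv c$ belongs to $\Sub_t(\C)$. It is subharmonic (indeed harmonic), and
\[
\limsup_{|z|\to+\infty}\bigl[v(z)-t\log|z|^2\bigr]=-\infty<+\infty,
\]
so the membership condition is satisfied. Moreover, $v=c\le Q$ on $\C$ by the very definition of $c$. Hence $v$ is a feasible function in the supremum defining $\Obs_t[Q]$, and so
\[
\Obs_t[Q](z)\ge c,\qquad z\in\C.
\]

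By Proposition \ref{prop-4.2}, this yields $\widehat Q_t\ge c$ quasi-everywhere on $\C$, and since $Q$ is of (local) Sobolev class $W^{2,p}$ for some $1<p<+\infty$, Proposition \ref{prop-4.6} shows that $\widehat Q_t$ is continuous, so the inequality in fact holds everywhere on $\C$. Now, for any $z_0\in S_0^*$, we have $Q(z_0)=c\le\widehat Q_t(z_0)$, which by the definition \eqref{eq-St} of the super-coincidence set places $z_0\in S_t^*$. This gives $S_0^*\subset S_t^*$, as claimed.

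The main obstacle here is essentially nil; the argument is little more than unwinding the definitions. The only points worth flagging are that one exploits the asymmetric nature of $\Sub_t(\C)$ (which is generous enough at infinity to contain any constant function) and that the passage from the quasi-everywhere to the pointwise inequality, which is needed to conclude for an \emph{arbitrary} $z_0\in S_0^*$ rather than merely for quasi-every such point, relies on the continuity of $\widehat Q_t$ supplied by the a priori regularity theory of Subsection \ref{subsec-smooth}.
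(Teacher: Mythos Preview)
Your proof is correct and follows essentially the same approach as the paper: both use the constant $c=\min_\C Q=Q(a)$ as a competitor in the obstacle problem to obtain $\widehat Q_t\ge c$, and then read off membership in $S_t^*$ at points of $S_0^*$. The only cosmetic difference is that the paper packages the competitor as $\max\{Q(a),\widehat Q_t\}$ rather than the bare constant, which lets it conclude $Q(a)\le\widehat Q_t$ without separately passing from a q.e.\ inequality to a pointwise one; your route via continuity of $\widehat Q_t$ (legitimate, since the ambient subsection assumes $Q\in W^{2,p}$) accomplishes the same thing.
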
 

\begin{proof}
Pick a point $a\in S_0^*$, and observe that the function $\max\{Q(a),\widehat
Q_t\}$ is subharmonic (in fact, in $\Sub_t(\C)$) and therefore competes with 
$\widehat Q_t$ for the obstacle problem. We conclude that 
$Q(a)\le\widehat Q_t$. As $\widehat Q_t\le Q$, it follows that 
$\widehat Q_t(a)=Q(a)$, so that $a\in S_t^*$. The proof is complete.
\end{proof}

\begin{prop} If $0<t_1\le t_2<T$, we have $S_{t_1}\subset S_{t_2}$ and 
$\hat\sigma_{t_1}\le\hat\sigma_{t_2}$. 
\label{prop-4.15}
\end{prop}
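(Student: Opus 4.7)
The plan is to reduce both assertions to the inclusion $S^*_{t_1}\subset S^*_{t_2}$ already established in Lemma \ref{lem-4.12}, combined with the explicit density formula for $\hat\sigma_t$ derived in the proof of Proposition \ref{prop-4.14}. Concretely, in the course of proving Proposition \ref{prop-4.14} it was shown (via Theorem \ref{thm-4.9} and the Kinderlehrer-Stampacchia identification of second derivatives on the coincidence set) that
\[
\diff\hat\sigma_t=\mathbf{1}_{S_t^*}\,\Delta Q\,\diff A,\qquad 0<t<T,
\]
so the measures $\hat\sigma_{t_1},\hat\sigma_{t_2}$ are represented against a common reference density $\Delta Q\,\diff A$ as indicator functions of nested sets.

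First I would invoke Lemma \ref{lem-4.12} to get $S^*_{t_1}\subset S^*_{t_2}$. Since $\hat\sigma_{t_1}$ and $\hat\sigma_{t_2}$ are \emph{positive} measures and each equals $\mathbf{1}_{S^*_{t_i}}\Delta Q\,\diff A$, we must have $\Delta Q\ge 0$ a.e.\ on $S_{t_1}^*\cup S_{t_2}^*$. Combined with the set inclusion, this yields the pointwise a.e.\ inequality $\mathbf{1}_{S^*_{t_1}}\Delta Q\le\mathbf{1}_{S^*_{t_2}}\Delta Q$, and therefore
\[
\hat\sigma_{t_1}\le\hat\sigma_{t_2}
\]
as measures. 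This proves the second assertion.

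For the first assertion, the measure inequality immediately forces the support inclusion: if $z\in S_{t_1}=\supp\hat\sigma_{t_1}$, then every open disk $D$ centered at $z$ has $\hat\sigma_{t_1}(D)>0$, hence $\hat\sigma_{t_2}(D)>0$, whence $z\in\supp\hat\sigma_{t_2}=S_{t_2}$. There is no genuine obstacle; the content of the proposition is already packaged into Lemma \ref{lem-4.12} and the density formula from Proposition \ref{prop-4.14}, which in turn rests on the non-trivial Kinderlehrer-Stampacchia-Caffarelli regularity (Theorem \ref{thm-4.8} and Theorem \ref{thm-4.9}). Once those are in hand, the argument is a short unwinding of definitions.
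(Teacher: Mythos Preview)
Your proof is correct and uses essentially the same ingredients as the paper: Lemma \ref{lem-4.12} for the inclusion $S^*_{t_1}\subset S^*_{t_2}$, and the density formula $\diff\hat\sigma_t=\mathbf{1}_{S^*_t}\Delta Q\,\diff A$ established in the proof of Proposition \ref{prop-4.14}. The only difference is the order: the paper first proves $S_{t_1}\subset S_{t_2}$ directly via the shallow-points characterization (if $a\in S^*_{t_1}\cap S^*_{t_2}$ is $Q$-shallow with respect to $S^*_{t_2}$, then it is $Q$-shallow with respect to the smaller set $S^*_{t_1}$), and then deduces $\hat\sigma_{t_1}\le\hat\sigma_{t_2}$ from Theorem \ref{thm-4.9}; you instead get the measure inequality first and read off the support inclusion from it. Both orderings are fine.
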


\begin{proof}
If a point $a\in S_{t_1}^*\cap S_{t_2}^*$ is $Q$-shallow with respect to 
$S_{t_2}^*$, then it is also $Q$-shallow with respect to $S_{t_1}^*$, since 
$S_{t_1}^*\subset S_{t_2}^*$, by Lemma \ref{lem-4.12}. The first assertion,
$S_{t_1}\subset S_{t_2}$, now follows from a second application of 
$S_{t_1}^*\subset S_{t_2}^*$. The second assertion, 
$\hat\sigma_{t_1}\le\hat\sigma_{t_2}$, is a consequence of the first assertion
combined with Theorem \ref{thm-4.9}.
\end{proof}

\begin{rem}
This supplies the potential theoretical proof of Corollary \ref{cor-2.12} 
alluded to in Remark \ref{rem-2.13}.
\end{rem}

We need the following two lemmas from \cite{ST}, pp. 227-228.

\begin{lem} 
{\rm($0<t_0<T$)} The map $t\mapsto S_t$ is monotonically increasing and 
left-continuous in the Hausdorff metric:
\[
S_t\nearrow S_{t_0}\quad\text{as}\,\,\,t \nearrow t_0.
\]
\end{lem}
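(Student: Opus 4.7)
The monotonicity half of the claim is already established in Proposition \ref{prop-4.15}, so the only content is left-continuity in the Hausdorff metric. Since the family $\{S_t\}_{t<t_0}$ is monotonically increasing and contained in the compact set $S_{t_0}$, a standard finite-open-cover argument shows that Hausdorff left-continuity is equivalent to the set-theoretic identity
\[
\overline{\bigcup_{0<t<t_0}S_t}=S_{t_0}.
\]
I would verify this equivalence first: the inclusion ``$\subset$'' is immediate from monotonicity, and for the reverse, given $\epsilon>0$ I would cover $S_{t_0}$ by finitely many disks of radius $\epsilon$, each meeting some $S_t$ with $t<t_0$, then take the maximum over the finitely many thresholds $t$ to produce a single $t^*<t_0$ beyond which $S_{t_0}$ lies within Hausdorff distance $\epsilon$ of $S_t$.

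The hard inclusion $S_{t_0}\subset\overline{\bigcup_{t<t_0}S_t}$ I would handle by contradiction. If it failed at some $z_0\in S_{t_0}$, there would exist an open disk $D$ centered at $z_0$ with $S_t\cap D=\emptyset$ for all $t<t_0$, hence $\hat\sigma_t(D)=0$ for every such $t$. The plan is to push this vanishing through to the limiting measure and thereby contradict $z_0\in\supp\hat\sigma_{t_0}$.

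To produce the limiting measure, introduce
\[
\hat\sigma_{t_0}^-(E):=\lim_{t\nearrow t_0}\hat\sigma_t(E),\qquad E\subset\C\text{ Borel}.
\]
Proposition \ref{prop-4.15} gives $\hat\sigma_{t}\le\hat\sigma_{t'}$ whenever $t\le t'<t_0$, so this limit exists monotonically and defines a positive Borel measure, and the same Proposition yields $\hat\sigma_{t_0}^-\le\hat\sigma_{t_0}$ as measures. But the total masses match,
\[
\hat\sigma_{t_0}^-(\C)=\lim_{t\nearrow t_0}t=t_0=\hat\sigma_{t_0}(\C),
\]
and a one-sided inequality between finite positive measures with equal total mass must be an equality. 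Consequently $\hat\sigma_{t_0}(D)=\hat\sigma_{t_0}^-(D)=0$, contradicting $z_0\in\supp\hat\sigma_{t_0}$.

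The main obstacle is conceptual rather than computational: one must recognize that matching total mass upgrades the one-sided inequality $\hat\sigma_{t_0}^-\le\hat\sigma_{t_0}$ to an equality, whereupon support information at the limit is immediate. An alternative route would monotonically let $\widehat Q_t\nearrow\widehat Q_{t_0}$ (using Lemma \ref{lem-4.12}), identify the limit via the growth normalization \eqref{eq-4.3} and the uniqueness in the obstacle problem, and then read off $\hat\sigma_{t_0}^-=\hat\sigma_{t_0}$ from Corollary \ref{cor-4.4}; but the measure-theoretic argument above is shorter and avoids any regularity discussion of the obstacle function.
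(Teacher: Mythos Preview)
Your argument is correct. The paper itself does not supply a proof of this lemma; it merely cites \cite{ST}, pp.~227--228 (the sentence ``We need the following two lemmas from \cite{ST}'' introduces both this lemma and the next). So there is nothing to compare against directly.

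Your route is clean and stays entirely within the paper's own framework: you leverage the measure-monotonicity statement $\hat\sigma_{t_1}\le\hat\sigma_{t_2}$ from Proposition~\ref{prop-4.15} (which in turn rests on Theorem~\ref{thm-4.9} and the standing $W^{2,p}$ assumption of this subsection), take the monotone limit $\hat\sigma_{t_0}^-$, and observe that a one-sided inequality between finite positive measures of equal total mass forces equality. This immediately gives $\hat\sigma_{t_0}(D)=0$ for any disk $D$ avoiding all $S_t$ with $t<t_0$, contradicting $z_0\in\supp\hat\sigma_{t_0}$. The reduction of Hausdorff left-continuity to the closure identity is standard; your finite-cover sketch is fine (one gets a $2\epsilon$ bound rather than $\epsilon$, but that is cosmetic). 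The alternative route via $\widehat Q_t\nearrow\widehat Q_{t_0}$ that you mention would also work, but the measure argument is indeed shorter. In short: the paper outsources this to Saff--Totik, whereas you give a self-contained proof that exploits the $W^{2,p}$ structure already developed in Section~\ref{sec-obst}.
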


This means that $S_{t_0}$ is in a small neighborhood of $S_t$ for 
$t<t_0$ close to $t_0$ or, equivalently, that
\[
S_{t_0}=\clos \bigcup_{t<t_0} S_t.
\]

\begin{lem}
{\rm($0<t_0<T$)}
We have
\[\bigcap_{t_0<t<T} S_t\subset S_{t_0}^*.
\]
In particular, if $S_{t_0}=S_{t_0}^*$, then  
\[S_{t_0}=\bigcap_{t>t_0} S_t.\]
\label{lem-4.18}
\end{lem}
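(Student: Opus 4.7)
The plan is to show, for any $z_0\in\bigcap_{t_0<t<T}S_t$, that $\widehat Q_{t_0}(z_0)\ge Q(z_0)$, which is precisely the defining property of $z_0\in S^*_{t_0}$. Since $Q\in W^{2,p}$ with $1<p<+\infty$ implies by Sobolev embedding that both $Q$ and $\widehat Q_t=\Obs_t[Q]$ (Proposition \ref{prop-4.6}) are continuous, and since $S_t\subset S_t^*$, we automatically have $\widehat Q_t(z_0)=Q(z_0)$ for every $t\in(t_0,T)$. Combined with $\widehat Q_{t_0}\le Q$, the assertion reduces to verifying the right-continuity
\[
\lim_{t\searrow t_0}\widehat Q_t(z_0)=\widehat Q_{t_0}(z_0).
\]

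I use the representation $\widehat Q_t=\gamma^*_t(Q)-U^{\hat\sigma_t}$ from \eqref{eq-Qt} and attack the two factors separately. From Proposition \ref{prop-4.15}, the family $\hat\sigma_t$ is monotonically increasing in $t$, so the pointwise infimum $\mu:=\inf_{t>t_0}\hat\sigma_t$ exists as a positive Borel measure with $\mu\ge\hat\sigma_{t_0}$. Its total mass equals $\lim_{t\searrow t_0}t=t_0=\|\hat\sigma_{t_0}\|$, forcing the non-negative difference $\mu-\hat\sigma_{t_0}$ to vanish. Hence $\hat\sigma_t\searrow\hat\sigma_{t_0}$ as positive measures. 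To pass to the logarithmic potential at $z_0$, I decompose $\log(1/|z_0-\eta|^2)=\log^+(1/|z_0-\eta|^2)-\log^+|z_0-\eta|^2$. The second integrand is bounded on the common compact support $S_{t_1}$ (any fixed $t_1\in(t_0,T)$), while the first is integrable with respect to $\hat\sigma_{t_1}$ because $U^{\hat\sigma_{t_1}}(z_0)=\gamma^*_{t_1}(Q)-\widehat Q_{t_1}(z_0)$ is finite by continuity of $\widehat Q_{t_1}$. The monotone convergence theorem for decreasing sequences of finite measures then yields $U^{\hat\sigma_t}(z_0)\to U^{\hat\sigma_{t_0}}(z_0)$.

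For the Robin constant, I observe that $\gamma_t(Q)=t\gamma(Q/t)=\inf_\sigma\bigl(t\,I_0[\sigma]+2\int Q\,\diff\sigma\bigr)$ is an infimum of affine functions of $t$, hence concave and, in particular, continuous on $(0,T)$. An analogous monotone-convergence argument applied to the continuous function $Q$ on the fixed compact support of the $\hat\sigma_t$ (for $t_0<t\le t_1$) gives $\int Q\,\diff\hat\sigma_t\to\int Q\,\diff\hat\sigma_{t_0}$. Consequently $\gamma^*_t(Q)=\gamma_t(Q)-(1/t)\int Q\,\diff\hat\sigma_t\to\gamma^*_{t_0}(Q)$, and combined with the potential convergence this yields $\widehat Q_t(z_0)\to\widehat Q_{t_0}(z_0)$, completing the first inclusion.

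The ``in particular'' assertion is immediate: Proposition \ref{prop-4.15} gives $S_{t_0}\subset\bigcap_{t>t_0}S_t$, and combined with the first inclusion and the hypothesis $S_{t_0}=S^*_{t_0}$, this forces equality. The main technical obstacle is the careful application of monotone convergence to the logarithmic potential at the possibly singular point $z_0$; the key observation that makes it work is that continuity of $\widehat Q_{t_1}$ for $t_1>t_0$ forces $U^{\hat\sigma_{t_1}}(z_0)<+\infty$, which provides the integrability needed for MCT to apply.
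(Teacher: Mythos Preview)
Your proof is correct. The paper itself does not prove this lemma but defers to \cite{ST}, pp.~227--228, so there is no in-paper argument to compare against. Your strategy---show right-continuity of $t\mapsto\widehat Q_t(z_0)$ by separately handling the potential $U^{\hat\sigma_t}(z_0)$ and the Robin constant $\gamma_t^*(Q)$---is sound, and the key integrability observation $U^{\hat\sigma_{t_1}}(z_0)<+\infty$ is exactly what makes the dominated/monotone convergence step go through.

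One remark that would shorten the argument: you only need the \emph{one-sided} inequality $\widehat Q_{t_0}(z_0)\ge Q(z_0)$, and for this a crude upper bound suffices. Writing
\[
Q(z_0)-\widehat Q_{t_0}(z_0)=\widehat Q_t(z_0)-\widehat Q_{t_0}(z_0)
=\bigl(\gamma_t^*(Q)-\gamma_{t_0}^*(Q)\bigr)+\int_\C\log|z_0-\eta|^2\,\diff(\hat\sigma_t-\hat\sigma_{t_0})(\eta),
\]
and using that $\hat\sigma_t-\hat\sigma_{t_0}\ge0$ has total mass $t-t_0$ with support in the fixed compact $S_{t_1}$, the integral is bounded above by $M(t-t_0)$ where $M=\sup_{\eta\in S_{t_1}}\log|z_0-\eta|^2$. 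Together with your continuity of $\gamma_t^*(Q)$ this gives $Q(z_0)-\widehat Q_{t_0}(z_0)\le0$ directly, bypassing the decomposition into $\log^+$ parts and the MCT step. Either way the proof stands.
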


It is easy to construct examples which show how $S_{t_0}^*$ may contain
``seed points'' outside the main body of $S_{t_0}$ which grow into (small)
components of $S_t$ for $t>t_0$. The next lemma gives a criterion which 
guarantees that this phenomenon takes place. For a compact set $E\subset\C$, 
let $\Phull(E)$ denote its polynomially convex hull, that is,
\[
\Phull(E):=\big\{z\in\C:\,|p(z)|\le\max_E|p|\,\,\,\text{for all polynomials}
\,\,p\big\}.
\]
The (compact) set $\Phull(E)$ adds to $E$ all the points of $\C\setminus E$ 
which belong to bounded connectivity components of $\C\setminus E$ 
(i.e., points invisible to Brownian motion in $\C\setminus E$ starting at 
$\infty$).  

\begin{lem} 
For all $t_0,t$ with $0<t_0<t<T$, we have the inclusion
\[
\partial[\Phull(S^*_{t_0})]\subset S_t.
\]
\label{lem-4.19}
\end{lem}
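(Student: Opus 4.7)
I would proceed by contradiction. Assume $z_0\in\partial[\Phull(S^*_{t_0})]$ but $z_0\notin S_t$ for some $t$ with $t_0<t<T$. Since $\partial[\Phull(E)]\subset E$ for every compact $E$ and $S^*_{t_0}\subset S^*_t$ by Lemma~\ref{lem-4.12}, we have $z_0\in S^*_t\setminus S_t$, so Proposition~\ref{prop-4.14} exhibits an open disk $D=D(z_0,r)$ with $\int_{S^*_t\cap D}|\Delta Q|\,\diff A=0$. Combined with Theorem~\ref{thm-4.9} and the fact that $\Delta Q\ge 0$ a.e.\ on $S^*_t$, this yields $\hat\sigma_t(D)=0$, so $\widehat Q_t=\gamma_t^*(Q)-U^{\hat\sigma_t}$ is harmonic on $D$.

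Set $V:=\widehat Q_t-\widehat Q_{t_0}$. On $D$, $V$ is superharmonic (harmonic minus subharmonic), globally $V\ge 0$ by Lemma~\ref{lem-4.12}, and $V(z_0)=Q(z_0)-Q(z_0)=0$ since $z_0\in S^*_{t_0}\cap S^*_t$. The minimum principle for superharmonic functions then forces $V\equiv 0$ on $D$, and hence $\widehat Q_t\equiv\widehat Q_{t_0}$ on $D$.

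To close the argument, introduce the nonnegative measure $\mu:=\hat\sigma_t-\hat\sigma_{t_0}$ of total mass $t-t_0>0$ (Proposition~\ref{prop-4.15}). From $\widehat Q_s=\gamma_s^*(Q)-U^{\hat\sigma_s}$ we read off $U^\mu=c-V$ with $c:=\gamma_t^*(Q)-\gamma_{t_0}^*(Q)$, so $V=c-U^\mu$ is globally subharmonic with Riesz measure $\mu$, vanishes on $D\cup S^*_{t_0}$, and satisfies $V(z)=(t-t_0)\log|z|^2+\Ordo(1)$ as $|z|\to+\infty$. The minimum principle for superharmonic functions applied on each bounded connectivity component of $\C\setminus(S^*_{t_0}\cup D)$ propagates the vanishing of $V$ to all of $\Phull(S^*_{t_0}\cup D)$; since $\mu$ is absolutely continuous (Theorem~\ref{thm-4.9}) and $\partial[\Phull(S^*_{t_0}\cup D)]$ has vanishing area, $\mu$ is concentrated in the unbounded exterior $\Omega':=\C\setminus\Phull(S^*_{t_0}\cup D)$. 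The contradiction should now come from pitting the growth of $V$ at infinity against what is compatible with positive $\mu$-mass in $\Omega'$ together with $U^\mu\equiv c$ on the hull.

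The cleanest implementation I see is to test the extremality of $\widehat Q_t=\Obs_t[Q]$ against the competitor $\tilde v:=\widehat Q_{t_0}+2(t-t_0)\,g_{\Omega'}(\cdot,\infty)\mathbf{1}_{\Omega'}$, where $g_{\Omega'}(\cdot,\infty)$ is the Green's function of $\Omega'$ with pole at infinity extended by $0$. By construction $\tilde v$ is subharmonic, lies in $\Sub_t(\C)$, agrees with $\widehat Q_t=\widehat Q_{t_0}$ on $\Phull(S^*_{t_0}\cup D)$, and strictly exceeds $\widehat Q_t$ at every interior point of $D\cap\Omega_\infty(t_0)$ (where $g_{\Omega'}(\cdot,\infty)>0$), so once one knows $\tilde v\le Q$ everywhere, the maximality property defining $\Obs_t[Q]$ is violated. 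The main obstacle, and the technical heart of the argument, is verifying $\tilde v\le Q$ on the annular region of $\Omega'$ interpolating between $\partial\Phull(S^*_{t_0}\cup D)$ (where $\tilde v=\widehat Q_{t_0}\le Q$) and a neighborhood of infinity (where the scale-invariant growth of $Q$ dominates any polynomial-in-$\log|z|$ expression). This requires comparing $g_{\Omega'}(\cdot,\infty)$ with the slack $Q-\widehat Q_{t_0}$ available on $\Omega'$ and, if necessary, replacing $\tilde v$ by $\max\{\widehat Q_t,\min(\tilde v,Q)\}$ after smoothing to preserve subharmonicity.
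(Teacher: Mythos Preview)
Your opening moves are sound: the contradiction setup, the observation that $z_0\in S^*_{t_0}\subset S^*_t$, and the use of Proposition~\ref{prop-4.14} to find a disk $D$ on which $\widehat Q_t$ is harmonic are all fine (though you could simply note $z_0\notin S_t$ and take $D$ disjoint from $S_t$). The minimum principle step giving $V\equiv 0$ on $D$, and its propagation to $\Phull(S^*_{t_0}\cup D)$, are also correct.

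The argument breaks down in the competitor construction. You assert that $\tilde v$ strictly exceeds $\widehat Q_t$ on $D\cap\Omega_\infty(t_0)$ because $g_{\Omega'}(\cdot,\infty)>0$ there; but $D\subset S^*_{t_0}\cup D\subset\Phull(S^*_{t_0}\cup D)$, so $D\cap\Omega'=\emptyset$ and the extended Green function vanishes identically on $D$. Thus $\tilde v=\widehat Q_{t_0}=\widehat Q_t$ on $D$, and you have no point where $\tilde v>\widehat Q_t$. Beyond this, you acknowledge that the inequality $\tilde v\le Q$ on $\Omega'$ is unverified and is ``the technical heart''; the suggested patch $\max\{\widehat Q_t,\min(\tilde v,Q)\}$ does not preserve subharmonicity, so the argument is genuinely incomplete.

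The paper's proof avoids all of this by a single observation you are missing: instead of working at the fixed $t$, replace $t$ by some $t'$ with $t_0<t'<t$ close to $t_0$. Since $a\in\partial[\Phull(S^*_{t_0})]$, a nearby point $b$ lies in the unbounded component of $\C\setminus S^*_{t_0}$ and can be joined to infinity by a curve $\gamma$ at positive distance from $S^*_{t_0}$; Lemma~\ref{lem-4.18} then gives $\gamma\subset\C\setminus S_{t'}$ for $t'$ close to $t_0$, so $a$ itself lies in the \emph{unbounded} component of $\C\setminus S_{t'}$. Now $u=\widehat Q_{t_0}-\widehat Q_{t'}$ is harmonic on all of $\C\setminus S_{t'}$ (not merely on a small disk), nonpositive, and vanishes at the interior point $a$; the strong maximum principle forces $u\equiv 0$ on the entire unbounded component, contradicting the asymptotics $u(z)=(t_0-t')\log|z|^2+\Ordo(1)$. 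The step you lack is precisely this use of Lemma~\ref{lem-4.18} to connect $a$ to infinity within $\C\setminus S_{t'}$.
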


\begin{proof}
The standard geometric interpretation of the polynomially convex hull gives
that 
\[\partial[\Phull(S^*_{t_0})]\subset\partial S^*_{t_0}\subset S^*_{t_0}.\] 
We need to show that $\partial[\Phull(S^*_{t_0})]\subset S_t$ for all $t$, 
$t_0<t<T$. We argue by contradiction, and suppose that there exists a point
$a\in \partial[\Phull(S^*_{t_0})]$ such that $a\in\C\setminus S_{t_1}$ for 
some $t_1$, with $t_0<t_1<T$. Then 
$a\in\C\setminus S_{t}$ for all $t$ with $t_0<t\le t_1$, 
and if $t>t_0$ is sufficiently close to $t_0$, the point $a$ belongs to the 
unbounded component of $\C\sm S_t$. 
Indeed, choose a small open neighborhood $U$ of $a$ avoiding $S_{t_1}$ and 
since, by assumption, $a\in\partial[\Phull(S^*_{t_0})]$, we may assured that
there exists a point $b\in U\sm \Phull(S^*_{t_0})$. The point $b$ belongs
to the unbounded component of $\C\setminus S^*_{t_0}$, so we may connect $b$ 
with $\infty$ by a curve $\gamma$ in $\C\sm  S^*_{t_0}$; $\gamma$ is at a 
positive distance from $S^*_{t_0}$. 
By Lemma \ref{lem-4.18}, 
$\gamma\subset \C\sm S_t$ for  all $t>t_0$ close to $t_0$, and so the point
$b$ -- and a fortiori $a$ --  is in the unbounded component of $\C\sm S_t$.
Next, we consider (for $t$ with $t>t_0$ close to $t_0$) the function 
$u=\widehat Q_{t_0}-\widehat Q_t$. Then, by Lemma \ref{lem-4.12}, we have 
$u\le0$. Moreover, since $a\in S_{t_0}^*\subset S_t^*$, we have
$\widehat Q_{t_0}(a)=\widehat Q_t(a)=Q(a)$, and therefore, $u(a)=0$. The
function $\widehat Q_{t_0}$ is harmonic in $\C\setminus S_{t_0}$, and, 
likewise, $\widehat Q_{t}$ is harmonic in $\C\setminus S_{t}$, so we conclude
that $u$ is harmonic in $\C\setminus S_t$. The function $u$ then has a local
maximum at the interior point $a$, so by the strong maximum principle,
we get that $u=0$ throughout $\C\setminus \Phull(S_{t})$. This does not 
agree with the known asymptotics \eqref{eq-4.3}. 
We conclude that the initial assumption must be false, so that 
$a\in S_{t_1}$ for all $t_1$ with $t_0<t_1<T$.
\end{proof}

\begin{rem} 
The above assertions extend to the case $t_0=0$ if as before $S_0^*$ is 
the set where the global minimum of $Q$ is attained, and we put
$S_0=\emptyset$.
\end{rem}

\subsection {Subharmonic potentials} As before, $Q:\C\to\R$ is assumed to be of
class $W^{2,p}$, so that e.g. $\Delta Q\in L^p_{\rm loc}(\C)$.  
We suppose there exists $T=T(Q)$ with $0<T\le+\infty$ such that 
\eqref{eq-4.2} holds for $0<t<T$ while it fails for $t>T$. 

\begin{lem} 
{\rm($0<t<T$)} Let $D$ be a bounded domain in $\C$ and 
suppose $\Delta Q\ge0$ in $D$. Then $\partial D\subset S_t^*$ implies 
$D\subset S_t^*$. 
\label{lem-4.21}
\end{lem}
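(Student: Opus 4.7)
The plan is to construct a competitor in the obstacle problem defining $\widehat Q_t$ by gluing $Q$ on $\bar D$ with $\widehat Q_t$ on the complement, and then invoke the extremality of $\widehat Q_t$. Since $Q \in W^{2,p}$ for some $p>1$ implies $\widehat Q_t\in W^{2,p}$ by Proposition \ref{prop-4.6}, both functions are continuous; and the hypothesis $\partial D \subset S_t^*$ means $\widehat Q_t = Q$ on $\partial D$, so the piecewise function
\[
v(z) := \begin{cases} Q(z), & z\in\bar D,\\ \widehat Q_t(z), & z\in\C\setminus D,\end{cases}
\]
is well-defined and continuous on $\C$.

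Next I would verify that $v\in\Sub_t(\C)$ with $v\le Q$ on $\C$. Subharmonicity on the open set $D$ is immediate from $\Delta Q\ge 0$, and on $\C\setminus\bar D$ it is inherited from $\widehat Q_t$. Since $\widehat Q_t\le Q$ q.e.\ on $\C$ by Proposition \ref{prop-4.2}, and $Q$, $\widehat Q_t$ are both continuous, we have $v\ge\widehat Q_t$ everywhere with equality on $\partial D$; the sub-mean value inequality at a point $a\in\partial D$ is then
\[
v(a)=\widehat Q_t(a)\le \frac{1}{2\pi}\int_{-\pi}^{\pi}\widehat Q_t(a+\varepsilon\e^{\imag\theta})\,\diff\theta \le \frac{1}{2\pi}\int_{-\pi}^{\pi}v(a+\varepsilon\e^{\imag\theta})\,\diff\theta,
\]
establishing subharmonicity across the boundary. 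The growth $v(z)=t\log|z|^2+\Ordo(1)$ as $|z|\to\infty$ is inherited from $\widehat Q_t$ since $D$ is bounded, so $v\in\Sub_t(\C)$; and $v\le Q$ holds trivially on $\bar D$ and on $\C\setminus D$ by Proposition \ref{prop-4.2}.

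Finally, $v$ is admissible for the supremum defining $\Obs_t[Q]=\widehat Q_t$, whence $v\le\widehat Q_t$ q.e.\ on $\C$; by continuity, $v\le\widehat Q_t$ pointwise. On $D$ this reads $Q\le\widehat Q_t$, and combined with $\widehat Q_t\le Q$ yields $\widehat Q_t=Q$ on $D$, that is $D\subset S_t^*$.

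The only delicate point is checking that the gluing $v$ is genuinely subharmonic across $\partial D$; but the comparison $v\ge\widehat Q_t$ with equality on $\partial D$, noted above, handles this cleanly, so no regularity of $\partial D$ is needed. The proof uses only Proposition \ref{prop-4.2} (the obstacle-problem identification of $\widehat Q_t$) and the hypothesis $\Delta Q\ge 0$ on $D$.
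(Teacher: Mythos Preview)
Your proof is correct and follows essentially the same approach as the paper: you construct the glued competitor $v$ (the paper calls it $R_1$), verify its subharmonicity across $\partial D$ via the inequality $v\ge\widehat Q_t$ with equality on $\partial D$, and invoke the extremality of $\widehat Q_t=\Obs_t[Q]$ to conclude $v\le\widehat Q_t$, hence $Q=\widehat Q_t$ on $D$. Your write-up is slightly more explicit about continuity and the growth condition placing $v$ in $\Sub_t(\C)$, but the argument is the same.
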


\begin{proof} The assumption $\partial D\subset S_t^*$ means that 
$\widehat Q_t=Q$ on $\partial D$. We write $R_0=\widehat Q_t$, and let $R_1$
be the function which equals $Q$ in $D$ and equals $\widehat Q_t$ elsewhere.
We observe that $R_0\le R_1\le Q$ on $\C$, while $R_0=R_1=Q$ on $\partial D$. 
Also, the function $R_1$ is subharmonic. Indeed, $\Delta R_1=\Delta Q\ge0$ on 
$D$ (by assumption), and $\Delta R_1=\Delta R_0\ge0$ on $\C\setminus\bar D$. 
It remains to observe that for $a\in\partial D$,  
\[
R_1(a)=R_0(a)\le\frac{1}{2\pi}\int_{-\pi}^{\pi}
R_0(a+\varepsilon \e^{\imag\theta})\,\diff\theta\le 
\frac{1}{2\pi}\int_{-\pi}^{\pi}R_1(a+\varepsilon \e^{\imag\theta})\,
\diff\theta,\qquad 0<\varepsilon<+\infty.
\]
We see that $R_1$ is subharmonic in $\C$, and the conclusion $R_1=R_0$
follows. 
\end{proof}

\begin{cor} If $Q$ is subharmonic in $\C$, then $\C\sm S^*_t$ is connected.
\label{cor-4.22}
\end{cor}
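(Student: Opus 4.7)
The plan is to argue by contradiction using Lemma \ref{lem-4.21}. Suppose, for the sake of contradiction, that $\C\setminus S_t^*$ is not connected. Since $S_t^*$ is compact (and nonempty by Proposition \ref{prop-4.5}), its complement $\C\setminus S_t^*$ is open and has exactly one unbounded connected component. The failure of connectedness would then force the existence of a bounded connected component $D$ of $\C\setminus S_t^*$.

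First I would check that $D$ satisfies the hypotheses of Lemma \ref{lem-4.21}. The set $D$ is by construction a bounded domain in $\C$. Since $D$ is a connected component of the open set $\C\setminus S_t^*$, its topological boundary satisfies $\partial D\subset S_t^*$ (a boundary point of $D$ cannot lie in $\C\setminus S_t^*$, as that would place it in some open component, contradicting its being a limit of points of $D$). Finally, the subharmonicity assumption on $Q$ gives $\Delta Q\ge 0$ throughout $\C$, hence in particular on $D$.

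Applying Lemma \ref{lem-4.21} then yields $D\subset S_t^*$, which directly contradicts the choice $D\subset\C\setminus S_t^*$. Thus no bounded component of $\C\setminus S_t^*$ can exist, and $\C\setminus S_t^*$ consists of its unique unbounded component, i.e., it is connected.

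There is no significant obstacle here: the entire argument is a direct application of Lemma \ref{lem-4.21}. The only point requiring mild care is the topological observation that bounded components of the complement of a compact set have their boundaries inside that compact set, which is standard.
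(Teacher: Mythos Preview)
Your argument is correct and is exactly the intended one: the paper states Corollary \ref{cor-4.22} without an explicit proof precisely because it follows immediately from Lemma \ref{lem-4.21} by the contradiction argument you wrote out. There is nothing to add.
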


A continuous function $h:\C\to\R$ is said to be {\em nowhere harmonic} if
for every open set $D\subset\C$ the restriction $h|_D$ fails to be harmonic. 

\begin{cor} Suppose $Q$ is subharmonic in $\C$, and that
$Q$ is nowhere harmonic. Then 
$S^*_{t_0}\subset S_t$ for all $t_0,t$ with $0<t_0<t<T$.
\end{cor}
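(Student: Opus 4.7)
The plan is to fix $t_0,t$ with $0<t_0<t<T$ and an arbitrary point $z_0\in S^*_{t_0}$, and show $z_0\in S_t$. By the monotonicity of the coincidence sets (Lemma \ref{lem-4.12}) we already have $z_0\in S^*_{t_0}\subset S^*_t$, so by Proposition \ref{prop-4.14} it suffices to verify that $z_0$ is \emph{not} a $Q$-shallow point of $S^*_t$. I would split the analysis into two cases depending on whether $z_0$ lies in the interior or on the boundary of $S^*_{t_0}$.

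First, suppose $z_0\in \Int S^*_{t_0}$. Pick a small open disk $D$ centered at $z_0$ with $D\subset S^*_{t_0}\subset S^*_t$. If $z_0$ were $Q$-shallow in $S^*_t$, after possibly shrinking $D$ we would have
\[
\int_{S^*_t\cap D}|\Delta Q|\,\diff A=\int_D|\Delta Q|\,\diff A=0,
\]
so $\Delta Q=0$ a.e.\ on $D$. Since $Q\in W^{2,p}$ with $1<p<+\infty$, the pointwise and distributional Laplacians coincide, so $\Delta Q=0$ as a distribution on $D$; by Weyl's lemma (together with the continuity of $Q$), $Q$ is classically harmonic on $D$, contradicting the hypothesis that $Q$ is nowhere harmonic. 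Thus $z_0\in S_t$ in this case.

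Next, suppose $z_0\in\partial S^*_{t_0}$. Here I would invoke Corollary \ref{cor-4.22}: since $Q$ is subharmonic, $\C\setminus S^*_{t_0}$ is connected, so it coincides with the unbounded component, and therefore $\Phull(S^*_{t_0})=S^*_{t_0}$. Consequently $\partial[\Phull(S^*_{t_0})]=\partial S^*_{t_0}\ni z_0$, and Lemma \ref{lem-4.19} gives $z_0\in\partial[\Phull(S^*_{t_0})]\subset S_t$, as required.

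Since every closed set decomposes as $S^*_{t_0}=\Int S^*_{t_0}\cup\partial S^*_{t_0}$, combining the two cases yields $S^*_{t_0}\subset S_t$ for all $t_0<t<T$. The only mildly delicate point is the first case, where one needs the passage from ``$\Delta Q=0$ a.e.\ on an open set'' to ``$Q$ is harmonic there,'' justified by the Sobolev regularity of $Q$; everything else is a direct application of the structural results already in hand (Lemma \ref{lem-4.12}, Corollary \ref{cor-4.22}, Lemma \ref{lem-4.19}, and Proposition \ref{prop-4.14}).
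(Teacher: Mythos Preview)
Your proposal is correct and follows essentially the same route as the paper: use Corollary \ref{cor-4.22} to get $\Phull(S^*_{t_0})=S^*_{t_0}$, apply Lemma \ref{lem-4.19} for boundary points, and use Proposition \ref{prop-4.14} together with the nowhere-harmonic hypothesis to rule out $Q$-shallow interior points. The only difference is organizational---you frame it as a case split on $\Int S^*_{t_0}$ versus $\partial S^*_{t_0}$, and you spell out the Weyl-lemma step that the paper leaves implicit.
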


\begin{proof}
By Corollary \ref{cor-4.22}, the set $\C\setminus S_{t_0}^*$ is connected, and
so $\Phull(S_{t_0}^*)=S_{t_0}^*$. By Lemma \ref{lem-4.19}, then, we arrive at
$\partial S_{t_0}^*\subset S_t$ for all $t$ with $t_0<t<T$. It remains to 
check that $\Int S_{t_0}^*\subset S_t$ for all $t$ with $t_0<t<T$. By 
Proposition \ref{prop-4.14}, we just need to show that no point in 
$\Int S_{t_0}^*$ is $Q$-shallow with respect to $S_t^*$. This is guaranteed by 
the requirement that $Q$ be nowhere harmonic.
\end{proof}

\subsection{Convex potentials}
We say that a convex function $q:\C\to\R$ is {\em locally uniformly convex} if
\[
|\xi|^2\Delta q(z)+\re[\xi^2\partial^2q(z)]\ge \epsilon(z)|\xi|^2,\qquad
\xi\in\C,
\]
for some continuous $\epsilon:\C\to]0,+\infty[$. For $C^2$-smooth $q$, this 
just says that the Hessian of $q$ is (strictly) positive definite everywhere.

In \cite{KS}, Chapter V, coincidence sets for constrained obstacle problems 
are considered, and under suitable convexity assumptions, the coincidence set
is simply connected with $C^{1,\alpha}$-smooth boundary (here, $0<\alpha<1$). 
The setting is the following. Suppose $\Omega$ is a strictly convex bounded
$C^\infty$-smooth domain, and let $q:\bar\Omega\to\R$ be $C^2$-smooth and 
locally uniformly convex, with $q>0$ on $\partial\Omega$ and 
$\min_\Omega q<0$. Then, if we put
$\varrho=0$ in the constrained obstacle problem (see Subsection 
\ref{subsec-4.2.2}), the coincidence set
\[
S^*_{\Omega,q}:=\big\{z\in\Omega:\,\Obs_{\Omega,0}[q](z)=q(z)\big\} 
\] 
is non-empty, compact, simply connected, and equal to the closure of its 
interior. Moreover, if $q$ is $C^{2,\alpha}$-smooth for some $\alpha$, 
$0<\alpha<1$, then the boundary $\partial S^*_{\Omega,q}$ is a  
$C^{1,\alpha'}$-smooth Jordan curve, for some $\alpha'$, $0<\alpha'<1$.

Applied to our setting (cf. Subsection \ref{subsec-smooth}), we get Theorem 
\ref{thm-4.11} below. Before we formulate the theorem, we note that if 
$Q:\C\setminus\R$ is convex, and \eqref{eq-4.2} holds for some positive $t$, 
then $Q$ must grow faster (radially, the growth is at least linear), so that 
\eqref{eq-4.2} holds for all positive reals $t$ (which makes 
$T=T(Q)=+\infty$). 

\begin{thm} 
{\rm ($0<t<T=+\infty$)} 
Suppose $Q:\C\to\R$ is $C^2$-smooth and locally uniformly convex with 
\eqref{eq-4.2}. Then the droplet $S_t$ is simply connected, and equal to the
closure of its interior. Moreover, 
if $Q$ is $C^{2,\alpha}$-smooth for some $\alpha$, $0<\alpha<1$, then 
$\partial S_t$ is a  $C^{1,\alpha'}$-smooth Jordan curve, for some $\alpha'$, 
$0<\alpha'<1$.
\label{thm-4.11}
\end{thm}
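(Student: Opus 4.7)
The plan is to reduce the unbounded obstacle problem defining $\widehat Q_t$ to a constrained obstacle problem on a bounded strictly convex smooth domain, and then invoke the Kinderlehrer-Stampacchia-Caffarelli result recalled just before the statement. First I would localize: pick a large disk $\Omega=\D(0,R)$ so that the super-coincidence set $S_t^*$ lies inside $\Omega$ (possible since $S_t^*$ is compact, by Proposition \ref{prop-4.5} and the growth condition \eqref{eq-4.2}). Since $\widehat Q_t$ is harmonic in a neighborhood of $\partial\Omega$ (again by Proposition \ref{prop-4.5}), the restriction $\varrho:=\widehat Q_t|_{\partial\Omega}$ is $C^\infty$-smooth, and Lemma \ref{lem-4.6} identifies $\widehat Q_t|_{\bar\Omega}$ with $\Obs_{\Omega,\varrho}[Q]$, the corresponding coincidence set being $S_t^*$.

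Next I would normalize the boundary data to zero. Let $\tilde h\in C^\infty(\bar\Omega)$ denote the harmonic extension of $\varrho$ into $\Omega$, and set $\tilde q:=Q-\tilde h$. A direct check (using Lemma \ref{lem-4.6} once more) shows $\Obs_{\Omega,0}[\tilde q]=\widehat Q_t-\tilde h$ on $\bar\Omega$, with coincidence set equal to $S_t^*$. The equality $\tilde q=0$ on $\partial\Omega$ can be perturbed upward by an arbitrarily small positive constant to meet strict positivity on $\partial\Omega$, while $\min_\Omega\tilde q<0$ follows from the strong maximum principle applied to the nontrivially superharmonic function $\tilde h-\widehat Q_t$ (nontrivial because $\Delta\widehat Q_t=\Delta Q>0$ on $\Int S_t^*$). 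Under the stronger hypothesis $Q\in C^{2,\alpha}$, boundary Schauder regularity yields $\tilde h\in C^{2,\alpha}(\bar\Omega)$, hence $\tilde q\in C^{2,\alpha}(\bar\Omega)$.

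The crux is verifying that $\tilde q$ is \emph{locally uniformly convex} on $\bar\Omega$. One has $\Delta\tilde q=\Delta Q>0$, but the full real Hessian of $\tilde q$ equals that of $Q$ minus the (trace-free) Hessian of $\tilde h$, which need not be small. I would address this either by appealing to the KS-Caffarelli theorem in a form requiring only strict subharmonicity of the obstacle together with $C^{1,1}$-regularity of the solution (the topological conclusion for the coincidence set is driven by strict subharmonicity rather than by full convexity), or by choosing $\Omega$ as a tight $C^\infty$-smooth strictly convex neighborhood of $\Phull(S_t^*)$ and controlling $\|\partial^2\tilde h\|_{L^\infty(\Omega)}$ via Poisson-kernel estimates so as to preserve the positivity modulus of the Hessian of $Q$. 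This is the main technical obstacle.

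Once the hypotheses on $\tilde q$ are in place, the cited KS-Caffarelli theorem gives that $S_t^*$ is non-empty, compact, simply connected, and equal to the closure of its interior; under the $C^{2,\alpha}$ assumption it also gives that $\partial S_t^*$ is a $C^{1,\alpha'}$-smooth Jordan curve for some $\alpha'\in(0,1)$. Finally, $S_t=S_t^*$: since $Q$ is locally uniformly convex, $\Delta Q>0$ everywhere, so no point of $S_t^*$ is $Q$-shallow in the sense of Subsection \ref{subsec-shallow}, and Proposition \ref{prop-4.14} identifies $S_t$ with $S_t^*$. All the conclusions therefore transfer to the droplet $S_t$.
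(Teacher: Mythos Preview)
Your overall strategy---reduce to a constrained obstacle problem on a bounded strictly convex smooth domain and invoke the KS--Caffarelli result quoted just before the theorem---matches the paper's. The gap is exactly where you flag it: the obstacle $\tilde q=Q-\tilde h$ need not be locally uniformly convex, and neither of your proposed fixes closes it. Fix (a) conflates two different things: Caffarelli's free-boundary regularity does proceed under a condition like $\Delta q\ge c>0$, but the topological conclusion (simple connectedness, and in particular \emph{connectedness} of the coincidence set) in the cited KS chapter genuinely uses convexity of $q$; strict subharmonicity alone yields only that the complement of the coincidence set is connected (cf.\ Corollary \ref{cor-4.22}), not that the coincidence set itself is. Fix (b) is not carried out, and choosing $\Omega$ tight around the a priori unknown set $\Phull(S_t^*)$ while keeping $\Omega$ strictly convex and simultaneously forcing $\|\partial^2\tilde h\|_{L^\infty(\Omega)}$ to be small is at best delicate and at worst circular.

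The paper sidesteps the issue entirely by choosing $\Omega$ not as a disk but as a sublevel set $\Omega_c=\{\widehat Q_t<c\}$ for large $c$. On $\partial\Omega_c$ the boundary datum $\varrho=\widehat Q_t|_{\partial\Omega_c}$ is the \emph{constant} $c$, so its harmonic extension is the constant $c$ and the normalized obstacle is simply $q=Q-c$, which inherits the local uniform convexity (and $C^{2,\alpha}$-smoothness) of $Q$ verbatim. The only additional work is to show that $\Omega_c$ is a strictly convex $C^\infty$-smooth domain for $c$ large; this follows by writing $\widehat Q_t(z)=t\log|z|^2+h(z)$ with $h$ bounded and harmonic near infinity, so that after rescaling, $\partial\Omega_c$ is a $C^\infty$-small perturbation of a round circle. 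One then checks $q>0$ on $\partial\Omega_c$ (since $\widehat Q_t<Q$ outside $S_t^*$) and $\min_{\Omega_c}q<0$ (since $Q=\widehat Q_t<c$ on $S_t^*$ for $c$ large), and the quoted KS result applies directly. Your final step, $S_t=S_t^*$ via Proposition \ref{prop-4.14}, is correct.
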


\begin{proof}
We claim that for big enough $c$, the compact set
\[
\bar\Omega_c:=\big\{z\in\C:\widehat Q_t(z)\le c\big\}
\]
is strictly convex with $C^\infty$-smooth boundary. In fact, we know from 
\eqref{eq-4.3} and the fact that $\widehat Q_t$ has the form
\[
\widehat Q_t(z)=t\log|z|^2+h(z),
\]
where $h$ is real-valued, bounded, and harmonic in a neighborhood of infinity.
As $c$ increases the sets $\bar\Omega_c$ cover bigger and bigger portions of 
the plane $\C$, and the boundary $\partial\bar\Omega_c$ is contained in a fixed
neighborhood of infinity for big enough $c$. The equation defining the boundary
is 
\[
|z|\,\e^{h(z)}=\e^c,
\]
and an argument using the harmonic conjugate of $h$ shows this equation may
be written in the form
\[
|z+a_0+a_1z^{-1}+a_2z^{-2}+\ldots|=\e^c,
\] 
where the series converges for big $|z|$. In other words, 
using the inverse mapping, $\partial\bar\Omega_c$ is (for big $c$) the 
image of the circle $|z|=\e^c$ under a mapping
\[
z\mapsto z+b_0+b_1z^{-1}+b_2z^{-2}+\ldots,
\]
which also converges for big $|z|$. After rescaling by a factor of $\e^{-c}$,
we are talking about the image of the unit circle $|z|=1$ under the mapping
\[
z\mapsto z+b_0\e^{-c}+b_1\e^{-2c}z^{-1}+b_2\e^{-3c}z^{-2}+\ldots,
\]
which for large values of $c$ constitutes a very slight perturbation of 
the circle $|z|=1$, and it is then easy to check that the domain inside the 
curve is strictly convex with $C^\infty$-smooth boundary. As a consequence, 
$\bar\Omega_c$ is strictly convex with $C^\infty$-smooth boundary for big $c$.
To finish the proof, we observe that (cf. Lemma \ref{lem-4.6})
\[
\Obs_{\Omega_c,0}[q]+c=\Obs_t[Q]\,\,\,\text{on}\,\,\,\bar\Omega_c,
\]
if $q=Q-c$. It is immediate that $S_t=S_{\Omega_c,q}$. The rest follows 
from Chapter V of \cite{KS}.  
\end{proof}



\section{Local droplets}
\label{sec-ld}

\subsection{Localization}
\label{subsec-5.1} 
We often localize the field $Q:\C\to\R\cup\{+\infty\}$ (which we assume to be
lower semi-continuous) to a closed set $\Sigma\subset\C$ and write 
\[
Q_\Sigma=\begin{cases}
Q\quad\,\,\,\,\,\text{on}\,\,\,\Sigma
\\
+\infty\quad\text{on}\,\,\,\C\sm\Sigma.
\end{cases}
\]
The function $Q_\Sigma$ is then also lower semi-continuous.
We will assume that $Q_\Sigma$ meets the growth condition \eqref{eq-4.2},
(which is the $t$-scaled version of \eqref{eq-2.6}; in case $\Sigma$ is 
compact, this 
is automatically so irrespective of the behavior of $Q$ near infinity). To 
avoid triviality, we also need to require that $Q_\Sigma<+\infty$ on a set 
of positive area. We will refer to the closed set $\Sigma$ as a 
{\em localization}. 

We will use the notation (which corresponds to the special parameter choice 
$t=1$)
\[
\hat\sigma_{Q,\Sigma}=\hat\sigma[Q,\Sigma]:=\hat\sigma_{Q_\Sigma},\quad
S_{Q,\Sigma}=S[Q,\Sigma]:=\supp\hat\sigma[Q,\Sigma];
\]
this conforms with the convention to write $\hat\sigma_Q=\hat\sigma[Q]$
and $S_Q=S[Q]$. We will focus on the $t$-scaled variants 
($0<t<+\infty$)
\[
\hat\sigma_t[Q,\Sigma]:=t\,\hat\sigma_{Q/t,\Sigma},\quad
S_t[Q,\Sigma]:=\supp\hat\sigma_t[Q,\Sigma]=\supp\hat\sigma[Q/t,\Sigma]=
S[Q/t,\Sigma].
\]
We shall also need the modified Robin constant 
\[
\gamma^*_t(Q,\Sigma)=\gamma^*_t(Q_\Sigma)
\]
from Subsection \ref{subsec-4.1}.

\begin{lem} 
{\rm($0<t<+\infty$)}
Suppose $\Sigma\subset\C$ is closed, and that $Q_{\Sigma}$ meets the growth 
condition \eqref{eq-4.2}, while $Q_{\Sigma}<+\infty$ holds on a set
of positive area. Then $S_t[Q,\Sigma]\subset\Sigma$.
\label{lem-5.1}
\end{lem}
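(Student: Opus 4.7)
The plan is to exploit the fact that $Q_\Sigma \equiv +\infty$ on $\C \setminus \Sigma$, which makes it prohibitively expensive (in the energy sense) for the equilibrium measure to place any mass outside $\Sigma$.

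More concretely, I would first check that the minimization problem defining $\hat\sigma_{Q_\Sigma/t}$ has a finite infimum. This uses the hypothesis that $Q_\Sigma < +\infty$ on a set $E \subset \Sigma$ of positive area: on such a set one can construct a compactly supported probability measure (for instance, a suitably normalized restriction of area measure to a compact subset of $E$ where $Q_\Sigma$ is bounded above, which exists by lower semi-continuity and a standard exhaustion) with finite logarithmic energy and finite weighted potential $\int Q_\Sigma \, d\sigma < +\infty$. Hence
\[
\inf_\sigma I_{Q_\Sigma/t}[\sigma] < +\infty,
\]
and by Frostman's Theorem \ref{thm-frost1} the infimum is attained at the unique minimizer $\hat\sigma_{Q_\Sigma/t}$, which therefore also has $I_{Q_\Sigma/t}[\hat\sigma_{Q_\Sigma/t}] < +\infty$.

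Next, since the logarithmic double integral for a compactly supported probability measure of finite energy is finite, finiteness of $I_{Q_\Sigma/t}[\hat\sigma_{Q_\Sigma/t}]$ forces
\[
\int_\C Q_\Sigma \, d\hat\sigma_{Q_\Sigma/t} < +\infty.
\]
But $Q_\Sigma \equiv +\infty$ on the open set $\C \setminus \Sigma$, so the only way the integral can be finite is if $\hat\sigma_{Q_\Sigma/t}(\C \setminus \Sigma) = 0$. Since $\Sigma$ is closed, this gives $\supp \hat\sigma_{Q_\Sigma/t} \subset \Sigma$. Rescaling, $\hat\sigma_t[Q,\Sigma] = t\hat\sigma_{Q_\Sigma/t}$ has the same support, so $S_t[Q,\Sigma] \subset \Sigma$, as desired.

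The argument is essentially routine; the only place that requires a touch of care is the initial construction of a competitor measure with finite weighted energy, which is what justifies that one is in the non-degenerate regime where Frostman's theorem yields a genuine (finite-energy) minimizer rather than a formal one. Everything else is immediate from the convention that $+\infty$ values of the potential are penalized with infinite energy.
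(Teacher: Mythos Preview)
Your proposal is correct and follows essentially the same route as the paper's proof: if the equilibrium measure placed mass outside $\Sigma$, the term $\int Q_\Sigma\,\diff\hat\sigma$ would be $+\infty$, contradicting the finiteness of the minimal energy. The paper's argument is terser (it simply invokes ``the energy minimizing property'' without spelling out why the infimum is finite), whereas you take the extra care to exhibit a competitor with finite weighted energy and to separate the logarithmic and potential contributions; this added detail is sound and arguably clarifies a point the paper leaves implicit.
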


\begin{proof}
If the probability measure $\sigma_0:=\hat\sigma[Q/t,\Sigma]$ were 
to have support outside $\Sigma$, the corresponding energy 
\[
I_{Q_\Sigma/t}[\sigma_0]=\int_{\C^2}\log\frac{1}{|\xi-\eta|^2}
\diff\sigma_0(\xi)\diff\sigma_0(\eta)+\frac{2}{t}\int_\C Q_\Sigma\diff\sigma_0
\]
would necessarily equal $+\infty$, which does not agree with the energy 
minimizing property of the equilibrium measure.
\end{proof}

We now compare two different localizations, one contained in the other.

\begin{lem} 
{\rm($0<t<+\infty$)}
Suppose $\Sigma_1,\Sigma_2\subset\C$ are closed with 
$\Sigma_1 \subset\Sigma_2$, and that $Q_{\Sigma_2}$ meets the growth 
condition \eqref{eq-4.2}, while $Q_{\Sigma_1}<+\infty$ holds on a set
of positive area. If $S_t[Q,\Sigma_2]\subset\Sigma_1$, then
\[
\hat\sigma_t[Q,\Sigma_1]=\hat\sigma_t[Q,\Sigma_2],\quad 
S_t[Q,\Sigma_1]=S_t[Q,\Sigma_2].
\]
\label{lem-5.2}
\end{lem}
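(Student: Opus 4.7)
The plan is to exploit the fact that $Q_{\Sigma_1}$ and $Q_{\Sigma_2}$ differ only on $\Sigma_2\setminus\Sigma_1$, where $Q_{\Sigma_1}=+\infty$ while $Q_{\Sigma_2}$ may be finite. In particular $Q_{\Sigma_2}\le Q_{\Sigma_1}$ pointwise on $\C$, with equality on $\Sigma_1$. Rescaling by $1/t$ preserves these comparisons, so for every probability measure $\sigma\in\probc$ we have the inequality
\[
I_{Q_{\Sigma_2}/t}[\sigma]\le I_{Q_{\Sigma_1}/t}[\sigma],
\]
with equality whenever $\supp\sigma\subset\Sigma_1$. Taking the infimum over $\sigma$ and using Frostman's Theorem \ref{thm-frost1} to assert that the infima are attained, this gives
\[
\gamma(Q_{\Sigma_2}/t)=I_{Q_{\Sigma_2}/t}[\hat\sigma_{Q/t,\Sigma_2}]\le
I_{Q_{\Sigma_1}/t}[\hat\sigma_{Q/t,\Sigma_1}]=\gamma(Q_{\Sigma_1}/t).
\]

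Next I would use the hypothesis $S_t[Q,\Sigma_2]\subset\Sigma_1$ to get the reverse inequality. Since the measure $\hat\sigma_{Q/t,\Sigma_2}$ is supported inside $\Sigma_1$, where $Q_{\Sigma_1}=Q_{\Sigma_2}=Q$, one has
\[
I_{Q_{\Sigma_1}/t}[\hat\sigma_{Q/t,\Sigma_2}]=
I_{Q_{\Sigma_2}/t}[\hat\sigma_{Q/t,\Sigma_2}]=\gamma(Q_{\Sigma_2}/t),
\]
and $\hat\sigma_{Q/t,\Sigma_2}$ is a legitimate competitor in the minimization defining $\gamma(Q_{\Sigma_1}/t)$. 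Consequently $\gamma(Q_{\Sigma_1}/t)\le\gamma(Q_{\Sigma_2}/t)$, so both Robin constants agree and $\hat\sigma_{Q/t,\Sigma_2}$ achieves the minimum for the $Q_{\Sigma_1}/t$ problem as well.

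The uniqueness assertion in Frostman's Theorem \ref{thm-frost1} then forces
\[
\hat\sigma_{Q/t,\Sigma_1}=\hat\sigma_{Q/t,\Sigma_2},
\]
which after rescaling by $t$ reads $\hat\sigma_t[Q,\Sigma_1]=\hat\sigma_t[Q,\Sigma_2]$, and the equality of supports $S_t[Q,\Sigma_1]=S_t[Q,\Sigma_2]$ is immediate. The only place to be cautious is to make sure Frostman's theorem is truly applicable for the potential $Q_{\Sigma_1}/t$; this needs the hypothesis that $Q_{\Sigma_1}<+\infty$ on a set of positive area (ruling out degeneracy) plus the growth condition \eqref{eq-4.2} inherited from $Q_{\Sigma_2}$ (since $Q_{\Sigma_1}\ge Q_{\Sigma_2}$), both of which are assumed. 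The main ``obstacle'' is thus purely bookkeeping: checking that the equilibrium problem for $Q_{\Sigma_1}$ is nondegenerate and that the two energies genuinely coincide on a measure supported in $\Sigma_1$.
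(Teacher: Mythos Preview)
Your proof is correct and follows essentially the same route as the paper's own argument: compare the two energy functionals via the pointwise inequality $Q_{\Sigma_2}\le Q_{\Sigma_1}$, use the support hypothesis $S_t[Q,\Sigma_2]\subset\Sigma_1$ to see that $\hat\sigma_{Q/t,\Sigma_2}$ has the same energy with respect to both weights, and then invoke the uniqueness clause in Frostman's theorem. The only cosmetic difference is that the paper works directly with the kernel $L_Q$ and the inequality $I_{Q_1}[\sigma_2]\le I_{Q_1}[\sigma_1]$, whereas you phrase things via the Robin constants; the substance is identical.
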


\begin{proof} For $j=1,2$, we write $\sigma_j=\hat\sigma[Q/t,\Sigma_j]$ and 
$Q_j=Q_{\Sigma_j}/t$. If $L_Q$ is as in \eqref{eq-LQ}, we then get that (since
$\Sigma_1\subset\Sigma_2$)
\[
L_{Q_1}(\xi,\eta)=L_{Q_2}(\xi,\eta),\qquad (\xi,\eta)\in\Sigma_1\times\Sigma_1,
\]
and so (since $S_t[Q,\Sigma_2]=\supp\hat\sigma[Q/t,\Sigma_2]=\supp\sigma_2
\subset\Sigma_1$)
\begin{equation}
\label{eq-5.1}
I_{Q_1}[\sigma_2]=\int_{\C^2}L_{Q_1}(\xi,\eta)
\diff\sigma_2(\xi)\diff\sigma_2(\eta)=\int_{\C^2}L_{Q_2}(\xi,\eta)
\diff\sigma_2(\xi)\diff\sigma_2(\eta)=I_{Q_2}[\sigma_2],
\end{equation}
where we have used the identity \eqref{eq-intQ2}. As $Q_1\ge Q_2$, we have
\[
I_{Q_1}[\sigma_1]=\inf_{\sigma} I_{Q_1}[\sigma]
\ge\inf_{\sigma} I_{Q_2}[\sigma]=I_{Q_2}[\sigma_2], 
\]
where both infima run over $\sigma\in\probc$. Combined with \eqref{eq-5.1}, 
this gives $I_{Q_1}[\sigma_1]\le I_{Q_1}[\sigma_1]$, which is only possible
if $\sigma_1=\sigma_2$, by Frostman's theorem (Theorem \ref{thm-frost1}).
Finally, if the measures coincide, their supports coincide as well.
\end{proof}

The typical application of Lemma \ref{lem-5.1} will be when both 
$\Sigma_1$ and $\Sigma_2$ are compact. However, already the case when 
$\Sigma_1=S_Q$ and $\Sigma_2=\C$ is interesting.

\begin{cor} 
{\rm($0<t<+\infty$)}
If $Q$ meets the growth condition \eqref{eq-4.2}, then with 
$\Sigma:=S_t[Q]=S_{Q/t}$, we have
\[
\hat\sigma_t[Q]=\hat\sigma_t[Q,\Sigma],\quad S_t[Q,\Sigma]=S_t[Q].
\]
\end{cor}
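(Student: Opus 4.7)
The plan is to deduce this as a direct application of Lemma \ref{lem-5.2} with $\Sigma_1 := S_t[Q] = S_{Q/t}$ and $\Sigma_2 := \C$. First I would unwind the definitions: for $\Sigma_2 = \C$ the localized potential $Q_{\Sigma_2}$ equals $Q$ itself, so by construction $\hat\sigma_t[Q,\Sigma_2] = \hat\sigma_t[Q]$ and $S_t[Q,\Sigma_2] = S_t[Q]$. Hence the inclusion $\Sigma_1 \subset \Sigma_2$ is automatic, and the crucial hypothesis $S_t[Q,\Sigma_2] \subset \Sigma_1$ reduces to the tautology $S_t[Q] \subset S_t[Q]$. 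The growth hypothesis \eqref{eq-4.2} on $Q_{\Sigma_2} = Q$ is assumed. Granted the remaining hypothesis of Lemma \ref{lem-5.2}, we conclude $\hat\sigma_t[Q,\Sigma_1] = \hat\sigma_t[Q,\Sigma_2] = \hat\sigma_t[Q]$, and the support identity $S_t[Q,\Sigma] = S_t[Q]$ follows by taking supports.

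The remaining hypothesis to check is that $Q_{\Sigma_1} < +\infty$ holds on a set of positive area. Since $\hat\sigma_t[Q]$ is a probability measure of total mass $t > 0$ supported on $\Sigma_1 = S_t[Q]$ and has finite logarithmic energy $I_{Q/t}[\hat\sigma_{Q/t}] = \gamma(Q/t) < +\infty$ by Frostman's Theorem \ref{thm-frost1}, the integral $\int_\C Q \diff\hat\sigma_t[Q]$ is finite; in particular $Q < +\infty$ holds $\hat\sigma_t[Q]$-almost everywhere on $\Sigma_1$. Thus $Q_{\Sigma_1}$ is finite on a set of positive $\hat\sigma_t[Q]$-measure, hence of positive logarithmic capacity, which is what is really used in the Frostman uniqueness step at the end of the proof of Lemma \ref{lem-5.2}.

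The main obstacle, such as it is, is purely cosmetic: Lemma \ref{lem-5.2} is phrased in terms of positive two-dimensional Lebesgue measure, whereas the above argument only delivers positive logarithmic capacity of $\{Q < +\infty\} \cap \Sigma_1$. However, an inspection of the proof of Lemma \ref{lem-5.2} shows that the positive-area assumption is only invoked to ensure that Frostman's Theorem \ref{thm-frost1} applies to the localized problem, and the hypothesis required there is non-polarity, i.e., positive capacity. So one can either observe that the proof of Lemma \ref{lem-5.2} goes through verbatim with this weaker hypothesis, or alternatively note that whenever the corollary is applied in the sequel the droplet $S_t[Q]$ has positive area (for instance under the $W^{2,p}$-smoothness of Subsection \ref{subsec-smooth}, by Theorem \ref{thm-4.9}), in which case the hypothesis of Lemma \ref{lem-5.2} holds as stated.
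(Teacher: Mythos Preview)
Your approach is exactly the intended one: the corollary is a direct application of Lemma \ref{lem-5.2} with $\Sigma_1=S_t[Q]$ and $\Sigma_2=\C$, and the paper states it without proof immediately after remarking that this is the relevant specialization. Your discussion of the positive-area hypothesis is more scrupulous than the paper itself, which simply takes the application as immediate; your observation that the proof of Lemma \ref{lem-5.2} only needs non-polarity (so that Frostman's theorem applies to the localized problem) is a valid way to close the gap.
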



\subsection{Local droplets and the obstacle problem}

We let $\Sigma$ be a localization, and suppose that $Q<+\infty$ on a subset 
of $\Sigma$ with positive area. We require that $Q_\Sigma$ meets the growth 
condition \eqref{eq-4.2} for a positive $t$, which is kept fixed for the 
moment (this requirement is void if $\Sigma\subset\C$ is compact). 
Then the Borel measure $\hat\sigma_t[Q,\Sigma]$ is a well-defined 
positive measure of total mass $t$, and its support $S_t[Q,\Sigma]$ is 
compact with $S\subset\Sigma$.
The following lemma is immediate from Lemma \ref{lem-4.7}.

\begin{lem} 
Suppose $Q\in W^{2,1}(\Int\,S)$ where $S=S_t[Q,\Sigma]$. Then 
$\hat\sigma_t[Q,\Sigma]$ is absolutely continuous in $\Int\,S$ and 
in fact
\[
\diff\hat\sigma=\Delta Q\,\diff A\quad\text{on}\,\,\,\,\Int\,S.
\]
\label{lem-5.4}
\end{lem}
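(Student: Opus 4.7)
The plan is to replay the proof of Lemma \ref{lem-4.8} (lem-4.7 in the display) verbatim, but with the lower semi-continuous potential $Q_\Sigma$ in place of $Q$ throughout. Nothing in Section \ref{sec-obst} required $Q$ to be real-valued — only lower semi-continuity, a $t$-scaled growth condition of the form \eqref{eq-4.2}, and the nontriviality assumption that $Q<+\infty$ on a set of positive area. All three are granted for $Q_\Sigma$ in the present setting.

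First I would invoke Corollary \ref{cor-4.4}, applied to $Q_\Sigma$, to obtain the distributional identity
\[
\diff\hat\sigma_t[Q,\Sigma]=\Delta\widehat{(Q_\Sigma)}_t\,\diff A \quad\text{on }\C,
\]
where $\widehat{(Q_\Sigma)}_t=\Obs_t[Q_\Sigma]$ is the obstacle function associated with $Q_\Sigma$ at level $t$. This is the master identity; the rest is local bookkeeping on $\Int S$.

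Next I restrict to the open set $\Int S$, where $S=S_t[Q,\Sigma]$. By Lemma \ref{lem-5.1} we have $S\subset\Sigma$, so in particular $Q_\Sigma=Q$ pointwise on $\Int S$; hence the hypothesis $Q\in W^{2,1}(\Int S)$ is equivalent to $Q_\Sigma\in W^{2,1}(\Int S)$. On the other hand, Proposition \ref{prop-4.2} applied to $Q_\Sigma$ gives $\widehat{(Q_\Sigma)}_t=Q_\Sigma$ q.e.\ on the super-coincidence set, which by Proposition \ref{prop-4.5} contains $S\supset\Int S$. Thus $\widehat{(Q_\Sigma)}_t=Q$ a.e.\ on $\Int S$, and since both sides are locally integrable there, this equality holds as distributions on the open set $\Int S$.

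Taking the distributional Laplacian of the equality $\widehat{(Q_\Sigma)}_t=Q$ on $\Int S$ gives $\Delta\widehat{(Q_\Sigma)}_t=\Delta Q$ as distributions on $\Int S$. Combined with the master identity restricted to $\Int S$, we conclude
\[
\diff\hat\sigma_t[Q,\Sigma]=\Delta Q\,\diff A\quad\text{on }\Int S,
\]
which simultaneously yields the absolute continuity and the density formula. There is no real obstacle: the localization $\Sigma$ enters only through Lemma \ref{lem-5.1}, which buys us the inclusion $\Int S\subset\Sigma$ needed to identify $Q_\Sigma$ with $Q$ on $\Int S$; after that the argument is identical to the unlocalized case.
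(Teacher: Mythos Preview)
Your proof is correct and takes essentially the same approach as the paper. The paper's own proof consists of the single sentence ``This follows from Lemma \ref{lem-4.8}'', which is understood to mean exactly what you have written out: apply that lemma with $Q_\Sigma$ in place of $Q$, noting that $Q_\Sigma=Q$ on $\Int S$ since $S\subset\Sigma$.
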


\begin{proof}
This follows from Lemma \ref{lem-4.8}. 
\end{proof}


We are led to the following three definitions.

\begin{defn}
Suppose $Q$ is in $W^{2,1}$ on a neighborhood of $S=S_t[Q,\Sigma]$. We say 
that $S$ is a {\em local $(Q,t)$-droplet with localization $\Sigma$} if the 
following equality holds ($\hat\sigma=\hat\sigma_t[Q,\Sigma]$):
\[
\diff\hat\sigma=1_{S}\Delta Q\diff A.
\] 
\label{def-5.4}
\end{defn}

\begin{defn}
A compact set $S\subset\C$ is a {\em local $(Q,t)$-droplet} if it is 
a local $(Q,t)$-droplet with respect to some localization $\Sigma$. 
\label{def-5.5}
\end{defn}

\begin{defn}
A compact set $S\subset\C$ is a {\em global $(Q,t)$-droplet} if it is 
a local $(Q,t)$-droplet with respect to the localization $\Sigma=\C$. 
\label{def-5.6}
\end{defn}

\begin{rem}
(a) There is at most one global $(Q,t)$-droplet $S$, as it is given by 
$S=S_t[Q]$. Theorem \ref{thm-4.9} guarantees that it exists under the 
growth requirement \eqref{eq-4.2} and the additional regularity $Q\in W^{2,p}$ 
(this is true also if $Q$ is in $W^{2,p}$ only in a neighborhood of $S$). 
In contrast, there may exist several local $(Q,t)$-droplets.

\noindent(b) If the boundary $\partial S$ of the set $S=S[Q,\Sigma]$ has 
zero area, then $S$ is a local $(Q,t)$-droplet if and only if and only if 
$\hat\sigma=\hat\sigma_t[Q,\Sigma]$ is absolutely continuous.

\noindent(c) Two local $(Q,t)$-droplets $S_1,S_2$ cannot have 
the containment $S_1\subset S_2$ unless $S_1=S_2$.

\noindent (d) The point with the definition of local droplets is that we may 
focus on 
the support $S=\supp\hat\sigma$ rather than the (generally more complicated) 
equilibrium measure $\hat\sigma$ (with respect to the weight $Q_\Sigma$). 

\noindent(e) In the above definition, it is possible to weaken the 
smoothness assumption on $Q$ to $W^{\Delta,1}$ smoothness, which just asks
that the function and its Laplacian are both locally integrable.
\end{rem}

\begin{prop}
If $S$ is a local $(Q,t)$-droplet with respect to some localization 
$\Sigma=\Sigma_0$, then it is a local $(Q,t)$-droplet with respect to the 
minimal localization $\Sigma=S$. 
\label{prop-5.9}
\end{prop}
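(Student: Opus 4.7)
The plan is to deduce this as a direct consequence of Lemma \ref{lem-5.2}, applied with $\Sigma_1 := S$ and $\Sigma_2 := \Sigma_0$. The two conclusions that need to be checked are that $S$ equals the support $S_t[Q,S]$ of the localized equilibrium measure and that the resulting measure is still $1_S \Delta Q\,\diff A$.

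First I would verify the three hypotheses of Lemma \ref{lem-5.2}. The inclusion $S \subset \Sigma_0$ is automatic from Lemma \ref{lem-5.1}, since $S = S_t[Q,\Sigma_0]$ by assumption. The growth condition on $Q_{\Sigma_0}$ is part of the standing hypothesis that $S$ is a local $(Q,t)$-droplet with localization $\Sigma_0$. The condition $S_t[Q,\Sigma_2] \subset \Sigma_1$ is trivial, since $S_t[Q,\Sigma_0] = S = \Sigma_1$.

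The one slightly nontrivial point is checking that $Q_S < +\infty$ on a set of positive area. Since $\hat\sigma := \hat\sigma_t[Q,\Sigma_0]$ has total mass $t > 0$ and, by Definition \ref{def-5.4}, satisfies $\diff\hat\sigma = 1_S \Delta Q\,\diff A$, we obtain
\[
\int_S \Delta Q\,\diff A = t > 0,
\]
which in particular forces $S$ to have positive area. Since the definition of a local droplet also assumes $Q \in W^{2,1}$ in a neighborhood of $S$, the function $Q$ is finite almost everywhere there, so $\{z \in S : Q_S(z) < +\infty\}$ has full area in $S$.

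Once the hypotheses are confirmed, Lemma \ref{lem-5.2} yields
\[
\hat\sigma_t[Q,S] = \hat\sigma_t[Q,\Sigma_0] = \hat\sigma, \qquad S_t[Q,S] = S_t[Q,\Sigma_0] = S,
\]
and the identity $\diff\hat\sigma_t[Q,S] = 1_S \Delta Q\,\diff A$ is inherited directly from the original localization. This is exactly what it means for $S$ to be a local $(Q,t)$-droplet with respect to $\Sigma = S$. There is no real obstacle in the argument; the proposition is essentially a repackaging of Lemma \ref{lem-5.2}, with the only substantive step being the observation that the positivity of the total mass $t$ pins down that $S$ has positive area and hence meets the non-degeneracy condition required for the lemma.
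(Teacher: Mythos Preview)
Your proposal is correct and follows exactly the paper's approach, which simply invokes Lemma \ref{lem-5.2} with $\Sigma_1=S$ and $\Sigma_2=\Sigma_0$. You have just filled in the verification of the hypotheses more carefully than the paper does, including the check that $S$ has positive area.
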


\begin{proof}
This follows from Lemma \ref{lem-5.2} with $\Sigma_1=S$ and 
$\Sigma_2=\Sigma_0$.
\end{proof}

\begin{rem}
If $S=S_t[Q,\Sigma]$ is a local $(Q,t)$-droplet, then the associated 
measure $\hat\sigma=\hat\sigma_t[Q,\Sigma]$ is absolutely 
continuous. It is possible that the converse might be true (cf. Lemma 
\ref{lem-5.4} above). 
For the moment, we have a weaker statement. Suppose first that $Q$ is in
$W^{2,p}$ in a neighborhood of $S$, for some $p$, $1<p<+\infty$. 
The statement now runs as follows: if $\hat\sigma$ is absolutely
continuous with density in $L^p$ for some $p$, $1<p<+\infty$, then $S$ is a 
local $(Q,t)$-droplet. Indeed, from the properties of the 2D Hilbert transform,
we get that the function 
\[
\widehat{(Q_{\Sigma})}_t(\xi)=\gamma^*_t(Q,\Sigma)-U^{\hat\sigma}(\xi),
\]
is in $W^{2,p}$ and from Proposition \ref{prop-4.5} we have that 
\[
\widehat{(Q_{\Sigma})}_t(\xi)=Q(\xi),\qquad \xi\in S,
\]
so that by \cite{KS}, p. 53, we get
\[
\Delta\widehat{(Q_{\Sigma})}_t(\xi)=\Delta Q(\xi),\qquad \xi\in S,
\]
as distributions, which leads to the desired result. 
\end{rem}

For a compact $S\subset\C$, we define the corresponding (weighted) 
logarithmic potential
\begin{equation}
U^{Q,S}(\xi)=\int_{S}\log\frac{1}{|\xi-\eta|^2}\,\Delta Q(\eta)\diff A(\eta).
\label{eq-UQS}
\end{equation}

We have the following characterization of local $(Q,1)$-droplets. We recall the
notion of $Q$-shallow points from Subsection \ref{subsec-shallow}.

\begin{thm}
Suppose that $S\subset\Sigma\subset\C$, where $S$ is compact and $\Sigma$ 
closed, and that $Q$ is in $W^{2,1}$ in a 
neighborhood of $S$. Then $S$ is a local $(Q,t)$-droplet with localization
$\Sigma$ if and only if: 

\noindent {\rm (i)} $\Delta Q\ge0$ a.e. on $S$, 

\noindent{\rm(ii)} $S$ contains no $Q$-shallow points,
\[
\int_S \Delta Q\diff A=t,
\tag{\rm  iii}
\]
\noindent {\rm(iv)} 
$U^{S,Q}+Q=\gamma^*_t(Q,S)$ q.e. on $S$, for some real constant
$\gamma^*_t(Q,S)$ (the modified Robin constant), and

\noindent{\rm(v)} $U^{S,Q}+Q\ge\gamma^*_t(Q,S)$ q.e. on $\Sigma$. 
\label{thm-5.7}
\end{thm}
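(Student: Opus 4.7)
The plan is to argue the two directions separately, using Frostman's Theorem \ref{thm-frost3} for the \emph{only if} direction and Lemma \ref{lm-4.1} for the \emph{if} direction, exploiting the fact that condition (iv)--(v) is tailor-made to serve as the hypothesis of the latter with $Q$ replaced by $Q_\Sigma$.

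For the forward implication, assume $S$ is a local $(Q,t)$-droplet with localization $\Sigma$, so that $\hat\sigma:=\hat\sigma_t[Q,\Sigma]$ satisfies $\diff\hat\sigma=\1_S\,\Delta Q\,\dA$. Positivity of $\hat\sigma$ and its total mass $t$ immediately give (i) and (iii). Frostman's Theorem \ref{thm-frost3} applied to the weight $Q_\Sigma/t$, after rescaling, yields $U^{\hat\sigma}+Q_\Sigma\ge\gamma_t^*(Q_\Sigma)$ q.e.\ on $\C$ with equality q.e.\ on $S$; since $U^{\hat\sigma}=U^{S,Q}$ by the formula \eqref{eq-UQS}, this gives (iv) and (v) with the identification $\gamma_t^*(Q,S):=\gamma_t^*(Q_\Sigma)$. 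Finally, $S=\supp\hat\sigma$ means every open disk around a point of $S$ has positive $\hat\sigma$-mass; combined with (i), this rules out $Q$-shallow points and delivers (ii).

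For the converse, I would set $\sigma:=\1_S\,\Delta Q\,\dA$, which by (i) and (iii) is a positive Borel measure on $S$ of total mass $t$. Condition (ii) combined with (i) forces $\supp\sigma=S$: a disk $D$ around $z_0\in S\sm\supp\sigma$ would satisfy $\int_{S\cap D}\Delta Q\,\dA=\sigma(D)=0$, and (i) would upgrade this to $\int_{S\cap D}|\Delta Q|\,\dA=0$, contradicting the absence of shallow points. Before invoking Lemma \ref{lm-4.1}, one should check $\sigma$ has finite logarithmic energy; integrating (iv) against $\sigma$ gives $\int U^{S,Q}\,\diff\sigma=t\gamma_t^*(Q,S)-\int_S Q\,\diff\sigma$, whose right-hand side is finite because $Q\in W^{2,1}$ near the compact set $S$. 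The key step is then to apply Lemma \ref{lm-4.1} with $Q$ replaced by $Q_\Sigma$ and $c:=\gamma_t^*(Q,S)$: since $Q_\Sigma\equiv+\infty$ off $\Sigma$, the inequality $W:=\gamma_t^*(Q,S)-U^{S,Q}\le Q_\Sigma$ q.e.\ on $\C$ reduces to (v), while $W=Q_\Sigma$ q.e.\ on $\supp\sigma=S$ is precisely (iv). Lemma \ref{lm-4.1} then identifies $W=\Obs_t[Q_\Sigma]$ q.e., and taking distributional Laplacians together with Corollary \ref{cor-4.4} yields $\diff\hat\sigma_t[Q,\Sigma]=\diff\sigma=\1_S\,\Delta Q\,\dA$. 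Together with $\supp\sigma=S=S_t[Q,\Sigma]$, this is exactly Definition \ref{def-5.4}.

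The step I expect to be the main obstacle is the rigorous verification that $\sigma$ carries finite logarithmic energy under the mild $W^{2,1}$ assumption on $Q$ alone; one may need to combine (iv) with an approximation argument (or a mollification of $Q$) to justify the relevant integration-by-parts identification. A secondary technical point is the passage from q.e.\ identities to pointwise or distributional statements needed to extract the measure $\sigma$ from the potential $W$, but these are standard once finite energy is in place.
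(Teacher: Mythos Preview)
Your proposal is correct and follows essentially the same route as the paper: Frostman's Theorem \ref{thm-frost3} (applied to $Q_\Sigma/t$) for necessity, then Lemma \ref{lm-4.1} with $Q$ replaced by $Q_\Sigma$ and $c=\gamma_t^*(Q,S)$ for sufficiency, followed by Corollary \ref{cor-4.4} to recover $\diff\hat\sigma_t[Q,\Sigma]=\1_S\Delta Q\,\dA$. You are in fact more explicit than the paper on two points it glosses over---using (ii) to secure $\supp\sigma=S$, and worrying about the finite-energy hypothesis of Lemma \ref{lm-4.1}; regarding the latter, note that since $\sigma$ is absolutely continuous with respect to area measure, polar sets are automatically $\sigma$-null, so integrating (iv) against $\sigma$ is legitimate, and the only remaining issue is the local integrability of $Q\Delta Q$, which is unproblematic under the $W^{2,p}$ ($p>1$) assumption used throughout the rest of the paper.
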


\begin{proof}
We first establish the necessity of conditions (i)-(iv). So, we suppose that
$S$ is a local $(Q,t)$-droplet. As $\diff\hat\sigma=1_S\Delta Q\diff A$ is 
positive
with mass $t$, and $S$ is its support set, conditions (i)-(iii) are necessary. 
The necessity of condition (iv) and (v) follows from Frostman's Theorem 
\ref{thm-frost3} (with $Q_S/t$ in place of $Q$, where $S$ is used as a 
localization).

We turn to the sufficiency of the conditions (i)-(v). We write 
$W:=c-U^{Q,S}$, where the constant $c=\gamma^*_t(Q,S)$ is as in (iv). By 
(iv), we then have $W=Q_\Sigma$ q.e. on $S$ while (v) gives 
$W\le Q_\Sigma$ q.e. on $\C$. By Lemma \ref{lm-4.1},
we get that $W=\Obs_t[Q_\Sigma]$. Next, Proposition \ref{prop-4.2} and 
Corollary \ref{cor-4.4} show that 
\[
\diff\hat\sigma_t[Q,S]=\Delta W\diff A=-\Delta U^{Q,S}\diff A
=1_S\Delta Q\diff A.
\]
So we have a local $(Q,t)$-droplet with localization $\Sigma$.
\end{proof}

\begin{rem}
To characterize the local $(Q,t)$-droplets, we use the minimal localization
$S$. We see that condition (v) becomes vacuous and may be removed.  
\end{rem}

\begin{cor}
Suppose that $S\subset\Sigma\subset\C$, where $S$ is compact and $\Sigma$ 
closed, and that $Q$ is in $W^{2,1}$ in a 
neighborhood of $S$. Consider the function
\[
\widehat Q_S:=\gamma^*_t(Q,S)-U^{Q,S},
\] 
where $\gamma^*_t(Q,S)$ is the constant in Theorem \ref{thm-5.7}. 
Then $S$ is a local $(Q,t)$-droplet with localization
$\Sigma$ if and only if: 

\noindent{\rm (i)} $\Delta Q\ge0$ a.e. on $S$, 

\noindent{\rm(ii)} $S$ contains no $Q$-shallow points,
\[
\int_S \Delta Q\diff A=t,
\tag{\rm  iii}
\]
\noindent{\rm(iv)} $\widehat Q_S=Q$ q.e. on $S$, and

\noindent{\rm(v)} $\widehat Q_S\le Q$ q.e. on $\Sigma$. 

\noindent Moreover, if {\rm(i)-(v)} are assumed, then
$\widehat Q_S\in\Sub_t(\C)$ is harmonic on $\C\sm S$, with asymptotics 
\[
\widehat Q_S(z)=t\log|z|^2+\Ordo(1)\quad\text{as}\,\,\,|z|\to+\infty.
\]
As a consequence, we have q.e.
\[
\widehat Q_S=\widehat{(Q_\Sigma)}_t=\Obs_t[Q_\Sigma].
\]
Moreover, if, for some $p$ with $1<p<+\infty$, we have $Q\in
W^{2,p}$ in a neighborhood of $S$, then $\widehat Q_S\in W^{2,p}$ as well.
\label{cor-5.13}
\end{cor}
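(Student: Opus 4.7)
The plan is to observe that Corollary~\ref{cor-5.13} is essentially a reformulation of Theorem~\ref{thm-5.7} packaged through the auxiliary function $\widehat Q_S=\gamma^*_t(Q,S)-U^{Q,S}$, plus a standard unpacking of the potential-theoretic structure and one appeal to Lemma~\ref{lm-4.1}. Concretely, I would first verify that conditions (i)--(v) of the corollary are equivalent to the homonymous conditions of Theorem~\ref{thm-5.7}. Conditions (i)--(iii) are literally the same, so only (iv) and (v) need inspection. Substituting the definition of $\widehat Q_S$, the equation $\widehat Q_S=Q$ q.e.\ on $S$ is the same as $U^{Q,S}+Q=\gamma^*_t(Q,S)$ q.e.\ on $S$, and the inequality $\widehat Q_S\le Q$ q.e.\ on $\Sigma$ is the same as $U^{Q,S}+Q\ge\gamma^*_t(Q,S)$ q.e.\ on $\Sigma$. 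Hence the equivalence with Theorem~\ref{thm-5.7} is immediate.

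For the additional assertions, assume (i)--(v) and set $\sigma:=1_S\Delta Q\,dA$, which by (i) and (iii) is a positive compactly supported Borel measure of total mass $t$, and by construction $U^{Q,S}=U^{\sigma}$. Since $\sigma\ge0$, $U^{\sigma}$ is superharmonic on $\C$ and harmonic on $\C\setminus\supp\sigma\supset\C\setminus S$; consequently $\widehat Q_S$ is subharmonic on $\C$, harmonic on $\C\setminus S$, and the standard asymptotics of logarithmic potentials at infinity give
\[
U^{\sigma}(z)=-t\log|z|^2+\Ordo(1),\qquad|z|\to+\infty,
\]
so that $\widehat Q_S(z)=t\log|z|^2+\Ordo(1)$ and in particular $\widehat Q_S\in\Sub_t(\C)$.

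With these structural properties together with (iv)--(v), I am in position to apply Lemma~\ref{lm-4.1} to the weight $Q_\Sigma$, taking $W=\widehat Q_S$ and $c=\gamma^*_t(Q,S)$: since $Q_\Sigma=+\infty$ off $\Sigma$, the inequality $W\le Q_\Sigma$ q.e.\ on $\C$ reduces to (v), while $W=Q_\Sigma$ q.e.\ on $\supp\sigma\subset S$ is (iv). The lemma therefore yields $\widehat Q_S=\Obs_t[Q_\Sigma]$ q.e., and Proposition~\ref{prop-4.2} further identifies this with $\widehat{(Q_\Sigma)}_t$. Finally, if $Q\in W^{2,p}$ in a neighborhood of $S$ for some $1<p<+\infty$, the density $1_S\Delta Q$ is in $L^p$ with compact support, so the $L^p$-boundedness of the Calderón--Zygmund operators governing the second derivatives of the logarithmic Newton potential (see Remark~\ref{rem-4.10}(c)) gives $\widehat Q_S\in W^{2,p}$.

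The only mildly delicate point is ensuring that $\sigma$ has finite logarithmic energy before invoking Lemma~\ref{lm-4.1}, but this is automatic because $\sigma$ has a compactly supported $L^1$ density; a Fubini argument using local integrability of $\log(1/|\cdot|)$ suffices. Beyond that, the proof is an accounting exercise rather than a genuine obstacle: every ingredient has been prepared in Sections~\ref{sec-obst} and~\ref{sec-ld}.
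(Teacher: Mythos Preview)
Your proposal is correct and matches the paper's own (very terse) proof: the equivalence of (i)--(v) with Theorem~\ref{thm-5.7} is a direct substitution, the subharmonicity, harmonicity off $S$, and asymptotics of $\widehat Q_S$ are standard properties of logarithmic potentials of compactly supported positive measures, the identification $\widehat Q_S=\Obs_t[Q_\Sigma]$ is exactly the Lemma~\ref{lm-4.1} step already carried out in the sufficiency half of Theorem~\ref{thm-5.7}, and the $W^{2,p}$ regularity comes from the 2D Hilbert transform, just as the paper says.

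One caveat on your closing remark: it is \emph{not} true that a compactly supported $L^1$ density automatically has finite logarithmic energy; one can superpose bumps of unit mass on shrinking disks so that the self-energies diverge while the total $L^1$ norm stays finite (the energy of a normalized disk of radius $r$ is of order $\log(1/r)$). The paper itself invokes Lemma~\ref{lm-4.1} inside the proof of Theorem~\ref{thm-5.7} without checking this hypothesis, so the lacuna is shared; and under the stronger assumption $Q\in W^{2,p}$ for some $p>1$---the regime actually used in the remainder of the paper---the density $1_S\Delta Q$ lies in $L^p$ and Young's inequality does give finite energy, so the issue is confined to the $W^{2,1}$ endpoint rather than the substance of your argument.
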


\begin{proof}
It is clear from the properties of logarithmic potentials that 
$U^{Q,S}$ is subharmonic in $\C$ and harmonic in $\C\sm S$, with the 
corresponding asymptotics at infinity as a consequence of condition (ii) 
of Theorem \ref{thm-5.7}. Moreover, the properties of the 2D Hilbert 
transform show that if $Q\in W^{2,p}$ in a neighborhood of $S$, then
$U^{Q,S}\in W^{2,p}$, for $1<p<+\infty$.  
These properties are then inherited by $\widehat Q_S$. 
\end{proof}

If there is some room to wiggle between the set $S_t[Q,\Sigma]$ and the
localization $\Sigma$, then the set $S_t[Q,\Sigma]$ is automatically a 
local $(Q,t)$-droplet: 

\begin{thm}
Suppose $Q\in W^{2,p}$ for some $p$, $1<p<+\infty$. 
If for a localization $\Sigma$ we have 
$S=S_t[Q,\Sigma]\subset\Int\,\Sigma$, then $S$ is a local $(Q,t)$-droplet
with localization $\Sigma$.
\end{thm}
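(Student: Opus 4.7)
The goal is to verify Definition \ref{def-5.4} for $S$ with localization $\Sigma$: writing $\hat\sigma := \hat\sigma_t[Q,\Sigma]$, we must show $\diff\hat\sigma = 1_{S}\Delta Q\,\diff A$. Since by construction $\hat\sigma_t[Q,\Sigma] = \hat\sigma_t[Q_\Sigma]$, Corollary \ref{cor-4.4} applied to the weight $Q_\Sigma$ already delivers the distributional identity $\diff\hat\sigma = \Delta\widehat{(Q_\Sigma)}_t\,\diff A$ on $\C$. The task thus reduces to establishing local $W^{2,p}$-smoothness of $\widehat{(Q_\Sigma)}_t$ near $S$, together with the identification $\Delta\widehat{(Q_\Sigma)}_t = \Delta Q$ almost everywhere on $S$.

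For the regularity step, the hypothesis $S\subset\Int\,\Sigma$ lets me select a $C^\infty$-smooth bounded Jordan domain $\Omega$ with $S\subset\Omega$ and $\bar\Omega\subset\Int\,\Sigma$. On $\bar\Omega$ the obstacle $Q_\Sigma$ coincides with the globally $W^{2,p}$-smooth function $Q$. The proof of Lemma \ref{lem-4.6} goes through verbatim with $Q_\Sigma$ in place of $Q$, giving $\widehat{(Q_\Sigma)}_t|_\Omega = \Obs_{\Omega,\varrho}[Q_\Sigma]|_\Omega$ for the boundary data $\varrho := \widehat{(Q_\Sigma)}_t|_{\partial\Omega}$. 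By Proposition \ref{prop-4.5} the obstacle solution $\widehat{(Q_\Sigma)}_t$ is harmonic in the neighborhood $\C\sm S$ of $\partial\Omega$, so $\varrho$ is the restriction to $\partial\Omega$ of a function which is real-analytic in a neighborhood of $\partial\Omega$, and Theorem \ref{thm-4.8} then yields $\widehat{(Q_\Sigma)}_t|_\Omega \in W^{2,p}(\Omega)$. This is a concrete instance of the local smoothness principle alluded to in Remark \ref{rem-4.10}(b).

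For the identification of the Laplacians, observe that the two $W^{2,p}(\Omega)$-smooth functions $Q$ and $\widehat{(Q_\Sigma)}_t$ agree on $S$: by Proposition \ref{prop-4.2} applied to $Q_\Sigma$, $\widehat{(Q_\Sigma)}_t = Q_\Sigma$ quasi-everywhere on $S^*_t[Q_\Sigma]$, while Proposition \ref{prop-4.5} gives $S\subset S^*_t[Q_\Sigma]$ and $Q_\Sigma = Q$ on $S\subset\Int\,\Sigma$; continuity promotes the equality from quasi-everywhere to everywhere. The coincidence lemma for Sobolev functions (\cite{KS}, p.~53) then yields $\Delta\widehat{(Q_\Sigma)}_t = \Delta Q$ a.e. on $S$. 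Since $\hat\sigma$ vanishes off its support $S$, this completes the verification that $\diff\hat\sigma = 1_{S}\Delta Q\,\diff A$.

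The principal obstacle is the regularity step. Because $Q_\Sigma$ equals $+\infty$ off $\Sigma$, the global $W^{2,p}$-theory of Proposition \ref{prop-4.6} and Theorem \ref{thm-4.9} does not apply directly to the singular weight $Q_\Sigma$; it is precisely the hypothesis $S\subset\Int\,\Sigma$ that furnishes the breathing room needed to squeeze a smooth Jordan domain $\Omega$ between $S$ and $\partial\Sigma$, on which the obstacle genuinely coincides with the $W^{2,p}$-smooth function $Q$, so that Kinderlehrer--Stampacchia--Caffarelli theory can be invoked on $\Omega$.
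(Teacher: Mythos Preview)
Your proof is correct and follows essentially the same route as the paper. The paper's proof is the one-liner ``This is Theorem \ref{thm-4.9} for $Q_\Sigma$ in place of $Q$,'' implicitly invoking Remark \ref{rem-4.10}(b) to justify that only local $W^{2,p}$-smoothness near $S$ is needed; your argument unpacks precisely this by squeezing a smooth Jordan domain $\Omega$ between $S$ and $\partial\Sigma$ and applying Lemma \ref{lem-4.6} and Theorem \ref{thm-4.8} there, which is exactly the mechanism behind Proposition \ref{prop-4.6} and Theorem \ref{thm-4.9}. One minor remark: the harmonicity of $\widehat{(Q_\Sigma)}_t$ near $\partial\Omega$ is most directly read off from the representation $\widehat{(Q_\Sigma)}_t=\gamma^*_t(Q_\Sigma)-U^{\hat\sigma}$ with $\supp\hat\sigma=S\subset\Omega$, rather than Proposition \ref{prop-4.5} (which only gives harmonicity off the possibly larger set $S^*_t$).
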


\begin{proof}
This is Theorem \ref{thm-4.9} for $Q_\Sigma$ in place of $Q$. 
\end{proof}



%

\begin{rem}

\end{rem} The modified Robin constant $\gamma^*(Q,S)$ may be written out 
explicitly:
\begin{equation}
\label{eq-5.2}
\gamma_t^*(Q,S)=\frac{1}{t}\int_{S\times S}\log\frac{1}{|\xi-\eta|^2}
\Delta Q(\xi)\Delta Q(\eta)\diff A(\xi)\diff A(\eta)+\int_S Q\Delta Q\diff A.
\end{equation}

\subsection{Characterization of local droplets}
We need the concept of local $Q$-droplets. We consider compact localizations 
$\Sigma$ only, which means that no requirement on $Q$ near infinity is 
needed, just that
$Q:\C\to\R\cup\{+\infty\}$ is lower semi-continuous and has $Q<+\infty$ 
on a subset of $\Sigma$ with positive area. We recall the concept of a 
$(Q,t)$-droplet, which presupposed that $Q$ was $W^{2,1}$-smooth near $S$. 

\begin{defn}
A compact set $S\subset\C$ is a (local) $Q$-droplet if it is a local 
$(Q,t)$-droplet for some $t$ with $0<t<+\infty$.
\end{defn}

We see that Theorem \ref{thm-5.7} has the following consequence.

\begin{cor}
Suppose that $S\subset\C$ is compact, and that $Q$ is in $W^{2,1}$ in a 
neighborhood of $S$. Then $S$ is a local $Q$-droplet if and only if: 

\noindent {\rm (i)} $\Delta Q\ge0$ a.e. on $S$, 

\noindent {\rm(ii)} $S$ contains no $Q$-shallow points, and

\noindent{\rm(iii)} $U^{S,Q}+Q$ is constant q.e. on $S$. 
\label{cor-5.16}
\end{cor}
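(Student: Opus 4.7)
The plan is to deduce this corollary directly from Theorem~\ref{thm-5.7} applied with the minimal localization $\Sigma = S$. Proposition~\ref{prop-5.9} permits this choice, and as noted in the remark following Theorem~\ref{thm-5.7}, condition~(v) there becomes vacuous when $\Sigma = S$, leaving only conditions~(i)--(iv). The one mismatch to handle is that a local $Q$-droplet allows any positive $t$, whereas Theorem~\ref{thm-5.7} fixes $t$ through its condition~(iii); the natural fix will be to declare $t := \int_S \Delta Q\,\diff A$.

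For the forward direction, I would begin by observing that if $S$ is a local $Q$-droplet, then by definition it is a local $(Q,t)$-droplet for some $t > 0$ and some localization. Using Proposition~\ref{prop-5.9}, I replace that localization by $S$ itself. Then conditions~(i), (ii), (iv) of Theorem~\ref{thm-5.7} translate directly into conditions~(i), (ii), (iii) of the corollary, the latter because Theorem~\ref{thm-5.7}(iv) is precisely the statement that $U^{S,Q}+Q$ is constant (equal to $\gamma^*_t(Q,S)$) quasi-everywhere on $S$.

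For the reverse direction, suppose $S$ satisfies (i)--(iii) of the corollary, and set $t := \int_S \Delta Q\,\diff A$. The crucial step is to check that $t > 0$: since $S$ is non-empty and contains no $Q$-shallow points, every $z_0 \in S$ has $\int_{S \cap D} |\Delta Q|\,\diff A > 0$ for every open disk $D$ around $z_0$, which together with~(i) forces $t > 0$. With this $t$ in hand, the hypotheses of Theorem~\ref{thm-5.7} with $\Sigma = S$ can be read off: its~(i) and~(ii) are our~(i) and~(ii); its~(iii) is my definition of $t$; its~(iv) is our~(iii) (the constant being necessarily identified as $\gamma^*_t(Q,S)$ upon integrating both sides against $\Delta Q\,\diff A$); and~(v) is vacuous because $\Sigma = S$. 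Therefore $S$ is a local $(Q,t)$-droplet, hence a local $Q$-droplet.

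The only genuine obstacle is verifying that $t > 0$ in the reverse direction, and this is immediate from the definition of a $Q$-shallow point given in Subsection~\ref{subsec-shallow}. Everything else amounts to bookkeeping between the two statements in the minimal-localization setting.
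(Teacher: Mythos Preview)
Your proof is correct and matches the paper's intended argument: the paper simply states that Corollary~\ref{cor-5.16} is a consequence of Theorem~\ref{thm-5.7}, and your reduction via the minimal localization $\Sigma=S$ (with Proposition~\ref{prop-5.9} and the remark following Theorem~\ref{thm-5.7}) is exactly the mechanism. Your extra care in verifying $t>0$ from the absence of $Q$-shallow points is a useful detail the paper leaves implicit.
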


By Sobolev imbedding, we have $W^{2,p}\subset C^1$ for $2<p\le+\infty$. The 
following characterization will prove useful later.

\begin{prop} 
{\rm($0<t<+\infty$)} 
Suppose $S\subset\C$ is compact with $S=\clos\,\Int\,S$, and 
that $Q\in W^{2,p}$ in a neighborhood of $S$, for some $p$, $2<p<+\infty$. 
We then have:

\noindent{\rm(i)} If $S$ is a local $Q$-droplet, then
$\bar\partial(U^{Q,S}+Q)=0$ on $S$.

\noindent{\rm (ii)} If $S$ is connected and $\bar\partial(U^{Q,S}+Q)=0$  
on $\partial S$, and if $S$ has no $Q$-shallow points, then $S$ is a 
local $Q$-droplet.
\end{prop}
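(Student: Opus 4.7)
The natural unknown is $h := U^{Q,S} + Q$. By Corollary~\ref{cor-5.13} (the mapping properties of the two-dimensional Hilbert transform), $h \in W^{2,p}$ in a neighborhood of $S$; since $p>2$, Sobolev embedding gives $h \in C^1$, so both $\partial h$ and $\bar\partial h$ are continuous. The distributional identity $\Delta \log|\xi|^2 = \delta_0$ (in the $\diff A$ normalization) yields $\Delta U^{Q,S} = -\1_S \Delta Q$, whence $\Delta h = \1_{\C \setminus S}\Delta Q$; in particular $h$ is harmonic on $\Int S$.

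For part (i), Corollary~\ref{cor-5.16} delivers $h$ constant quasi-everywhere on $S$. Since any polar exceptional set has empty interior and $h$ is continuous while $S = \clos\,\Int\,S$, $h$ is in fact constant on all of $S$. Consequently $\bar\partial h \equiv 0$ on $\Int S$ and, by continuity of $\bar\partial h$, throughout $S$.

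For part (ii), the plan is to propagate the boundary condition $\bar\partial h|_{\partial S} = 0$ inward. Let $V$ be any connected component of $\Int S$. Since $h$ is real-valued and harmonic on $V$, the function $\overline{\bar\partial h}$ is holomorphic on $V$. Because $S = \clos\,\Int\,S$, one has $\partial V \subset \partial(\Int S) = \partial S$, on which $\bar\partial h$ vanishes by hypothesis. As $V$ is bounded (it lies in the compact set $S$) and $\bar\partial h$ is continuous on $\bar V$, the maximum modulus principle forces $\bar\partial h \equiv 0$ on $V$. Hence $h$ is holomorphic on $\Int S$ and, being real-valued, locally constant there. Connectedness of $S$, together with continuity of $h$ and density of $\Int S$ in $S$, then forces the local constants to agree, so $h \equiv c$ on $S$ for some $c \in \R$.

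With this constancy, conditions (ii) and (iii) of Corollary~\ref{cor-5.16} are in hand; the remaining condition $\Delta Q \ge 0$ almost everywhere on $S$ follows by identifying $c - U^{Q,S}$ with the obstacle envelope $\widehat{(Q_S)}_t$ via Lemma~\ref{lm-4.1} and reading off $\hat\sigma_t[Q,S] = \1_S\,\Delta Q\,\diff A$ from Corollary~\ref{cor-4.4}, since the positivity of the equilibrium measure then imposes the sign on $\Delta Q$. The main obstacle is the maximum-principle step, which hinges on both $S = \clos\,\Int\,S$ (to guarantee $\partial V \subset \partial S$) and the connectedness of $S$ (to synchronize the constants across distinct components of $\Int S$); without either, the hypothesis on $\partial S$ alone fails to pin down $h$ on the interior.
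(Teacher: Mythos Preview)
Your argument for both parts matches the paper's in substance. For (i) you deduce constancy of $h=U^{Q,S}+Q$ on $S$ from continuity and the density of $\Int S$, then read off $\bar\partial h=0$; the paper instead invokes the Kinderlehrer--Stampacchia lemma (\cite{KS}, p.~53) to get $\bar\partial h=0$ a.e.\ directly, but either route works and yours is arguably more elementary. For (ii) both you and the paper observe that $F=\bar\partial h$ is anti-holomorphic on $\Int S$ (since $\partial F=\Delta h=0$ there), continuous up to $\partial S$ where it vanishes, hence identically zero by the maximum principle; connectedness of $S$ then forces $h$ to be a single constant.

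The gap is in your final paragraph. You correctly notice that invoking Corollary~\ref{cor-5.16} still requires its condition (i), $\Delta Q\ge 0$ a.e.\ on $S$, but your proposed route through Lemma~\ref{lm-4.1} is circular: that lemma takes as hypothesis that the measure $\sigma$ in $W=c-U^\sigma$ is \emph{positive}, and here $\sigma=1_S\,\Delta Q\,\diff A$, whose positivity is exactly what is at issue. The paper's own proof simply cites Corollary~\ref{cor-5.16} without addressing this condition, so the same lacuna is present there; you deserve credit for spotting that something is owed. In fact the statement as written appears to need the extra hypothesis: with $S=\bar\D$ and $Q(z)=-|z|^2$ one computes $\Delta Q\equiv -1$, $U^{Q,S}(\xi)=|\xi|^2-1$ on $\bar\D$, hence $h\equiv -1$ on $S$; the set $S$ is connected with no $Q$-shallow points and $\bar\partial h\equiv 0$, yet $S$ cannot be a local $Q$-droplet since $1_S\,\Delta Q\,\diff A$ is negative.
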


\begin{proof} We first treat part (i). So, we assume that $S$ is a local 
$Q$-droplet. By Corollary \ref{cor-5.16}, $U^{Q,S}+Q$ is constant q.e. on $S$. 
As both $Q$ and $U^{Q,S}$ are in $W^{2,p}$ in a neighborhood of $S$,
we conclude from \cite{KS}, p. 53, that $\bar\partial(U^{Q,S}+Q)=0$
a.e. on $S$. By Sobolev imbedding, $\bar\partial(U^{Q,S}+Q)$ is continuous
in a neighborhood of $S$, and so $\bar\partial(U^{Q,S}+Q)=0$ on $\Int\, S$ and
a fortiori (by the topological assumption) on $S$.

We turn to part (ii). 
Consider the function  $F:=\bar\partial(U^{Q,S}+Q)$, which is in $W^{1,p}$ in
a neighborhood of $S$, and therefore continuous. We have 
$$\partial F=\Delta(U^{Q,S}+Q)=-1_S\Delta Q+\Delta Q=0
\quad\text{a.e. on}\,\,\,S. $$
Hence $F$ is conjugate holomorphic in the interior of $S$ and since $F=0$ on 
the boundary, we have $F\equiv0$ on $S$. If  $S$ is connected, then this 
implies that $U_S+Q$ is constant on $S$, so by Corollary \ref{cor-5.16}, 
$S$ is a local $Q$-droplet.
\end{proof}

\section{Chains of local droplets}
\label{sec-cld}


\subsection{A partial ordering of local droplets}
We recall that $S$ is a local $Q$-droplet if it is a local $(Q,t)$-droplet for
some $t$ with $0<t<+\infty$. For the concept to make sense, we need to 
ask that $Q:\C\to\R\cup\{+\infty\}$ is lower semi-continuous and 
$W^{2,1}$-smooth near $S$. 
Given a local $Q$-droplet $S$, the corresponding value of (the evolution 
parameter) $t$ is easily calculated:
\[
t=t(Q,S):=\int_S\Delta Q\diff A.
\] 
We note that by Corollary \ref{cor-5.16}, $\Delta Q\ge0$ on $S$.
To simplify the presentation, we shall {\em assume that $Q$ is $W^{2,p}$-smooth
in $\C$ for some $p$, $1<p<+\infty$}.

\begin {lem} Let $S_2$ be a local $Q$-droplet, with $t_2=t(Q,S_2)$. If 
$t_1$ has $0<t_1<t_2$ we put $S_1:=S_{t_1}[Q, S_2]$. Then $S_1$ is a local
$Q$-droplet, with $t_1=t(Q,S_1)$.
\label{lem-6.1}
\end{lem}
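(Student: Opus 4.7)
The plan is to verify that $S_1 := S_{t_1}[Q,S_2]$ satisfies Definition \ref{def-5.4} with localization $\Sigma = S_2$, namely that $\diff\hat\sigma_1 = 1_{S_1}\Delta Q\,\diff A$, where I write $\hat\sigma_j := \hat\sigma_{t_j}[Q,S_2]$. Once this identity is in place, $S_1$ is automatically a local $Q$-droplet by Definition \ref{def-5.5}, and the total mass identity $t_1 = \|\hat\sigma_1\| = \int_{S_1}\Delta Q\,\diff A$ yields $t(Q,S_1) = t_1$. As a first step, I would invoke monotonicity of equilibrium measures in the localized setting. By Proposition \ref{prop-5.9}, $S_2$ is a local $(Q,t_2)$-droplet with minimal localization $\Sigma = S_2$, so $\diff\hat\sigma_2 = 1_{S_2}\Delta Q\,\diff A$; since $Q \in W^{2,p}$, this density sits in $L^p_{\rm loc}$. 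Applying the obvious adaptation of Proposition \ref{prop-4.15} to the potential $Q_{S_2}$ (allowed since $S_2$ is compact, making the growth condition \eqref{eq-4.2} trivial), we obtain $\hat\sigma_1 \le \hat\sigma_2$ for $t_1 < t_2$, and hence $\hat\sigma_1$ is absolutely continuous with density in $L^p_{\rm loc}$.

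Next I would identify that density. By the standard $L^p$ theory of the logarithmic potential, $\widehat{(Q_{S_2})}_{t_1} = \gamma^*_{t_1}(Q,S_2) - U^{\hat\sigma_1}$ is of class $W^{2,p}$, and by Corollary \ref{cor-4.4} applied to $Q_{S_2}$, one has $\diff\hat\sigma_1 = \Delta\widehat{(Q_{S_2})}_{t_1}\diff A$. The coincidence set $S^*_{t_1}[Q_{S_2}]$ sits inside $S_2$, because $Q_{S_2} = +\infty$ off $S_2$ rules out coincidence there; on $S^*_{t_1}[Q_{S_2}]$ the two $W^{2,p}$ functions $\widehat{(Q_{S_2})}_{t_1}$ and $Q$ agree, so by the standard fact from \cite{KS}, p.~53, their Laplacians agree a.e.\ on that set. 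Off the coincidence set, $\widehat{(Q_{S_2})}_{t_1}$ is harmonic by Proposition \ref{prop-4.5}, and so $\diff\hat\sigma_1 = 1_{S^*_{t_1}[Q_{S_2}]}\Delta Q\,\diff A$.

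Finally, I would peel off the shallow points. By the localized form of Proposition \ref{prop-4.14}, $S_1$ equals $S^*_{t_1}[Q_{S_2}]$ after removal of its $Q$-shallow points, and a Lindel\"of covering argument shows that $\Delta Q = 0$ a.e.\ on $S^*_{t_1}[Q_{S_2}] \sm S_1$: the shallow set is relatively open in $S^*_{t_1}[Q_{S_2}]$, and each of its points admits a disk on which $\int |\Delta Q|\,\diff A$ vanishes. Combining with the previous step yields $\diff\hat\sigma_1 = 1_{S_1}\Delta Q\,\diff A$, as required. The main obstacle is purely organizational, namely checking that Propositions \ref{prop-4.5}, \ref{prop-4.14}, \ref{prop-4.15} and Corollary \ref{cor-4.4} all carry over when $Q$ is replaced by $Q_{S_2}$; this is unproblematic because $S_2$ is compact (so the growth hypothesis \eqref{eq-4.2} is automatic) and the only regularity of $Q$ ever invoked is on a neighborhood of the coincidence set, which sits inside $S_2$ where $Q_{S_2} = Q \in W^{2,p}$.
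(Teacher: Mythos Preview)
Your argument has a genuine circularity in Step~1. The ``obvious adaptation of Proposition~\ref{prop-4.15}'' to $Q_{S_2}$ is not available: the proof of the measure inequality $\hat\sigma_{t_1}\le\hat\sigma_{t_2}$ in Proposition~\ref{prop-4.15} invokes Theorem~\ref{thm-4.9} at \emph{both} levels, and at level $t_1$ the conclusion $\diff\hat\sigma_1=1_{S_1}\Delta Q\,\diff A$ is exactly what you are trying to prove. Your attempted justification---that the regularity of $Q$ is only needed near the coincidence set, which ``sits inside $S_2$ where $Q_{S_2}=Q\in W^{2,p}$''---conflates being \emph{contained in} $S_2$ with having an \emph{open neighborhood} inside $S_2$. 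The a~priori $W^{2,p}$ regularity of $\widehat{(Q_{S_2})}_{t_1}$ (Proposition~\ref{prop-4.6} via Theorem~\ref{thm-4.8}) needs the obstacle to be $W^{2,p}$ on an open set containing the coincidence set, and this can fail: whenever $S_1$ meets $\partial S_2$ (as happens, for instance, along the inner boundary of a multiply connected $S_2$, cf.\ Corollary~\ref{cor-6.16}), $Q_{S_2}$ jumps to $+\infty$ in every neighborhood of those points. Without $\widehat{(Q_{S_2})}_{t_1}\in W^{2,p}$ you cannot run the \cite{KS} argument in Step~2, and Step~2 explicitly rests on the $L^p$ density obtained in Step~1.

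The paper sidesteps this by a single observation: since $\Obs_{t_1}[Q_{S_2}]\le\Obs_{t_2}[Q_{S_2}]=\widehat Q_{S_2}\le Q_{S_2}$, one has the identity
\[
\Obs_{t_1}[Q_{S_2}]=\Obs_{t_1}\big[\widehat Q_{S_2}\big],
\]
which replaces the singular obstacle $Q_{S_2}$ by the globally $W^{2,p}$-smooth function $\widehat Q_{S_2}$ (a constant minus the potential of an $L^p$ density). Theorem~\ref{thm-4.9} then applies directly to $\widehat Q_{S_2}$, giving $\diff\hat\sigma_1=1_{S_1}\Delta\widehat Q_{S_2}\,\diff A=1_{S_1}\Delta Q\,\diff A$ in one stroke. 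This is the missing idea in your proposal; once you have it, your Steps~2--3 become unnecessary.
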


\begin{proof} 
We should study the measure $\sigma_1:=\sigma_{t_1}[Q, S_2]$, which by
Proposition \ref{prop-4.2} and Corollary \ref{cor-4.4} is obtained
from $\Obs_{t_1}[Q_{S_2}]$ by applying the Laplacian. 
From $t_1<t_2$ and the definition of the obstacle problem, we see that
\[
\Obs_{t_1}[Q_{S_2}]\le\Obs_{t_2}[Q_{S_2}]=\widehat Q_{S_2},
\]
where we use Corollary \ref{cor-5.13} to get the rightmost identity.
A moments reflection, using that $\widehat Q_{S_2}\le Q$, reveals that in fact
\begin{equation*}
\Obs_{t_1}[Q_{S_2}]=\Obs_{t_2}[\widehat Q_{S_2}].
\end{equation*}
Since $\Delta Q$ is in $L^p$ locally, $\widehat Q_{S_2}$ is $W^{2,p}$-smooth,
and by Theorem \ref{thm-4.9} with $\widehat Q_{S_2}$ in place of $Q$, we
get that $S_1$ is a local $Q$-droplet. 
\end{proof}

Lemma \ref{lem-6.1} allows us to introduce a partial ordering in the set of 
all local $Q$-droplets. 

\begin{defn}
Let $S_1,S_2$ be two local $Q$-droplets, and write $t_j=t(Q,S_j)$, $j=1,2$.
We write $S_1\prec S_2$ if $S_1\subset S_2$ and $S_1=S_{t_1}[Q, S_2]$.
\end{defn}

\begin{rem}
(a) In other words, $S_1\prec S_2$ if $S_1,S_2$ are local $Q$-droplets and
$S_1$ is a local $(Q,t_1)$-droplet with localization $S_2$, where 
$t_1=t(Q,S_1)$.
  
\noindent(b) It follows from the definition that if $S_1,S_2$ are 
$Q$-droplets with $S_1\prec S_2$, then $t(Q,S_1)\le t(Q,S_2)$. 

\noindent(c) If $S_1\prec S_2$ and $S_2\prec S_1$ for two local $Q$-droplets, 
then $S_1\subset S_2$ and $S_2\subset S_2$, and so $S_1=S_2$.
\end{rem}

\begin{prop}
Let $S_1,S_2$ be two local $Q$-droplets with $S_1\subset S_2$. 
Then $S_1\prec S_2$ if and only if $\widehat Q_{S_1}\le Q$ holds on $S_2$. 
\label{prop-6.4}
\end{prop}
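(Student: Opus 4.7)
The plan is to apply Corollary \ref{cor-5.13} in both directions, using the fact that changing the localization from $S_1$ to $S_2$ affects only condition (v) in that characterization, while conditions (i)--(iv) are intrinsic to $S_1$ being a local $Q$-droplet. Throughout, set $t_1 = t(Q,S_1) = \int_{S_1}\Delta Q\,\diff A$ and note that, since $S_1$ is a local $Q$-droplet (localized to $S_1$ via Proposition \ref{prop-5.9}), Corollary \ref{cor-5.13} yields that $\widehat Q_{S_1}\in\Sub_{t_1}(\C)$ is harmonic off $S_1$, satisfies $\widehat Q_{S_1}(z)=t_1\log|z|^2+\Ordo(1)$ at infinity, has $\widehat Q_{S_1}=Q$ on $S_1$, and coincides q.e.\ with $\Obs_{t_1}[Q_{S_1}]$. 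Moreover, under the standing assumption $Q\in W^{2,p}$, Corollary \ref{cor-5.13} also gives $\widehat Q_{S_1}\in W^{2,p}$, which by Sobolev imbedding is continuous, so the qualifier ``q.e.'' can be promoted to ``pointwise'' on closed sets.

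For the ``if'' direction, assume $\widehat Q_{S_1}\le Q$ on $S_2$. I would verify the five conditions of Corollary \ref{cor-5.13} with the compact set $S_1$ and localization $\Sigma=S_2$: conditions (i), (ii), and (iii) hold because $S_1$ is a local $(Q,t_1)$-droplet with localization $S_1$; condition (iv) holds by the identity $\widehat Q_{S_1}=Q$ on $S_1$ recalled above; condition (v) is the assumption. Corollary \ref{cor-5.13} then gives $\hat\sigma_{t_1}[Q,S_2]=1_{S_1}\Delta Q\,\diff A$. Taking supports, and using condition (ii) (absence of $Q$-shallow points) to guarantee $S_1=\supp(1_{S_1}\Delta Q\,\diff A)$, we conclude $S_{t_1}[Q,S_2]=S_1$, which is exactly $S_1\prec S_2$.

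For the ``only if'' direction, assume $S_1\prec S_2$, so $S_1=S_{t_1}[Q,S_2]\subset S_2$. Applying Lemma \ref{lem-5.2} with $\Sigma_1=S_1$ and $\Sigma_2=S_2$ (its hypothesis $S_{t_1}[Q,\Sigma_2]\subset\Sigma_1$ is precisely $S_1\subset S_1$), we obtain $\hat\sigma_{t_1}[Q,S_2]=\hat\sigma_{t_1}[Q,S_1]$. Since $S_1$ is a local $(Q,t_1)$-droplet, the right-hand side equals $1_{S_1}\Delta Q\,\diff A$, so $S_1$ is a local $(Q,t_1)$-droplet with localization $S_2$. Corollary \ref{cor-5.13} applied with $\Sigma=S_2$ now returns condition (v): $\widehat Q_{S_1}\le Q$ q.e.\ on $S_2$, which upgrades to pointwise on $S_2$ by continuity.

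The only mildly delicate point is ensuring that Corollary \ref{cor-5.13}'s characterization really is symmetric in this use: in the ``if'' direction, it is used constructively (verifying (i)--(v) to produce a droplet), and in the ``only if'' direction, it is used to extract condition (v) from the knowledge that we already have a droplet with localization $S_2$. The continuity upgrade from q.e.\ to everywhere, based on $W^{2,p}$ Sobolev imbedding of both $Q$ and $\widehat Q_{S_1}$ into $C^0$ (valid for $1<p<+\infty$ in two dimensions), is the only analytic ingredient beyond the cited definitions and lemmas.
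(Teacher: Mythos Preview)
Your proof is correct and follows essentially the same approach as the paper: the paper's own argument is a two-sentence sketch pointing to Corollary~\ref{cor-5.13} and observing that for local $Q$-droplets conditions (i)--(iv) hold intrinsically, leaving only (v), with the q.e.\ upgraded to pointwise by continuity. You have simply unpacked this into a full argument, including the use of Lemma~\ref{lem-5.2} in the ``only if'' direction to confirm that $\hat\sigma_{t_1}[Q,S_2]=1_{S_1}\Delta Q\,\diff A$, which the paper leaves implicit (cf.\ Remark~6.3(a) and the proof of Lemma~\ref{lem-6.1}).
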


\begin{proof}
This follows from Corollary \ref{cor-5.13}. After all, for local $Q$-droplets
we do not need to check conditions (i)-(iv); only (v) remains. Moreover, by
continuity and the fact that $Q$-droplets lack $Q$-shallow points, the q.e. 
statements hold everywhere.
\end{proof}

\begin{lem}
For a local $Q$-droplet $S$, we have $S\prec S$. 
\end{lem}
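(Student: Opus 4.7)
The plan is to unwind the definitions and appeal to Proposition \ref{prop-5.9}, which says that a local droplet is automatically a local droplet for its own minimal localization. Let $S$ be a local $Q$-droplet, and set $t := t(Q,S) = \int_S \Delta Q\,\diff A$. By the definition of a local $Q$-droplet, there exists some $t_0$ with $0<t_0<+\infty$ and some localization $\Sigma_0 \supset S$ such that $S$ is a local $(Q,t_0)$-droplet with localization $\Sigma_0$, i.e.\ $\diff\hat\sigma_{t_0}[Q,\Sigma_0] = 1_S \Delta Q\,\diff A$.

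First I would observe that $t_0 = t$. Indeed, the total mass of $\hat\sigma_{t_0}[Q,\Sigma_0]$ equals $t_0$ on one hand, and equals $\int_S \Delta Q\,\diff A = t$ on the other. (This is exactly condition (iii) of Theorem \ref{thm-5.7}.) So $t_0 = t$, and $S$ is a local $(Q,t)$-droplet with some localization $\Sigma_0 \supset S$.

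Next, by Proposition \ref{prop-5.9}, $S$ is then also a local $(Q,t)$-droplet with respect to the minimal localization $\Sigma = S$ itself; that is,
\[
\diff\hat\sigma_t[Q,S] = 1_S \Delta Q \,\diff A.
\]
In particular, $S$ is the support of the measure $\hat\sigma_t[Q,S]$, which by definition is exactly $S_t[Q,S]$. Thus $S = S_t[Q,S]$ with $t = t(Q,S)$, and combined with the trivial inclusion $S\subset S$, this is precisely the definition of $S\prec S$.

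No real obstacle arises here; the statement is essentially a tautology once one has Proposition \ref{prop-5.9} to pass to the minimal localization, together with the observation from condition (iii) of Theorem \ref{thm-5.7} that the parameter $t$ attached to a local droplet is forced to equal its total $\Delta Q$-mass.
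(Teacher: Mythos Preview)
Your proof is correct and follows essentially the same approach as the paper: both invoke the definition of the relation $\prec$ together with Proposition \ref{prop-5.9} to conclude that $S=S_t[Q,S]$ with $t=t(Q,S)$. Your version simply spells out the details (in particular the identification $t_0=t$ via total mass) that the paper leaves implicit.
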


\begin{proof}
This follows from the definition of the ``$\prec$'' relation together with
Proposition \ref{prop-5.9}. 
\end{proof}

There is one more property we need to check to show that ``$\prec$'' defines a
partial ordering.

\begin{lem} 
If $S_1,S_2,S_3$ are three local $Q$-droplets with $S_1\prec S_2$ and 
$S_2\prec S_3$,  then $S_1\prec S_3$.
\end{lem}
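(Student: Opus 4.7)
The plan is to reduce the claim to a pointwise comparison of the obstacle-problem envelopes and then invoke a maximum-principle argument in $\C\setminus S_2$.

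First, observe that the containment $S_1\subset S_3$ is immediate from $S_1\subset S_2$ and $S_2\subset S_3$. By Proposition \ref{prop-6.4}, in order to conclude $S_1\prec S_3$ it suffices to check that $\widehat Q_{S_1}\le Q$ on $S_3$. Since $S_2\prec S_3$, a second application of Proposition \ref{prop-6.4} yields $\widehat Q_{S_2}\le Q$ on $S_3$, so it is enough to establish the global inequality
\[
\widehat Q_{S_1}\le \widehat Q_{S_2}\quad\text{on}\,\,\,\C.
\]
Once we have this, on $S_3$ we chain $\widehat Q_{S_1}\le \widehat Q_{S_2}\le Q$ and are done.

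To prove the global comparison, set $u:=\widehat Q_{S_1}-\widehat Q_{S_2}$. By Corollary \ref{cor-5.13} each $\widehat Q_{S_j}$ is harmonic on $\C\sm S_j$, and because $S_1\subset S_2$, the function $u$ is harmonic on $\C\sm S_2$. On $S_2$ we know that $\widehat Q_{S_2}=Q$ (since $S_2$ is a local $Q$-droplet), while $S_1\prec S_2$ combined with Proposition \ref{prop-6.4} gives $\widehat Q_{S_1}\le Q$ on $S_2$; hence $u\le 0$ on $S_2$, and in particular on $\partial S_2$. The asymptotic behavior recorded in Corollary \ref{cor-5.13} gives
\[
u(z)=(t_1-t_2)\log|z|^2+\Ordo(1)\quad\text{as}\,\,\,|z|\to+\infty,
\]
where $t_j=t(Q,S_j)$. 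Remark~(b) after Definition of $\prec$ gives $t_1\le t_2$, so $u$ is bounded above near infinity. If $t_1<t_2$, the classical maximum principle on the unbounded domain $\C\sm S_2$ (with boundary $\partial S_2$ and $u\to-\infty$ at $\infty$) forces $u\le 0$ there, and combining with $u\le 0$ on $S_2$ yields $u\le 0$ on all of $\C$.

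The only remaining case is $t_1=t_2$: then $S_1$ and $S_2$ are both local $(Q,t_1)$-droplets with $S_1\subset S_2$, and item~(c) of the Remark following Definition \ref{def-5.6} forces $S_1=S_2$, so the transitivity statement reduces to $S_2\prec S_3$, which is in hand. The one delicate point in the plan is the application of the maximum principle on $\C\sm S_2$: one must ensure $u$ is upper semicontinuous up to $\partial S_2$ (which follows from the continuity of the $\widehat Q_{S_j}$ provided by Sobolev embedding, since $Q\in W^{2,p}$ with $p>1$ ensures the logarithmic potentials involved are continuous), and then use the decay $u(z)\to-\infty$ in place of a boundary condition at $\infty$. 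This is standard, and completes the proof.
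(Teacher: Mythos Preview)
Your argument is correct, but it takes a different route from the paper. The paper works directly with the definition: from $S_2\prec S_3$ it notes that $S_{t_1}[Q,S_3]\subset S_{t_2}[Q,S_3]=S_2$, and then applies the localization Lemma \ref{lem-5.2} (with $\Sigma_1=S_2$, $\Sigma_2=S_3$) to conclude $S_{t_1}[Q,S_3]=S_{t_1}[Q,S_2]=S_1$, which is exactly the statement $S_1\prec S_3$. No case split and no maximum-principle step are needed.

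Your approach instead passes through the characterization of Proposition \ref{prop-6.4} and establishes the stronger global inequality $\widehat Q_{S_1}\le\widehat Q_{S_2}$ by a maximum-principle argument on $\C\setminus S_2$ (handling bounded components and the unbounded component separately, with the asymptotics at infinity controlling the latter). This is more hands-on and requires the case split $t_1<t_2$ versus $t_1=t_2$, but it has the virtue of yielding the monotonicity $\widehat Q_{S_1}\le\widehat Q_{S_2}$ on all of $\C$ whenever $S_1\prec S_2$ --- a local-droplet analogue of Lemma \ref{lem-4.12} that the paper does not state explicitly. The paper's argument is slicker for the bare transitivity claim; yours extracts a bit more along the way.
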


\begin{proof} 
If we use that $S_2\prec S_3$, we see from Lemma \ref{lem-6.1} that
 $S_{t_1}[Q,S_3]$ is a local $Q$-droplet with
\[
S_{t_1}(Q, S_3)\subset S_2=S_{t_2}(Q, S_3).
\] 
Using that $S_1\prec S_2$, we appeal to Lemma \ref{lem-5.2}, and get
\[
S_{t_1}[Q, S_3]=S_{t_1}[Q, S_2]=S_1,
\]
so that $S_1\prec S_3$, as claimed.
\end{proof}

\begin{rem}  
$S_1\subset S_2$ does not imply $S_1\prec S_2$. For example, 
suppose $Q$ has two global minima at the points $0$ and $2$, and suppose the 
minima are non-degenerate. Consider
\[
S_1:=S_{t_1}(Q, \Sigma_1),\qquad S_2:=S_{t_2}(Q,\Sigma_2),\qquad \text{where}
\quad 0<t_1\ll t_2\ll 1,
\]
with $\Sigma_1=\bar\D(0,1)$ and $\Sigma_2=\bar\D(0,3)$. 
Then $S_1\subset S_2$ but $S_1\not\prec S_2$. This is easy to see using the
characterization of Proposition \ref{prop-6.4}.
\label{rem-6.7}
\end{rem}

\subsection{A comparison principle}
We keep the setting of the previous subsection.
We recall the definition of the polynomially convex hull $\Phull(E)$ of a 
compact set $E$ from Subsection \ref{subsec-coinc}. The set $\Phull(E)\sm E$
is the union of all the bounded components of $\C\sm E$. 

\begin{prop}
Suppose $S_1,S_2$ are two local $Q$-droplets with $S_1\subset S_2$. We then
have $\widehat Q_{S_2}\le \widehat Q_{S_1}$ on $\Phull(S_1)$, with equality on 
$S_1$. Moreover, if for some $z_0\in\Int[\Phull(S_1)]$ we have 
$\widehat Q_{S_2}(z_0)=\widehat Q_{S_1}(z_0)$, then $\widehat Q_{S_2}=
\widehat Q_{S_1}$ holds on the component of $\Int[\Phull(S_1)]$
that contains $z_0$. 
\label{prop-6.15}
\end{prop}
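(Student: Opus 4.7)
The plan is to let $u := \widehat Q_{S_2} - \widehat Q_{S_1}$ and show that $u$ is a continuous subharmonic function on $\C$ that vanishes on $S_1$; the proposition will then follow from the (weak and strong) maximum principle applied on the bounded components of $\C \setminus S_1$. Note that only set inclusion $S_1 \subset S_2$ is available to us, not the partial order $S_1 \prec S_2$ (cf.\ Remark \ref{rem-6.7}), so the argument must not rely on $\widehat Q_{S_1} \le Q$ outside $S_1$.

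First, I would check that both $\widehat Q_{S_1}$ and $\widehat Q_{S_2}$ are continuous on $\C$. Since $Q \in W^{2,p}$ for some $1 < p < +\infty$, Corollary \ref{cor-5.13} gives $\widehat Q_{S_j} \in W^{2,p}$ near $S_j$, and away from $S_j$ it is harmonic; Sobolev embedding then yields global continuity. By Corollary \ref{cor-5.13}(iv), $\widehat Q_{S_j} = Q$ q.e.\ on $S_j$, and hence everywhere on $S_j$ by continuity. Consequently, on $S_1 \subset S_2$ both $\widehat Q_{S_1}$ and $\widehat Q_{S_2}$ coincide with $Q$, so $u \equiv 0$ on $S_1$.

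Next I would compute the distributional Laplacian. Combining Corollary \ref{cor-4.4} with the definition of a local $Q$-droplet (Definition \ref{def-5.4}), the measure associated with $S_j$ is $\Delta \widehat Q_{S_j}\,\diff A = 1_{S_j} \Delta Q \,\diff A$, so $\Delta \widehat Q_{S_j} = 1_{S_j} \Delta Q$ as distributions. Hence
\[
\Delta u = (1_{S_2} - 1_{S_1})\,\Delta Q = 1_{S_2 \setminus S_1}\,\Delta Q \ge 0,
\]
using $\Delta Q \ge 0$ a.e.\ on $S_2$ from Corollary \ref{cor-5.16}(i). Thus $u$ is subharmonic on all of $\C$.

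To conclude the first assertion, let $D$ be any bounded component of $\C \setminus S_1$. Then $\partial D \subset S_1$, so $u$ is continuous on $\bar D$ and vanishes on $\partial D$; the weak maximum principle for the continuous subharmonic function $u|_{\bar D}$ gives $u \le 0$ throughout $\bar D$. Since $\Phull(S_1)$ is the union of $S_1$ with the closures of all such bounded components, we conclude $\widehat Q_{S_2} \le \widehat Q_{S_1}$ on $\Phull(S_1)$, with equality on $S_1$. For the second assertion, if $u(z_0) = 0$ at some $z_0 \in \Int[\Phull(S_1)]$, then $u$ attains its supremum value $0$ at an interior point of the open set $\Int[\Phull(S_1)]$, and the strong maximum principle for subharmonic functions forces $u$ to be identically $0$ on the connected component of $\Int[\Phull(S_1)]$ containing $z_0$. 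The only mildly delicate point in the whole argument is the continuous representative of $\widehat Q_{S_j}$ together with the clean identification $\Delta \widehat Q_{S_j} = 1_{S_j}\Delta Q$; once these are in hand, everything reduces to the standard maximum principle.
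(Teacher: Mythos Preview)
Your proof is correct and follows essentially the same route as the paper: form the difference $u=\widehat Q_{S_2}-\widehat Q_{S_1}$, observe it lies in $W^{2,p}$ (hence is continuous), compute $\Delta u=1_{S_2\setminus S_1}\Delta Q\ge0$ so that $u$ is subharmonic, note $u=0$ on $S_1$ via Corollary~\ref{cor-5.13}, and then apply the weak and strong maximum principles on the bounded components of $\C\setminus S_1$. The only cosmetic difference is that the paper invokes the global $W^{2,p}$ regularity of $\widehat Q_{S_j}$ directly from Corollary~\ref{cor-5.13} rather than patching local $W^{2,p}$ near $S_j$ with harmonicity elsewhere.
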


\begin{proof}
The difference $\widehat Q_{S_2}-\widehat Q_{S_1}$ is in $W^{2,p}$ and 
therefore continuous, and it is subharmonic, as 
\[
\Delta [\widehat Q_{S_2}-\widehat Q_{S_1}]=1_{S_2\sm S_1}\Delta Q\ge0\quad
\text{a.e. on}\,\,\,\C.
\]
Moreover, by Corollary \ref{cor-5.13}, $\widehat Q_{S_2}=Q$ on $S_2$ and
$\widehat Q_{S_1}=Q$ on $S_1$, and so $\widehat Q_{S_2}-\widehat Q_{S_1}=0$ 
on $S_1$ as $S_1\subset S_2$. The inequality 
$\widehat Q_{S_2}-\widehat Q_{S_1}\le0$ now follows
from the maximum principle. The last assertion follows from the strong maximum
principle.
\end{proof}

We see that a local $Q$-droplet $S_2$ with $S_1\prec S_2$ does not grow
in the direction of the interior holes of $S_1$:

\begin{cor}
Suppose $S_1,S_2$ are two local $Q$-droplets with $S_1\prec S_2$. We then have 
$[S_2\sm S_1]\cap\Phull(S_1)=\emptyset$ and 
$\widehat Q_{S_2}=\widehat Q_{S_1}$ on $\Phull(S_1)$. 
\label{cor-6.16}
\end{cor}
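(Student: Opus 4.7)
The plan is to upgrade the inequality of Proposition \ref{prop-6.15} to an equality on $\Phull(S_1)$ by proving the reverse bound $\widehat Q_{S_1}\le\widehat Q_{S_2}$ on all of $\C$, and then to rule out any intrusion of $S_2$ into the bounded components of $\C\sm S_1$ via the no-shallow-point property of local droplets. Writing $t_j:=t(Q,S_j)$ (so $t_1\le t_2$), I would verify that $\widehat Q_{S_1}$ is a legitimate competitor in the obstacle problem $\Obs_{t_2}[Q_{S_2}]$. Indeed $\widehat Q_{S_1}\in \Sub_{t_1}(\C)\subset\Sub_{t_2}(\C)$, the containment being immediate from $t_1\le t_2$ and the definition of $\Sub_t(\C)$. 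Moreover $\widehat Q_{S_1}\le Q_{S_2}$ holds pointwise on $\C$: on $S_2$ this is exactly the content of Proposition \ref{prop-6.4} applied to $S_1\prec S_2$, while on $\C\sm S_2$ we have $Q_{S_2}\equiv +\infty$ and the bound is trivial. Since $\widehat Q_{S_2}=\Obs_{t_2}[Q_{S_2}]$ by Corollary \ref{cor-5.13} (with minimal localization $\Sigma=S_2$), this yields $\widehat Q_{S_1}\le\widehat Q_{S_2}$ on $\C$. Combined with the opposite bound on $\Phull(S_1)$ furnished by Proposition \ref{prop-6.15}, we obtain $\widehat Q_{S_1}=\widehat Q_{S_2}$ on $\Phull(S_1)$, which is the second assertion.

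For the disjointness claim, I would argue by contradiction. Assume there exists $z_0\in (S_2\sm S_1)\cap\Phull(S_1)$; then $z_0$ lies in some bounded component $U$ of $\C\sm S_1$. On $U$, the function $\widehat Q_{S_1}$ is harmonic (since $U\cap S_1=\emptyset$ and $\widehat Q_{S_1}$ is harmonic off $S_1$ by Corollary \ref{cor-5.13}), and by the equality just established, $\widehat Q_{S_2}$ is harmonic on $U$ as well. Since $\Delta\widehat Q_{S_2}=1_{S_2}\Delta Q$ almost everywhere on $\C$, this forces $\Delta Q=0$ a.e. on $S_2\cap U$. Every point of $S_2\cap U$ then admits a small open disk $D\subset U$ on which $\int_{S_2\cap D}|\Delta Q|\,\diff A=0$, making it a $Q$-shallow point of $S_2$; but $S_2$ is a local $Q$-droplet and Corollary \ref{cor-5.16} forbids shallow points, so $S_2\cap U=\emptyset$, contradicting $z_0\in S_2\cap U$.

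The main delicacy is the competition step: verifying $\widehat Q_{S_1}\le Q_{S_2}$ on $\C$ rests on the characterization $S_1\prec S_2\Leftrightarrow\widehat Q_{S_1}\le Q$ on $S_2$ from Proposition \ref{prop-6.4}; once this is in hand, the two-sided squeeze on $\Phull(S_1)$ is immediate, and inward growth of $S_2$ is then excluded by the elementary harmonicity plus no-shallow-point argument sketched above.
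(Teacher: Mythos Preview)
Your proof is correct and takes a genuinely different route from the paper's.

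The paper argues in the opposite order: it first establishes the disjointness $[S_2\sm S_1]\cap\Phull(S_1)=\emptyset$ and only afterwards the equality $\widehat Q_{S_2}=\widehat Q_{S_1}$ on $\Phull(S_1)$. For the disjointness, the paper does not prove a global reverse inequality; instead it uses the \emph{strong} maximum principle clause of Proposition~\ref{prop-6.15}. Given $z_0\in S_2\cap\Int[\Phull(S_1)]$, the paper observes that $\widehat Q_{S_2}(z_0)=Q(z_0)\ge\widehat Q_{S_1}(z_0)$ (from Proposition~\ref{prop-6.4}) forces equality at $z_0$, hence equality on the whole component $\mathrm{Comp}(z_0)$ of $\Int[\Phull(S_1)]$; taking Laplacians gives $1_{S_1}\Delta Q=1_{S_2}\Delta Q$ a.e.\ on $\mathrm{Comp}(z_0)$, and then the no-shallow-point condition yields $z_0\in S_1$. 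Only after ruling out intrusion does the paper deduce the equality on $\Phull(S_1)$ via harmonicity of the difference.

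Your approach short-circuits the componentwise strong-maximum-principle step: by recognising $\widehat Q_{S_1}$ as a competitor in $\Obs_{t_2}[Q_{S_2}]$ you obtain $\widehat Q_{S_1}\le\widehat Q_{S_2}$ globally on $\C$ in one stroke, and the squeeze against Proposition~\ref{prop-6.15} immediately gives equality on all of $\Phull(S_1)$, not just on one component. This is cleaner and more conceptual; it also makes the subsequent shallow-point argument for disjointness slightly simpler, since you work directly in a bounded component $U$ of $\C\sm S_1$ (where $\widehat Q_{S_1}$ is already harmonic) rather than in a component of $\Int[\Phull(S_1)]$. One small point you leave implicit: to conclude $\int_{S_2\cap D}|\Delta Q|\,\diff A=0$ from $\Delta Q=0$ a.e.\ on $S_2\cap U$ you are using $\Delta Q\ge0$ a.e.\ on $S_2$, which is condition~(i) of Corollary~\ref{cor-5.16}.
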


\begin{proof}
If $S_1\prec S_2$ we have $\widehat Q_{S_1}\le Q$ on $S_2$ (cf. Proposition 
\ref{prop-6.4}), and we also have $\widehat Q_{S_2}=Q$ on $S_2$ (cf. Corollary
\ref{cor-5.13}). In view of Proposition \ref{prop-6.15}, it follows that if 
$z_0\in S_2\cap\Phull(S_1)$, then $\widehat Q_{S_2}(z_0)=
\widehat Q_{S_1}(z_0)$. So, if $z_0\in S_2\cap\Int[\Phull(S_1)]$,
another application of Proposition \ref{prop-6.15} shows that 
$\widehat Q_{S_2}=\widehat Q_{S_1}$ holds on the component
of $\Int[\Phull(S_1)]$ which contains $z_0$. 
Taking the Laplacian, we find that $1_{S_1}\Delta Q=1_{S_2}\Delta Q$ a.e.
on the component $\text{Comp}(z_0)$ of 
$\Int[\Phull(S_1)]$ which contains $z_0$, which leads to
\[
S_1\cap \text{Comp}(z_0)=S_2\cap\text{Comp}(z_0).
\]
Since $z_0\in S_2$ we also must have $z_0\in S_1$. 
We conclude that $[S_2\setminus S_1]\cap\Int[\Phull(S_1)]=\emptyset$, and
{\em a fortiori} $[S_2\setminus S_1]\cap\Phull(S_1)=\emptyset$. But then
$\widehat Q_{S_2}-\widehat Q_{S_1}$ is harmonic in $\Int[\Phull(S_1)]$ and
vanishes on $\partial[\Phull(S_1)]\subset\partial S_1\subset S_1$, and the 
conclusion $\widehat Q_{S_2}-\widehat Q_{S_1}=0$ on $\Phull(S_1)$ is immediate.
\end{proof}

\subsection{Domination chains of local droplets}
We are interested in chains of local $Q$-droplets. 

\begin{defn}
A {\em domination chain} of local 
$Q$-droplets is a (continuously indexed) family  of $Q$-droplets 
$\{S_t\}_t$, where the index $t$ ranges over a nonempty interval 
$I\subset\R_+$, with left endpoint $0$, such that $t=t(Q,S_t)$ and
\[
t_1\le t_2\quad \Longleftrightarrow\quad S_{t_1}\prec S_{t_2}.
\]
The domination chain is {\em terminating} if the interval $I$ is given 
by $0<t\le t_*$, 
for some $T_*$ with $0<t_*<+\infty$, and {\em non-terminating} if it is given 
by $0<t<t_*$ for some $t_*$ with $0<t_*\le+\infty$. In case the domination 
chain is terminating, we say that it {\em terminates at} $S_{t_*}$.
\end{defn}

\begin{lem}
Given a local $Q$-droplet $S_*$, there is exactly one domination chain of 
local $Q$-droplets that terminates at $S_*$. 
\label{lem-6.11}
\end{lem}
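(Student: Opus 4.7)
The plan is to prove the lemma by constructing the chain explicitly and then observing that the definition of $\prec$ forces uniqueness. Write $t_*:=t(Q,S_*)$. For existence, for each $t$ with $0<t\le t_*$ I simply set
\[
S_t:=S_t[Q,S_*]=\supp\hat\sigma_t[Q,S_*],
\]
with the convention $S_{t_*}=S_*$ (valid because, by Proposition \ref{prop-5.9}, $S_*$ is a local $(Q,t_*)$-droplet with its minimal localization $\Sigma=S_*$, so that $S_{t_*}[Q,S_*]=S_*$). Lemma \ref{lem-6.1} applied with $S_2=S_*$ immediately yields that $S_t$ is a local $Q$-droplet with $t(Q,S_t)=t$ for every $0<t<t_*$.

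Next I verify the ordering. For $0<t_1\le t_2\le t_*$, Proposition \ref{prop-4.15} (applied to the obstacle problem for $Q_{S_*}$) gives $S_{t_1}\subset S_{t_2}$, both being contained in $S_*$. To get $S_{t_1}=S_{t_1}[Q,S_{t_2}]$, I apply Lemma \ref{lem-5.2} with $\Sigma_1=S_{t_2}$ and $\Sigma_2=S_*$: the inclusion $\Sigma_1\subset\Sigma_2$ is clear, and the required containment $S_{t_1}[Q,\Sigma_2]\subset\Sigma_1$ is precisely $S_{t_1}\subset S_{t_2}$, which we just established. Lemma \ref{lem-5.2} then delivers $S_{t_1}[Q,S_{t_2}]=S_{t_1}[Q,S_*]=S_{t_1}$, so $S_{t_1}\prec S_{t_2}$ by the definition of the partial ordering. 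The reverse implication $S_{t_1}\prec S_{t_2}\Rightarrow t_1\le t_2$ is remark (b) following the definition of $\prec$. This shows $\{S_t\}_{0<t\le t_*}$ is a terminating domination chain ending at $S_*$.

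For uniqueness, suppose $\{S'_t\}_{0<t\le t_*}$ is another terminating domination chain with $S'_{t_*}=S_*$. Then for any $t$ with $0<t\le t_*$ we have $S'_t\prec S'_{t_*}=S_*$, and the very definition of the relation $\prec$ forces
\[
S'_t=S_t[Q,S_*]=S_t,
\]
so the two chains coincide.

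The only step that requires care is the verification of the chain condition $S_{t_1}\prec S_{t_2}$ for intermediate parameters, because it hinges on applying Lemma \ref{lem-5.2} with the \emph{right} choice of the two localizations; everything else is either a direct invocation of Lemma \ref{lem-6.1} or an unwinding of the definition of $\prec$.
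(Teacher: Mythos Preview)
Your proof is correct and follows the same approach as the paper: construct $S_t:=S_t[Q,S_*]$ via Lemma~\ref{lem-6.1}, use Lemma~\ref{lem-5.2} to verify the domination relation, and read off uniqueness directly from the definition of $\prec$. You spell out more detail than the paper (which just writes ``by Lemma~\ref{lem-5.2} it is a domination chain''), in particular the monotonicity $S_{t_1}\subset S_{t_2}$; one small remark is that Proposition~\ref{prop-4.15} as stated assumes $W^{2,p}$ smoothness of the potential on all of $\C$, which $Q_{S_*}$ lacks, but this is harmless since (as in the proof of Lemma~\ref{lem-6.1}) $\Obs_t[Q_{S_*}]=\Obs_t[\widehat Q_{S_*}]$ and $\widehat Q_{S_*}\in W^{2,p}$, so Proposition~\ref{prop-4.15} applies with $\widehat Q_{S_*}$ in place of $Q$.
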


\begin{proof}
By Lemma \ref{lem-6.1}, $S_t:=S_t[Q,S_*]$ for $0<t\le t_*:=t(Q,S_*)$ defines 
a continuously indexed collection of local $Q$-droplets, and by Lemma 
\ref{lem-5.2} it is a (terminating) domination chain. Finally, if 
$S_\sharp$ is a local  $Q$-droplet with $S_\sharp\prec S_*$, then by 
definition, it is of the form $S_\sharp=S_{t_\sharp}[Q,S_*]$ with 
$t_\sharp:=t(Q,S_\sharp)\le t_*$, so the domination chain is unique.
\end{proof}

\subsection{Maximal domination chains of local $Q$-droplets}
We keep the setting of the previous subsection.
We shall need the concept of a maximal domination chain of local $Q$-droplets.

\begin{defn}
A domination chain of $Q$-droplets is {\em maximal} if it is contained in no 
larger domination chain of local $Q$-droplets.
\end{defn}

Maximal domination chains of $Q$-droplets can be either terminating or 
non-terminating.
If the chain is indexed by the unbounded interval $I=\R_+$ then it is 
automatically non-terminating. If the chain is indexed by a bounded interval, 
then it can be non-terminating only if the droplets develop ``arms'' or 
``islands'' that tend to infinity:

\begin{thm} 
Let $\{S_t\}_{t\in I}$ be a maximal non-terminating domination chain of local 
$Q$-droplets. Then the union
\[
S_\cup:=\bigcup_{t\in I} S_t
\]
is an unbounded subset of $\C$. 
\label{thm-6.11}
\end{thm}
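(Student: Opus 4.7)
The plan is to argue by contradiction. Suppose $S_\cup$ is bounded, so that $S_\cup\subset\Int K$ for some closed disk $K$. I would treat the two subcases $t_*=+\infty$ and $t_*<+\infty$ separately.

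\emph{Case $t_*=+\infty$.} Here $I=(0,+\infty)$, and for each $t\in I$ condition (iii) of Theorem~\ref{thm-5.7} applied to the local $(Q,t)$-droplet $S_t$ gives
\[
t \;=\; t(Q,S_t) \;=\; \int_{S_t}\Delta Q\,\diff A \;\le\; \int_K(\Delta Q)_+\,\diff A \;<\;+\infty,
\]
a bound independent of $t$, contradicting the fact that $t$ is unbounded in $I$.

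\emph{Case $t_*<+\infty$.} Here I would construct a local $(Q,t_*)$-droplet $S_*$ dominating every $S_t$, $t\in I$; adjoining $S_*$ at parameter $t_*$ then produces a strictly larger domination chain, contradicting maximality. The natural candidate is $S_*:=\clos\bigcup_{t\in I}S_t$, a compact subset of $\Int K$. The crux is to verify that $S_*$ is a local $(Q,t_*)$-droplet via the characterization in Corollary~\ref{cor-5.16}: (i) $\Delta Q\ge0$ a.e.\ on $S_*$, (ii) $S_*$ has no $Q$-shallow points, and (iii) $U^{S_*,Q}+Q$ is constant q.e.\ on $S_*$. Condition (ii) would be immediate: if $z_0\in S_*$ were $Q$-shallow via a disk $D$, then picking $w\in D$ lying in some $S_t$ (by density of $\bigcup_tS_t$ in $S_*$), the inclusion $S_t\subset S_*$ yields $\int_{S_t\cap D}|\Delta Q|\diff A\le\int_{S_*\cap D}|\Delta Q|\diff A=0$, contradicting the absence of $Q$-shallow points in the local droplet $S_t$. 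Condition (i) follows from $\Delta Q\ge0$ a.e.\ on each $S_t$; for the boundary portion $\partial S_*\setminus\bigcup_tS_t$, if it carried positive area one would replace $S_*$ by $\supp(1_{\bigcup S_t}\Delta Q\,\diff A)$, which does not affect the overall strategy.

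The hard part will be condition (iii). The idea is to take the limit in the relation $U^{Q,S_t}(z)+Q(z)=\gamma^*_t(Q,S_t)$ that holds on $S_t$ (from $\widehat Q_{S_t}=Q$ on $S_t$, Corollary~\ref{cor-5.13}). Since $1_{S_t}\Delta Q\nearrow 1_{\bigcup_tS_t}\Delta Q$ monotonically and everything is confined to $K$, monotone convergence (after splitting $\log|z-\cdot|^{-2}$ into its positive and negative parts on $K$) gives $U^{Q,S_t}(z)\to U^{Q,S_*}(z)$ pointwise in $z$. Fixing any $z_0\in S_{t_0}$ with $t_0\in I$, for $t\ge t_0$ we have $z_0\in S_t$ and therefore
\[
\gamma^*_t(Q,S_t) \;=\; U^{Q,S_t}(z_0)+Q(z_0) \;\longrightarrow\; U^{Q,S_*}(z_0)+Q(z_0) \;=:\;\gamma_*,
\]
a limit which is independent of $z_0$. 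Passing to the limit in the equation for arbitrary $z\in\bigcup_tS_t$ then gives $U^{Q,S_*}+Q=\gamma_*$ on $\bigcup_tS_t$, and by continuity (coming from the $W^{2,p}$-regularity of $Q$ and the standard regularity of logarithmic potentials) also on $S_*$, establishing (iii). Once $S_*$ is a local $Q$-droplet, the relation $S_t\prec S_*$ for each $t\in I$ follows at once from Proposition~\ref{prop-6.4}, since $S_t\subset S_*$ and $\widehat Q_{S_t}\le Q$ on $\C\supset S_*$. Adjoining $S_*$ at $t=t_*$ then extends the chain, contradicting maximality. The most delicate technical point will be the combined passage to the limit in the potentials and the Robin constants, together with the handling of a possibly irregular boundary $\partial S_*$ via the minor adjustment of $S_*$ noted above.
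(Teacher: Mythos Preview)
Your overall strategy is the same as the paper's: assume $S_\cup$ bounded, set $S_*=\clos S_\cup$, verify via Corollary~\ref{cor-5.16} that $S_*$ is a local $Q$-droplet, and then show $S_t\prec S_*$ for every $t\in I$, contradicting maximality. (Your explicit treatment of the case $t_*=+\infty$ is a nice addition; the paper simply asserts $t_*<+\infty$.) However, your final step contains a genuine error.

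You claim that $\widehat Q_{S_t}\le Q$ holds on all of $\C$, and invoke Proposition~\ref{prop-6.4} directly. This is false in general: that global inequality is exactly what distinguishes the \emph{global} droplet from a merely local one (compare Corollary~\ref{cor-5.13}(v), which only gives $\widehat Q_{S}\le Q$ on the localization $\Sigma$). The correct argument, which the paper carries out, uses the chain structure itself: for any $t'\in I$ with $t'>t$ the relation $S_t\prec S_{t'}$ together with Proposition~\ref{prop-6.4} gives $\widehat Q_{S_t}\le Q$ on $S_{t'}$; letting $t'\to t_*$ yields $\widehat Q_{S_t}\le Q$ on $S_\cup$, and then on $S_*$ by continuity. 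Only then does Proposition~\ref{prop-6.4} deliver $S_t\prec S_*$.

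A smaller gap: your monotone convergence argument actually produces $U^{Q,S_\cup}$, not $U^{Q,S_*}$, and you silently identify the two. Your proposed fix of replacing $S_*$ by $\supp(1_{S_\cup}\Delta Q\,\diff A)$ is vacuous, since the absence of $Q$-shallow points in each $S_t$ forces that support to equal $S_*$. The paper resolves this, and simultaneously obtains $\Delta Q\ge0$ a.e.\ on $S_*$, by first establishing $\widehat Q_{S_\cup}=Q$ on $S_*$ via $W^{2,p}$-convergence and continuity, and then applying the Kinderlehrer--Stampacchia lemma (two $W^{2,p}$ functions coinciding on a set have equal second-order derivatives a.e.\ there) to conclude $1_{S_\cup}\Delta Q=1_{S_*}\Delta Q$ a.e.
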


\begin{proof} 
We suppose $S_\cup$ is bounded, and form $S_*=\clos\,S_\cup$, which is then 
compact. We are to show that the non-terminating domination chain 
$\{S_t\}_{t\in I}$ cannot be maximal. The interval $I$ is given by 
$0<t<t_*$ for some $t_*$
with $0<t_*<+\infty$. For $t\in I$, we let $\sigma_t$ be the positive measure 
$\diff\sigma_t=1_{S_t}\Delta Q\diff A$, which has total mass $\|\sigma_t\|=t$.
Let $\sigma_*$ be given by 
$\diff\sigma_*=1_{S_\cup}\Delta Q\diff A$, which has total mass 
$\|\sigma_*\|=t_*$. Then $\sigma_t\to\sigma_*$ in norm as $t\to t_*$, and
in fact the corresponding densities converge in $L^p$:
\[
1_{S_t}\Delta Q\diff A\,\to\,1_{S_\cup}\Delta Q\diff A\quad\text{in}\,\,\,\,\,
L^p(\C)\,\,\,\,\,\text{as}\,\,\,\,t\to t_*.
\]
By the well-known properties of the 2D Hilbert transform, we find that
the associated potentials converge in $W^{2,p}$: $U^{Q,S_t}\to U^{Q,S_\cup}$
as $t\to t_*$. Also, we easily check that if the constants $\gamma^*(Q,S_t)$ 
and $\gamma^*(Q,S_\cup)$ are as in \eqref{eq-5.2}, we have 
$\gamma^*(Q,S_t)\to\gamma^*(Q,S_\cup)$ as $t\to t_*$. 
As a consequence,
\[
\widehat Q_{S_t}=\gamma^*_{t}(Q,S_t)-U^{Q,S_t}\to \widehat Q_{S_\cup}=
\gamma^*_{t_*}(Q,S_t)-U^{Q,S_\cup}\quad\text{in}\,\,\,\,\,
W^{2,p}\,\,\,\,\,\text{as}\,\,\,\,t\to t_*.
\]
By Sobolev imbedding the convergence is locally uniform. Since 
$\widehat Q_{S_t}=Q$ on $S_t$ we get that $\widehat Q_{S_\cup}=Q$ on $S_\cup$.
By continuity, then, we find that $\widehat Q_{S_\cup}=Q$ on 
$S_*=\clos\, S_\cup$. Next, by \cite{KS}, p. 53, we see that
$\Delta\widehat Q_{S_\cup}=\Delta Q$ a.e. on $S_*$, that is, 
$1_{S_\cup}\Delta Q=\Delta Q$ a.e. on $S_*$. Expressed differently, we have
$1_{S_\cup}\Delta Q=1_{S_*}\Delta Q$ as elements of $L^p(\C)$. In particular,
$\Delta Q\ge0$ holds a.e. on $S_*$. By construction, $S_*$ has no $Q$-shallow
points, a property this set inherits from the individual droplets $S_t$, 
$t\in I$. In view of Corollary \ref{cor-5.16}, $S_*$ is a local
$Q$-droplet. It remains to show that we may add $S_*$ as a terminal local
$Q$-droplet for the domination chain, thereby defeating the maximality of the 
non-terminating domination chain. To this end, it suffices to obtain that 
$S_t\prec S_*$ for $t\in I$. We pick a $t'$ with $t<t'<t_*$, and use 
$S_t\prec S_{t'}$ to deduce that $\widehat Q_{S_t}\le Q$ on $S_{t'}$ 
(Proposition \ref{prop-6.4}). 
By letting $t'\to t_*$, we get that $\widehat Q_{S_t}\le Q$ on $S_{\cup}$, 
and by continuity that $\widehat Q_{S_t}\le Q$ on $S_{*}$. By Proposition
\ref{prop-6.4} this means that $S_t\prec S_*$. The proof is finished. 
\end{proof}

The following definition is useful.

\begin{defn}
A local $Q$-droplet $S$ is {\em maximal} if for any other local $Q$-droplet 
$S'$ the relation $S\prec S'$ implies that $S=S'$. 
\end{defn}

\begin{cor}
A maximal domination chain $\{S_t\}_{t\in I}$ of local $Q$-droplets either 
terminates at a maximal local $Q$-droplet, or is non-terminating, in which 
case the set $S_\cup$ of Theorem \ref{thm-6.11} is unbounded.
\end{cor}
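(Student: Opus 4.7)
The plan splits into the two possible behaviors of a maximal domination chain. If the maximal chain is non-terminating, Theorem~\ref{thm-6.11} applies directly and asserts that $S_\cup$ is unbounded, so no further argument is required in that case. The content of the corollary therefore lies entirely in showing that if a maximal chain terminates at some $S_\ast := S_{t_\ast}$, then $S_\ast$ is a maximal local $Q$-droplet.

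I would argue by contradiction. Suppose there exists a local $Q$-droplet $S'$ with $S_\ast \prec S'$ and $S_\ast \ne S'$, and set $t' := t(Q, S')$. Unwinding the definition of $\prec$ gives $S_\ast = S_{t_\ast}[Q, S'] \subsetneq S'$; moreover, if one had $t' = t_\ast$, then $S' = S_{t'}[Q, S'] = S_{t_\ast}[Q, S'] = S_\ast$, a contradiction, so $t_\ast < t'$ strictly.

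The idea now is to produce a strict extension of $\{S_t\}_{t \in I}$, contradicting maximality. Applying Lemma~\ref{lem-6.11} to $S'$ yields the unique terminating domination chain ending at $S'$, explicitly $\{\tilde S_t\}_{0 < t \le t'}$ with $\tilde S_t := S_t[Q, S']$. For every $t \in I$ we have $S_t \prec S_\ast$ (because $\{S_t\}_{t \in I}$ is a domination chain) together with $S_\ast \prec S'$ by hypothesis, so the transitivity of $\prec$ established earlier in Section~\ref{sec-cld} yields $S_t \prec S'$. By the very definition of $\prec$, this identifies $S_t = S_t[Q, S'] = \tilde S_t$. Consequently $\{\tilde S_t\}_{0 < t \le t'}$ coincides with $\{S_t\}_{t \in I}$ on $(0, t_\ast]$ and extends it by the element $\tilde S_{t'} = S' \ne S_\ast$, contradicting maximality.

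The main obstacle is organizational rather than analytic: once one recognizes that transitivity of $\prec$ automatically matches any candidate extension with the original chain, the argument reduces to two invocations of Lemma~\ref{lem-6.11} and the definition of $\prec$. No new smoothness or potential-theoretic input is needed beyond the framework already developed for the relation $\prec$.
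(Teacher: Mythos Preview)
Your argument is correct and is exactly the natural detailed verification of what the paper leaves implicit (the corollary is stated without proof, being immediate from Theorem~\ref{thm-6.11}, Lemma~\ref{lem-6.11}, and transitivity of $\prec$). One cosmetic remark: the extended chain $\{\tilde S_t\}_{0<t\le t'}$ contains all indices in $(t_\ast,t']$, not just $t'$; your phrasing ``extends it by the element $\tilde S_{t'}$'' is slightly loose, but the point that the containment is strict (hence contradicting maximality) is unaffected.
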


\subsection{Richardson's formula} 
We keep the setting of the previous two subsections.
We would like to understand the flow evolution $t\mapsto S_t$ of domination
chains (or containment chains, see the next subsection) of local $Q$-droplets.
A natural way to do this is to analyze the effect of the flow when we use
harmonic functions as test function (i.e., we calculate 
``harmonic moments''). 

\begin{prop} 
Suppose $S,S'$ are two local $Q$-droplets with $S\subset S'$. 
Then for all $h\in W^{2,1}(\C)$ (local Sobolev class) that are harmonic in 
$\C\sm S$
and bounded near infinity, we have (with $t=t(Q,S)$ and $t'=t(Q,S')$)
\[
\int_{S'\sm S} h\Delta Q\diff A=(t'-t)\,h(\infty).
\]
\label{prop-6.14}
\end{prop}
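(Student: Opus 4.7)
The idea is to realize the integral $\int_{S'\setminus S} h\,\Delta Q\,\diff A$ as $\int_{\C} h\,\Delta u\,\diff A$ for an auxiliary function $u$ built from the obstacle-type solutions, and then integrate by parts so that the logarithmic growth of $u$ at infinity reads off $h(\infty)$ from the boundary term.

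Set $u := \widehat Q_{S'} - \widehat Q_S$. By Corollary \ref{cor-5.13}, both potentials are in $W^{2,p}$ and (by Sobolev imbedding) continuous, satisfy $\widehat Q_S = Q$ on $S$ and $\widehat Q_{S'} = Q$ on $S'$, with Laplacians $\Delta\widehat Q_S = \1_S\Delta Q$ and $\Delta\widehat Q_{S'} = \1_{S'}\Delta Q$, and each carries the prescribed logarithmic asymptotic. Since $S\subset S'$ and $Q$ is itself continuous, the q.e.\ equalities upgrade to identities, so $u\equiv 0$ on $S$; moreover $\Delta u = \1_{S'\setminus S}\Delta Q$ a.e.\ and
\[
u(z) = (t'-t)\log|z|^2 + \Ordo(1), \qquad |z|\to\infty.
\]
Since $h$ is harmonic on $\C\setminus S$ and of class $W^{2,1}_{\mathrm{loc}}$, the Laplacian $\Delta h$ is an $L^1_{\mathrm{loc}}$ function supported in $S$; combined with the continuous vanishing of $u$ on $S$, this yields $u\,\Delta h\equiv 0$ almost everywhere on $\C$.

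Pick $R$ so large that $S'\subset\D(0,R)$ and apply Green's second identity on $\D(0,R)$, in the form appropriate to the paper's conventions $\Delta = \partial\bar\partial$, $\diff A = \pi^{-1}\diff\vol_2$:
\[
\int_{\D(0,R)}\bigl(h\,\Delta u - u\,\Delta h\bigr)\,\diff A = \frac{1}{4\pi}\int_{|z|=R}\bigl(h\,\partial_n u - u\,\partial_n h\bigr)\,\diff\sigma.
\]
The left side collapses to $\int_{S'\setminus S} h\,\Delta Q\,\diff A$ since $u\,\Delta h\equiv 0$. On the right, the asymptotic of $u$ gives $\partial_n u = 2(t'-t)/R + \Ordo(R^{-2})$ on $|z|=R$, whence
\[
\frac{1}{4\pi}\int_{|z|=R} h\,\partial_n u\,\diff\sigma = (t'-t)\cdot\frac{1}{2\pi}\int_0^{2\pi} h(Re^{\imag\theta})\,\diff\theta + \Ordo(R^{-1}),
\]
which tends to $(t'-t)\,h(\infty)$ as $R\to\infty$: a bounded harmonic function on a neighborhood of $\infty$ has circle averages identically equal to its value at $\infty$. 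Meanwhile $h$ harmonic and bounded near $\infty$ forces $\partial_n h = \Ordo(R^{-2})$, while $u = \Ordo(\log R)$ on $|z|=R$, so $\frac{1}{4\pi}\int_{|z|=R} u\,\partial_n h\,\diff\sigma = \Ordo(R^{-1}\log R)\to 0$. Assembling the pieces delivers Richardson's formula.

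The main obstacle is of a regularity nature: one must justify Green's identity and the vanishing $u\,\Delta h\equiv 0$ within the Sobolev classes $W^{2,p}\cap W^{2,1}_{\mathrm{loc}}$ rather than in the classical $C^2$ setting. The crucial inputs are Sobolev imbedding, which makes $u$ genuinely continuous and forces $\widehat Q_S = Q$ on all of $S$ (not only q.e.), and the $W^{2,1}$ hypothesis on $h$, which prevents $\Delta h$ from carrying distributional mass on $\partial S$; together these render $u\,\Delta h$ the zero function without any assumption on the geometry of $\partial S$.
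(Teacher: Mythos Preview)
Your proof is correct and follows essentially the same route as the paper: form $u=\widehat Q_{S'}-\widehat Q_S$, use that $u$ vanishes on $S$ while $\Delta h$ is supported there so $u\,\Delta h\equiv0$, and then apply Green's formula on large disks to pick up the logarithmic growth at infinity. The only cosmetic difference is that the paper first subtracts a constant to reduce to $h(\infty)=0$ and then shows both boundary terms vanish outright, whereas you keep $h(\infty)$ general and read it off from the circle averages of $h$.
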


\begin{proof} The formula holds for constant $h$, by the choice of 
$t,t'$. So, by subtracting a constant, we may take $h(\infty)=0$. 
We have
\begin{equation}
\int_S h\Delta Q\diff A=\int_{\C}h\,1_S\Delta Q\diff A=\int_{\C} h
\Delta\widehat Q_S\diff A=\int_{\C}\widehat Q_S\,\Delta h\diff A,
\label{eq-6.1}
\end{equation}
and the analogous identity holds for $S'$ as well. By forming the difference 
between \eqref{eq-6.1} for $S$ and $S'$ we see that
\begin{equation*}
\int_{S'\sm S} h\Delta Q\diff A=\int_{\C}
\big[\widehat Q_{S'}-\widehat Q_S\big]\,\Delta h\diff A=0,
\end{equation*}
because $\Delta h=0$ on $\C\sm S$ while $\widehat Q_{S'}-\widehat Q_S=Q-Q=0$
on $S$ (see, e.g., Corollary \ref{cor-5.13}). To finish the proof, we just
need to justify \eqref{eq-6.1}. By Green's formula, we have 
\begin{equation}
\int_{\D(0,R)}\big[h\Delta\widehat Q_S-\widehat Q_S\Delta h\big]\diff A
=2\int_{\T(0,R)}
\big[h\partial_n\widehat Q_S-\widehat Q_S\partial_n h\big]\diff s,
\label{eq-6.2}
\end{equation}
where $\diff s$ is normalized arc length (i.e., arc length divided by $2\pi$) 
and $\partial_n$ is the exterior normal 
derivative. Next, we observe that as $|z|\to+\infty$, we have the asymptotics
\[
h=\Ordo(|z|^{-1}),\quad |\nabla h|=\Ordo(|z|^{-2}),\quad 
\widehat Q_S=\Ordo(\log |z|),\quad 
|\nabla\widehat Q_S|=\Ordo(|z|^{-1}),
\]
because both $h$ and $\widehat Q_S$ are harmonic in $\C\sm S$ with given
asymptotical behavior. By letting $R\to+\infty$ in \eqref{eq-6.2}, we obtain 
\eqref{eq-6.1}.
The proof is complete.
\end{proof}

\begin{cor} Suppose $S,S'$ are two local $Q$-droplets with $S\subset S'$. 
Also suppose that the interior $\Int\, S$ has finitely many components. 
Then for all $h$ continuous and bounded in $\C\sm\Int\,S$, which are harmonic
in $\C\sm\clos\,\Int\,S$, we have (with $t=t(Q,S)$ and $t'=t(Q,S')$)
\[
\int_{S'\sm S} h\Delta Q\diff A=(t'-t)\,h(\infty).
\]
\label{cor-6.15}
\end{cor}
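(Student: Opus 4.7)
The plan is to adapt the proof of Proposition \ref{prop-6.14}. The obstacle is that $h$ is only defined on $\C\sm\Int\,S$ and may fail to be harmonic across $\partial(\Int\,S)$, so Green's formula cannot be applied directly on a ball $\D(0,R)$. Instead I would apply it on an annular region that excises $\Int\,S$.

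Set $u := \widehat Q_{S'} - \widehat Q_S$. By Corollary \ref{cor-5.13}, $u$ is of class $W^{2,p}$ with $\Delta u = \1_{S'\sm S}\Delta Q$ a.e., $u\equiv0$ on $S$ (because $\widehat Q_{S'} = \widehat Q_S = Q$ there), and $u(z) = (t'-t)\log|z|^2 + \Ordo(1)$ at infinity. Since $\supp\Delta u\subset S'\sm S\subset\C\sm\Int\,S$, the identity
\[
\int_{S'\sm S}h\,\Delta Q\,\diff A = \int_{\C}h\,\Delta u\,\diff A
\]
is unambiguous using $h$ only on $\C\sm\Int\,S$. Using that $\Int\,S$ has only finitely many components, I construct a decreasing sequence $\{G^{(n)}\}$ of bounded $C^\infty$-smooth open neighborhoods of $\clos\,\Int\,S$ with $\bigcap_n G^{(n)} = \clos\,\Int\,S$. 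For $R$ large enough that $S'\cup\clos\,G^{(1)}\subset\D(0,R)$, Green's identity on $V_R^{(n)} := \D(0,R)\sm\clos\,G^{(n)}$, where $h$ is smooth harmonic and $u\in W^{2,p}$, yields
\[
\int_{V_R^{(n)}}h\,\Delta u\,\diff A = B_R - c\int_{\partial G^{(n)}}\big[h\,\partial_n u - u\,\partial_n h\big]\,\diff s,
\]
with $B_R$ the $\T(0,R)$ contribution (independent of $n$), $c$ the normalization constant from Proposition \ref{prop-6.14}, and the inner normal oriented outward from $V_R^{(n)}$.

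Passing to the limit $n\to\infty$, the left-hand side tends (by dominated convergence) to $\int_{\D(0,R)\sm\clos\,\Int\,S}h\,\Delta u\,\diff A$, which equals $\int_{S'\sm S}h\,\Delta Q\,\diff A$ since $\supp\Delta u\subset S'\sm S\subset\C\sm\clos\,\Int\,S$ and $S'\subset\D(0,R)$. The crux is showing the inner boundary integral vanishes: since $u\equiv0$ on the open set $\Int\,S$, we have $\nabla u = 0$ on $\Int\,S$, which by the Sobolev regularity of $u$ extends by continuity to $\clos\,\Int\,S$; choosing the $G^{(n)}$ as sublevel sets of the distance function to $\clos\,\Int\,S$ and selecting suitable ``good'' levels via a Fubini-type argument, one controls the traces of $u$ and of $\partial_n u$ on $\partial G^{(n)}$, and combined with the uniform boundedness of $h$ and $\nabla h$ on a fixed neighborhood of $\clos\,\Int\,S$ and a uniform perimeter bound for $\partial G^{(n)}$, the boundary integral tends to zero.

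We therefore have $\int_{S'\sm S}h\,\Delta Q\,\diff A = B_R$ for every sufficiently large $R$. Letting $R\to+\infty$ and expanding $B_R$ using $h = h(\infty) + \Ordo(|z|^{-1})$, $|\nabla h| = \Ordo(|z|^{-2})$, and the asymptotics of $u$, we conclude $B_R\to(t'-t)\,h(\infty)$ exactly as at the end of the proof of Proposition \ref{prop-6.14}. The hardest step is the vanishing of the inner boundary term, since it requires delicate trace control of $\nabla u$ on curves accumulating onto $\clos\,\Int\,S$ in the regime where $u$ need not be classically $C^1$.
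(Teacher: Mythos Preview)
Your approach diverges from the paper's. The paper's proof is essentially one line: by a Mergelyan-type approximation argument (using that $\Int\,S$ has finitely many components), one approximates $h$ uniformly on $\C\sm\Int\,S$ by bounded $C^\infty$-smooth functions $h_n$ that are harmonic on $\C\sm\clos\,\Int\,S$ (and in particular on $\C\sm S$); each $h_n$ then satisfies the hypotheses of Proposition~\ref{prop-6.14}, and one passes to the limit. This completely avoids any boundary analysis near $\clos\,\Int\,S$.

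Your direct Green's formula approach has a genuine gap. You assert ``the uniform boundedness of $h$ and $\nabla h$ on a fixed neighborhood of $\clos\,\Int\,S$'', but this fails in general for $\nabla h$: the hypothesis gives only that $h$ is continuous on $\C\sm\Int\,S$ and harmonic on the open set $\C\sm\clos\,\Int\,S$, and harmonic functions with merely continuous boundary data typically have gradients that blow up at the boundary. Without any regularity assumption on $\partial(\clos\,\Int\,S)$, you have no control on $|\nabla h|$ as $\partial G^{(n)}$ approaches $\clos\,\Int\,S$, so the term $\int_{\partial G^{(n)}} u\,\partial_n h\,\diff s$ cannot be handled by the product $(\sup|u|)\cdot\int|\nabla h|\,\diff s$ as you suggest. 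A second, related difficulty is the claimed ``uniform perimeter bound for $\partial G^{(n)}$'': sublevel sets of the distance function to a compact set with rough boundary can have perimeters that blow up as the level tends to zero, so this is also unjustified. These two issues together obstruct the vanishing of the inner boundary integral, and your own closing remark acknowledges that this step is not under control. The Mergelyan route sidesteps all of this by shifting the approximation from the domain to the function $h$ itself.
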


\begin{proof} 
By  Mergelyan-type approximation we can find a sequence of 
bounded $C^\infty$-smooth functions $h_n$ that are are harmonic in 
$\C\sm\clos\,\Int\,S$, such that $h_n\to h$ uniformly on $\C\sm\Int\,S$
as $n\to+\infty$. The assertion now follows from Proposition \ref{prop-6.14}.
\end{proof}

\subsection{Differential form of Richardson's formula}
We keep the setting of the previous subsections, and introduce the concept of
a containment chain. 

\begin{defn}
A {\em containment chain} of local $Q$-droplets is a (continuously indexed) 
family  of $Q$-droplets $\{S_t\}_t$, where the index $t$ ranges over 
a nonempty interval  $I\subset\R_+$, with left endpoint $0$, such that 
$t=t(Q,S_t)$ and
\[
t_1\le t_2\quad \Longleftrightarrow\quad S_{t_1}\subset S_{t_2}.
\]
\end{defn}

Let $\{S_t\}_{t\in I}$ be a containment chain of local $Q$-droplets, and
let $t_*$ denote the right endpoint of $I$. Let $I^-$ be 
the interval obtained from $I$ by removal of $t_*$ (if $t_*\notin I$, we put 
$I^-:=I$).

\begin{lem}
The map $t\mapsto S_t$, $t\in I$, is continuous in the Hausdorff metric 
except for a countable subset of the interval $I$. 
\end{lem}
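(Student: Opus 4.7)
My plan is to reduce the claim to the standard fact that a monotone real-valued function has at most countably many discontinuities, applied to the distance functions $t\mapsto\dist(z,S_t)$ for $z$ ranging over a countable dense subset of $\C$. The argument will use only the monotonicity of the chain and is independent of the $Q$-droplet structure.

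First I would fix an interior point $t\in I$ and introduce the one-sided limits
\[
S_{t^-}:=\clos\bigcup_{s<t,\,s\in I}S_s,\qquad S_{t^+}:=\bigcap_{s>t,\,s\in I}S_s.
\]
Since Hausdorff continuity at $t$ is a local question in $t$, I may restrict attention to $s\in(t-\epsilon,t+\epsilon)\cap I$, for which the sets all lie inside the compact set $S_{t+\epsilon}$, so no boundedness trouble arises. Monotonicity of the chain forces $S_{t^-}\subset S_t\subset S_{t^+}$, and the standard arguments for increasing unions and decreasing intersections of compact sets give $d_H(S_s,S_{t^-})\to 0$ as $s\uparrow t$ and $d_H(S_s,S_{t^+})\to 0$ as $s\downarrow t$. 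Consequently Hausdorff continuity at $t$ reduces to the single equality $S_{t^-}=S_{t^+}$.

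Next I would exploit that for each $z\in\C$ the function $\phi_z(t):=\dist(z,S_t)$ is monotone non-increasing in $t$ (since $S_t$ grows), and therefore has at most countably many points of discontinuity; call that set $E_z$. Picking a countable dense subset $D\subset\C$ (for instance $\Q+\imag\Q$) and setting
\[
E:=\bigcup_{z\in D}E_z,
\]
I obtain a countable subset of $I$ outside of which I will establish continuity. For $t\in I\setminus E$ and every $z\in D$, the one-sided limits of $\phi_z$ at $t$ agree, which by the previous step translates into $\dist(z,S_{t^-})=\dist(z,S_{t^+})$ for all $z\in D$. Because $\dist(\cdot,K)$ is $1$-Lipschitz for any compact $K$, density of $D$ extends this equality to every $z\in\C$. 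Applying it to an arbitrary $z\in S_{t^+}$ yields $\dist(z,S_{t^-})=0$, so $z\in S_{t^-}$ (as $S_{t^-}$ is closed); thus $S_{t^+}\subset S_{t^-}$, and the reverse inclusion is automatic.

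I do not anticipate any serious obstacle: the only subtlety is to keep the local nature of the continuity question in mind so that containment chains whose total union is unbounded are handled by always working inside $S_{t+\epsilon}$. Endpoint points of $I$, if any belong to $I$, are treated in the same way using only the one relevant one-sided limit.
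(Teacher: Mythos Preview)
Your argument is correct. The reduction to monotone real-valued functions $\phi_z(t)=\dist(z,S_t)$ over a countable dense set $D\subset\C$ is sound: the identities $\lim_{s\uparrow t}\phi_z(s)=\dist(z,S_{t^-})$ and $\lim_{s\downarrow t}\phi_z(s)=\dist(z,S_{t^+})$ hold for the reasons you indicate (monotone nesting plus compactness of $S_{t+\epsilon}$), and the $1$-Lipschitz property of $z\mapsto\dist(z,K)$ legitimately propagates the equality from $D$ to all of $\C$. The chain $S_{t^-}\subset S_t\subset S_{t^+}$ together with $S_{t^+}\subset S_{t^-}$ then yields continuity at $t$.

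The paper proceeds differently. It defines $\delta^-(t)=\max_{z\in S_t}\dist(z,S_t^-)$ and $\delta^+(t)=\max_{z\in S_t^+}\dist(z,S_t)$, and argues by a packing bound: if $t_1<\cdots<t_N\le t'$ all satisfy $\delta^-(t_j)>\epsilon$, then choosing $z_j\in S_{t_j}$ with $\dist(z_j,S_{t_j}^-)>\epsilon$ gives $N$ points in $S_{t'}$ that are pairwise $\epsilon$-separated (since $z_i\in S_{t_i}\subset S_{t_j}^-$ for $i<j$), and this $N$ is bounded in terms of the diameter of $S_{t'}$. Your route is softer and more portable---it works verbatim for any increasing family of nonempty compact subsets of a separable metric space---whereas the paper's packing argument is more quantitative: it shows that on each compact subinterval $[0,t']\subset I$ there are only \emph{finitely} many jumps of size exceeding $\epsilon$, and that the only possible accumulation point of the discontinuity set is the right endpoint $t_*$.
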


\begin{proof}
For $t_0\in I^-$, we form the compact sets
\[
S_{t_0}^-=\clos \bigcup_{t:t<t_0}S_t,\quad S_{t_0}^+=\bigcap_{t:t>t_0}S_t.
\]
Then $S_t^-\subset S_t\subset S_t^+$ holds for each for $t\in I^-$. 
Note that $S_{t_0}^-$ is well-defined also when $t_0=t_*$. 
For $t\in I^-$, we put 
\[
\delta^-(t):=\max_{z\in S_{t}}\dist_\C(z,S_{t}^-),\quad
\delta^+(t):=\max_{z\in S_{t}^+}\dist_\C(z,S_{t}).
\]
Next, for positive $\epsilon$, we consider the sets
\[
D^-_\epsilon:=\{t\in I^-:\,\delta^-(t)>\epsilon\},\quad
D^+_\epsilon:=\{t\in I^-:\,\delta^+(t)>\epsilon\}.
\]
We argue that for each positive $\epsilon$, the sets 
$D^-_\epsilon,D^+_\epsilon$ are countable and that the only possible 
accumulation point is $t_*$ (and if $t_*$ is an accumulation point, then
the set $S_{t_*}^-$ must be unbounded). Indeed, if, for some $t'$ with 
$0<t'<t_*$, the set $D^-_\epsilon\cap]0,t']$ has $N=N(t',\epsilon)$ elements,
then the local $Q$-droplet $S_{t'}$ contains $N$ points which are 
$\epsilon$-separated (the distance between any two different points is at
least $\epsilon$). We get an effective bound on $N$ in terms of the diameter
of the compact set $S_{t'}$. The analogous argument applies to $D^+_\epsilon$
in place of $D^-_\epsilon$.
\end{proof}

We let $\omega_\infty^{(t)}$ denote harmonic measure for the 
open set $\C\sm S_t$ with respect to the point at infinity. This is a 
probability measure whose support is contained in $\partial\Phull(S_t)\subset 
\partial S_t$ (the effect on a test function is that we get the value at 
infinity of the harmonic extension).

\begin{prop} 
Suppose the map $t\mapsto S_t$, $t\in I$, is right continuous 
at $t_0\in I^-$. Suppose moreover that $\Int\, S_{t_0}$ has finitely many 
components and that $S_{t_0}=\clos\,\Int\, S_{t_0}$. Then  for all  
$g\in C(\C)$ we have 
\[
\lim_{t\to t_0^+}~\frac1{t-t_0}\int_{S_{t}\sm S_{t_0}} g\,\Delta Q\diff A
=\int_{\partial S_{t_0}} g\, d\omega_\infty^{(t_0)},
\]
that is, we have the weak-star convergence of measures
\[
\lim_{t\to t_0^+}~\frac1{t-t_0}1_{S_t\sm S_{t_0}}\Delta Q\diff A
=\diff\omega_\infty^{(t_0)}.
\]
\label{prop-6.18}
\end{prop}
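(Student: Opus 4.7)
The plan is to interpret the rescaled measures $\mu_t:=\frac{1}{t-t_0}\mathbf{1}_{S_t\sm S_{t_0}}\Delta Q\,\diff A$ as a tight family of probability measures (their total masses are $[t(Q,S_t)-t(Q,S_{t_0})]/(t-t_0)=1$) and to prove weak-star convergence $\mu_t\to\omega_\infty^{(t_0)}$ as $t\to t_0^+$; both formulations in the statement then follow immediately. The hypotheses that $\Int\,S_{t_0}$ has finitely many components and $S_{t_0}=\clos\,\Int\,S_{t_0}$ allow me to apply Corollary \ref{cor-6.15} to the pair $S_{t_0}\subset S_t$, giving the key Richardson identity
\[
\int_\C h\,\diff\mu_t=h(\infty)
\]
for every bounded $h$ that is continuous on $\C\sm\Int\,S_{t_0}$ and harmonic in $\C\sm S_{t_0}$.

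Next I would use the right-continuity of $t\mapsto S_t$ at $t_0$ to localize the support: given $\varepsilon>0$, there is $\delta>0$ such that $S_t$ lies in the $\varepsilon$-neighborhood of $S_{t_0}$ for $t\in(t_0,t_0+\delta)$, so $\supp\mu_t\subset S_t\sm S_{t_0}$ lies in the $\varepsilon$-neighborhood of $\partial S_{t_0}$. Hence $\{\mu_t\}$ has uniformly compact support, Prokhorov supplies weak-star cluster points $\mu_*$, each a probability measure supported in $\partial S_{t_0}$, and passage to the limit in the Richardson identity gives $\int h\,\diff\mu_*=h(\infty)$ for every admissible test $h$.

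The main obstacle is to identify $\mu_*$ with $\omega_\infty^{(t_0)}$, because $\partial S_{t_0}$ may strictly contain $\partial\Phull(S_{t_0})=\partial\Omega$ (where $\Omega$ is the unbounded component of $\C\sm S_{t_0}$) -- a priori $\mu_*$ could place mass on the boundaries of the bounded complementary components of $\C\sm S_{t_0}$. To handle this, I would exploit that $\C\sm S_{t_0}$ is Dirichlet-regular: the condition $S_{t_0}=\clos\,\Int\,S_{t_0}$ forces positive area of $S_{t_0}$ in every neighborhood of every boundary point, and the Wiener criterion delivers regularity. For any $f\in C(\partial\Omega)$ and any continuous extension $\tilde f\in C(\partial S_{t_0})$, solving the Dirichlet problem componentwise in $\C\sm S_{t_0}$ produces an admissible $h$ with $h|_{\partial S_{t_0}}=\tilde f$ and $h(\infty)=\int f\,\diff\omega_\infty^{(t_0)}$, and the limit Richardson identity becomes
\[
\int_{\partial S_{t_0}}\tilde f\,\diff\mu_*=\int_{\partial\Omega}f\,\diff\omega_\infty^{(t_0)}.
\]
Fixing $f$ and varying $\tilde f$, the difference $\tilde f_1-\tilde f_2$ ranges over all continuous functions on $\partial S_{t_0}$ vanishing on $\partial\Omega$, which forces $\mu_*$ to be concentrated on $\partial\Omega$. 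The residual identity then reads $\int_{\partial\Omega}f\,\diff\mu_*=\int_{\partial\Omega}f\,\diff\omega_\infty^{(t_0)}$ for every $f\in C(\partial\Omega)$, so $\mu_*=\omega_\infty^{(t_0)}$. Uniqueness of the cluster point combined with tightness yields the full weak-star limit, and evaluating against $g\in C(\C)$ finishes the proof.
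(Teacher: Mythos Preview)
Your argument is correct but takes a longer route than the paper's. The paper proceeds directly: given $g\in C(\C)$, it defines $h$ to agree with $g$ on $S_{t_0}$ and to be the bounded harmonic extension in $\C\sm S_{t_0}$, so that $h(\infty)=\int_{\partial S_{t_0}}g\,\diff\omega_\infty^{(t_0)}$ by the very definition of harmonic measure. Corollary~\ref{cor-6.15} applied to this $h$ gives
\[
\frac{1}{t-t_0}\int_{S_t\sm S_{t_0}}g\,\Delta Q\,\diff A
=h(\infty)+\frac{1}{t-t_0}\int_{S_t\sm S_{t_0}}(g-h)\,\Delta Q\,\diff A,
\]
and the second term tends to zero because $g-h$ vanishes on $\partial S_{t_0}$ while right continuity forces $S_t\sm S_{t_0}$ to collapse onto $\partial S_{t_0}$. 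This avoids Prokhorov, subsequences, and the separate identification step: the harmonic extension $h$ simultaneously produces the target value \emph{and} serves as the test function in Richardson's formula.

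Your compactness route is modular---first localize the mass to $\partial S_{t_0}$, then identify the limit measure---and your explicit appeal to Dirichlet regularity via the Wiener criterion makes visible an ingredient the paper uses only implicitly when it asserts that $h$ is continuous across $\partial S_{t_0}$. But the direct splitting $g=h+(g-h)$ is considerably shorter and reaches the answer without an abstract cluster-point argument or the two-step identification (first ruling out mass on inner boundary components, then matching on $\partial\Omega$).
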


\begin{proof}  
Let $h$ denote the function which coincides with $g$ on $S_{t_0}$ and extends
harmonically (and boundedly) to $\C\sm S_t$, so that in particular
\[
h(\infty)=\int_{\partial S_{t_0}} g d\omega_\infty^{(t_0)}.
\]
Then $h$ is continuous and bounded in $\C$ (see, e.g. \cite{GM} for a
discussion of the Dirichlet problem). Since $S_{t_0}=\clos\,\Int\,S_{t_0}$,
Corollary \ref{cor-6.15} applied to to $h$ gives
\[
\frac1{t-t_0}\int_{S_{t}\sm S_{t_0}}g\Delta Q\diff A=
h(\infty)+\frac1{t-t_0}\int_{S_{t}\sm S_{t_0} } (g-h)\Delta Q\dA.
\]
It remains to show that the last term on the right hand side tends to zero 
as $t\to t_0$. This follows from the fact that $h(z)-g(z)\to 0$ as 
$z\to\partial S_{t_0}$ and that $S_{t}\searrow S_{t_0}$ by the right 
continuity assumption.
\end{proof}

\begin{rem}
Proposition \ref{prop-6.18} states that (under regularity assumptions) the
infinitesimal growth of the local $Q$-droplets is in the exterior direction
only.
If the containment chain of local $Q$-droplets were to grow in the direction of
the internal holes, the containment chain could not possibly be a domination 
chain (cf. Corollary \ref{cor-6.16}).
\end{rem}

\subsection{Richardson's inequality} We now show that under modest regularity
conditions, containment chains of local $Q$-droplets are in fact domination
chains. 

\begin{thm} Suppose $S,S'$ are two local $Q$-droplets, with $S\subset S'$.
Then the following are equivalent:

\noindent{\rm(i)} $S\prec S'$. 

\noindent{\rm(ii)} For all real-valued functions $h\in W^{2,1}(\C)$ 
(local Sobolev class) that are 
subharmonic in $\C\sm S$, harmonic in $\C\sm S'$, and bounded near infinity, 
we have (with $t=t(Q,S)$ and $t'=t(Q,S')$)
\[
(t'-t)h(\infty)\le\int_{S'\sm S} h\Delta Q\diff A.
\]
\label{thm-6.19}
\end{thm}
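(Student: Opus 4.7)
The plan is to prove both directions via a single Green's-type identity that extends the one underlying Proposition \ref{prop-6.14}. Setting $\Phi := \widehat Q_{S'} - \widehat Q_S$, the first step is to establish, for any $h \in W^{2,1}$ (local Sobolev class) that is bounded near infinity with adequate gradient decay there, the identity
\begin{equation*}
\int_{S' \sm S} h\,\Delta Q\,dA \;-\; (t' - t)\,h(\infty) \;=\; \int_\C \Phi\,\Delta h\,dA.
\end{equation*}
The derivation mirrors Proposition \ref{prop-6.14}: apply Green's identity on $\D(0,R)$ to $h$ and $\Phi$, noting $\Delta \Phi = 1_{S' \sm S}\Delta Q$, and pass to the limit $R \to \infty$; the constant $(t' - t)$ in front of $h(\infty)$ is pinned down by testing with $h \equiv 1$. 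Two observations will be used throughout: $\Phi = 0$ on $S$ (since $\widehat Q_S = Q = \widehat Q_{S'}$ there, by Corollary \ref{cor-5.13}), and for any $h$ admissible in (ii), $\Delta h$ is supported in $S'$ and is $\ge 0$ on $S' \sm S$. Together these reduce the right-hand side to $\int_{S' \sm S} \Phi\,\Delta h\,dA$.

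For the direction (i) $\Rightarrow$ (ii), Proposition \ref{prop-6.4} gives $\widehat Q_S \le Q = \widehat Q_{S'}$ on $S'$, hence $\Phi \ge 0$ there, in particular on $S' \sm S$. Combined with $\Delta h \ge 0$ on $S' \sm S$, the right-hand side of the identity is nonnegative, which is precisely the inequality of (ii).

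For the direction (ii) $\Rightarrow$ (i), the goal is to show $\Phi \ge 0$ on $S' \sm S$; combined with $\Phi = 0$ on $S$ and Proposition \ref{prop-6.4}, this yields $S \prec S'$. The key move is to produce, for each nonnegative smooth density $\psi$ compactly supported in $\Int(S' \sm S)$, a test function $h$ admissible in (ii) satisfying $\Delta h = \psi$ on $S' \sm S$ and $\Delta h = 0$ on $\C \sm S'$. The natural candidate is the logarithmic potential
\[
h(z) \;:=\; \int_{\C} \log\frac{1}{|z - \zeta|^2}\bigl[\nu(\zeta) - \psi(\zeta)\bigr]\,dA(\zeta),
\]
where $\nu$ is any smooth nonnegative density compactly supported in $\Int S$ with $\int \nu\,dA = \int \psi\,dA$. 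The mass cancellation makes $h$ bounded near infinity with the required gradient decay, the standard $W^{2,p}$-theory of the logarithmic potential ensures the needed local regularity, and $\Delta h = \psi - \nu$ places $h$ squarely in the admissible class for (ii): subharmonic on $\C \sm S$, harmonic on $\C \sm S'$. Applying the identity, discarding the $\nu$-contribution by $\Phi = 0$ on $S$, and invoking (ii), we conclude $\int_{S' \sm S} \Phi\,\psi\,dA \ge 0$. Letting $\psi$ range over such densities forces $\Phi \ge 0$ a.e.\ on $\Int(S' \sm S)$, and by continuity of $\Phi$ throughout $S' \sm S$ (both $\widehat Q_{S'}$ and $\widehat Q_S$ being $W^{2,p}$, hence $C^1$, in a neighborhood of $S'$).

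The main obstacle is the reverse direction: one must verify that the potential-theoretic test functions satisfy every hypothesis of (ii) (in particular the subharmonicity off $S$, harmonicity off $S'$, and boundedness at infinity), and that the class of admissible $\psi$ is rich enough to conclude pointwise nonnegativity of $\Phi$. The mass-cancellation device, introducing an auxiliary density $\nu$ on $S$ of equal mass to $\psi$, is essential: without it, a straightforward potential built from $\psi$ alone would grow logarithmically at infinity and fail the boundedness requirement.
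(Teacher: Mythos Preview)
Your approach is essentially the paper's own: the same Green's identity $\int_{S'\sm S}h\,\Delta Q\,\diff A-(t'-t)h(\infty)=\int_{S'\sm S}\Phi\,\Delta h\,\diff A$, the same use of Proposition~\ref{prop-6.4} for (i)$\Rightarrow$(ii), and the same mass-cancellation device (a compensating density on $S$) to manufacture admissible test functions for (ii)$\Rightarrow$(i).

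There is, however, a small gap in your (ii)$\Rightarrow$(i). You take $\psi$ smooth and compactly supported in $\Int(S'\sm S)$ and $\nu$ smooth and compactly supported in $\Int S$. This yields $\Phi\ge0$ a.e.\ on $\Int(S'\sm S)$, hence by continuity on $\clos\Int(S'\sm S)$; but nothing guarantees $\clos\Int(S'\sm S)=S'\sm S$ (a local $Q$-droplet is not assumed to equal the closure of its interior), and likewise $\Int S$ could in principle be empty even though $S$ has positive area. The paper sidesteps this by allowing the test density $g=\Delta h$ to be an arbitrary $L^q$ function on $S'$ with $g\ge0$ on $S'\sm S$ and $\int_{S'}g\,\diff A=0$; the associated logarithmic potential is then in $W^{2,q}$ (by the 2D Hilbert transform) and still admissible in (ii). With $L^q$ test densities one obtains $\Phi\ge0$ a.e.\ on $S'\sm S$ directly, and then continuity of $\Phi$ together with the absence of $Q$-shallow points in $S'$ (so any disk around a point of $S'\sm S$ meets $S'\sm S$ in positive area) upgrades this to $\Phi\ge0$ everywhere on $S'\sm S$. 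Replacing your smooth compactly supported $\psi,\nu$ by $L^q$ densities supported on $S'\sm S$ and $S$ respectively closes the gap with no change to the structure of your argument.
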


\begin{proof} 
We first show that (i)$\implies$(ii). 
We note that the inequality is an equality when $h$ is constant (see, e.g., 
Proposition \ref{prop-6.14}). This allows us to restrict our
attention to $h$ with $h(\infty)=0$.
As in the proof of Richardson's formula (Proposition \ref{prop-6.14}), we 
find that
\[
\int_{S'\sm S} h\Delta Q\diff A=\int_{\C} 
[\widehat Q_{S'}-\widehat Q_S]\Delta h\diff A
=\int_{S'\sm S}[\widehat Q_{S'}-\widehat Q_S]\Delta h\diff A\ge
0,
\]
since $\widehat Q_S\le Q=\widehat Q_{S'}$, by Proposition \ref{prop-6.4} and
Corollary \ref{cor-5.13}. 

We turn to the implication (ii)$\implies$(i). We take $h(\infty)=0$, and get
(as above) from (ii) that
\begin{equation}
0\le\int_{S'\sm S} h\Delta Q\diff A=
\int_{S'\sm S}[\widehat Q_{S'}-\widehat Q_S]\Delta h\diff A.
\label{eq-6.3}
\end{equation}
The question now is what kind of functions $\Delta h$ are possible here.
We have automatically $\Delta h\in L^1(S')$ while $\Delta h\ge0$ a.e.
on $S'\sm S$. We also need to impose that
\[
\int_{S'}\Delta h\diff A=0,
\]
as a consequence of the behavior of $h$ near infinity. In fact, any real-valued
function $g\in L^q(S')$ for some $q$ with $1<q<+\infty$ with 
$g\ge0$ a.e. on $S'\sm S$ and 
\begin{equation}
\label{eq-6.4}
\int_{S'}g\diff A=0,
\end{equation}
is of the form $g=\Delta h$ for an $h$ as in (ii) with $h(\infty)=0$. 
It follows from \eqref{eq-6.3} that
\begin{equation}
0\le\int_{S'\sm S}[\widehat Q_{S'}-\widehat Q_S]g\diff A.
\label{eq-6.5}
\end{equation}
As it is easy to fulfill \eqref{eq-6.4} by placing an $L^q$-integrable 
negative mass on $S$ to compensate for the positive mass on $S'\sm S$, on 
$S'\sm S$ the function $g$ is basically any positive $L^q$ function on 
$S'\sm S$. This is
only possible if $\widehat Q_{S}\le \widehat Q_{S'}$ on $S'\sm S$, and as 
$\widehat Q_{S'}=Q$ on $S'$, we get $\widehat Q_{S}\le Q$ on $S'\sm S$. Since
$\widehat Q_{S}\le Q$ holds automatically on $S$, we see that 
$\widehat Q_{S}\le Q$ on $S'$. The conclusion $S\prec S'$ now follows from 
Proposition \ref{prop-6.4}. 
The proof is complete.
\end{proof}

The proof of Theorem \ref{thm-6.19} has the following consequence.

\begin{cor}
Suppose $S'$ is a local $Q$-droplet, and that $S\subset S'$, where $S$ is
compact and lacks $Q$-shallow points. If condition {\rm(ii)} of Theorem 
\ref{thm-6.19} is fulfilled, then $S$ is a local $Q$-droplet, and $S\prec S'$.
\label{cor-6.23}
\end{cor}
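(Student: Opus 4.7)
The strategy is to verify the three conditions of Corollary~\ref{cor-5.16} characterizing $S$ as a local $Q$-droplet, and then apply Proposition~\ref{prop-6.4}. Condition (i) of that corollary, $\Delta Q\ge 0$ a.e.\ on $S$, is inherited from $S'$: since $S'$ is a local $Q$-droplet, Corollary~\ref{cor-5.16} applied to $S'$ gives $\Delta Q\ge 0$ a.e.\ on $S'\supset S$. Condition (ii), the absence of $Q$-shallow points, is a hypothesis. Thus the task reduces to establishing condition (iii), the constancy of $U^{Q,S}+Q$ on $S$, together with the domination inequality $\widehat Q_S\le Q$ on $S'$ required by Proposition~\ref{prop-6.4}.

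To this end, I follow the implication (ii)$\Longrightarrow$(i) from the proof of Theorem~\ref{thm-6.19}, but with a sharper duality extraction. Define $\widehat Q_S:=\gamma-U^{Q,S}$, where $\gamma$ is a constant to be fixed later; this function lies locally in $W^{2,p}$, is harmonic in $\C\setminus S$, and satisfies $\Delta\widehat Q_S=1_S\Delta Q$ as distributions. For any $h$ meeting the conditions in Theorem~\ref{thm-6.19}(ii) with $h(\infty)=0$, integration by parts (with vanishing boundary terms at infinity, since $h=\Ordo(|z|^{-1})$ and $\widehat Q_S=\Ordo(\log|z|)$) together with $\widehat Q_{S'}=Q$ on $S'$ (Corollary~\ref{cor-5.13}) yields
\[
\int_{S'\setminus S}h\,\Delta Q\,\diff A
=\int_{S'}(Q-\widehat Q_S)\,\Delta h\,\diff A,
\]
which is $\ge 0$ by the assumed condition (ii). Exactly as in the proof of Theorem~\ref{thm-6.19}, the test function $g:=\Delta h$ ranges over all $g\in L^p(S')$ with $g\ge 0$ on $S'\setminus S$ and $\int_{S'}g\,\diff A=0$.

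The new ingredient is a two-stage extraction. First, restrict to $g$ supported on $S$ with $\int_S g\,\diff A=0$; since both $g$ and $-g$ are admissible, we conclude $\int_S(Q-\widehat Q_S)g\,\diff A=0$ for all such $g$, whence $Q-\widehat Q_S$ is constant (call the value $c$) a.e.\ on $S$, and in fact everywhere on $S$ by continuity (Sobolev embedding gives $Q,\widehat Q_S\in C(\C)$). Second, for arbitrary $g\ge 0$ on $S'\setminus S$, prescribe $g$ to take the balancing constant value $-(\int_{S'\setminus S}g\,\diff A)/|S|$ on $S$; the $S$-contribution becomes $-c\int_{S'\setminus S}g\,\diff A$, leaving $\int_{S'\setminus S}(Q-\widehat Q_S-c)g\,\diff A\ge 0$ for every such $g$, so $Q-\widehat Q_S\ge c$ on $S'\setminus S$. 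Choose $\gamma$ so that $c=0$; then $\widehat Q_S=Q$ on $S$ (which is condition (iii) of Corollary~\ref{cor-5.16}) and $\widehat Q_S\le Q$ on $S'$. Corollary~\ref{cor-5.16} then identifies $S$ as a local $Q$-droplet, and Proposition~\ref{prop-6.4} concludes $S\prec S'$.

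The main obstacle is the first substep of the duality extraction: one must exploit that admissible test functions $g$ may be negative on $S$ in any shape (subject only to zero total integral), not merely in the amount needed to compensate positive mass on $S'\setminus S$. This extra freedom is what forces $Q-\widehat Q_S$ to be \emph{exactly} constant on $S$, rather than merely bounded below by some value, and it is what allows us to upgrade the conclusion of Theorem~\ref{thm-6.19} to a situation where $S$ is not known a priori to be a local $Q$-droplet.
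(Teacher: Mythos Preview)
Your proof is correct and follows essentially the same route as the paper's. The paper works with the difference $U^{Q,S}-U^{Q,S'}$ rather than $Q-\widehat Q_S$, but since $\widehat Q_{S'}=Q$ on $S'$ these differ only by a constant, which is killed by the zero-mean condition on $g$; the key first extraction (taking $g$ supported in $S$, using that $\pm g$ are both admissible, and concluding constancy) is identical. The only cosmetic difference is in the final step: you carry out a second extraction to obtain $\widehat Q_S\le Q$ on $S'$ directly and then invoke Proposition~\ref{prop-6.4}, whereas the paper simply cites Theorem~\ref{thm-6.19} once $S$ is known to be a local $Q$-droplet.
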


\begin{proof}
As in the proof of Theorem \ref{thm-6.19}, we get from condition (ii) of that
theorem
\[
0\le\int_{S'\sm S}h\Delta Q\diff A=\int_{\C}[U^{Q,S}-U^{Q,S'}]\Delta h\diff A
=\int_S[U^{Q,S}-U^{Q,S'}]\Delta h\diff A+\int_{S'\sm S}
[U^{Q,S}-U^{Q,S'}]\Delta h\diff A,
\] 
provided that $h\in W^{2,1}(\C)$ (local Sobolev class)
is subharmonic in $\C\sm S$ and harmonic in
$\C\sm S'$, and bounded near infinity, with $h(\infty)=0$.  
As in the proof of Theorem \ref{thm-6.19}, we choose $h$ as (minus) the 
logarithmic potential of $g$, where $g\in L^q(S')$ has $g\ge0$ on $S'\sm S$
and 
\[
\int_{S'}g\diff A=0.
\]
so that
\begin{equation}
0\le\int_S[U^{Q,S}-U^{Q,S'}]g\diff A+\int_{S'\sm S}
[U^{Q,S}-U^{Q,S'}]g\diff A,
\label{eq-6.6.0}
\end{equation}
If we choose $g$ such that $g=0$ on $S'\sm S$, we have equality (since then
the inequality applies to $-g$ as well):
\[
\int_S[U^{Q,S}-U^{Q,S'}]g\diff A=0.
\] 
As $g$ is now arbitrary except that its integral over $S$ vanishes, 
we conclude that $U^{Q,S}-U^{Q,S'}$ is constant on $S$. Call the constant $c$:
$U^{Q,S}=c+U^{Q,S'}$ on $S$. Since $S'$ is a local $Q$-droplet, the
function $U^{Q,S'}+Q$ is constant on $S'$ (cf. Corollary \ref{cor-5.16})
and consequently, $U^{Q,S}$ is constant on $S$ (after all, $S\subset S'$). 
We conclude that $S$ is a local $Q$-droplet. That $S\prec S'$ now follows from
Theorem \ref{thm-6.19}.
\end{proof}

\begin{thm} 
Let $\{S_t\}_{t\in I}$ be a containment chain of local $Q$-droplets. If, for 
almost every $t\in I$, the set $\Int\, S_t$ has finitely many components and 
$S_t=\clos\,\Int\,S_t$, then  $\{S_t\}_{t\in I}$ is a domination chain. 
\label{thm-6.21}
\end{thm}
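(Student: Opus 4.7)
The plan is to verify condition (ii) of Theorem \ref{thm-6.19} for the pair $(S_{t_1},S_{t_2})$ whenever $t_1<t_2$ in $I$; combined with the trivial case $t_1=t_2$, this gives $S_{t_1}\prec S_{t_2}$, which is exactly what it means for $\{S_t\}_{t\in I}$ to be a domination chain. Inspection of the implication (ii)$\Rightarrow$(i) in the proof of Theorem \ref{thm-6.19} shows that the only test functions $h$ actually used there are (minus) logarithmic potentials of $L^q$-densities with $q>1$; such $h$ are $W^{2,q}$-smooth and hence continuous on $\C$. It therefore suffices to verify (ii) for continuous $h$. Fix such an $h$, subharmonic in $\C\sm S_{t_1}$, harmonic in $\C\sm S_{t_2}$, and bounded near infinity. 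Richardson's formula (Proposition \ref{prop-6.14}) is an equality on constants, so after subtracting one we may assume $h(\infty)=0$; the target becomes $F(t_2)\ge 0$, where
\[
F(t):=\int_{S_t\sm S_{t_1}}h\,\Delta Q\,dA,\qquad t\in[t_1,t_2].
\]

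First, I would check that $F$ is Lipschitz on $[t_1,t_2]$. Since each $S_t$ is a local $Q$-droplet, Corollary \ref{cor-5.16} gives $\Delta Q\ge 0$ a.e.\ on $S_t$, and the containment chain structure makes $\{1_{S_t\sm S_{t_1}}\Delta Q\,dA\}_t$ an increasing family of positive measures of total mass $t-t_1$. Consequently, for $t_1\le s<t\le t_2$,
\[
|F(t)-F(s)|=\left|\int_{S_t\sm S_s}h\,\Delta Q\,dA\right|\le \sup_{S_{t_2}}|h|\cdot(t-s),
\]
and $h$ is bounded on $\C$ (continuous with finite limit at infinity), so $F$ is Lipschitz, hence absolutely continuous.

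Next, I compute $F'(t)$ a.e. By the preceding lemma on Hausdorff continuity, $t\mapsto S_t$ is right continuous off a countable subset of $I$; combined with the hypothesis that for a.e.\ $t$ the interior $\Int\,S_t$ has finitely many components and $S_t=\clos\,\Int\,S_t$, Proposition \ref{prop-6.18} applies at a.e.\ $t\in(t_1,t_2)$, and gives $F'(t)=\int h\,d\omega_\infty^{(t)}$. For any such $t\ge t_1$ we have $\C\sm S_t\subset\C\sm S_{t_1}$, so $h$ is subharmonic on $\C\sm S_t$; viewed on the Riemann sphere, $h$ is a bounded continuous subharmonic function on $(\mathrm{sphere})\sm S_t$ with value $h(\infty)=0$ at the interior point $\infty$, whence the submean value inequality against harmonic measure from infinity yields $0=h(\infty)\le\int h\,d\omega_\infty^{(t)}$. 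Thus $F'(t)\ge 0$ a.e., and absolute continuity of $F$ with $F(t_1)=0$ gives $F(t_2)=\int_{t_1}^{t_2}F'(t)\,dt\ge 0$, which is the desired Richardson inequality.

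The main obstacle is the careful book-keeping around Proposition \ref{prop-6.18}: one must check that the exceptional set of $t\in(t_1,t_2)$ at which the proposition fails to apply --- through failure of right continuity of $t\mapsto S_t$, or through failure of the topological regularity $S_t=\clos\,\Int\,S_t$ with finitely many components of $\Int\,S_t$ --- has Lebesgue measure zero. The hypothesis of Theorem \ref{thm-6.21} plus the preceding Hausdorff-continuity lemma together supply this; once it is in hand, the Lipschitz bound on $F$ legitimizes integrating the pointwise a.e.\ inequality $F'(t)\ge 0$ to obtain the conclusion.
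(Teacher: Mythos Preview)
Your proposal is correct and follows essentially the same approach as the paper: both arguments establish Lipschitz continuity of an integral along the chain, compute its a.e.\ derivative via Proposition~\ref{prop-6.18}, and use the submean-value inequality against harmonic measure to obtain the required sign. The only cosmetic difference is that you package the conclusion through condition~(ii) of Theorem~\ref{thm-6.19}, whereas the paper works directly with the potentials $U^{\nu-\mu}$ and concludes via Proposition~\ref{prop-6.4}; since, as you correctly note, the test functions in the proof of Theorem~\ref{thm-6.19}(ii)$\Rightarrow$(i) are precisely such potentials, the two arguments are equivalent.
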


\begin{proof} 
We consider the $W^{2,p}$-smooth function 
\[
V(\xi,\eta;t):=U^{Q,S_t}(\xi)-U^{Q,S_t}(\eta)=\int_{S_t}\log
\bigg|\frac{z-\xi}{z-\eta}\bigg|^2\Delta Q(z)\diff A(z),\qquad t\in I,
\]
and note that
\begin{equation}
V(\xi,\eta;t)=\widehat Q_{S_t}(\eta)-\widehat Q_{S_t}(\xi)
,\qquad t\in I.
\label{eq-6.6}
\end{equation}
Let $\mu,\nu$ be two compactly supported Borel probability measures which are 
absolutely continuous with densities in $L^q$ for some $q$, $1<q<+\infty$. 
We need the expression
\[
V^{\mu,\nu}(t):=\int_{C^2}V(\xi,\eta;t)\diff\mu(\xi)\diff\nu(\eta)
=\int_{S_t}U^{\nu-\mu}\Delta Q\diff A,\qquad t\in I,
\]
where $U^{\nu-\mu}:=U^\nu-U^\mu$, and $U^\mu,U^\nu$ are the usual logarithmic
potentials. The functions $U^\mu,U^\nu$ are in $W^{2,q}$ and therefore 
continuous (and bounded). 
The function $U^{\nu-\mu}$ is harmonic off $\supp(\nu-\mu)$, and its value 
at infinity is $U^{\nu-\mu}(\infty)=0$. 
We have 
\begin{equation}
V^{\mu,\nu}(t')-V^{\mu,\nu}(t)=
\int_{S_{t'}\sm S_t}U^{\nu-\mu}\Delta Q\diff A,\quad\text{for}\,\,\, 
t,t'\in I \,\,\,\text{with}\,\,\,t<t',
\label{eq-6.7}
\end{equation}
which gives
\[
|V^{\mu,\nu}(t')-V^{\mu,\nu}(t)|\le\|U^{\nu-\mu}\|_{L^\infty(\C)}
\int_{S_{t'}\sm S_t}\Delta Q\diff A=(t'-t)
\|U^{\nu-\mu}\|_{L^\infty(\C)},\quad\text{for}\,\,\, 
t,t'\in I \,\,\,\text{with}\,\,\,t<t',
\]
since $\Delta Q\ge0$ a.e. on a local $Q$-droplet.
It follows that the function $V^{\mu,\nu}$ is Lipschitz continuous, and
therefore differentiable almost everywhere. In view of \eqref{eq-6.7}, its 
right derivative is
\[
[V^{\mu,\nu}]'(t^+)=\lim_{t'\to t^+}\frac1{t'-t}
\int_{S_{t'}\sm S_{t}}U^{\nu-\mu}\Delta Q\diff A
=\int_{\partial S_{t}^\infty}U^{\nu-\mu}\diff\omega_\infty^{(t)},
\]
by Proposition \ref{prop-6.18}, with the possible exception of a countable set
of $t$'s. If now $\supp\nu\subset S_t$, the function $U^{\nu-\mu}$ becomes
subharmonic (and bounded) in $\C\sm S_t$, so by the maximum principle
\[
0=U^{\nu-\mu}(\infty)\le\int_{\partial S_{t}^\infty}
U^{\nu-\mu}\diff\omega_\infty^{(t)}.
\]
We conclude that $[V^{\mu,\nu}]'(t)\ge0$ for a.e. $t$ with 
$\supp\nu\subset S_t$. Put 
\[
t_\nu:=\inf\{t\in I:\,\supp\nu\subset S_t\},
\]
and note that for $t\in I$ with $t>t_\nu$ we have $[V^{\mu,\nu}]'(t)\ge0$
almost everywhere, and hence $V^{\mu,\nu}$ is increasing on that sub-interval:
\begin{equation}
V^{\mu,\nu}(t)\le V^{\mu,\nu}(t')\quad \text{for}\,\,\,t,t'\in I\,\,\,
\text{with}\,\,\,t_\nu<t<t'.
\label{eq-6.8}
\end{equation}
Next, we let the probability measures $\mu,\nu$ get more and more concentrated,
so that $\supp\mu\to\{\xi\}$ and $\supp\nu\to\{\eta\}$. The inequality 
\eqref{eq-6.8} survives the limit process, and we obtain that
\begin{equation}
V(\xi,\eta;t)\le V(\xi,\eta;t')\quad \text{for}\,\,\,t,t'\in I\,\,\,
\text{with}\,\,\,t_\xi<t<t',
\label{eq-6.9}
\end{equation}
where
\[
t_\xi:=\inf\{t\in I:\,\xi\in S_t\}.
\]
The short argument which justifies this involves choosing the support of $\nu$
cleverly, and this is made possible by the fact that a local 
$Q$-droplet lacks $Q$-shallow points. If we use \eqref{eq-6.6}, we see that
\eqref{eq-6.9} expresses that
\begin{equation}
\widehat Q_{S_{t}}(\eta)-\widehat Q_{S_{t}}(\xi)
\le \widehat Q_{S_{t'}}(\eta)-\widehat Q_{S_{t'}}(\xi)
\quad \text{for}\,\,\,t,t'\in I\,\,\,
\text{with}\,\,\,t_\xi<t<t'.
\label{eq-6.10}
\end{equation} 
Since for $t_\xi<t<t'$ we have $\xi\in S_t\subset S_{t'}$, we get that
(cf. Proposition \ref{prop-4.5})
\[
\widehat Q_{S_{t}}(\xi)=\widehat Q_{S_{t'}}(\xi)=Q(\xi),
\]
so that \eqref{eq-6.10} simplifies:
\begin{equation*}
\widehat Q_{S_{t}}(\eta)
\le \widehat Q_{S_{t'}}(\eta)
\quad \text{for}\,\,\,t,t'\in I\,\,\,
\text{with}\,\,\,t_\xi<t<t'.
\end{equation*} 
By making clever choices of the point $\xi$ we can get $t_\xi$ to be as close
to $0$ as we need, and so
\begin{equation*}
\widehat Q_{S_{t}}(\eta)
\le \widehat Q_{S_{t'}}(\eta)
\quad \text{for}\,\,\,t,t'\in I\,\,\,
\text{with}\,\,\,t<t'.
\end{equation*} 
For $\eta\in S_{t'}$ we have $\widehat Q_{S_{t'}}(\eta)=Q(\eta)$, and we 
derive that for $t,t'\in I$ with $t<t'$, we have
\begin{equation*}
\widehat Q_{S_{t}}(\eta)
\le Q(\eta)
\qquad \eta\in S_{t'}.
\end{equation*}
By Proposition \ref{prop-6.4}, we get $S_{t}\prec S_{t'}$ for all 
$t,t'\in I$ with $t<t'$, and $\{S_t\}_{t\in I}$ is a domination chain.
\end{proof}

\section{The Hele-Shaw equation}
\label{sec-hse}


\subsection{Smooth curve families (laminations)}
We need the following definition.

\begin{defn}
A family  of simple curves $\Gamma_t$ (where $t$ runs over some interval) 
in $\C$ is a $C^\infty$-smooth {\em lamination} if 

\noindent(i) $\Gamma_t\cap\Gamma_{t'}=\emptyset$ holds for $t\neq t'$, and

\noindent(ii) Each curve has a local parametrization $z=\gamma_t(\theta)$
($\theta$ runs over some interval), such that the function 
$\gamma(\theta,t):=\gamma_t(\theta)$ is a local $C^\infty$-diffeomorphism.
\end{defn}

We will alternatively use the term {\em $C^\infty$-smooth curve family} as
synonymous to $C^\infty$-smooth lamination. We mention that it is of course 
also possible to define laminations with a lower degree of smoothness than 
$C^\infty$. 
The normal velocity $v_n=v_n(z)$, $z\in \Gamma_t$, may be defined as follows:
\[
v_n:=\langle\partial_t\gamma, n\rangle=
\frac1{|\partial_\theta\gamma|}\,
\im[\partial_t\gamma\partial_\theta\bar\gamma],
\]
where the inner product is that of $\C\cong\R^2$ and $n$ is a unit normal 
to $\Gamma_t$. 
It is easy to see that the definition does not depend on the choice 
of parametrization $\gamma$. Indeed, if we write
\[
\ti\gamma_t(\vartheta)=\ti\gamma(\vartheta,t):=\gamma(\theta(\vartheta,t),t),
\] 
where $\vartheta\mapsto \theta(\vartheta,t)$ is a local diffeomorphism, then
\[
\partial_t \ti\gamma=\partial_\theta\gamma\,\partial_t \theta+
\partial_t\gamma,\qquad 
\partial_\vartheta \ti\gamma=\partial_\theta\gamma\,\partial_\vartheta \theta,
\]
so that
\begin{multline*}
\frac1{|\partial_\vartheta\ti\gamma|}\,
\im[\partial_t\ti\gamma\partial_\vartheta
\overline{\ti\gamma}]=\frac1{|\partial_\theta\gamma\partial_\vartheta\theta|}\,
\im\big[|\partial_\theta\gamma|^2\partial_t \theta\partial_\vartheta\theta+
\partial_t\gamma\partial_\theta\bar\gamma\partial_\vartheta\theta\big]
\\
=\frac{\partial_\vartheta\theta}{|\partial_\vartheta\theta|}\,
\frac1{|\partial_\theta\gamma|}\,\im[\partial_t\gamma\partial_\theta\bar\gamma]
=\pm \frac1{|\partial_\theta\gamma|}\,
\im[\partial_t\gamma\partial_\theta\bar\gamma],
\end{multline*}
where there is a sign change if the coordinate change reverses the direction
of the unit normal vector.

\begin{lem} Let $\Gamma_t$ be a $C^\infty$-smooth lamination of Jordan curves,
such that the domain $D_t$ interior to $\Gamma_t$ increases with 
$t$. Then, for continuous $f:\C\to\C$, we have
\[
\frac\diff{\diff t}\int_{D_t}f\diff A=2\int_{\Gamma_t}fv_n~\diff s.
\]
\end{lem}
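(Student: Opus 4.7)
The plan is to reduce the derivative of the area integral to a boundary integral via the Reynolds-transport idea, using the fact that the monotone lamination $\{\Gamma_t\}$ supplies a natural change of coordinates near the moving boundary. I will compute the change $\int_{D_{t+h}}f\,\diff A-\int_{D_t}f\,\diff A=\int_{D_{t+h}\sm D_t}f\,\diff A$ using the local parametrization $z=\gamma(\theta,t)$, letting $h\to 0$, and comparing the Jacobian with the formula for the normal velocity $v_n$ given just before the lemma.

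First, I would observe that because $\Gamma_t\cap\Gamma_{t'}=\emptyset$ for $t\ne t'$ and $(\theta,t)\mapsto\gamma(\theta,t)$ is a local $C^\infty$-diffeomorphism, each point of a tubular neighborhood of $\Gamma_{t_0}$ has a unique representation as $\gamma(\theta,t)$ for $t$ in a small interval around $t_0$. Since $D_t$ grows with $t$, the set $D_{t_0+h}\sm D_{t_0}$ (for $h>0$ small) lies in such a tubular neighborhood and its preimage under the parametrization is exactly the strip $\{(\theta,t):t_0<t<t_0+h\}$ (modulo localization by a partition of unity along $\Gamma_{t_0}$, which covers the compact Jordan curve by finitely many coordinate charts). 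The Jacobian of $(\theta,t)\mapsto\gamma(\theta,t)$, viewed as a map into $\C\cong\R^2$, is
\[
J(\theta,t)=\im[\partial_t\gamma\,\partial_\theta\bar\gamma],
\]
as one checks by expanding $\gamma=x+\iu y$; because $D_t$ grows, one can fix the orientation of the chart so that $J\ge 0$, and then $v_n=J/|\partial_\theta\gamma|$ without absolute values.

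Next, writing $\diff A=\pi^{-1}\diff\vol_2$, the change-of-variables formula gives
\[
\int_{D_{t_0+h}\sm D_{t_0}}f\,\diff A
=\frac{1}{\pi}\int_{t_0}^{t_0+h}\!\int f(\gamma(\theta,t))\,J(\theta,t)\,\diff\theta\,\diff t.
\]
Dividing by $h$ and letting $h\to 0^+$ (continuity of $f$ and of the integrand in $t$ makes this routine, via dominated convergence on the compact curve), I obtain
\[
\frac{\diff}{\diff t}\int_{D_t}f\,\diff A\bigg|_{t=t_0}
=\frac{1}{\pi}\int f(\gamma(\theta,t_0))\,J(\theta,t_0)\,\diff\theta.
\]
Since arc length on $\Gamma_{t_0}$ is $\diff\sigma=|\partial_\theta\gamma|\,\diff\theta$ and, following the convention used earlier in the paper, normalized arc length is $\diff s=(2\pi)^{-1}\diff\sigma$, substituting $J=v_n|\partial_\theta\gamma|$ gives
\[
\frac{1}{\pi}\int f\,v_n\,|\partial_\theta\gamma|\,\diff\theta
=\frac{1}{\pi}\int_{\Gamma_{t_0}} f\,v_n\,\diff\sigma
=2\int_{\Gamma_{t_0}} f\,v_n\,\diff s,
\]
which is the claimed identity.

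The main obstacle I anticipate is the globalization step: the parametrization $\gamma(\theta,t)$ is only local, so to integrate over all of $\Gamma_t$ and over the full strip $D_{t+h}\sm D_t$ I need to piece together finitely many coordinate patches around $\Gamma_{t_0}$, confirm that the strip is fully captured by these patches for sufficiently small $h$ (using compactness of $\Gamma_{t_0}$ and the fact that the lamination varies smoothly and monotonically), and check that the orientations agree so that $J\ge 0$ uniformly — the sign-change calculation following the definition of $v_n$ is exactly what guarantees this can be arranged consistently. Once this bookkeeping is done, the calculation above is essentially verbatim.
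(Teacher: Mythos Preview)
Your proposal is correct and follows essentially the same approach as the paper: both parametrize the annular region $D_{t+h}\sm D_t$ by the lamination coordinates $(\theta,\tau)$, compute the area element (you via the real Jacobian $J=\im[\partial_t\gamma\,\partial_\theta\bar\gamma]$, the paper via the equivalent differential-form calculation $\diff\gamma\wedge\diff\bar\gamma$), identify it as $v_n\,|\partial_\theta\gamma|\,\diff\theta\,\diff\tau$, and then convert to normalized arc length. The only difference is cosmetic: the paper assumes a global periodic parametrization $\theta\in[0,1]$ of the Jordan curve, whereas you are more explicit about patching local charts, but the computation is the same.
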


\begin{proof} 
We identify the area form with the area measure according to, e.g., 
$\diff z\wedge\diff\bar z=2\pi\imag\diff A(z)$. We may assume that for $t,t_0$
close to one another with $t_0<t$, $D_{t}\sm D_{t_0}$ is parametrized by 
$\gamma_\tau(\theta)=\gamma(\theta,\tau)$ where $0\le\theta\le1$ and 
$t_0\le\tau<t$, with
periodicity boundary conditions in $\theta$: $\gamma(0,\tau)=\gamma(1,\tau)$.
We let $R(t_0,t)$ denote the rectangle $[0,1]\times[t_0,t]$, so that
\[
\int_{D_t\sm D_{t_0}}f\diff A=\frac{1}{2\pi\imag}\int_{R(t_0,t)}
f(\gamma(\theta,\tau))\,\diff\gamma\wedge\diff\bar\gamma.
\]
We calculate:
\[
\diff\gamma\wedge d\bar\gamma=[\partial_\theta\gamma\partial_t\bar\gamma-
\partial_\theta\bar\gamma\partial_t\gamma]\,\diff\theta\wedge\diff t
=2\imag\im[\partial_\theta\gamma\partial_t\bar\gamma]\,\diff\theta\wedge\diff t
=2\imag|\partial_\theta\gamma|v_n\diff\theta\diff t,
\]
where we have identified a form with the corresponding measure. We identify
$|\partial_\theta\gamma|\diff\theta$ as arc length along $\Gamma_t$, so that
$|\partial_\theta\gamma|\diff\theta=2\pi\diff s(\theta)$, and therefore,
\[
\frac{1}{2\pi \imag}\diff\gamma\wedge\diff\bar\gamma=
2 v_n\diff s(\theta)\diff t.
\]
The assertion is now immediate.
\end{proof}

\subsection{The Hele-Shaw flow equation}
We assume we have a $C^\infty$-smooth lamination of Jordan curves $\Gamma_t$, 
and let $D_t$ denote the interior domain while $\Omega_t$ is the exterior
(unbounded) domain. We also write $K_t:=\clos\,D_t=\C\sm\Omega_t$, so that 
$K_t$ is compact.
The classical {\em Hele-Shaw equation} relates the
normal velocity $v_n$ to the normal derivative of the Green function (for the
Laplacian) of the exterior domain $\Omega_t$ when one of the two coordinates
is the point at infinity (the factor $\frac14$ comes from our choice of 
normalizations):
\begin{equation}
v_n=\tfrac14\partial_n G_t\quad\text{on}\,\,\,\Gamma_t,\,\,\,\text{where}\,\,\,
G_t=G(\cdot,\infty;\Omega_t).
\label{eq-7.1}
\end{equation}
The Green function $G_t$ is always positive in $\Omega_t$ and vanishes 
along the boundary $\Gamma_t$, and $n$ is taken in the exterior direction, so
that $\partial_n G_t>0$ on $\Gamma_t$. Actually, $\partial_n G_t$ is the
Poisson kernel of $\Omega$ for the point at infinity, so that 
$\frac12\partial_n G_t$ times normalized arc length measure has the 
interpretation of $\diff\omega^{(t)}_\infty$, 
harmonic measure at infinity for the domain $\Omega_t$.
There is also a weighted analog of \eqref{eq-7.1}: the 
{\em weighted Hele-Shaw equation} is
\begin{equation}
\rho\,v_n=\tfrac14\partial_n G_t\quad\text{on}\,\,\,\Gamma_t.
\label{eq-7.2}
\end{equation}
The function $\rho$ is the weight, and it is assumed to be $C^\infty$-smooth
with $\rho>0$ point-wise. It is possible to interpret the introduction of the 
weight as a change of the geometry (cf. \cite{HS}, \cite{HO}, \cite{HP}). 
In the sequel, we will use $\rho=\Delta Q$. 

\begin{defn}
We say that an increasing family of compact sets $\{K_t\}_t$ (where $t$ 
ranges over some interval) is a {\it generalized} solution of the weighted 
Hele-Shaw equation with weight $\rho=\Delta Q$ if 

\noindent(i)
$\Delta Q\ge0$ on $\cup_t K_t$, if 

\noindent(ii) for each $t\in I$, $K_t$ lacks 
$Q$-shallow points, and if, 

\noindent(iii) for all $f\in C(\C)$, the function
\[
t\mapsto \int_{K_t}f\Delta Q\diff A
\]
is absolutely continuous and for a.e. $t$ we have 
($\Gamma_t=\partial\Omega_t^\infty$ where $\Omega_t^\infty:=\C\sm\Phull(K_t)$ 
is the unbounded component of the complement $\C\sm K_t$ and 
$\omega_\infty^{(t)}$ is harmonic measure at infinity for $\Omega_t^\infty$) 
\[
\frac {\diff}{\diff t}\int_{K_t}f\Delta Q\diff A=
\int_{\Gamma_t}f\diff\omega^{(t)}_\infty.
\]
\end{defn}

Note that no smoothness requirement is imposed on the compact sets $K_t$ as
in the standard formulation of the weighted Hele-Shaw equation \eqref{eq-7.2}.
The way things are set up, strong solutions of the weighted Hele-Shaw equation 
(i.e., solutions of \eqref{eq-7.2}) are automatically generalized solutions.
In short, the equation asks that the compact sets $K_t$ grow according to the 
law
\[
\frac{\diff}{\diff t}\,[1_{K_t}\Delta Q\diff A]=\diff\omega^{(t)}_\infty.
\]

\begin{prop}
Let $\{K_t\}_t$ be an increasing family of compact sets, where $t$ ranges 
over an open interval $I$, and suppose $\Delta Q\ge0$ on $\cup_{t\in I}K_t$,
and that $K_t$ lacks $Q$-shallow points, for each $t\in I$.   
Then $\{K_t\}_{t\in I}$ is a generalized solution of the weighted Hele-Shaw 
equation with weight $\Delta Q$ if and only if, for all $t,t'\in I$ with 
$t<t'$, and for all real-valued $f\in C(\C)$,  
\[
\int_{K_{t'}\sm K_t}f\Delta Q\diff A=
\int_{t}^{t'}\int_{\Gamma_\tau}f\diff\omega^{(\tau)}_\infty\diff\tau.
\]
\end{prop}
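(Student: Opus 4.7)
The plan is to reduce the equivalence to the Fundamental Theorem of Calculus for absolutely continuous functions, applied to the single-variable function
\[
F_f(t):=\int_{K_t}f\Delta Q\,\dA,\qquad t\in I.
\]
Since $\{K_t\}_{t\in I}$ is an increasing chain and $\Delta Q\ge 0$ on $\cup_t K_t$, we have the basic identity $F_f(t')-F_f(t)=\int_{K_{t'}\sm K_t}f\Delta Q\,\dA$ whenever $t\le t'$. So the claim becomes: $F_f$ is absolutely continuous with a.e.\ derivative $g(\tau):=\int_{\Gamma_\tau}f\,\diff\omega^{(\tau)}_\infty$ if and only if $F_f(t')-F_f(t)=\int_t^{t'}g(\tau)\diff\tau$ for every $t<t'$ in $I$ and every real $f\in C(\C)$.

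For the forward direction, the definition of generalized solution supplies absolute continuity of $F_f$ on each compact subinterval of $I$ together with the a.e.\ identity $F_f'(\tau)=g(\tau)$, so FTC for AC functions delivers the integral identity. For the converse, the hypothesis exhibits $F_f$ as the indefinite integral of $g$ on $I$. By the Lebesgue differentiation theorem for indefinite integrals, $F_f$ is absolutely continuous on each compact subinterval of $I$ with $F_f'(\tau)=g(\tau)$ at a.e.\ $\tau$, which is precisely what the definition of generalized solution demands.

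The main technical issue is to justify that $g(\tau)$ is Lebesgue measurable and locally integrable in $\tau$, so that the integrals appearing in both formulations make sense. Boundedness on compact subintervals $[a,b]\subset I$ is straightforward: since $\omega^{(\tau)}_\infty$ is a probability measure supported on $\partial\Phull(K_\tau)\subset\Phull(K_b)$ for $\tau\in[a,b]$, and $f\in C(\C)$ is bounded on the compact set $\Phull(K_b)$, we get $|g(\tau)|\le\sup_{\Phull(K_b)}|f|$. For measurability, the monotonicity of $\tau\mapsto K_\tau$ implies monotone behaviour of the domains $\Omega_\tau^\infty=\C\sm\Phull(K_\tau)$, and the harmonic extension at $\infty$ (via Perron--Wiener--Brelot) depends right-continuously on $\tau$ under such monotone exhaustion of domain complements; hence $g$ is a pointwise monotone limit of such extensions and is Borel measurable. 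In the backward direction this measurability is in any case implicit in the assumption that the right-hand side of the stated identity is well defined, so no extra work is needed there.
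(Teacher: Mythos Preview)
Your proposal is correct and follows essentially the same approach as the paper, which dispatches the proposition in a single sentence (``This is just an application of Calculus''); you have simply spelled out the FTC argument for absolutely continuous functions that the paper leaves implicit. Your measurability discussion for $g$ is more than the paper provides, and in fact is not strictly needed in the forward direction: since $F_f$ is absolutely continuous, its a.e.\ derivative $F_f'$ is automatically Lebesgue measurable and integrable, and the hypothesis $F_f'=g$ a.e.\ then makes $g$ Lebesgue measurable without any appeal to monotone-domain continuity of harmonic measure.
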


\begin{proof}
This is just an application of Calculus. 
\end{proof}

So, the weighted Hele-Shaw equation corresponds to the disintegration of
measures
\[
1_{K_{t'}\sm K_t}\Delta Q\diff A=\int_{t}^{t'}\diff\omega^{(\tau)}_\infty
\diff\tau.
\]
It follows from the standard properties of the harmonic measure that
if $f\in C(\C)$ is real-valued, bounded, and subharmonic in $\C\sm K_t$
while it is harmonic near infinity, then
\begin{equation}
\int_{K_{t'}\sm K_t}f\Delta Q\diff A=\int_{t}^{t'}
\int_{\Gamma_\tau}f\diff\omega^{(\tau)}_\infty\diff\tau
\ge\int_{t}^{t'}f(\infty)\diff\tau=(t'-t)f(\infty).
\label{eq-7.3}
\end{equation}
This strongly resembles Richardson's inequality for local $Q$-droplets 
(Theorem \ref{thm-6.19}). The comparison with Theorem \ref{thm-6.19} suggests 
the concept of a weak solution to the Hele-Shaw equation.

\begin{defn}
We say that an increasing family of compact sets $\{K_t\}_t$ (where $t$ 
ranges over some interval) is a {\it weak} solution of the weighted 
Hele-Shaw equation with weight $\Delta Q$ if 

\noindent(i)
$\Delta Q\ge0$ on $\cup_t K_t$, if 

\noindent(ii) for each $t\in I$, $K_t$ lacks 
$Q$-shallow points, and if, 

\noindent(iii) for all real-valued $f\in W^{2,1}(\C)$ (local Sobolev class), 
\[
(t'-t)f(\infty)\le \int_{K_{t'}\sm K_t}f\Delta Q\diff A\quad\text{for}\,\,\,
t,t'\in I \,\,\,\text{with}\,\,\,t<t',
\]
provided $f$ is subharmonic in $\C\sm K_t$, harmonic in 
$\C\sm K_{t'}$, and bounded near infinity.
\label{def-7.5}
\end{defn}

\begin{prop}
A generalized solution of the Hele-Shaw equation is a weak solution.
\end{prop}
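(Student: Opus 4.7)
The proof combines the integral-disintegration identity provided by the proposition just before Definition \ref{def-7.5} with the sub-mean-value property of subharmonic functions against harmonic measure at infinity. The first two defining conditions of a weak solution (positivity of $\Delta Q$ on $\cup_t K_t$ and absence of $Q$-shallow points) coincide with those of a generalized solution, so only the inequality in condition (iii) requires work.

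Fix $t,t'\in I$ with $t<t'$ and let $f\in W^{2,1}(\C)$ be subharmonic on $\C\sm K_t$, harmonic on $\C\sm K_{t'}$, and bounded near infinity. I would first dispose of the case $f\in C(\C)$: by the proposition preceding Definition \ref{def-7.5},
\[
\int_{K_{t'}\sm K_t}f\,\Delta Q\,\diff A
=\int_t^{t'}\int_{\Gamma_\tau} f\,\diff\omega^{(\tau)}_\infty\,\diff\tau.
\]
For each $\tau\in[t,t']$ we have $K_t\subset K_\tau\subset K_{t'}$, so $f$ is subharmonic on $\C\sm K_\tau\supset\Omega_\tau^\infty$ and harmonic (and bounded) in a neighborhood of $\infty$; boundedness forces the logarithmic term in the Laurent expansion at $\infty$ to vanish, so $f(\infty)$ is well-defined and $f-f(\infty)$ is a bounded subharmonic function on $\Omega_\tau^\infty$ with limit $0$ at $\infty$. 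The standard maximum principle (or, equivalently, representation via harmonic measure at $\infty$) then yields $f(\infty)\le \int_{\Gamma_\tau}f\,\diff\omega^{(\tau)}_\infty$, and integrating in $\tau$ gives the desired inequality $(t'-t)f(\infty)\le\int_{K_{t'}\sm K_t}f\,\Delta Q\,\diff A$. This is essentially the content of \eqref{eq-7.3}.

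For the extension to general $f\in W^{2,1}(\C)$, I would mollify: let $\phi_\varepsilon$ be a standard bump with $\supp\phi_\varepsilon\subset\D(0,\varepsilon)$ and put $f_\varepsilon=\phi_\varepsilon*f$. Since $f$ is harmonic (hence $C^\infty$) on $\C\sm K_{t'}$, we have $f_\varepsilon=f$ on $\{z:\dist(z,K_{t'})>\varepsilon\}$. Moreover $f_\varepsilon\in C^\infty(\C)$, is subharmonic on $\C\sm K_t^{\varepsilon}$ (the $\varepsilon$-enlargement of $K_t$), and harmonic on $\C\sm K_{t'}^{\varepsilon}$, with $f_\varepsilon(\infty)=f(\infty)$. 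Applying the continuous-case result to $f_\varepsilon$ with the pair $(K_t^\varepsilon,K_{t'}^\varepsilon)$ requires that the enlarged family still be a generalized solution at those levels, which is not given; instead I would apply the integral identity directly to the continuous test function $f_\varepsilon$ (valid because $f_\varepsilon\in C(\C)$) and pass to the limit $\varepsilon\to 0$. On the left, $f_\varepsilon\to f$ in $L^1(K_{t'}\sm K_t,\Delta Q\,\diff A)$ because $f\in W^{2,1}\subset L^1_{\rm loc}$; on the right, harmonic measure integrals $\int_{\Gamma_\tau}f_\varepsilon\,\diff\omega_\infty^{(\tau)}$ extend continuously to $f$ using that $f$ is harmonic (hence smooth) in a neighborhood of infinity and the boundedness assumption—and one bounds $\int_{\Gamma_\tau}f_\varepsilon\,\diff\omega_\infty^{(\tau)}$ from below by $f_\varepsilon(\infty)=f(\infty)$ by the maximum-principle step already used. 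This then yields the required inequality for $f$.

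The main obstacle is the approximation step, because $W^{2,1}(\C)$ in dimension $2$ is the borderline case of Sobolev embedding and need not embed into $C(\C)$, so the boundary integrals $\int_{\Gamma_\tau}f\,\diff\omega_\infty^{(\tau)}$ are not \emph{a priori} defined for generic $f\in W^{2,1}$. What rescues the argument is that $f$ is in fact harmonic, and hence smooth and bounded, on a neighborhood of $\Omega_\tau^\infty$, which reduces the issue to understanding $f$ on a compact piece of $\Gamma_\tau$ lying inside $K_{t'}$; there the maximum-principle inequality applied to $f_\varepsilon$ suffices since only the lower bound $f(\infty)$ is needed, not pointwise values of $f$ on $\Gamma_\tau$.
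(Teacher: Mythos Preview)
Your treatment of the continuous case is exactly the paper's argument: it is precisely the content of \eqref{eq-7.3}, obtained by combining the disintegration identity with the sub-mean-value inequality for harmonic measure at infinity.

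Where you diverge from the paper is the passage from continuous to general $f\in W^{2,1}$. The paper does not mollify. Instead it asserts that it suffices to verify condition~(iii) of Definition~\ref{def-7.5} for $f\in W^{2,q}$ with some $q>1$; this is justified by how the weak-solution condition is actually used downstream (see the proof of Corollary~\ref{cor-6.23} and Theorem~\ref{thm-7.7}, where the test functions are logarithmic potentials of $L^q$ densities). Since $W^{2,q}(\C)\subset C(\C)$ for $q>1$ in two dimensions, the continuous case then applies directly.

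Your mollification route has a genuine gap. After convolving, $f_\varepsilon$ is subharmonic only on $\C\sm K_t^\varepsilon$, the $\varepsilon$-fattening of $K_t$. To invoke the maximum-principle bound $\int_{\Gamma_\tau}f_\varepsilon\,\diff\omega_\infty^{(\tau)}\ge f(\infty)$ you need $f_\varepsilon$ subharmonic on $\Omega_\tau^\infty$, i.e.\ $K_t^\varepsilon\subset\Phull(K_\tau)$. There is no reason this should hold for $\tau$ close to $t$: the family $\{K_\tau\}$ may grow away from parts of $\partial K_t$, so that $K_t^\varepsilon\not\subset K_\tau$ for every $\tau\in I$ however small $\varepsilon$ is. Your final paragraph does not close this gap: the claim that $f$ is harmonic on a neighborhood of $\Omega_\tau^\infty$ is false, since $\Gamma_\tau\subset K_{t'}$ where $f$ is merely subharmonic and possibly discontinuous. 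The lower bound you need is on the boundary integral, and the maximum principle for $f_\varepsilon$ simply does not apply there without the containment $K_t^\varepsilon\subset K_\tau$.
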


\begin{proof}
It is known that it suffices to have condition (iii) of Definition 
\ref{def-7.5} fulfilled for $f\in W^{2,q}$ for some $q$ slightly bigger than 
$1$. Such functions are continuous, so the assertion is immediate from 
\eqref{eq-7.3}. 
\end{proof}

We note that the sets $K_t$ need not be local $Q$-droplets, although that 
is one
particular instance. The analogy with that case suggest the following.

\begin{defn}
An increasing family of compact sets $\{K_t\}_t$ is {\it correctly indexed} 
if 
\[
t=t(Q,K_t)=\int_{K_t}\Delta Q\diff A.
\]
\end{defn}

This is in agreement with \eqref{eq-7.3} (or with Definition \ref{def-7.5}) 
for $f\equiv1$ 
(since the inequality applies to $f\equiv-1$ as well the inequality is of 
course an equality).

It is known that the Hele-Shaw equation behaves like the heat equation, in
that one direction of time $t$ is stable and the other is unstable. Here,
the evolution $t\mapsto K_t$ is unstable when $t$ increases, and stable when
$t$ decreases. 

\begin{thm} 
Let $K_*\subset\C$ be compact, with $\Delta Q\ge0$ a.e. on $K_*$. 
We assume that $t_*:=t(Q,K_*)>0$, and that $K_*$ lacks $Q$-shallow points. 
Then there exists a correctly indexed weak solution $t\mapsto K_t$ of the 
weighted Hele-Shaw equation with weight $\Delta Q$ on the interval 
$0<t\le t_*$, such that $K_{t_*}=K_*$. The solution is unique. 
\label{thm-7.7}
\end{thm}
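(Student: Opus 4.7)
The plan is to set $K_t := S_t[Q, K_*]$ for $0 < t < t_*$ and $K_{t_*} := K_*$, and to show this family is the unique correctly indexed weak solution. The crucial preliminary step, which is also the main obstacle, is to verify that $K_*$ is itself a local $(Q, t_*)$-droplet with minimal localization $K_*$. To this end I would apply the left-continuity of the droplet family in $t$ (as in the unlabeled lemma from \cite{ST}, pp.~227--228, stated just before Lemma \ref{lem-4.18}, adapted to the localized obstacle $Q_{K_*}$) to obtain
\[
S_{t_*}[Q, K_*] = \clos\bigcup_{t < t_*} S_t[Q, K_*].
\]
Since $\int_{S_t[Q, K_*]}\Delta Q\diff A = t$ for $t < t_*$, monotone convergence shows the right-hand side carries $\Delta Q$-mass $t_* = \int_{K_*}\Delta Q\diff A$. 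Hence the relatively open set $K_* \setminus \clos\bigcup_{t<t_*} S_t[Q, K_*]$ has $\Delta Q$-mass zero on some neighborhood of each of its points, so every such point would be $Q$-shallow with respect to $K_*$; by hypothesis the set is empty, and $S_{t_*}[Q, K_*] = K_*$. Combining with Lemma \ref{lem-5.4} and the matching of total masses (which rules out any singular part of the equilibrium measure on $\partial K_*$) yields $\hat\sigma_{t_*}[Q, K_*] = 1_{K_*}\Delta Q\diff A$, so $K_*$ is a local $(Q, t_*)$-droplet.

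Once $K_*$ is known to be a local droplet, existence is a direct assembly of earlier results. By Lemma \ref{lem-6.1}, each $K_t$ with $t < t_*$ is a local $(Q, t)$-droplet, and by definition $K_t \prec K_*$; hence $K_t \prec K_{t'}$ for any $t \le t' \le t_*$. The monotone inclusion $K_t \subset K_{t'}$ follows from Proposition \ref{prop-4.15}, correct indexing $t = \int_{K_t}\Delta Q\diff A$ and the absence of $Q$-shallow points come from Corollary \ref{cor-5.16}, and the weak Hele-Shaw inequality (condition~(iii) of Definition \ref{def-7.5}) is precisely Richardson's inequality --- the implication (i)$\Rightarrow$(ii) of Theorem \ref{thm-6.19} --- applied to each domination pair.

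For uniqueness, suppose $\{K'_t\}_{t \in (0, t_*]}$ is another correctly indexed weak solution with $K'_{t_*} = K_*$. Fix $t \in (0, t_*)$; then $K'_t \subset K_*$ by monotonicity, $K'_t$ lacks $Q$-shallow points by Definition \ref{def-7.5}(ii), and the weak Hele-Shaw inequality at the pair $(t, t_*)$ is exactly condition~(ii) of Theorem \ref{thm-6.19} for $(K'_t, K_*)$. Corollary \ref{cor-6.23} then gives that $K'_t$ is a local $Q$-droplet with $K'_t \prec K_*$; since $t = t(Q, K'_t)$ by correct indexing, the defining property of $\prec$ forces $K'_t = S_t[Q, K_*] = K_t$.
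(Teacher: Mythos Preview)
There is a genuine gap. Your entire existence argument rests on the claim that $K_*$ is a local $(Q,t_*)$-droplet, but this is generally \emph{false} under the hypotheses of the theorem. The assumptions on $K_*$ are only that $\Delta Q\ge0$ a.e.\ on $K_*$, that $t_*>0$, and that $K_*$ lacks $Q$-shallow points; nowhere is it assumed that $U^{Q,K_*}+Q$ is constant on $K_*$, and that condition (which is equivalent to $K_*$ being a local $Q$-droplet, by Corollary~\ref{cor-5.16}) is a severe geometric restriction. For a concrete counterexample take $Q(z)=|z|^2$ and $K_*=[-1,1]^2$: then $\Delta Q\equiv1>0$, so the first two hypotheses hold, yet the square is not a local $|z|^2$-droplet. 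Your argument breaks precisely at the assertion ``$\int_{S_t[Q,K_*]}\Delta Q\,\diff A=t$ for $t<t_*$'': this identity would follow from Lemma~\ref{lem-6.1} \emph{if} $K_*$ were already known to be a local $Q$-droplet, but without that, once $S_t[Q,K_*]$ touches $\partial K_*$ the equilibrium measure $\hat\sigma_t[Q,K_*]$ typically acquires a singular part on $\partial K_*$ and is no longer $1_{S_t[Q,K_*]}\Delta Q\,\diff A$. The subsequent mass-matching and monotone-convergence steps then collapse.

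The paper circumvents this by replacing $Q$ with the auxiliary potential $\widetilde Q_*:=-U^{Q,K_*}$. Since $\Delta\widetilde Q_*=1_{K_*}\Delta Q$, one has $U^{K_*,\widetilde Q_*}+\widetilde Q_*=U^{Q,K_*}-U^{Q,K_*}=0$ on $K_*$, so condition~(iii) of Corollary~\ref{cor-5.16} holds \emph{automatically}, and $K_*$ is a local $\widetilde Q_*$-droplet for free. One then defines $K_t:=S_t[\widetilde Q_*,K_*]$ and runs exactly the machinery you describe (Lemma~\ref{lem-6.1}, Theorem~\ref{thm-6.19}, Corollary~\ref{cor-6.23}), but for the weight $\widetilde Q_*$ rather than $Q$. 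Because $\Delta\widetilde Q_*=\Delta Q$ on $K_*$ and all the relevant integrals in Definition~\ref{def-7.5} live inside $K_*$, the resulting domination chain of $\widetilde Q_*$-droplets is simultaneously a weak solution of the Hele-Shaw equation with weight $\Delta Q$. Your uniqueness argument via Corollary~\ref{cor-6.23} is essentially correct once transplanted to this setting.
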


\begin{proof}
We consider the function $\widetilde Q_*:=-U^{Q,K_*}$, where $U^{Q,K_*}$ is as 
in \eqref{eq-UQS}. It has 
\[
\Delta \widetilde Q_*=1_{K_*}\Delta Q,\qquad 
\widetilde Q_*(z)=t_*\log|z|^2+\Ordo(1)\,\,\,\text{as}\,\,\,|z|\to+\infty,
\]
and we can define
\begin{equation}
K_t:=S_t[\widetilde Q_*,K_*],\qquad 0<t<t_*,
\label{eq-7.4}
\end{equation}
and $K_{t_*}:=K_*$. The way things are set up, $K_*$ becomes a 
$\widetilde Q_*$-droplet (cf. Corollary \ref{cor-5.16}). Moreover, in view of 
Lemma \ref{lem-6.1} (with $\widetilde Q_*$ in place of $Q$), the sets $K_t$ 
are local $\widetilde Q_*$-droplets. We see from Theorem \ref{thm-6.19} that 
the sets $K_t$ form a domination chain of local $\widetilde Q_*$-droplets 
if and only if they form a weak solution of the Hele-Shaw equation. 

It remains to establish that the weak solution $t\mapsto K_t$ 
unique. So, suppose $t\mapsto K_t$ is a weak solution, which need not be of
the form \eqref{eq-7.4}. We claim that $K_t$ is a $\widetilde Q_*$-droplet
for $0<t<t_*$. We know that $K_*$ is a local $\widehat Q_*$-droplet, 
that $K_t$ has no $Q$-shallow points, and that $K_t\subset K_*$. In addition,
the weak solution condition entails
\[
(t_*-t)f(\infty)\le \int_{K_*\sm K_t}f\Delta Q\diff A,\qquad 0<t<t_*,
\] 
provided $f\in W^{2,1}(\C)$ (local Sobolev class) is real-valued, 
subharmonic in $\C\sm K_t$, harmonic in $\C\sm K_*$, and bounded near infinity.
An application of Corollary \ref{cor-6.23} shows that $K_t$ must also be 
a local $\widetilde Q_*$-droplet, with $K_t\prec K_*$ with respect to the 
weight $\widetilde Q_*$. The uniqueness part is now a consequence of Lemma 
\ref{lem-6.11}.
\end{proof} 

\begin{rem}
(a) A key element of the proof of Theorem \ref{thm-7.7} is the identification 
of the weak solutions of the Hele-Shaw equation $t\mapsto K_t$ with domination 
chains with respect to the weight $\widetilde Q_*$.

\noindent(b)
Theorem \ref{thm-7.7} supplies existence and uniqueness in the backward 
time direction. It is not difficult to see that there is even {\em local} 
uniqueness in the backward time direction. 
However, in the forward time direction, there is generally neither existence 
nor uniqueness. An example of non-uniqueness can be based on, e.g., the 
setting of Remark \ref{rem-6.7}. We now discuss non-existence. In the context
of Theorem \ref{thm-7.7}, the difference $U^{Q,K_*}-U^{Q,K_t}$ is constant on
$K_t$ for $0<t<t_*$ (see, e.g., the proof of Corollary \ref{cor-6.23}); 
let $c(t)$ be that constant. We consider the 
functions $H_t:=Q-U^{Q,K_t}$ and $H_*:=Q-U^{Q,K_*}$, which have 
$\Delta H_t=0$ a.e. on $K_t$ and $\Delta H_*=0$ a.e. on $K_*$, respectively.
We have $H_t-H_*=U^{Q,K_*}-U^{Q,K_t}=c(t)$ on $K_t$, we we write as $H_t=c(t)+
H_*$ on $K_t$. So $H_t$ restricted to $K_t$ is supposed to have an extension 
to $K_*$ -- the function $\widetilde H_t:=c(t)+H_*$ -- with $\Delta 
\widetilde H_t=0$ a.e. on $K_*$. This adds an additional smoothness requirement
on $H_t$ for $0<t<t_*$, which suggests that $K_t$ cannot be an arbitrary 
compact subset of $\C$ with $\Delta Q\ge0$ a.e. on $K_t$ which lacks 
$Q$-shallow points. But $K_t$ is uniquely given for $0<t<t_*$ (the backward 
direction) for arbitrary compacts $K_*$ lacking $Q$-shallow points.  
So with very irregular $K_*$ we should be able to arrange that we have 
non-existence in the forward time direction. Another reason for non-existence 
in the forward direction is the existence of maximal local $Q$-droplets
(see the next section for details), at least for some $Q$ with 
$\Delta Q\equiv1$.
\end{rem}

%
%
%
%

A proof of the following statement can be based on Proposition \ref{prop-6.18}.
The only part that needs checking is the absolute continuity requirement, which
we leave to the interested reader.

\begin{prop}
Suppose $t\mapsto K_t$ is a weak solution to the Hele-Shaw equation and that
for almost all $t$ the sets $\Int\, K_t$ have finitely many components. 
Then $t\mapsto K_t$ is a generalized solution.  
\end{prop}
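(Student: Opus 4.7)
The plan is to verify the two defining properties of a generalized solution: absolute continuity of $t\mapsto\int_{K_t}f\Delta Q\,\diff A$ for every $f\in C(\C)$, and the identification of its derivative as $\int_{\Gamma_t}f\,\diff\omega_\infty^{(t)}$ at almost every $t$. The first is immediate from the weak-solution inequality, while the second will follow from Proposition \ref{prop-6.18} once I reinterpret the chain $\{K_t\}$ as a domination chain of local droplets with respect to a suitable modified potential.

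First I would dispose of absolute continuity. Testing condition (iii) of Definition \ref{def-7.5} with $f\equiv 1$ and with $f\equiv-1$ forces
\[
\int_{K_{t'}\setminus K_t}\Delta Q\,\diff A \;=\; t'-t\quad\text{whenever}\,\,\,t<t',
\]
so the chain is correctly indexed up to an additive constant. For any $f\in C(\C)$, this gives the Lipschitz-type bound
\[
\Big|\int_{K_{t'}\setminus K_t} f\,\Delta Q\,\diff A\Big| \;\le\; \|f\|_{L^\infty(K_{t'})}\,(t'-t),
\]
so $t\mapsto\int_{K_t}f\Delta Q\,\diff A$ is locally Lipschitz and hence absolutely continuous.

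To identify the derivative at a point $t_0\in I$, I would pick any $t_*\in I$ with $t_*>t_0$ and mimic the reduction used in the proof of Theorem \ref{thm-7.7}: set $\widetilde Q:=-U^{Q,K_{t_*}}$, so that $\Delta\widetilde Q=\1_{K_{t_*}}\Delta Q$ and $\widetilde Q(z)=t(Q,K_{t_*})\log|z|^2+\Ordo(1)$ at infinity. A direct computation shows that $K_{t_*}$ is itself a local $\widetilde Q$-droplet, because $U^{\widetilde Q,K_{t_*}}+\widetilde Q\equiv 0$ on $K_{t_*}$. For $t<t_*$, the weak-solution inequality of Definition \ref{def-7.5} coincides with condition (ii) of Theorem \ref{thm-6.19} applied to the pair $(K_t,K_{t_*})$ with weight $\widetilde Q$, since every relevant integration takes place inside $K_{t_*}$, where $\Delta\widetilde Q=\Delta Q$; similarly, the notions of $\widetilde Q$-shallow and $Q$-shallow points agree on subsets of $K_{t_*}$. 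Corollary \ref{cor-6.23} then upgrades each $K_t$ to a local $\widetilde Q$-droplet with $K_t\prec K_{t_*}$, so that $\{K_t\}_{t\le t_*}$ is a domination chain of local $\widetilde Q$-droplets.

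With this structure at hand, the Hausdorff-continuity lemma preceding Proposition \ref{prop-6.18} guarantees right-continuity of $t\mapsto K_t$ off a countable set. Proposition \ref{prop-6.18}, applied to the weight $\widetilde Q$, then yields
\[
\frac1{t-t_0}\,\1_{K_t\setminus K_{t_0}}\Delta\widetilde Q\,\diff A \;\to\; \diff\omega_\infty^{(t_0)} \quad\text{weakly-star as}\,\,\, t\to t_0^+
\]
at almost every $t_0<t_*$. Since $\Delta\widetilde Q=\Delta Q$ on $K_{t_*}$, the same relation holds for $\Delta Q$, and combined with the absolute continuity already established this is precisely the generalized-solution derivative identity at $t_0$. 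Letting $t_*$ exhaust $I$ from the right completes the argument. The main obstacle I anticipate is the closure requirement $K_{t_0}=\clos\,\Int\,K_{t_0}$ that appears in the hypothesis of Proposition \ref{prop-6.18} but is not in our assumptions; I would try to extract it for almost every $t_0$ from the absence of $Q$-shallow points together with the $W^{2,p}$-regularity of $\widehat Q_{K_{t_0}}$ inherited from the local-droplet structure, which is exactly the delicate regularity point the authors leave to the reader.
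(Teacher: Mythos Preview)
Your proposal is correct and follows exactly the route the paper indicates: the paper itself gives no proof beyond stating that it ``can be based on Proposition \ref{prop-6.18}'' and that ``the only part that needs checking is the absolute continuity requirement, which we leave to the interested reader,'' and you supply precisely that check via the Lipschitz estimate coming from correct indexing. The closure condition $K_{t_0}=\clos\,\Int\,K_{t_0}$ you flag as a residual obstacle is not addressed by the paper either; it is tacitly absorbed into what is left to the reader, so your proof is at least as complete as the paper's.
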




\end{document}